\providecommand{\customgenericname}{}
\newcommand{\newcustomtheorem}[2]{\newenvironment{#1}[1]
  {\renewcommand\customgenericname{#2}
   \renewcommand\theinnercustomgeneric{##1}\innercustomgeneric}{\endinnercustomgeneric}}
\newcommand{\newcustomlemma}[2]{\newenvironment{#1}[1]
  {\renewcommand\customgenericname{#2}
   \renewcommand\theinnercustomgeneric{##1} \innercustomgeneric}{\endinnercustomgeneric}}
\theoremstyle{plain}
\newtheorem{theorem}{Theorem}
\newtheorem{lemma}{Lemma}\numberwithin{lemma}{section}
\newtheorem{proposition}{Proposition}\numberwithin{proposition}{section}
\numberwithin{corollary}{section}
\theoremstyle{definition}
\numberwithin{definition}{section}
\theoremstyle{remark}
\newtheorem*{remark}{Remark}
\numberwithin{equation}{section}
\newcommand{\e}{\epsilon}
\newcommand{\la}{\lambda}
\newcommand{\ga}{\gamma}
\newcommand{\R}{\mathbb{R}}
\newcommand{\Z}{\mathbb{Z}}
\newcommand{\LL}{\mathcal{L}}
\newcommand{\q}{\quad}
\newcommand{\qq}{\qquad}
\newcommand{\qqqq}{\qquad\qquad\qquad\qquad}
\newcommand{\wt}{\widetilde}
\newcommand{\wh}{\widehat}
\newcommand{\supp}{\mbox{supp}}
\newcommand{\di}{\partial}
\newcommand{\bbn}{\mathbb{N}}
\newcommand{\bbz}{\mathbb{Z}}
\newcommand{\bbzn}{\mathbb Z^n}
\newcommand{\bbr}{\mathbb{R}}
\newcommand{\bbrn}{\mathbb R^n}
\newcommand{\xxxi}{\vec{\xi}}
\newcommand{\ccdot}{\vec{\cdot}\;}
\def\xxxi{\vec{\boldsymbol{\xi}}}
\def\000{\vec{\boldsymbol{0}}}
\def\zzz{\vec{\boldsymbol{z}}}
\def\bbbb{{\boldsymbol{b}}}
\def\mm{{\boldsymbol{m}}}
\def\xx{{\mathbf{x}}}
\def\cc{{\mathbf{c}}}
\def\ii{{\mathrm{i}}}
\newcommand{\II}{\mathcal{I}}
\newcommand{\JJ}{\mathcal{J}}
\newcommand{\UU}{\mathcal{U}}
\newcommand{\VV}{\mathcal{V}}
\newcommand{\DD}{\mathcal{D}}
\newcommand{\KK}{\mathscr{K}}
\newcommand{\RR}{\mathscr{R}}
\newcommand{\BB}{\mathscr{B}}
\newcommand{\Ga}{\Gamma}
\newcommand{\La}{\Lambda}
\title{Trilinear Fourier multipliers on Hardy spaces}
\subjclass[2010]{Primary 42B15, 42B30, 42B25, 47H60}
\keywords{Fourier muitipliers, Multilinear operators, Hardy spaces}
\author{Jin Bong Lee}
\address{J.B. Lee, Research Institute of Mathematics, Seoul National University, Seoul 08826, Republic of Korea}
\email{jinblee@snu.ac.kr}
\author{Bae Jun Park}
\address{B. Park, Department of Mathematics, Sungkyunkwan University, Suwon 16419, Republic of Korea}
\email{bpark43@skku.edu}
\thanks{J.B. Lee is supported by NRF grant 2021R1C1C2008252. B. Park is supported in part by NRF grant 2022R1F1A1063637 and by POSCO Science Fellowship of POSCO TJ Park Foundation}
\begin{document}

\begin{abstract}
In this paper, we obtain the $H^{p_1}\times H^{p_2}\times H^{p_3}\to H^p$ boundedness for trilinear Fourier multiplier operators, which is a trilinear analogue of the multiplier theorem of  Calder\'on and Torchinsky \cite{Ca_To1977}.
Our result improves the trilinear estimate in \cite{LHHLPPY} by additionally assuming an appropriate vanishing moment condition, which is natural in the boundedness into the Hardy space $H^p$ for $0<p\le 1$.

\end{abstract}

\maketitle
\tableofcontents

\section{Introduction}

For a function $\sigma$ on $\bbrn$, 
let $T_\sigma$ be the corresponding Fourier multiplier operator given by
$$
T_\sigma f(x) := \int_{\R^n} \sigma(\xi) \wh{f}(\xi) e^{2\pi i\langle x,\xi\rangle} \;d\xi
$$
for a Schwartz function $f$ on $\bbrn$, where $\wh{f}(\xi):=\int_{\bbrn}f(x)e^{2\pi i\langle x,\xi\rangle}dx$ is the Fourier transform of $f$.
The function $\sigma$ is called an $L^p$ multiplier if $T_\sigma$ is bounded on $L^p(\bbrn)$ for $1<p<\infty$. For several decades, figuring out a sharp condition for $\sigma$ to be an $L^p$ multiplier has been one of the most interesting problems in harmonic analysis. Although there is no complete answer to this question, we have some satisfactory results. In 1956, Mihlin \cite{Mi1956} proved that $\sigma$ is an $L^p$ multiplier provided that
\begin{equation}\label{mih condi}
|\di^\alpha \sigma (\xi)| \lesssim |\xi|^{-|\alpha|},\q \xi \not=0 \qq\text{ for any multi-indices }~ |\alpha|\le [n/2]+1.
\end{equation} 
This result was refined by H\"ormander \cite{Ho1960} who replaced \eqref{mih condi} by the weaker condition
\begin{equation*}
\sup_{k\in\bbz}\big\Vert \sigma(2^k\cdot)\wh{\psi} \big\Vert_{L_s^2(\bbrn)}<\infty \qq \text{ for }~ s>n/2,
\end{equation*}
 where $L_s^2(\bbrn)$ denotes the fractional Sobolev space on $\bbrn$ and $\psi$ is a Schwartz function on $\bbrn$ generating Littlewood-Paley functions, which will be officially defined in Section \ref{hardyspacesection}. We also remark that $s>n/2$ is the best possible regularity condition for the $L^p$ boundedness of $T_{\sigma}$.

Now we define the (real) Hardy space.
Let $\phi$ be a smooth function on $\bbrn$ that is supported in $\{x\in\bbrn: |x|\le 1\}$ and we define $\phi_l:=2^{ln}\phi(2^l\cdot)$. 
Then the Hardy space $H^p(\bbrn)$, $0<p\le\infty$, consists of tempered distributions $f$ on $\bbrn$ such that
\begin{equation}\label{hardydef2}
\Vert f\Vert_{H^p(\bbrn)}:= \Big\Vert \sup_{l\in\bbz} \big| \phi_l\ast f\big| \Big\Vert_{L^p(\bbrn)}
\end{equation} is finite. The space provides an extension to $0<p\le 1$ in the scale of classical $L^p$ spaces for $1<p\le\infty$, which is more natural and useful in many respects than the corresponding $L^p$ extension. Indeed, $L^p(\bbrn)=H^p(\bbrn)$ for $1<p\le\infty$ and several essential operators, such as singular integrals of Calder\'on-Zygmund type, that are well-behaved on $L^p(\bbrn)$ only for $1<p\le\infty$ are also well-behaved on $H^p(\bbrn)$ for $0<p\le 1$.
Now let $\mathscr{S}(\bbrn)$ denote  the Schwartz space on $\bbrn$ and $\mathscr{S}_0(\bbrn)$ be its subspace consisting of $f$ satisfying
$$\int_{\bbrn}x^{\alpha}f(x) \; dx=0 \q \text{ for all multi-indices}~\alpha.$$
Then it turns out that 
\begin{equation}\label{densesubset}
\mathscr{S}_0(\bbrn) ~ \text{ is dense in }~ H^p(\bbrn)~\text{ for all }~0<p<\infty.
\end{equation}
We remark that $\mathscr{S}(\bbrn)$ is also dense in $H^p(\bbrn)=L^p(\bbrn)$ for $1<p<\infty$, but not for $0<p\le 1$. See \cite[Chapter III, \S 5.2]{St1993} for more details.
Moreover, as mentioned in \cite[Chapter III, \S 5.4]{St1993}, if $f\in L^1(\bbrn)\cap H^p(\bbrn)$ for $0<p\le 1$, then 
\begin{equation}\label{hardyvanishing}
\int_{\bbrn}x^{\alpha}f(x) \; dx=0 \q \text{ for all multi-indices }~|\alpha|\le \frac{n}{p}-n.
\end{equation}
We refer to \cite{Bu_Gu_Si1971, Ca1977, Fe_St1972, St1993, Uch1985} for more details.

In 1977, Calder\'on and Torchinsky \cite{Ca_To1977} provided a natural extension of the result of H\"ormander to the Hardy space $H^p(\bbrn)$ for $0<p\le 1$.
For the purpose of investigating $H^p$ estimates for $0<p\le 1$, 
the operator $T_{\sigma}$ is assumed to initially act on $\mathscr{S}_0(\bbrn)$ and then to admit an $H^p$-bounded extension for $0<p< \infty$ via density, in view of \eqref{densesubset}. Then Calder\'on and Torchinsky proved 
\begin{customthm}{A}[\cite{Ca_To1977}] \label{thma}
Let $0<p\le 1$. Suppose that $s>n/p-n/2$. Then we have
\begin{equation*}
\big\Vert T_{\sigma}f\big\Vert_{H^p(\bbrn)}\lesssim \sup_{k\in \bbz}\big\Vert \sigma(2^k\cdot)\wh{\psi}\big\Vert_{L_s^2(\bbrn)}\Vert f\Vert_{H^p(\bbrn)}
\end{equation*}
for all $f\in \mathscr{S}_0(\bbrn)$.
\end{customthm}
For more information about the theory of Fourier multipliers, we also refer the reader to \cite{Ba_Sa1985, Gr_He_Ho_Ng2017, Gr_Park_IMRN, Gr_Sl2019, Park2019, Se1988, Se1989, Se_Tr2019} and the references therein.

\hfill

We now turn our attention to multilinear extensions of the above multiplier results. Let $m$ be a positive integer greater or equal to $2$.
For a bounded function $\sigma$ on $(\bbrn)^m$, let $T_\sigma$ now denote an $m$-linear Fourier multiplier operator given by
\begin{equation*}
T_\sigma\big(f_1, \cdots, f_m\big)(x) := \int_{(\bbrn)^m}  \sigma(\xxxi\,)\Big( \prod_{j=1}^m \wh{f_j}(\xi_j)\Big) \; e^{2\pi i\langle x,\xi_1+\dots+\xi_m\rangle}\, d\xxxi,\quad \xxxi :=(\xi_1, \cdots, \xi_m)
\end{equation*}
for $f_1,\dots,f_m\in \mathscr{S}_0(\bbrn)$.
The first important result concerning multilinear multipliers was obtained by Coifman and Meyer \cite{Co_Me1978} who proved that if $N$ is sufficiently large and
\begin{equation}\label{multilinearmihlin}
\big| \partial_{\xi_1}^{\alpha_m}\cdots \partial_{\xi_m}^{\alpha_m}\sigma(\xi_1,\dots,\xi_m)\big|\lesssim_{\alpha_1,\dots,\alpha_m} \big| (\xi_1,\dots,\xi_m)\big|^{-(|\alpha_1|+\dots+|\alpha_m|)}, \q (\xi_1,\dots,\xi_m)\not= \000
\end{equation} for all $|\alpha_1|+\cdots+ |\alpha_m|\le N$, then $T_{\sigma}$ is bounded from $L^{p_1}(\bbrn)\times \cdots\times L^{p_m}(\bbrn)$ into $L^p(\bbrn)$  for $1<p_1,\dots,p_m<\infty$ and $1\le p<\infty$.
This result is a multilinear analogue of Mihlin's result in which \eqref{mih condi} is required,  but the optimal regularity condition, such as $|\alpha|\le [n/2]+1$ in \eqref{mih condi}, is not considered in the result of Coifman and Meyer.
 Afterwards, Tomita \cite{Tom2010} provided a sharp estimate for multilinear multiplier $T_{\sigma}$, as a multilinear counterpart of H\"ormander's result.
Let $\Psi^{(m)}$ be a Schwartz function on $(\bbrn)^m$ having the properties that
\begin{equation*}
\supp(\widehat{\Psi^{(m)}})\subset \big\{\xxxi:=(\xi_1,\dots,\xi_m)\in (\bbrn)^m: 1/2\leq |\xxxi|\leq 2 \big\}, \q  \sum_{j\in\Z}{\widehat{\Psi^{(m)}}}(2^{-j}\xxxi)=1,~\xxxi\not= \vec{\boldsymbol{0}}.
\end{equation*}
For $s\geq 0$, we define the Sobolev norm
\begin{equation}\label{sobolevnorm}
\Vert F\Vert_{L_s^2((\bbrn)^m)}:=\Big( \int_{(\bbrn)^m}{\big(1+4\pi^2|\xxxi|^2 \big)^s\big|\wh{F}(\xxxi)\big|^2}\;d\xxxi\Big)^{1/2}.
\end{equation}
\begin{customthm}{B} [\cite{Tom2010}]\label{thmb}
Let $1<p,p_1,\dots,p_m<\infty$ with $1/p=1/p_1+\cdots+1/p_m$.
Suppose that 
\begin{equation*}
\sup_{k \in \mathbb{Z}} \big\|\sigma(2^k \vec{\cdot}\;) \widehat{\Psi^{(m)}}\; \big\|_{L^2_s((\bbrn)^m)}<\infty
\end{equation*}
for $s>mn/2$. Then we have
 \begin{equation}\label{tomitaest}
 \big\Vert T_{\sigma}\big(f_1,\dots,f_m \big)\big\Vert_{L^p} \lesssim \sup_{k \in \mathbb{Z}} \big\|\sigma(2^k \vec{\cdot}\;) \widehat{\Psi^{(m)}}\; \big\|_{L^2_s((\bbrn)^m)}\prod_{j=1}^{m}\Vert f_j\Vert_{L^{p_j}(\bbrn)}
 \end{equation} 
 for $f_1,\dots,f_m \in \mathscr{S}_0(\bbrn)$.
\end{customthm}
The standard Sobolev space $L_s^2((\bbrn)^m)$ in \eqref{tomitaest} is replaced by a product-type Sobolev space in many recent papers.
\begin{customthm}{C}[\cite{ Gr_Mi_Ng_Tom2017,Gr_Mi_Tom2013, Gr_Ng2016, Mi_Tom2013}] \label{thmc}
Let $0<p_1,\dots,p_m\leq \infty$ and $0<p<\infty$ with $1/p=1/p_1+\dots+1/p_m$. 
Suppose that
\begin{equation}\label{minimal0}
s_1,\dots,s_m>\frac{n}{2},\qquad \sum_{j\in J}\Big(\frac{s_j}{n}-\frac{1}{p_j} \Big)>-\frac{1}{2}
\end{equation}
for any nonempty subsets $J$ of $ \{1,\dots,m\}$, and 
\begin{equation}\label{sigmacondition1}
\sup_{k\in\bbz}\big\Vert \sigma(2^k\vec{\cdot}\;)\wh{\Psi^{(m)}}\big\Vert_{L_{(s_1,\dots,s_m)}^2((\bbrn)^m)}<\infty.
\end{equation}
Then we have
\begin{equation}\label{productest}
\big\Vert T_{\sigma}\big(f_1,\dots,f_n \big) \big\Vert_{L^p(\bbrn)} \lesssim \sup_{k\in\bbz}\big\Vert \sigma(2^k\vec{\cdot}\;)\wh{\Psi^{(m)}}\big\Vert_{L_{(s_1,\dots,s_m)}^2((\bbrn)^m)} \prod_{j=1}^{m}{\Vert f_j\Vert_{H^{p_j}(\bbrn)}}
\end{equation}
for $f_1,\dots,f_m\in\mathscr{S}_0(\bbrn)$.
\end{customthm}
Here, the space $L_{(s_1,\dots,s_m)}^{2}((\bbrn)^m)$ indicates the product type Sobolev space on $(\bbrn)^m$, in which the norm is defined by replacing the term $(1+4\pi^2 |\xxxi|^2)^s$ in \eqref{sobolevnorm} by $\prod_{j=1}^{m}\big( 1+4\pi^2|\xi_j|^2\big)^{s_j}$.
It is known in \cite{Park_potential} that the condition \eqref{minimal0} is sharp in the sense that if the condition does not hold, then there exists $\sigma$ such that the corresponding operator $T_{\sigma}$ does not satisfy \eqref{productest}. 
 We also refer the reader to \cite{Cr_Ng2021, Fu_Tom2012} for weighted estimates for multilinear Fourier multipliers.

As an extension of Theorem \ref{thma} to the whole range $0<p_1,\dots,p_m\le \infty$, in the recent paper  of the authors, Lee, Heo, Hong, Park, and Yang \cite{LHHLPPY}, we provide a multilinear multiplier theorem with standard Sobolev space conditions.
\begin{customthm}{D}[\cite{LHHLPPY}]\label{thmd}
Let $0<p_1, \cdots, p_m \le \infty$ and $0<p<\infty$ with $1/p=1/p_1+\cdots+1/p_m$. 
	Suppose that 
	\begin{equation} \label{basicregularity}
		s> \frac{mn}{2}\q \text{and}\q  \frac{1}{p}-\frac{1}{2} < \frac{s}{n}+\sum_{j \in J} \Big( \frac{1}{p_j}-\frac{1}{2}\Big)
	\end{equation}
  for any subsets $J$  of $\{1,\dots,m\}$, and
	\begin{equation}\label{sigmacondition2}
	 \sup_{k \in \mathbb{Z}} \big\|\sigma(2^k \;\vec{\cdot}\;) \widehat{\Psi^{(m)}} \big\|_{L^2_s((\bbrn)^m)}<\infty.
	\end{equation}
	Then we have
	\begin{equation} \label{ddbaa}
	\big\| {T}_{\sigma}(f_1,\dots,f_m)\|_{ L^p(\bbrn)} \lesssim  \sup_{k \in \mathbb{Z}} \big\|\sigma(2^k \;\vec{\cdot}\;) \widehat{\Psi^{(m)}} \big\|_{L^2_s((\bbrn)^m)} \, \prod_{j=1}^m \|f_j\|_{H^{p_j}(\bbrn)}
	\end{equation}
for $f_1,\dots,f_m\in\mathscr{S}_0(\bbrn)$.
\end{customthm}
The optimality of the condition \eqref{basicregularity} was  achieved by Grafakos, He, and H\'onzik \cite{Gr_He_Ho2018}  who proved that if \eqref{ddbaa} holds, then we must necessarily have
$s\ge mn/2$ and $1/p-1/2\le s/n+\sum_{j\in J}\big(1/p-1/2\big)$ for all subsets $J$ of $\{1,\dots,m\}$.  \\

We remark that in the bilinear case $m=2$, Theorem \ref{thmd} follows from Theorem \ref{thmc} as \eqref{basicregularity} implies the existence of $s_1$ and $s_2$, with $s_1+s_2=s$, satisfying \eqref{minimal0}. This is well described in the first proof of Theorem \ref{thmd} in \cite{LHHLPPY}. 
However, when $m\ge 3$, this inclusion is not evident even if similar types of regularity conditions are required in both theorems.

\hfill

 Unlike the estimate in Theorem \ref{thma}, the multilinear extensions in Theorems \ref{thmc} and \ref{thmd} consider the Lebesgue space $L^p$ as a target space when $p\le 1$ (recall that $L^p=H^p$ for $1<p<\infty$).

If a function $\sigma$ on $(\bbrn)^m$ satisfies (\ref{sigmacondition1}) for $s_1,\dots,s_m>n/2$ or (\ref{sigmacondition2}) for $s>mn/2$, then 
Theorems \ref{thmc} and \ref{thmd} imply that $T_{\sigma}(f_1,\dots,f_m)\in L^1$  for all $f_1,\dots,f_m\in\mathscr{S}_0(\bbrn)$.
Therefore, in order for $T_{\sigma}(f_1,\dots,f_m)$ to belong to $H^p(\bbrn)$ for $0<p\le 1$, it should be necessary that
\begin{equation}\label{vanishingtsigma}
\int_{\bbrn}x^{\alpha}T_{\sigma}\big(f_1,\dots,f_m \big)\; dx =0 \q\text{ for } ~ |\alpha|\le \frac{n}{p}-n,
\end{equation} 
in view of (\ref{hardyvanishing}).
However, this property is generally not guaranteed, even if all the functions $f_1,\dots,f_m$ satisfy the moment conditions, in the multilinear setting, while, in the linear case,
$$\int_{\bbrn}x^{\alpha}f(x)\; dx=0, ~ |\alpha|\le N \q \text{ implies }\q \int_{\bbrn}x^{\alpha}T_{\sigma}f(x)\; dx=0, ~ |\alpha|\le N$$
for $N\ge 0$.
Recently, by imposing additional cancellation conditions corresponding to (\ref{vanishingtsigma}), Grafakos, Nakamura, Nguyen, and Sawano \cite{Gr_Na_Ng_Sa2019, Gr_Na_Ng_Sa_appear} obtain a mapping property into Hardy spaces for $T_{\sigma}$.
\begin{customthm}{E}[\cite{Gr_Na_Ng_Sa2019, Gr_Na_Ng_Sa_appear}]\label{thme}
Let $0<p_1, \cdots, p_m \le \infty$ and $0<p\le 1$ with $1/p=1/p_1+\cdots+1/p_m$. 
Let $N$ be sufficiently large and $\sigma$ satisfy \eqref{multilinearmihlin} for all multi-indices $|\alpha_1|+\dots+|\alpha_m|\le N$.
Suppose that 
$$\int_{\bbrn}x^{\alpha}T_{\sigma}\big(a_1,\dots,a_m\big)(x)\; dx=0 $$
for all multi-indices $|\alpha|\le \frac{n}{p}-n$, where $a_j$'s are $(p_j,\infty)$-atoms.
	Then we have
	\begin{equation} \label{ddb}
	\big\| {T}_{\sigma}(f_1,\dots,f_m)\big\|_{ H^p(\bbrn)} \lesssim_{\sigma,N} \prod_{j=1}^m \|f_j\|_{H^{p_j}(\bbrn)}
	\end{equation}
 for  $f_1,\dots,f_m\in\mathscr{S}_0(\bbrn)$.
\end{customthm}
Here, the $(p,\infty)$-atom is similar, but more generalized concept than $H^{p}$-atoms defined in Section \ref{preliminary}, and we adopt the convention that $(\infty,\infty)$-atom $a$ simply means $a\in L^{\infty}(\bbrn)$ with no cancellation condition.
See \cite{Gr_Na_Ng_Sa2019, Gr_Na_Ng_Sa_appear} for the definition and properties of the $(p,\infty)$-atom.

We remark that Theorem \ref{thme} successfully shows the boundedness into $H^p(\bbrn)$, but the optimal regularity conditions considered in Theorems \ref{thmc} and \ref{thmd} are not pursued at all as it requires sufficiently large $N$.

The aim of this paper is to establish the boundedness  into $H^p$ for trilinear multiplier operators, analogous to (\ref{ddb}), with the same regularity conditions as in Theorem \ref{thmd}, which is significantly more difficult in general.
Unfortunately, we do not obtain the desired results for general $m$-linear operators for $m\ge 4$ and we will discuss some obstacles for this generalization in the appendix.

To state our main result, 
let us write $\Psi:=\Psi^{(3)}$ and in what follows, we will use the notation
$$\LL_s^2[\sigma]:=\sup_{k\in\bbz}\big\Vert \sigma( 2^k\ccdot) \wh{\Psi}\big\Vert_{L^2_s((\bbrn)^3)}$$
for a function $\sigma$ on $(\bbrn)^3$.
Let $0<p\le 1$ and we will consider trilinear multipliers $\sigma$ satisfying
\begin{equation}\label{vanishingmoment}
\int_{\bbrn} x^{\alpha}T_{\sigma}\big(f_1,f_2,f_3 \big)(x) \; dx =0 \q \text{ for all multi-indices }~ |\alpha|\le \frac{n}{p}-n 
\end{equation}
for all $f_1,f_2,f_3\in \mathscr{S}_0(\bbrn)$.
Then the main result is as follows: 
\begin{theorem}\label{main}
Let $0<p_1,p_2,p_3<\infty$ and $0<p\le 1$ with $1/p=1/p_1+1/p_2+1/p_3$.
Suppose that
\begin{equation}\label{regularitycondition}
s>\frac{3n}{2} \q \text{ and }\q \frac{1}{p}-\frac{1}{2}<\frac{s}{n}+\sum_{j\in J}\Big(\frac{1}{p_j}-\frac{1}{2} \Big)
\end{equation}
where $J$ is an arbitrary subset of $\{1,2,3\}$.
Let $\sigma$ be a function on $(\bbrn)^3$ satisfying $\LL_s^2[\sigma]<\infty$ and the vanishing moment condition \eqref{vanishingmoment}. 
Then we have
\begin{equation}\label{mainthmest}
\big\Vert T_{\sigma}(f_1,f_2,f_3)\big\Vert_{H^{p}(\bbrn)}\lesssim \LL_s^2[\sigma]\Vert f_1\Vert_{H^{p_1}(\bbrn)}\Vert f_2\Vert_{H^{p_2}(\bbrn)}\Vert f_3\Vert_{H^{p_3}(\bbrn)}
\end{equation} 
 for $f_1,f_2,f_3\in \mathscr{S}_0(\bbrn)$.
\end{theorem}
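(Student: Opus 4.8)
\section*{Proof proposal}

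The plan is to combine the $L^p$ boundedness already available from Theorem \ref{thmd} with an atomic/wavelet decomposition on the output side that exploits the vanishing moment hypothesis \eqref{vanishingmoment}. More precisely, I would first decompose $\sigma = \sum_{k\in\bbz}\sigma_k$ where $\sigma_k(\xxxi) = \sigma(\xxxi)\wh{\Psi}(2^{-k}\xxxi)$, so that $T_\sigma = \sum_k T_{\sigma_k}$ and each piece has frequency support in an annulus of size $2^k$. Since we already control $\Vert T_\sigma(f_1,f_2,f_3)\Vert_{L^p}$ by $\LL_s^2[\sigma]\prod_j\Vert f_j\Vert_{H^{p_j}}$ via Theorem \ref{thmd} (the hypotheses \eqref{regularitycondition} match \eqref{basicregularity} with $m=3$), it suffices to upgrade $L^p$ control to $H^p$ control, and for $0<p\le 1$ this is exactly a matter of producing a good atomic decomposition of the output with a summable sequence of coefficients. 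The natural route is the grand maximal function characterization of $H^p$, so I would try to estimate $\sup_{l}|\phi_l * T_\sigma(f_1,f_2,f_3)|$ directly.

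The key technical step is a \emph{pointwise} or \emph{local} estimate for $\phi_l * T_{\sigma_k}(f_1,f_2,f_3)$. For $l \ge k$ (the ``low frequency relative to the convolution'' regime) one gets rapid decay essentially for free since $\phi_l$ is smooth at scale $2^{-l}$ while $T_{\sigma_k}$ lives at frequency $2^k$; the sum over $l\ge k$ converges. The delicate regime is $l < k$: here $\phi_l$ does not see the oscillation, and one must use the cancellation. I would first reduce to inputs that are $(p_j,\infty)$-atoms (or use the wavelet/molecular decomposition of $H^{p_j}$), say $a_j$ supported in a cube $Q_j$ of sidelength $2^{-\mu_j}$, and track how $T_{\sigma_k}(a_1,a_2,a_3)$ is concentrated. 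The standard Coifman--Meyer kernel estimates, refined with the Sobolev regularity $s>3n/2$ exactly as in the proof of Theorem \ref{thmd}, give that $T_{\sigma_k}(a_1,a_2,a_3)$ decays rapidly away from a dilate of the smallest cube among the $Q_j$, with size gains in $2^{(\mu_j-k)}$. One then sums a geometric-type series in $k$ and in the scales $\mu_j$, controlled by the regularity inequalities in \eqref{regularitycondition}; these are precisely what is needed to make $\sum_{J}\big(1/p_j - 1/2\big)$-type exponents land on the right side of summability. The vanishing moment condition \eqref{vanishingmoment} is invoked to subtract off a Taylor polynomial of $\phi_l$ of degree $\lfloor n/p - n\rfloor$ at the center of the relevant cube, gaining the extra decay $2^{-l(\lfloor n/p-n\rfloor+1)}$ that makes the $l<k$ sum converge and, crucially, certifies that the pieces are genuine $H^p$ atoms rather than merely $L^p$ functions.

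Concretely I would organize the argument as follows. \textbf{Step 1:} Reduce \eqref{mainthmest} to the case where each $f_j$ is a single $H^{p_j}$-atom (finite atomic decomposition on the dense class $\mathscr{S}_0$), using the $p$-convexity of $\Vert\cdot\Vert_{H^p}^p$ for $0<p\le 1$ and the fact already recorded that $\mathscr{S}_0$ is dense. \textbf{Step 2:} For atoms $a_1,a_2,a_3$ with supporting cubes of sidelengths $2^{-\mu_1},2^{-\mu_2},2^{-\mu_3}$, normalize by translation/dilation and establish the kernel/size estimates for $\phi_l * T_{\sigma_k}(a_1,a_2,a_3)$ in the two regimes $l\ge k$ and $l<k$, using \eqref{vanishingmoment} in the latter. \textbf{Step 3:} Sum over $l$ to obtain a bound for $\sup_l|\phi_l * T_\sigma(a_1,a_2,a_3)|$ that, after taking $L^p$ norms and summing over $k$ and the scales $\mu_j$, is controlled by $\LL_s^2[\sigma]\prod_j\Vert a_j\Vert_{H^{p_j}}$; the geometric series converge exactly under \eqref{regularitycondition}. \textbf{Step 4:} Reassemble via Step 1.

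The main obstacle I anticipate is Step 2 in the regime where two or three of the atom scales are very different from each other \emph{and} from $2^{-k}$: the ``off-diagonal'' geometry of $\supp(a_1)\times\supp(a_2)\times\supp(a_3)$ under the trilinear kernel is genuinely three-dimensional in the scale parameters, and one must carefully choose around \emph{which} point to Taylor-expand $\phi_l$ (the center of the smallest cube, or of the cube nearest the ``interaction region'') so that the vanishing moment of $T_\sigma$ — which is a statement about the full output, not about individual kernel pieces — can actually be brought to bear. Splitting the frequency annulus into the subregions where $|\xi_j|\sim 2^k$ versus $|\xi_j|\ll 2^k$ (the usual Coifman--Meyer trichotomy, now in three variables) and treating the ``high-high-high'' term separately from the ``paraproduct'' terms is where the bulk of the work will lie, and it is also presumably the reason the authors cannot push the method to $m\ge 4$, since there the combinatorics of these subregions and the required cancellation bookkeeping become unmanageable.
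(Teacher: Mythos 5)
Your proposal captures several correct intuitions (the paraproduct reduction, Taylor-expanding $\phi_l$ to exploit \eqref{vanishingmoment}, the regimes $l\ge k$ versus $l<k$), but Step~1 contains a genuine gap that the paper's approach is designed precisely to avoid. You cannot reduce to the case where \emph{all three} $f_j$ are single $H^{p_j}$-atoms. The $p$-convexity of $\|\cdot\|_{H^p}^p$ does let you write $\|T_\sigma(f_1,f_2,f_3)\|_{H^p}^p \le \sum_{k,m,n}|\lambda_k|^p|\mu_m|^p|\nu_n|^p\,\|T_\sigma(a_k,a_m,a_n)\|_{H^p}^p$, but then you need $\big(\sum_m|\mu_m|^p\big)^{1/p}\lesssim\|f_2\|_{H^{p_2}}$, i.e.\ $\ell^p$-summability of the coefficients for $f_2$. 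Atomic decomposition of $H^{p_2}$ only gives $\ell^{p_2}$-summability, and for $p_2>1>p$ one has $\ell^{p_2}\supsetneq\ell^p$, so $\ell^p$-summability is a strictly stronger demand that the decomposition does not supply. The same obstruction hits $f_3$. In fact the regions of interest in the paper ($\mathcal{R}_1,\mathcal{R}_2,\mathcal{R}_3$) have $p_j>2$ in two slots, where an atomic reduction is the wrong tool entirely.

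The paper's route sidesteps this: it decomposes only the slot with $p_j\le1$ into $H^{p_j}$-atoms (after reducing to $(1/p_1,1/p_2,1/p_3)\in\mathcal{R}_1\cup\mathcal{R}_2\cup\mathcal{R}_3$), and for the slots with $p_j>2$ uses the discrete $\varphi$-transform representation of $\Lambda_j f_j$ and $\Gamma_j f_j$ (Lemmas~4.2--4.4 and the $g^q$-square-function bookkeeping), so that the eventual bound separates as $\sup_l|\phi_l*T^\kappa(\cdot)|\lesssim u_1(x)u_2(x)u_3(x)$ with each $u_i$ controlled in the corresponding $L^{p_i}$. Your proposal is also missing the interpolation layer: the direct pointwise estimates only work on $\mathcal{R}_1,\mathcal{R}_2,\mathcal{R}_3$ (one small index, two indices $>2$) and the endpoint case of Proposition~\ref{mainproposition2}; the rest of \eqref{regularitycondition} is reached by the complex interpolation in Lemma~\ref{interpolationlemma}, which keeps $p$ and the moment condition fixed. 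Without this reduction-then-interpolate structure, you would have to run your geometric-series bookkeeping simultaneously across all eight regions $\mathcal{R}_0,\dots,\mathcal{R}_7$, and there is no indication in the proposal of how that would close. Your observation about choosing the Taylor-expansion center is on the right track (the paper expands around $\xx_{Q_k}$, the corner of the cube of the atom in the small-$p$ slot), and your final paragraph correctly anticipates why $m\ge4$ is blocked, but the core reduction mechanism in Step~1 would need to be replaced.
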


 \begin{figure}[h]
\begin{tikzpicture}
\path[fill=green!5] (3,0,2)--(3,1.5,2)--(5,1.5,2)--(5,0,2)--(3,0,2);
\path[fill=green!5] (3,1.5,2)--(3,1.5,0)--(5,1.5,0)--(5,1.5,2)--(3,1.5,2);
\path[fill=green!5] (5,0,0)--(5,1.5,0)--(5,1.5,2)--(5,0,2)--(5,0,0);

\path[fill=blue!5] (0,0,5.6)--(1.7,0,5.6)--(1.7,1.5,5.6)--(0,1.5,5.6)--(0,0,5.6);
\path[fill=blue!5] (1.7,0,5.6)--(1.7,0,4)--(1.7,1.5,4)--(1.7,1.5,5.6)--(1.7,0,5.6);
\path[fill=blue!5] (1.7,1.5,5.6)--(1.7,1.5,4)--(0,1.5,4)--(0,1.5,5.6)--(1.7,1,5.6);

\path[fill=red!5] (1.7,4.5,0)--(1.7,4.5,2)--(0,4.5,2)--(0,4.5,0)--(1.7,4.5,0);
\path[fill=red!5] (1.7,3,0)--(1.7,4.5,0)--(1.7,4.5,2)--(1.7,3,2)--(1.7,3,0);
\path[fill=red!5] (0,3,2)--(1.7,3,2)--(1.7,4.5,2)--(0,4.5,2)--(0,3,2);

\path[fill=yellow!5] (1.7,1.5,4)--(1.7,3,2)--(3,1.5,2)--(1.7,1.5,4);
\path[fill=yellow!5] (0,3,2)--(0,1.5,4)--(1.7,1.5,4)--(1.7,3,2)--(0,3,2);
\path[fill=yellow!5] (1.7,3,2)--(1.7,3,0)--(3,1.5,0)--(3,1.5,2)--(1.7,3,2);
\path[fill=yellow!5] (1.7,1.5,4)--(1.7,0,4)--(3,0,2)--(3,1.5,2)--(1.7,1.5,4);

\node [above] at (4,0.4,0.9) { $\RR_1$};
\node [above] at (1,3.8,1.5) { $\RR_2$};
\node [above] at (1.2,0.8,6) { $\RR_3$};
\node [above] at (1.4,1,1.6) { $\RR_0$};

\draw [->] (0,0,4)--(0,0,6);
\draw [->] (0,3,0)--(0,5,0);
\draw [->] (3,0,0)--(5,0,0);
\draw[-] (1.7,1.5,4)--(1.7,3,2)--(3,1.5,2)--(1.7,1.5,4);
\draw[-] (1.7,1.5,4)--(1.7,0,4)--(3,0,2)--(3,1.5,2);
\draw[-] (1.7,3,2)--(1.7,3,0)--(3,1.5,0)--(3,1.5,2);
\draw[-] (1.7,1.5,4)--(0,1.5,4)--(0,3,2)--(1.7,3,2);
\draw[dash pattern= { on 1pt off 1pt}] (0,0,0)--(0,0,4);
\draw[dash pattern= { on 1pt off 1pt}] (0,0,0)--(0,3,0);
\draw[dash pattern= { on 1pt off 1pt}] (0,0,0)--(3,0,0);
\draw[dash pattern= { on 1pt off 1pt}] (3,0,2)--(3,0,0)--(3,1.5,0);
\draw[dash pattern= { on 1pt off 1pt}] (0,3,2)--(0,3,0)--(1.7,3,0);
\draw[dash pattern= { on 1pt off 1pt}] (1.7,0,4)--(0,0,4)--(0,1.5,4);
\draw[-] (1.7,1.5,4)--(1.7,1.5,6);
\draw[-] (1.7,3,2)--(1.7,4.5,2);
\draw[-] (3,1.5,2)--(5,1.5,2);
\draw[-] (1.7,0,4)--(1.7,0,6);
\draw[-] (0,1.5,4)--(0,1.5,6);
\draw[-] (1.7,3,0)--(1.7,4.5,0);
\draw[-] (0,3,2)--(0,4.5,2);
\draw[-] (3,0,2)--(5,0,2);
\draw[-] (3,1.5,0)--(5,1.5,0);

\node [right] at (5,0,0) {\tiny$t_1$};
\node [left] at (0,5,0) {\tiny$t_2$};
\node [left] at (0,0,6) {\tiny$t_3$};

\draw[dash pattern= { on 2pt off 1pt}] (3,0,0)--(0,3,0)--(0,0,4)--(3,0,0);

\node [below] at (3.3,0,2) {\tiny$(1,0,\frac{1}{2})$};
\node [below] at (3.3,0,0) {\tiny$(1,0,0)$};
\node [above] at (3.4,1.4,0) {\tiny$(1,\frac{1}{2},0)$};
\node [right] at (1.7,3,0) {\tiny$(\frac{1}{2},1,0)$};
\node [right] at (1.7,3,2) {\tiny$(\frac{1}{2},1,\frac{1}{2})$};
\node [left] at (0.1,3,2) {\tiny$(0,1,\frac{1}{2})$};
\node [left] at (0,1.5,3.7) {\tiny$(0,\frac{1}{2},1)$};
\node [below] at (2,0,3.9) {\tiny$(\frac{1}{2},0,1)$};
\node [below] at (2.2,1.5,3.9) {\tiny$(\frac{1}{2},\frac{1}{2},1)$};


\path[fill=orange!5] (4.7,5.3,2)--(4.7,3.8,2)--(6,2.3,2)--(8,2.3,2)--(8,2.3,0)--(4.7,5.3,0)--(4.7,5.3,2);
\draw[orange!40, -] (4.7,5.3,2)--(8,2.3,2)--(8,2.3,0)--(4.7,5.3,0)--(4.7,5.3,2);

\node at (6,3.3,1) { $\RR_4$};

\draw[-] (4.7,5.3,2)--(4.7,3.8,2)--(6,2.3,2)--(8,2.3,2);
\draw[dash pattern= { on 1pt off 1pt}] (4.7,3.8,0)--(4.7,5.3,0);
\draw[dash pattern= { on 1pt off 1pt}] (6,2.3,0)--(8,2.3,0);
\draw[dash pattern= { on 1pt off 1pt}] (4.7,3.8,0)--(6,2.3,0);
\draw[dash pattern= { on 1pt off 1pt}] (4.7,3.8,2)--(4.7,3.8,0);
\draw[dash pattern= { on 1pt off 1pt}] (6,2.3,0)--(6,2.3,2);


\path[fill=purple!5] (-1.3,1.7,6)-- (-1.3,1.7,4)--(-1.3,3.2,2)--(-1.3,4.7,2)--(-3,4.7,2)--(-3,1.7,6)--(-1.3,1.7,6);
\draw[purple!40, -] (-1.3,1.7,6)--(-1.3,4.7,2)--(-3,4.7,2)--(-3,1.7,6)--(-1.3,1.7,6);

\node at (-1.7,3.3,4.6) { $\RR_5$};

\draw[-] (-1.3,1.7,6)-- (-1.3,1.7,4)--(-1.3,3.2,2)--(-1.3,4.7,2);

\draw[dash pattern= { on 1pt off 1pt}] (-1.3,1.7,4)--(-3,1.7,4);
\draw[dash pattern= { on 1pt off 1pt}]  (-3,3.2,2)--(-1.3,3.2,2);
\draw[dash pattern= { on 1pt off 1pt}] (-3,1.7,4)--(-3,3.2,2);
\draw[dash pattern= { on 1pt off 1pt}] (-3,1.7,4)--(-3,1.7,6);
\draw[dash pattern= { on 1pt off 1pt}] (-3,3.2,2)--(-3,4.7,2);


\path[fill=cyan!5] (3.5,0.5,9)--(3.5,0.5,7)--(4.8,0.5,5)--(6.8,0.5,5)--(6.8,-1,5)--(3.5,-1,9)--(3.5,0.5,9);
\draw[cyan!40, -] (3.5,0.5,9)--(6.8,0.5,5)--(6.8,-1,5)--(3.5,-1,9)--(3.5,0.5,9);

\node  at (4.3,-0.5,5.7) { $\RR_6$};

\draw[-] (3.5,0.5,9)--(3.5,0.5,7)--(4.8,0.5,5)--(6.8,0.5,5);
\draw[dash pattern= { on 1pt off 1pt}] (3.5,-1,7)--(3.5,-1,9);
\draw[dash pattern= { on 1pt off 1pt}] (4.8,-1,5)--(6.8,-1,5);
\draw[dash pattern= { on 1pt off 1pt}] (3.5,0.5,7)--(3.5,-1,7)--(4.8,-1,5)--(4.8,0.5,5);

\path[fill=gray!5] (11,-2,2)--(7.7,1,2)--(7.7,-2,6)--(11,-2,2);

\draw[dash pattern= { on 1pt off 1pt}] (7.7,-2,4)--(7.7,-0.5,2)--(9,-2,2)--(7.7,-2,4);
\draw[dash pattern= { on 1pt off 1pt}] (7.7,-2,4)--(7.7,-2,6);
\draw[dash pattern= { on 1pt off 1pt}] (7.7,-0.5,2)--(7.7,1,2);
\draw[dash pattern= { on 1pt off 1pt}] (9,-2,2)--(11,-2,2);

\draw[gray!40, -] (11,-2,2)--(7.7,1,2)--(7.7,-2,6)--(11,-2,2);

\node at (9,-1,4) { $\RR_7$};

\node[right] at (7.7,-0.5,2) { \tiny$(\frac{1}{2},1,\frac{1}{2})$};
\node[right] at (7.6,-2.1,4) { \tiny$(\frac{1}{2},\frac{1}{2},1)$};
\node[above right] at (8.8,-2,2) { \tiny$(1,\frac{1}{2},\frac{1}{2})$};

\end{tikzpicture}
\caption{The regions $\RR_{\ii}$, $0\le \ii\le 7$}\label{figregion}
\end{figure}

We remark that $(1/p_1,1/p_2,1/p_3)$ in Theorem \ref{main} is contained in one of the following sets:
\begin{align*}
\RR_0&:=        \big\{ ( t_1,t_2,t_3) :  0< t_1,t_2,t_3 < 1,~  0<t_{1}+t_2, t_2+t_3,t_3+t_1 <3/2, \\
&\qq\qq\qq\qq\qq\qq\qq\qq\qq\qq\qq 1\le t_1+t_2+t_3< 2 \big\},\\
\RR_{\ii} &:= \big\{ ( t_1,t_2,t_3) :   0 < t_j < {1}/{2},~1\le t_{\ii} <\infty  , \,\, j\not= \ii \big\}, \q \ii=1,2,3,\\
\RR_4 &:=\big\{ ( t_1,t_2,t_3) :  0< t_3 < {1}/{2},~ 1/2 \le t_{1}, t_2 <\infty, ~ 3/2\le t_1+t_2 \big\},  \\
\RR_5&:=  \big\{ ( t_1,t_2,t_3) :  0< t_1 <{1}/{2},~ 1/2 \le t_{2}, t_3 <\infty, ~ 3/2\le t_2+t_3 \big\},\\
\RR_6&:=    \big\{ ( t_1,t_2,t_3) :  0< t_2 < {1}/{2},~ 1/2 \le t_{1}, t_3 <\infty, ~ 3/2\le t_1+t_3 \big\},\\
\RR_7&:=  \big\{ ( t_1,t_2,t_3) :  1/2\le t_1,t_2,t_3<\infty,~ 2\le t_1+t_2+t_3<\infty \big\}.
\end{align*}
See Figure \ref{figregion} for the regions $\RR_{\ii}$.
Then the condition (\ref{regularitycondition}) becomes
\begin{equation}\label{conditionsinterpretation}
s>\begin{cases}
3n/2, & (1/p_1,1/p_2,1/p_3)\in \RR_{0},\\
n/p_{\ii}+n/2, & (1/p_1,1/p_2,1/p_3)\in \RR_{\ii}, ~\ii=1,2,3\\
n/p_1+n/p_2, & (1/p_1,1/p_2,1/p_3)\in \RR_{4},\\
n/p_2+n/p_3, & (1/p_1,1/p_2,1/p_3)\in \RR_{5},\\
n/p_3+n/p_1, & (1/p_1,1/p_2,1/p_3)\in \RR_{6},\\
n/p_1+n/p_2+n/p_3-n/2, & (1/p_1,1/p_2,1/p_3)\in \RR_7.
\end{cases}
\end{equation}

In the proof of Theorem \ref{main}, we will mainly focus on the case $(1/p_1,1/p_2,1/p_3)\in \RR_{\ii}$, $\ii=1,2,3$, in which $s>n/p_{\ii}+n/2$ is required.
Then the remaining cases follow from interpolation methods.  More precisely, via interpolation, 
$$\text{the estimates (\ref{mainthmest}) in $\RR_1$ and $\RR_2$ ~$\Rightarrow$~ the estimate (\ref{mainthmest}) in $\RR_4$},$$
$$\text{the estimates (\ref{mainthmest}) in $\RR_2$ and $\RR_3$ ~$\Rightarrow$~ the estimate (\ref{mainthmest}) in $\RR_5$},$$
$$\text{the estimates (\ref{mainthmest}) in $\RR_3$ and $\RR_1$ ~$\Rightarrow$~ the estimate (\ref{mainthmest}) in $\RR_6$},$$
$$\text{the estimates (\ref{mainthmest}) in $\RR_1$, $\RR_2$, and $\RR_3$ ~$\Rightarrow$~ the estimate (\ref{mainthmest}) in $\RR_0$},$$
$$\text{the estimates (\ref{mainthmest}) in $\RR_1$, $\RR_2$, and $\RR_3$ ~$\Rightarrow$~ the estimate (\ref{mainthmest}) in $\RR_7$}$$
 where the case $1/p_1+1/p_2+1/p_3=1$ for $(1/p_1,1/p_2,1/p_3)\in\RR_0$ will be treated separately. 
Here, a complex interpolation method will be applied, but the regularity condition on $s$ will be fixed. Moreover, the index $p$ will be also fixed so that  the vanishing moment condition (\ref{vanishingmoment}) will not be damaged in the process of the interpolation. 
For example, when $(1/p_1,1/p_2,1/p_3)\in \RR_4$, we set $s>n/p_1+n/p_2$ and fix the index $p$ with $1/p=1/p_1+1/p_2+1/p_3$. 
We also fix $\sigma$ satisfying the vanishing moment condition (\ref{vanishingmoment}).
Now we choose $(1/p_1^0,1/p_2^0,1/p_3)\in R_1$ and $(1/p_1^1,1/p_2^1,1/p_3)\in \RR_2$ so that
$$s>n/p_1^0+n/2,\q s>n/p_2^1+n/2, $$
$$ 1/p=1/p_1^0+1/p_2^0+1/p_3=1/p_1^1+1/p_2^1+1/p_3.$$
Then the two estimates
 $$\Vert T_{\sigma}\Vert_{H^{p_1^0}\times H^{p_2^0}\times H^{p_3}\to H^p},\Vert T_{\sigma}\Vert_{H^{p_1^1}\times H^{p_2^1}\times H^{p_3}\to H^p}\lesssim \LL_s^2[\sigma]$$
imply
$$\Vert T_{\sigma}\Vert_{H^{p_1}\times H^{p_2}\times H^{p_3}\to H^p}\lesssim \LL_s^2[\sigma].$$
The detailed arguments concerning the interpolation (for all the cases) will be provided in Section \ref{proofoftheorem1}.

The estimates for  $(1/p_1,1/p_2,1/p_3)\in \RR_{\ii}$, $\ii=1,2,3$, will be restated in Proposition \ref{mainproposition} below, and they will be proved throughout  three sections (Sections \ref{pfproposition}-\ref{estimatejj}).
Since one of $p_j$'s is less or equal to $1$, we benefit from the atomic decomposition for the Hardy space. Moreover, for other indices greater than 2, we employ the techniques of (variant) $\varphi$-transform, introduced by Frazier and Jawerth \cite{Fr_Ja1985, Fr_Ja1988, Fr_Ja1990} and Park \cite{Park_constr}, which will be presented in Section \ref{preliminary}. Then $T_{\sigma}(f_1,f_2,f_3)$ can be decomposed in the form
$$T_{\sigma}(f_1,f_2,f_3)=\sum_{\kappa\in \mathrm{K}}T^{\kappa}(f_1,f_2,f_3)$$
where $\mathrm{K}$ is a finite set, and then we will actually prove that each $T^{\kappa}(f_1,f_2,f_3)$ satisfies the estimate 
\begin{equation}\label{suplinstance}
\sup_{l\in\bbz}\big| \phi_l\ast \big(T^{\kappa}(f_1,f_2,f_3)\big)(x)\big| \lesssim \LL_s^2[\sigma]u_1(x)u_2(x)u_3(x)
\end{equation}
where $\Vert u_{\ii}\Vert_{L^{p_{\ii}}(\bbrn)}\lesssim \Vert f_{\ii}\Vert_{H^{p_{\ii}}(\bbrn)}$ for $\ii=1,2,3$.
Since the above estimate separates the left-hand side into three functions of $x$, we may apply H\"older's inequality with exponents $1/p=1/p_1+1/p_2+1/p_3$ to obtain, in view of (\ref{hardydef2}),
\begin{align*}
\big\Vert T^{\kappa}(f_1,f_2,f_3)\big\Vert_{H^p(\bbrn)}&=\Big\Vert \sup_{l\in\bbz}\big|\phi_l\ast \big( T^{\kappa}(f_1,f_2,f_3)\big) \big|\Big\Vert_{L^p(\bbrn)}\\
&\lesssim \LL_s^2[\sigma]\Vert f_1\Vert_{H^{p_1}(\bbrn)}\Vert f_2\Vert_{H^{p_2}(\bbrn)}\Vert f_3\Vert_{H^{p_3}(\bbrn)}.
\end{align*}
Such pointwise estimates (\ref{suplinstance}) will be described in several lemmas in Sections \ref{estimateii} and \ref{estimatejj}, and the proofs will be given in Section \ref{proofoflemmas} separately, which is one of the keys in this paper.

\subsection*{Notation}
For a cube $Q$ in $\bbrn$ let  $\xx_Q$ be the lower left corner of $Q$ and $\ell(Q)$ be the side-length of $Q$.
We denote by  $Q^*$, $Q^{**}$, and $Q^{***}$ the concentric dilates of $Q$ with $\ell(Q^*)=10\sqrt{n}\ell(Q)$, $\ell(Q^{**})=\big(10\sqrt{n} \big)^2\ell(Q)$, and $\ell(Q^{***})=\big(10\sqrt{n} \big)^3\ell(Q)$. 
Let $\DD$ stand for the family of all dyadic cubes in $\bbrn$ and $\DD_j$ be the subset of $\DD$ consisting of dyadic cubes of side-length $2^{-j}$.  
 For each $\xx\in\bbrn$ and $l\in\bbz$ let $B_{\xx}^l:=B(\xx,100n2^{-l})$ be the ball of radius $100n2^{-l}$ and center $\xx$.
We use the notation $\langle \cdot \rangle$ to denote both the inner product of functions and $\langle y\rangle:= (1+4\pi^2|y|^2)^{1/2}$ for $y\in \bbr^M$, $M\in\bbn$.
That is,  $\langle f, g \rangle=\int_{\bbrn} f(x) \overline{g(x)}\,dx$ for two functions $f$ and $g$, and $\langle x_1\rangle:=(1+4\pi^2|x_1|^2)^{1/2}$, $\langle (x_1,x_2)\rangle:=\big(1+4\pi^2(|x_1|^2+|x_2|^2)\big)^{1/2}$ for $x_1,x_2\in \bbrn$.

\section{Preliminaries}\label{preliminary}

\subsection{Hardy spaces}\label{hardyspacesection}

Let $\theta$ be a Schwartz function on $\bbrn$ such that $\supp(\wh{\theta})\subset \{\xi\in\bbrn: |\xi|\le 2\}$ and $\wh{\theta}(\xi)=1$ for $|\xi|\le 1$. Let $\psi:=\theta-2^{-n}\theta(2^{-1}\cdot)$, and for each $j\in\bbz$ we define $\theta_j:=2^{jn}\theta(2^j\cdot)$ and $\psi_j:=2^{jn}\psi(2^j\cdot)$.
Then $\{\psi_j\}_{j\in\bbz}$ forms a Littlewood-Paley partition of unity, satisfying
$$\supp(\wh{\psi_j})\subset \big\{\xi\in\bbrn: 2^{j-1}\le |\xi|\le 2^{j+1}\big\} \quad\text{ and }\quad \sum_{j\in\bbz}\wh{\psi_j}(\xi)=1, ~\xi\not= 0.$$
We define the convolution operators $\Ga_j$ and $\La_j$ by 
$$\Ga_jf:=\theta_j\ast f, \qq \La_jf:=\psi_j\ast f.$$

The Hardy space $H^p(\bbrn)$ can be characterized with the (quasi-)norm equivalences 
\begin{equation}\label{hardydef}
\Vert f\Vert_{H^p(\bbrn)}\sim\big\Vert \big\{ \Ga_jf \big\}_{j\in\bbz}\big\Vert_{L^p(\ell^{\infty})}, \qq  0<p\le \infty
\end{equation}
and
\begin{equation}\label{hardydeflittle}
\Vert f\Vert_{H^p(\bbrn)}\sim \big\Vert \big\{\La_jf\big\}_{j\in\bbz}\big\Vert_{L^p(\ell^{2})}, \qq 0<p<\infty,
\end{equation}
which is the Littlewood-Paley theory for Hardy spaces.
In addition, when $p\le 1$, every $f\in H^p(\bbrn)$ can be decomposed as
\begin{equation}\label{atomdecomp}
f=\sum_{k=1}^{\infty}\lambda_k a_k \q \text{ in the sense of tempered distributions}
\end{equation}
where $a_k$'s are $H^p$-atoms having the properties that 
$\supp(a_k)\subset Q_k$, $\Vert a_k\Vert_{L^{\infty}(\bbrn)}\le |Q_k|^{-1/p}$ for some cube $Q_k$,
$\int x^{\gamma}a_k(x)dx=0$ for all multi-indices $|\gamma|\le M$,
and $\big( \sum_{k=1}^{\infty}|\lambda_k|^p\big)^{1/p}\lesssim \Vert f\Vert_{H^p(\bbrn)}$
where $M$ is a fixed integer satisfying $M\ge [n/p-n]_+$, which may be actually  arbitrarily large.
Furthermore, each  $H^p$-atom $a_k$ satisfies
\begin{equation*}
\Vert a_k\Vert_{H^1(\bbrn)}\lesssim |Q_k|^{-1/p+1}.
\end{equation*}

\subsection{Maximal inequalities}
Let $\mathcal{M}$ denote the Hardy-Littlewood maximal operator, defined by
$$\mathcal{M}f(x):=\sup_{Q:x\in Q}\frac{1}{|Q|}\int_Q{|f(y)|}dy$$
for a locally integrable function $f$ on $\bbrn$, where the supremum ranges over all cubes $Q$ containing $x$. 
For given $0<r<\infty$, we define $\mathcal{M}_rf:=\big( \mathcal{M}\big(|f|^r\big)\big)^{1/r}$. Then it is well-known that
\begin{equation}\label{hlmax}
\big\Vert \big\{ \mathcal{M}_rf_k \big\}_{k\in\bbz}\big\Vert_{L^p(\ell^q)}\lesssim \Vert \{f_k\}_{k\in\bbz}\Vert_{L^p(\ell^q)}
\end{equation} whenever $r<p< \infty$ and $r<q\le \infty$.
We note that for $1\le r<\infty$
\begin{equation}\label{maximalbound}
\bigg\Vert \frac{f(x-\cdot)}{\langle 2^j\cdot\rangle^t}\bigg\Vert_{L^r(\bbrn)}\lesssim 2^{-{jn}/{r}}\mathcal{M}_rf(x)\qq\text{ if }~t>n/r.
\end{equation}

For $\mm \in \bbzn$ and any dyadic cubes $Q\in\DD$, we use the notation
		$$Q(\mm):=Q+ \ell(Q)\,\mm.$$
Then we define the dyadic shifted maximal operator $\mathcal{M}_{dyad}^{ \mm}$  by
\[\mathcal{M}_{dyad}^{ \mm}f(x):=\sup_{Q\in\DD: x\in Q} \frac{1}{|Q|} \int_{{Q(\mm})} |f(y)|\, dy\]
		where the supremum is taken over all dyadic cubes $Q$ containing $x$. 
It is clear that $\mathcal{M}_{dyad}^{\bold{0}}f(x)\le \mathcal{M}f(x)$ and accordingly, $\mathcal{M}_{dyad}^{\bold{0}}$ is bounded on $L^p$ for $p>1$.
In general, the following maximal inequality holds:
	For $1<p<\infty$ and $\mm\in\bbzn$ we have
	\begin{equation}\label{okl}
	\big\|\mathcal{M}_{dyad}^{\mm}f\big\|_{L^p(\bbrn)} \lesssim \big(\log{(10+|\mm|)}\big)^{n/p}\, \|f\|_{L^p(\bbrn)}.
	\end{equation}
The inequality (\ref{okl}) follows from the repeated use of the inequality in one dimensional setting that appears in \cite[Chapter II, \S 5.10]{St1993}, and we omit the detailed proof here. Refer to \cite[Appendix]{LHHLPPY} for the argument.

\subsection{Variants of $\varphi$-transform}

 For a sequence of complex numbers $\bbbb:=\{b_Q\}_{Q\in\mathcal{D}}$ we define 
 $$\Vert \bbbb\Vert_{\dot{f}^{p,q}}:=\big\Vert g^q(\bbbb)\big\Vert_{L^p(\bbrn)}$$
 for $0<p<\infty$, where 
 $$g^q(\bbbb)(x):=\big\Vert \big\{ |b_Q||Q|^{-{1}/{2}}\chi_Q(x)\big\}_{Q\in\mathcal{D}}\big\Vert_{\ell^q}, \qq  0<q\le\infty.$$
 Let $\wt{\psi_j}:=\psi_{j-1}+\psi_j+\psi_{j+1}$ for $j\in\bbz$. Observe that $\wt{\psi_j}$ enjoys the properties that $\supp(\wh{\wt{\psi}})\subset \{\xi\in\bbrn: 2^{j-2}\le|\xi|\le 2^{j+2}\}$ and $\psi_j=\psi_j\ast \wt{\psi_j}$.
Then we have the representation
\begin{equation}\label{character0}
\La_jf(x)=\sum_{Q\in\mathcal{D}_j}b_Q\psi^{Q}(x)
\end{equation}
 where $\psi^Q(x):=|Q|^{{1}/{2}}\psi_j(x-\xx_Q)$, $\wt{\psi}^Q(x):=|Q|^{{1}/{2}}\wt{\psi_j}(x-\xx_Q)$ for each $Q\in\mathcal{D}_j$, and $b_Q=\langle f,\wt{\psi}^Q\rangle$.
 This implies that
 $$f=\sum_{j\in\bbz}\La_jf=\sum_{j\in\bbz}\sum_{Q\in\mathcal{D}_j}b_Q\psi^Q \q \text{ in }~\mathcal{S}'/\mathcal{P}$$
 where $\mathcal{S}'/\mathcal{P}$ stands for a tempered distribution modulo polynomials.
 Moreover, in this case, we have 
 \begin{equation}\label{charactera}
 \Vert \bbbb\Vert_{\dot{f}^{p,q}}\sim \big\Vert \{\La_jf\}_{j\in\bbz}\big\Vert_{L^p(\ell^q)}.
 \end{equation}
 Therefore, the Hardy space $H^p(\bbrn)$ can be characterized by the discrete function space $\dot{f}^{p,2}$, in view of the equivalence in (\ref{hardydeflittle}). 
 We refer to \cite{Fr_Ja1985, Fr_Ja1988, Fr_Ja1990} for more details.

It is also known in \cite{Park_constr} that $\Ga_jf$ has a representation analogous to (\ref{character0}) with an equivalence similar to (\ref{charactera}), while $f\not= \sum_{j\in\bbz}\Ga_jf$ generally. Let $\wt{\theta}:=2^n\theta(2\cdot)$ and $\wt{\theta_j}:=2^{jn}\wt{\theta}(2^j\cdot)=\theta_{j+1}$ so that $\theta_j=\theta_j\ast \wt{\theta_{j}}$.
 Let $\theta^Q(x):=|Q|^{{1}/{2}}\theta_j(x-\xx_Q)$, $\wt{\theta}^Q(x):=|Q|^{{1}/{2}}\wt{\theta_{j}}(x-\xx_Q)$, and  $b_Q=\langle f,\wt{\theta}^Q\rangle$ for each $Q\in\mathcal{D}_j$.
Then we have
\begin{equation}\label{characterc1}
\Ga_jf(x)=\sum_{Q\in\mathcal{D}_j}b_Q\theta^Q(x)
\end{equation}
and  for $0<p<\infty$ and $0<q\le \infty$
\begin{equation}\label{characterc2}
\big\Vert \{\Ga_jf\}_{j\in\bbz}\big\Vert_{L^p(\ell^q)}\sim \Vert \bbbb\Vert_{\dot{f}^{p,q}}.
\end{equation}
We refer to \cite[Lemma 3.1]{Park_constr} for more details.

\section{Proof of Theorem \ref{main} : Reduction and interpolation}\label{proofoftheorem1}

The proof of Theorem \ref{main} can be obtained by interpolating the estimates in the following propositions. 

\begin{proposition}\label{mainproposition}
Let $0<p_1,p_2,p_3< \infty$ and $0<p< 1$ with $1/p=1/p_1+1/p_2+1/p_3$.
Suppose that $(1/p_1,1/p_2,1/p_3)\in \RR_{1}\cup \RR_2\cup\RR_3$ and
\begin{equation*}
s>\frac{n}{\min\{p_1,p_2,p_3\}}+\frac{n}{2}.
\end{equation*}
Let $\sigma$ be a function on $(\bbrn)^3$ satisfying $\LL_s^2[\sigma]<\infty$ and the vanishing moment condition \eqref{vanishingmoment}. 
Then we have
\begin{equation*}
\big\Vert T_{\sigma}(f_1,f_2,f_3)\big\Vert_{H^{p}(\bbrn)}\lesssim \LL_s^2[\sigma]\Vert f_1\Vert_{H^{p_1}(\bbrn)}\Vert f_2\Vert_{H^{p_2}(\bbrn)}\Vert f_3\Vert_{H^{p_3}(\bbrn)}
\end{equation*} 
for $f_1,f_2,f_3\in \mathscr{S}_0(\bbrn)$.
\end{proposition}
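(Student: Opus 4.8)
The plan is to reduce the proposition to the pointwise bound \eqref{suplinstance} for each of the finitely many pieces $T^\kappa$ into which $T_\sigma$ will be split: once \eqref{suplinstance} is known for a fixed $\kappa$, H\"older's inequality with $1/p=1/p_1+1/p_2+1/p_3$ turns it into the required bound for $\|T^\kappa(f_1,f_2,f_3)\|_{H^p(\bbrn)}$, and summing over $\kappa$ finishes the proof. By symmetry we may assume $(1/p_1,1/p_2,1/p_3)\in\RR_1$, so that $p_1\le 1<2<p_2,p_3$ and $s>n/p_1+n/2$; in particular $p<1$, so the moment condition \eqref{vanishingmoment} with $N:=[n/p-n]\ge 0$ is genuinely present. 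Since $\mathcal M$ is unbounded on $L^{p_1}(\bbrn)$, the input $f_1$ must be handled through the atomic decomposition \eqref{atomdecomp}, while $f_2,f_3$ (with $p_2,p_3>1$) will be handled by the ($\varphi$-transform) machinery of Section~\ref{preliminary} together with maximal functions.

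First I would set $\sigma^k:=\sigma\,\wh{\Psi}(2^{-k}\ccdot)$, so that $\sigma=\sum_{k\in\bbz}\sigma^k$ with each $\sigma^k$ supported where $|\xxxi|\sim 2^k$, and expand $\sigma^k(2^k\ccdot)$ in a Fourier series on a sufficiently large fixed cube, reducing the level-$k$ piece to
\[
T_{\sigma^k}(f_1,f_2,f_3)(x)=\sum_{\mm=(m_1,m_2,m_3)\in(\bbzn)^3}c^k_{\mm}\,\prod_{j=1}^{3}\big(\Phi^k_{j,m_j}\ast f_j\big)(x),
\]
where each $\Phi^k_{j,m_j}$ is an $L^1$-normalized Schwartz bump of scale $2^{-k}$ translated by $O(2^{-k}|m_j|)$, and where $\LL_s^2[\sigma]<\infty$ gives $\big(\sum_{\mm}|c^k_{\mm}|^2\langle\mm\rangle^{2s}\big)^{1/2}\lesssim\LL_s^2[\sigma]$ uniformly in $k$; by Cauchy--Schwarz, $\sum_{\mm}|c^k_{\mm}|\langle\mm\rangle^{\tau}\lesssim\LL_s^2[\sigma]$ for every $\tau<s-3n/2$. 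This surplus of $\tau$ over $0$ is what will absorb the $\langle\mm\rangle$-losses incurred when the translated bumps $\Phi^k_{j,m_j}$ are dominated by (shifted) maximal functions; the full strength $s>n/p_1+n/2$ (rather than merely $s>3n/2$) enters precisely because, $p_1$ being $\le 1$, the $f_1$-factor cannot benefit from the logarithmic-loss inequality \eqref{okl} and forces a genuine power of $\langle m_1\rangle$ to be paid. I would then split $T_\sigma=\sum_{\kappa\in\mathrm{K}}T^\kappa$ into finitely many pieces according to which of $|\xi_1|,|\xi_2|,|\xi_3|$ are $\sim 2^k$ and, when two or three are, according to how $|\xi_1+\xi_2+\xi_3|$ compares to $2^k$, and establish \eqref{suplinstance} for each $\kappa$.

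For the non-degenerate pieces --- those whose output frequency is comparable to $2^k$ --- I would insert the atomic decomposition of $f_1$ and the representations $\Ga_j f_\ell=\sum_{Q\in\DD_j}b^{(\ell)}_Q\theta^Q$, $\ell=2,3$. The input whose frequency variable is the large one contributes a Littlewood--Paley square function, which lies in the correct $L^{p_j}(\bbrn)$ by \eqref{hardydeflittle} (when it is $f_1$) or by \eqref{characterc2}; the two low-frequency inputs contribute $\mathcal M f_\ell(x)$ for $\ell\in\{2,3\}$ and, when the low-frequency input is $f_1$, a sum $\sum_i|\lambda_i||Q_i|^{-1/p_1}\chi_{Q_i^{***}}$-type function whose $L^{p_1}$-norm is $\lesssim\big(\sum_i|\lambda_i|^{p_1}\big)^{1/p_1}\lesssim\|f_1\|_{H^{p_1}(\bbrn)}$ thanks to $p_1\le 1$. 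Here one uses the vanishing moments of the atoms to render the sums over $k$ with $2^k\ell(Q_i)\le 1$ geometrically convergent, the rapid $\langle\mm\rangle$-decay of the $c^k_{\mm}$ against $\tau$ to dispose of the translations, the maximal bound \eqref{maximalbound} and the vector-valued inequality \eqref{hlmax} for the $f_2,f_3$-factors, and the dyadic shifted maximal inequality \eqref{okl} for the translations in the $f_2,f_3$-factors; no cancellation of $T_\sigma$ is needed for these pieces.

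The main obstacle is the degenerate pieces, in which two or three of the frequency variables are $\sim 2^k$ but nearly cancel, so that the output frequency $2^{k'}$ may be far smaller than $2^k$. After the reductions above one is left with a sum over $k'\to-\infty$ that diverges if estimated by size alone, because a low-frequency Littlewood--Paley component of the output is admissible in $H^p(\bbrn)$ only through cancellation, not through magnitude. This is precisely where the hypothesis \eqref{vanishingmoment} must be invoked: since $\wh{T_{\sigma}(f_1,f_2,f_3)}$ vanishes to order $N$ at the origin, each low output-frequency component acquires a gain of order $2^{k'(N+1)}$ that restores summability in $k'$. The real difficulty is to transfer this cancellation --- a global property of $T_\sigma(f_1,f_2,f_3)$ and not of its individual pieces --- down to the elementary building blocks, keeping all constants controlled by $\LL_s^2[\sigma]$ and the $H^{p_j}$-norms, while simultaneously balancing the accumulated $\langle\mm\rangle$-losses and the interplay of the four scales $2^k$, $2^{k'}$, $\ell(Q_i)$, and $2^{-l}$. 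These delicate pointwise estimates are stated as the lemmas of Sections~\ref{estimateii}--\ref{estimatejj} and proved in Section~\ref{proofoflemmas}.
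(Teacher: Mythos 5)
Your outer skeleton matches the paper's (atomic decomposition on the $p_1\le 1$ input, $\varphi$-transform representations on $f_2,f_3$, a pointwise bound of the form \eqref{suplinstance}, then H\"older over $1/p=1/p_1+1/p_2+1/p_3$), but the inner engine you propose --- periodizing $\sigma^k(2^k\ccdot)$ and expanding in a Fourier series so that $T_{\sigma^k}$ becomes $\sum_{\mm}c^k_{\mm}\prod_{j}(\Phi^k_{j,m_j}\ast f_j)$ --- is a genuine departure. After the paraproduct split, the paper never touches the symbol again: it keeps $\sigma_j$ intact, writes $T_{\sigma_j}(\La_ja_k,\psi^P,\theta^R)$ as a triple convolution against $\sigma_j^{\vee}$, and extracts the needed weights by H\"older/Cauchy--Schwarz against the kernel using $\|\langle 2^j\ccdot\rangle^{s}\sigma_j^{\vee}\|_{L^2}\lesssim \LL_s^2[\sigma]$, the maximal bound \eqref{maximalbound}, and the almost-orthogonality statements of Lemmas \ref{almost orthogonality}--\ref{nonlacunarylemma}. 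The two routes thus pay for the regularity $s$ in different currencies.

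The actual gap is exactly at the point you flag as ``the real difficulty'': transferring the cancellation \eqref{vanishingmoment} to the building blocks. In the paper's decomposition this is tractable because each piece $T_{\sigma_j}(\La_jf_1,\Ga_{j-10}f_2,\Ga_{j-10}f_3)$ is still $T_\sigma$ applied to admissible inputs, so \eqref{vanishingmoment} descends to \eqref{vanishingreduction}, which then enters the pointwise estimates through Lemma \ref{epsilon control} in the proofs of Lemmas \ref{keylemma6}--\ref{keylemma12}. In your decomposition the individual terms $c^k_{\mm}\prod_j(\Phi^k_{j,m_j}\ast f_j)$ are genuine tensor products and carry no vanishing moments: $\int x^{\alpha}\prod_j(\Phi^k_{j,m_j}\ast f_j)(x)\,dx$ is generically nonzero, and the hypothesis \eqref{vanishingmoment} says nothing about a single $(k,\mm)$. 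Summing over $\mm$ first would restore the cancellation but destroy the decomposition you need for the maximal-function estimates. Your closing appeal --- that ``these delicate pointwise estimates are stated as the lemmas of Sections~\ref{estimateii}--\ref{estimatejj}'' --- does not fill this hole, because those lemmas are formulated for the paper's $\UU_{\nu}$ terms built directly from $T_{\sigma_j}(\La_ja_k,\psi^P,\theta^R)$ with the symbol intact, and cannot be invoked against your Fourier-series building blocks. A proof along your route would have to re-derive the entire cancellation analysis from scratch, and it is not at all clear the tensor-product expansion is compatible with it; the paper's decomposition seems chosen precisely so that the moment condition survives the split.
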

\begin{proposition}\label{mainproposition2}
Let $0<p\le 1$.
Suppose that one of $p_1,p_2,p_3$ is equal to $p$ and the other two are infinity.
Suppose that $s>n/p+n/2$.
Let $\sigma$ be a function on $(\bbrn)^3$ satisfying $\LL_s^2[\sigma]<\infty$ and the vanishing moment condition \eqref{vanishingmoment}. 
Then we have
\begin{equation*}
\big\Vert T_{\sigma}(f_1,f_2,f_3)\big\Vert_{H^{p}(\bbrn)}\lesssim \LL_s^2[\sigma]\Vert f_1\Vert_{H^{p_1}(\bbrn)}\Vert f_2\Vert_{H^{p_2}(\bbrn)}\Vert f_3\Vert_{H^{p_3}(\bbrn)}
\end{equation*} 
 for $f_1,f_2,f_3\in \mathscr{S}_0(\bbrn)$.
\end{proposition}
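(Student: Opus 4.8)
The plan is to reduce Proposition \ref{mainproposition2} to the endpoint case $p_1 = p$, $p_2 = p_3 = \infty$ (the other two permutations being symmetric in the argument, though one must keep track of which variable plays the distinguished role since the vanishing-moment hypothesis \eqref{vanishingmoment} is symmetric but the atomic decomposition will only be applied to the $H^p$-input). First I would dualize the $L^\infty = H^\infty$ inputs: for $f_2, f_3 \in \mathscr{S}_0(\bbrn)$ one simply has $\Vert f_j \Vert_{L^\infty} \lesssim \Vert f_j \Vert_{H^\infty}$, so it suffices to prove
\begin{equation*}
\big\Vert T_\sigma(f_1, f_2, f_3)\big\Vert_{H^p(\bbrn)} \lesssim \LL_s^2[\sigma]\, \Vert f_1\Vert_{H^p(\bbrn)}\, \Vert f_2\Vert_{L^\infty(\bbrn)}\, \Vert f_3\Vert_{L^\infty(\bbrn)}.
\end{equation*}
By the atomic decomposition \eqref{atomdecomp} and the $p$-subadditivity of $\Vert\cdot\Vert_{H^p}^p$, it is enough to establish a uniform atomic estimate: for every $H^p$-atom $a$ supported on a cube $Q$ with $\Vert a\Vert_{L^\infty}\le |Q|^{-1/p}$ and vanishing moments up to order $M \gg n/p - n$, one has $\Vert T_\sigma(a, f_2, f_3)\Vert_{H^p} \lesssim \LL_s^2[\sigma]\,\Vert f_2\Vert_{L^\infty}\Vert f_3\Vert_{L^\infty}$. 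Here I must be slightly careful: $f_2, f_3$ are not atoms, so this is not literally a trilinear atomic reduction; rather, I fix $f_2, f_3$ with unit $L^\infty$-norm and run the argument atom-by-atom in the first slot only, which is legitimate because $T_\sigma$ is trilinear and $\mathscr{S}_0$ is dense.

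Next, by translation and dilation invariance of the hypotheses (the class $\LL_s^2[\sigma] < \infty$ is dilation-invariant and $H^p$-norms transform correctly), I would normalize $Q$ to be centered at the origin with $\ell(Q) = 1$, so $\Vert a\Vert_{L^\infty} \le 1$ and $\supp a \subset Q \subset Q^*$. The proof then splits into a \textbf{local part} and a \textbf{global part}. For the local part one estimates $\Vert T_\sigma(a, f_2, f_3)\Vert_{H^p(Q^{**})}$, say via $\Vert \cdot \Vert_{H^p} \lesssim \Vert \cdot \Vert_{L^r}^{\,?}$-type embeddings together with the $L^{r_1}\times L^{r_2}\times L^{r_3}\to L^r$ boundedness of $T_\sigma$ that follows from Theorem \ref{thmd} (equivalently Theorem \ref{thmb}/\ref{thmc}) under the regularity $s > 3n/2$, which is implied by $s > n/p + n/2$. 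Since everything is supported in a cube of bounded measure, Hölder's inequality converts the $L^r$ bound into the desired $|Q^{**}|^{1-1/p}$-type factor, which is $O(1)$ after normalization. For the global part, i.e. the integral over $\bbrn \setminus Q^{**}$, one exploits the cancellation: the vanishing moments of $a$ (plus the additional moment condition \eqref{vanishingmoment} on $T_\sigma$ itself for the final $H^p$ wrap-up) give decay of the kernel of $\xi \mapsto T_\sigma(\cdot, f_2, f_3)$ away from $Q$. Concretely I would use the Calderón–Torchinsky-type maximal-function characterization \eqref{hardydef2}: bound $\sup_{l}|\phi_l \ast T_\sigma(a, f_2, f_3)(x)|$ for $x$ far from $Q$ by integrating the atom against the smooth kernel, using the moment cancellation of $a$ to gain $|x|^{-n-M-1}$-type decay, and controlling the kernel size by $\LL_s^2[\sigma]\,\Vert f_2\Vert_{L^\infty}\Vert f_3\Vert_{L^\infty}$ through a Littlewood–Paley decomposition of $\sigma$ and the Sobolev embedding $L^2_s \hookrightarrow L^\infty$ (valid since $s > 3n/2 > (3n)/2$). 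The resulting pointwise bound $\lesssim \LL_s^2[\sigma]\,|x|^{-n-M-1}$ is in $L^p(\bbrn\setminus Q^{**})$ once $M + 1 > n/p - n$, i.e. $M$ large.

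The role of the moment hypothesis \eqref{vanishingmoment} deserves separate comment, since it is the novelty relative to Theorem \ref{thmd}: the decay estimate on $\bbrn \setminus Q^{**}$ controls the tails, and the $L^r$ bound handles the core, but to actually conclude membership in $H^p$ (as opposed to merely $L^p$ with tail decay) for $p \le 1$ one needs $\int x^\gamma T_\sigma(a, f_2, f_3)(x)\,dx = 0$ for $|\gamma| \le n/p - n$; this is precisely \eqref{vanishingmoment} applied with $f_1 = a$ (after extending \eqref{vanishingmoment} from $\mathscr{S}_0$ to atoms by an approximation argument, using that $T_\sigma$ maps into $L^1$ under $s > 3n/2$). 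Combining the vanishing moments with the pointwise decay and a standard "cancellation + size + smoothness $\Rightarrow H^p$" criterion (of the type used to show that suitable molecules belong to $H^p$) yields $\Vert T_\sigma(a, f_2, f_3)\Vert_{H^p} \lesssim \LL_s^2[\sigma]$, uniformly in the atom. \textbf{The main obstacle} I anticipate is the global/smoothness estimate: one must differentiate the kernel of $T_\sigma(\cdot, f_2, f_3)$ enough times to meet the $H^p$-molecule regularity threshold while only assuming the $L^2_s$-type (rather than pointwise Mihlin) control on $\sigma$, so the bookkeeping of how many derivatives the Sobolev embedding and the Littlewood–Paley pieces can afford—against the required order $\sim n/p - n$—is where the hypothesis $s > n/p + n/2$ (rather than merely $s > 3n/2$) is genuinely used and must be spent carefully.
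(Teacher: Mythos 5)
Your proposal follows the same overall strategy as the paper's proof in Section~\ref{pfpropositionp=1}: reduce by symmetry to $p_1=p$, $p_2=p_3=\infty$, apply the atomic decomposition to $f_1$ only, split the maximal function estimate into a local piece over $Q_k^{***}$ and a global piece over $(Q_k^{***})^c$, and dispose of the local piece by Hölder plus the $L^2$-boundedness from Theorem~\ref{thmd}. Up to this point you are essentially replicating the paper.

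The soft spot is the global piece. You propose to bound $\sup_l|\phi_l\ast T_\sigma(a_k,f_2,f_3)(x)|$ for $x\notin Q_k^{***}$ "by integrating the atom against the smooth kernel, using the moment cancellation of $a$ to gain $|x|^{-n-M-1}$-type decay," and only then invoke \eqref{vanishingmoment} as a final "molecule-criterion wrap-up." But the pointwise decay you describe is not uniform in $l$: when $2^{-l}\gtrsim|x-\xx_{Q_k}|$ (i.e.\ $x\in B_k^l$), the averaging ball $\{|x-y|\le 2^{-l}\}$ reaches back to the cube, so tail decay of $T_\sigma(a_k,f_2,f_3)(y)$ alone cannot give the bound you want, and the atom's cancellation is useless there. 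This is exactly why the paper further splits $(Q_k^{***})^c$ into $(B_k^l)^c$ and $B_k^l$ (the quantities $\VV_1$ and $\VV_2$), treating $\VV_2$ through Lemma~\ref{epsilon control} — a Taylor expansion of $\phi_l$ that converts the vanishing moments of $T_\sigma(a_k,f_2,f_3)$ supplied by \eqref{vanishingmoment} into the factor $2^{l(N_p+n+\epsilon)}\int|y-\xx_{Q_k}|^{N_p+\epsilon}|T_\sigma(a_k,f_2,f_3)(y)|\,dy$, which is then controlled by the weighted $L^1$ bound \eqref{keylemma14claim}. In other words, the moment hypothesis is not a separate "after the fact" ingredient for $H^p$-membership; it enters at the level of the pointwise maximal-function estimate for wide $\phi_l$, precisely on $B_k^l$. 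Your molecule-criterion remark is a legitimate way to package this (and the paper's weighted $L^1$ bound is what would verify the decay leg of such a criterion, with the Sobolev budget split $s=s_1+s_2+s_3$, $s_1>n/p-n/2$, $s_2,s_3>n/2$ being the reason $s>n/p+n/2$ is exactly sufficient), but as written the proposal neither carries that criterion through nor replaces it with the $B_k^l$ splitting; it only labels it as the "main obstacle." That obstacle is real and is the actual content of Lemmas~\ref{keylemma13} and \ref{keylemma14}, so the proposal as stated has a gap there.
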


We present the proof of  Proposition \ref{mainproposition} in Sections \ref{pfproposition}, \ref{estimateii}, and \ref{estimatejj}, and that of Proposition \ref{mainproposition2} in Section \ref{pfpropositionp=1}.
For now, we proceed with the following interpolation argument, simply assuming the above propositions hold.
\begin{lemma}\label{interpolationlemma}
Let $0<p_1^0, p_2^0, p_3^0\le\infty$, $0<p_1^1, p_2^1, p_3^1\le\infty$, and $0<p^0,p^1<\infty$.
Suppose that 
$$\big\Vert T_{\sigma} \big\Vert_{H^{p_1^l}\times H^{p_2^l}\times H^{p_3^l}\to H^{p^l}}\lesssim \mathcal{A}, \qq l=0,1. $$
Then for any $0<\theta<1$, $0<p_1,p_2,p_3\le\infty$, and $0<p<\infty$ satisfying
$$1/p_j=(1-\theta)/p_j^0+\theta/p_j^1 \qq\text{for }~ j=1,2,3,$$
$$1/p=(1-\theta)/p^0+\theta/p^1,$$
we have
$$\Vert T_{\sigma}\Vert_{H^{p_1}\times H^{p_2}\times H^{p_3}\to H^p}\lesssim \mathcal{A}.$$
\end{lemma}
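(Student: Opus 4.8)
The plan is to run a complex interpolation argument adapted to Hardy spaces and the multilinear setting. First I would reduce to working with the dense subspace $\mathscr{S}_0(\bbrn)$, where $T_\sigma$ is a priori defined on all of $\mathscr{S}_0\times\mathscr{S}_0\times\mathscr{S}_0$; once the claimed bound $\Vert T_\sigma(f_1,f_2,f_3)\Vert_{H^p}\lesssim \mathcal{A}\prod_j\Vert f_j\Vert_{H^{p_j}}$ is established for such $f_j$, the full estimate follows by density in view of \eqref{densesubset}. The natural tool here is a multilinear version of the Stein–Weiss/Calder\'on complex interpolation theorem for the analytic family $\{T_\sigma\}$ (which is actually constant in the interpolation parameter, so the "analytic family" is trivial), applied on the scale of Hardy spaces $H^p$. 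I would invoke the complex interpolation identity $[H^{p^0},H^{p^1}]_\theta=H^p$ with $1/p=(1-\theta)/p^0+\theta/p^1$, valid for $0<p^0,p^1<\infty$, together with its analogue in each input slot; since some $p_j$ may equal $\infty$, the endpoint slots require using $\mathrm{BMO}$ (or $H^\infty$ interpreted appropriately) in place of $L^\infty$, and the interpolation space $[H^{p_j^0},H^{p_j^1}]_\theta$ is still $H^{p_j}$ when $p_j<\infty$.

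The key steps, in order, would be: (1) State a multilinear complex interpolation lemma of Calder\'on–Torchinsky / Fefferman–Riviere–Sagher type: if a multilinear operator $T$ maps $H^{p_1^0}\times H^{p_2^0}\times H^{p_3^0}\to H^{p^0}$ and $H^{p_1^1}\times H^{p_2^1}\times H^{p_3^1}\to H^{p^1}$ with norms $\le\mathcal{A}$, then it maps the corresponding $\theta$-intermediate spaces with norm $\lesssim\mathcal{A}$. (2) Verify the hypotheses: fix $f_j\in\mathscr{S}_0$, normalize $\Vert f_j\Vert_{H^{p_j}}=1$, and build analytic families $f_j^z$ with $f_j^0$, suitably related to $f_j$, lying in the boundary spaces — the standard device is to use the atomic or molecular decomposition, or the $\varphi$-transform coefficients $\bbbb=\{b_Q\}$, and define $f_j^z$ by raising the "size" of each atom/coefficient to a complex power $\zeta(z)$ interpolating the exponents $p_j^0,p_j^1$, keeping the cancellation (vanishing moments) intact so that each $f_j^z$ remains in $\mathscr{S}_0$ or at least in the right Hardy space. (3) Form $F(z):=\langle T_\sigma(f_1^z,f_2^z,f_3^z), g^z\rangle$ for $g^z$ an analytic family in the dual scale $H^{p}{}'$ (i.e. interpolating between the duals of $H^{p^0}$ and $H^{p^1}$, which are Lipschitz/$\mathrm{BMO}$-type spaces when $p^l\le1$), check that $F$ is analytic and bounded on the strip, apply the three-lines lemma, and read off $\Vert T_\sigma(f_1,f_2,f_3)\Vert_{H^p}\lesssim\mathcal{A}$. (4) Handle the $p_j=\infty$ slots and $p^l\le1$ target slots by the appropriate substitutions ($\mathrm{BMO}$, $\Lambda_\alpha$) in the duality pairing.

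The main obstacle I expect is step (2)–(3): constructing the analytic families correctly in the Hardy-space scale so that the boundary values land in exactly $H^{p_j^0}$ and $H^{p_j^1}$ with controlled norms, and handling the duality for $H^p$ with $p<1$ (whose dual is a homogeneous Lipschitz space rather than $L^{p'}$), as well as the endpoint $p_j=\infty$ where $H^\infty$ must be read as $\mathrm{BMO}$. A clean way around much of this technical overhead is to cite the known complex interpolation theory for Hardy spaces of Calder\'on–Torchinsky and the multilinear interpolation framework already used in \cite{Gr_Mi_Ng_Tom2017, LHHLPPY}; since $T_\sigma$ and the bound $\mathcal{A}=\LL_s^2[\sigma]$ do not vary with the interpolation parameter, one does not even need an analytic family of operators, only the bilinear/trilinear complex method for the spaces, which is standard. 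I would therefore present the proof as: "This is a consequence of the multilinear complex interpolation theorem on Hardy spaces; see \cite{Ca_To1977} and \cite[Section~2]{Gr_Mi_Ng_Tom2017}," and include only the verification that the indices match and that $\mathscr{S}_0$-inputs are preserved.
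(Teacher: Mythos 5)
The paper omits the proof of this lemma entirely, stating that it is ``essentially the same as \cite[Lemma 2.4]{LHHLPPY},'' so a line-by-line comparison is not possible; your sketch, however, correctly identifies the underlying machinery (complex interpolation on quasi-Banach Hardy scales via analytic families built from atomic/molecular or $\varphi$-transform decompositions, paired against an analytic family in the dual scale, with the three-lines lemma, and with the operator itself constant in the strip parameter). This is indeed the content of the cited lemma, so your approach is consistent with the paper's.

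Two small points worth correcting. First, in this paper $H^\infty(\bbrn)$ is $L^\infty(\bbrn)$ (see the remark after \eqref{hardydef2}), not $\mathrm{BMO}$; when $p_j^l=\infty$ you do not need to pass to $\mathrm{BMO}$ in the input slot, since the analytic family of inputs can be arranged to land in $L^\infty$ directly (the atoms of the finite-index slot are bounded, and one only needs $\|f_j^{1+it}\|_{L^\infty}\lesssim 1$, which is weaker than a $\mathrm{BMO}$ endpoint). Second, be a little careful with the phrasing ``only the complex method for the spaces is needed'': for multilinear operators on the $H^p$ scale with $p<1$ one cannot simply quote the interpolation identity $[H^{p^0},H^{p^1}]_\theta=H^p$ and conclude; one must run the analytic-family/three-lines argument explicitly, exactly as you describe in your steps (2)--(3). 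The fact that $T_\sigma$ and $\cA$ are constant in $z$ removes the need for analyticity in the \emph{operator}, but not for analytic families in the \emph{inputs} and in the \emph{dual test function}, and for $p<1$ the latter pairing must be interpreted in the $H^p$--$\dot\Lambda_{n(1/p-1)}$ duality, as you note.
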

The proof of the lemma is essentially same as that of \cite[Lemma 2.4]{LHHLPPY}, so it is omitted here.

\subsection{Proof of \eqref{mainthmest} when $(1/p_1,1/p_2,1/p_3)\in \RR_4\cup \RR_5\cup \RR_6$}

We need to work only with $(1/p_1,1/p_2,1/p_3)\in \RR_4$ since the other cases are just symmetric versions.
In this case, $2<p_3<\infty$ and as mentioned in \eqref{conditionsinterpretation}, the condition \eqref{regularitycondition} is equivalent to 
$$s>n/p_1+n/p_2.$$
Now choose $\wt{p_1},\wt{p_2}<1$ such that
$$1/p_1+1/p_2=1/\wt{p_1}+1/2=1/2+1/\wt{p_2}$$
and thus $$s>n/\wt{p_1}+n/2, \q s>n/2+n/\wt{p_2}.$$
Let $\epsilon_1, \epsilon_2>0$ be numbers with
$$s>n/(\wt{p_1}-\epsilon_1)+n/2, \q s>n/2+n/(\wt{p_2}-\epsilon_2)$$
and select $q_1,q_2>2$  such that
$$1/p=1/(\wt{p_1}-\epsilon_1)+1/q_1+1/p_3=1/q_2+1/(\wt{p_2}-\epsilon_2)+1/p_3.$$
Then we observe that 
$$ (1-\theta)\Big(\frac{1}{\wt{p_1}-\epsilon_1},\frac{1}{q_1},\frac{1}{p_3} \Big)+\theta \Big( \frac{1}{q_2},\frac{1}{\wt{p_2}-\epsilon_2},\frac{1}{p_3}\Big)=\Big(\frac{1}{p_1},\frac{1}{p_2},\frac{1}{p_3}\Big)$$
for some $0<\theta<1$.
Let $C_1:=(1/(\wt{p_1}-\epsilon_1),1/q_1,1/p_3)$ and $C_2:=(1/q_2,1/(\wt{p_2}-\epsilon_2),1/p_3)$.
It is obvious that $C_1\in \RR_1$, $C_2\in \RR_2$ and thus it follows from Proposition \ref{mainproposition} that
\begin{align*}
\Vert T_{\sigma}\Vert_{H^{\wt{p_1}-\epsilon_1}\times H^{q_1}\times H^{p_3}\to H^p}\lesssim \LL_s^2[\sigma] \q &\text{ at }~ C_1=(1/(\wt{p_1}-\epsilon_1),1/q_1,1/p_3)\in \RR_1, \\
\Vert T_{\sigma}\Vert_{H^{q_2}\times H^{\wt{p_2}-\epsilon_2}\times H^{p_3}\to H^p}\lesssim \LL_s^2[\sigma] \q &\text{ at }~ C_2=(1/q_2,1/(\wt{p_2}-\epsilon_2),1/p_3)\in \RR_2.
\end{align*}
Finally, the assertion \eqref{mainthmest} for $(1/p_1,1/p_2,1/p_3)\in \RR_4$ is derived by means of interpolation in Lemma \ref{interpolationlemma}.
 \begin{figure}[h]
\begin{tikzpicture}
\path[fill=orange!5] (4.7,5.3,2)--(4.7,3.8,2)--(6,2.3,2)--(8,2.3,2)--(8,2.3,0)--(4.7,5.3,0)--(4.7,5.3,2);
\draw[orange!40, -] (4.7,5.3,2)--(8,2.3,2)--(8,2.3,0)--(4.7,5.3,0)--(4.7,5.3,2);
\draw[-][line width=0.25mm] (4.7,5.3,2)--(4.7,3.8,2)--(6,2.3,2)--(8,2.3,2);
\draw[dash pattern= { on 1pt off 1pt}] (4.7,3.8,0)--(4.7,5.3,0);
\draw[dash pattern= { on 1pt off 1pt}] (6,2.3,0)--(8,2.3,0);
\draw[dash pattern= { on 1pt off 1pt}] (4.7,3.8,0)--(6,2.3,0);
\draw[dash pattern= { on 1pt off 1pt}] (4.7,3.8,2)--(4.7,3.8,0);
\draw[dash pattern= { on 1pt off 1pt}] (6,2.3,0)--(6,2.3,2);
\filldraw[fill=black] (6.9,3.3,1.5)  circle[radius=0.5mm];
\filldraw[fill=black] (4.535,5.45,1.5)  circle[radius=0.3mm];
\filldraw[fill=black] (8.165,2.15,1.5)  circle[radius=0.3mm];
\filldraw[fill=black] (4.7,5.3,1.5)  circle[radius=0.3mm];
\filldraw[fill=black] (8,2.3,1.5)  circle[radius=0.3mm];
\draw [dash pattern= { on 2pt off 1pt}]  (4.535,5.45,1.5)--(8.165,2.15,1.5);
\draw[-] (6.9,3.3,1.5)--(6.4,4,-0.7);
\node[above right] at (6.4,4,-0.7) {\tiny$(\frac{1}{p_1},\frac{1}{p_2},\frac{1}{p_3})\in \RR_4$};
\node[left] at (4.535,5.45,1.5) {\tiny$\RR_2 \ni C_2=(\frac{1}{q_2},\frac{1}{\wt{p_2}-\epsilon_2},\frac{1}{p_3})$};
\draw[-] (4.7,5.3,1.5)--(5.7,5.3,-0.5);
\node[right] at (5.7,5.3,-0.5) {\tiny$(\frac{1}{2},\frac{1}{\wt{p_2}},\frac{1}{p_3})$};
\draw[-](8,2.3,1.5)--(7.5,2.65,-0.7);
\node[right] at (7.5,2.65,-0.7) {\tiny$(\frac{1}{\wt{p_1}},\frac{1}{2},\frac{1}{p_3})$};
\node[right] at (8.165,2.15,1.5) {\tiny$C_1=(\frac{1}{\wt{p_1}-\epsilon_1},\frac{1}{q_1},\frac{1}{p_3})\in \RR_1$};
\end{tikzpicture}
\caption{$(1-\theta)\big(\frac{1}{\wt{p_1}-\epsilon_1},\frac{1}{q_1},\frac{1}{p_3} \big)+\theta \big( \frac{1}{q_2},\frac{1}{\wt{p_2}-\epsilon_2},\frac{1}{p_3}\big)=(\frac{1}{p_1},\frac{1}{p_2},\frac{1}{p_3})\in \RR_4$}\label{fig4}
\end{figure}
See Figure \ref{fig4} for the interpolation.

\subsection{Proof of \eqref{mainthmest} when $(1/p_1,1/p_2,1/p_3)\in \RR_0$}
We first fix $1/2<p<1$ such that $1/p_1+1/p_2+1/p_3=1/p$ and assume that, in view of \eqref{conditionsinterpretation},
$$s>3n/2=n/1+n/2.$$
Then we choose $2<p_0<\infty$ such that $1+1/2+1/p_0=1/p$.
Then it is clear that $(1/p_1,1/p_2,1/p_3)$ is located inside the hexagon with the vertices  $(1,1/p_0,1/2)$, $(1,1/2,1/p_0)$, $(1/2,1,1/p_0)$,  $(1/p_0,1,1/2)$, $(1/p_0,1/2,1)$, and $(1/2,1/p_0,1)$.
Now we choose a sufficiently small $\epsilon>0$ and $2<\wt{p_0}<\infty$ such that 
$$\frac{1}{2+\epsilon}+\frac{1}{\wt{p_0}}=\frac{1}{2}+\frac{1}{p_0},$$ and
the point $(1/p_1,1/p_2,1/p_3)$ is still inside the smaller hexagon with  $D_1:=(1,1/\wt{p_0},1/(2+\epsilon))$, $D_2:=(1,1/(2+\epsilon),1/\wt{p_0})$,   $D_3:=(1/(2+\epsilon),1,1/\wt{p_0})$, $D_4:=(1/\wt{p_0},1,1/(2+\epsilon))$,  $D_5:=(1/\wt{p_0},1/(2+\epsilon),1)$, and $D_6:=(1/(2+\epsilon),1/\wt{p_0},1)$.
Now Proposition \ref{mainproposition} deduces that
$$\big\Vert T_{\sigma}\big\Vert_{H^{q_1}\times H^{q_2}\times H^{q_3}\to H^p}\lesssim \LL_s^2[\sigma]$$
for  $(1/q_1,1/q_2,1/q_3)\in \{D_1,D_2,D_3,D_4,D_5,D_6 \}$, as $D_1,D_2\in\RR_1$, $D_3,D_4\in\RR_2$, and $D_5,D_6\in\RR_3$.
This implies, via interpolation in Lemma \ref{interpolationlemma},
$$\big\Vert T_{\sigma}(f_1,f_2,f_3)\big\Vert_{H^p(\bbrn)}\lesssim \LL_s^2[\sigma]\Vert f_1\Vert_{H^{p_1}(\bbrn)}\Vert f_2\Vert_{H^{p_2}(\bbrn)}\Vert f_3\Vert_{H^{p_3}(\bbrn)}.$$
See Figure \ref{fig7} for the interpolation.

For the case $p=1$, we interpolate the estimates in Proposition \ref{mainproposition2}. To be specific, for any given $0<p_1,p_2,p_3<\infty$ with $1/p_1+1/p_2+1/p_3=1$, the estimate \eqref{mainthmest} with $p=1$ follows from interpolating
\begin{align*}
\big\Vert T_{\sigma}(f_1,f_2,f_3)\big\Vert_{H^1(\bbrn)}&\lesssim \LL_s^2[\sigma]\Vert f_1\Vert_{H^1(\bbrn)}\Vert f_2\Vert_{H^{\infty}(\bbrn)}\Vert f_3\Vert_{H^{\infty}(\bbrn)},\\
\big\Vert T_{\sigma}(f_1,f_2,f_3)\big\Vert_{H^1(\bbrn)}&\lesssim \LL_s^2[\sigma]\Vert f_1\Vert_{H^{\infty}(\bbrn)}\Vert f_2\Vert_{H^{1}(\bbrn)}\Vert f_3\Vert_{H^{\infty}(\bbrn)},\\
\big\Vert T_{\sigma}(f_1,f_2,f_3)\big\Vert_{H^1(\bbrn)}&\lesssim \LL_s^2[\sigma]\Vert f_1\Vert_{H^{\infty}(\bbrn)}\Vert f_2\Vert_{H^{\infty}(\bbrn)}\Vert f_3\Vert_{H^{1}(\bbrn)}.
\end{align*}

 \begin{figure}[h]
\begin{tikzpicture}

\path[fill=yellow!5] (1.6,1.5,4)--(1.6,3,2)--(3,1.5,2)--(1.6,1.5,4);
\path[fill=yellow!5] (0,3,2)--(0,1.5,4)--(1.6,1.5,4)--(1.6,3,2)--(0,3,2);
\path[fill=yellow!5] (1.6,3,2)--(1.6,3,0)--(3,1.5,0)--(3,1.5,2)--(1.6,3,2);
\path[fill=yellow!5] (1.6,1.5,4)--(1.6,0,4)--(3,0,2)--(3,1.5,2)--(1.6,1.5,4);

\path[fill=yellow!5] (0,3,0)--(1.6,3,0)--(1.6,3,2)--(0,3,2)--(0,3,0);
\path[fill=yellow!5] (0,0,4)--(1.6,0,4)--(1.6,1.5,2)--(0,1.5,4)--(0,0,4);
\path[fill=yellow!5] (3,0,0)--(3,0,2)--(3,1.5,2)--(3,1.5,0)--(3,0,0);

\draw[-][line width=0.25mm] (1.6,1.5,4)--(1.6,3,2)--(3,1.5,2)--(1.6,1.5,4);
\draw[-][line width=0.25mm] (1.6,1.5,4)--(1.6,0,4)--(3,0,2)--(3,1.5,2);
\draw[-][line width=0.25mm] (1.6,3,2)--(1.6,3,0)--(3,1.5,0)--(3,1.5,2);
\draw[-][line width=0.25mm] (1.6,1.5,4)--(0,1.5,4)--(0,3,2)--(1.6,3,2);
\draw[-][line width=0.25mm] (3,0,2)--(3,0,0)--(3,1.5,0);
\draw[-][line width=0.25mm] (0,3,2)--(0,3,0)--(1.6,3,0);
\draw[-][line width=0.25mm] (1.6,0,4)--(0,0,4)--(0,1.5,4);

\draw[dash pattern= { on 1pt off 1pt}] (3,0,0)--(0,3,0)--(0,0,4)--(3,0,0);

\draw[dash pattern= { on 2pt off 1pt}]  (1,1.4,4)--(1.5,1,4)--(3,1,1.75)--(3,1.4,1.3)--(1.5,3,1.3)--(1,3,1.75)--(1,1.4,4);

\filldraw[fill=black] (2,2.6,2.5)  circle[radius=0.5mm];
\filldraw[fill=black] (1,1.4,4)  circle[radius=0.3mm];
\filldraw[fill=black] (1.5,1,4)  circle[radius=0.3mm];

\filldraw[fill=black] (1.5,3,1.3)  circle[radius=0.3mm];
\filldraw[fill=black] (1,3,1.75)  circle[radius=0.3mm];

\filldraw[fill=black] (3,1.4,1.3)  circle[radius=0.3mm];
\filldraw[fill=black] (3,1,1.75)  circle[radius=0.3mm];

\draw[-] (2,2.6,2.5)--(4,2.6,1.5);
\node[right] at (4,2.6,1.5) {\tiny$(\frac{1}{{p_1}},\frac{1}{p_2},\frac{1}{p_3})\in \RR_0$};

\draw[-] (3,1,1.75)--(3.5,0.3,0.5);
\node[right] at (3.5,0.3,0.5){\tiny$D_1=(1,\frac{1}{\wt{p_0}},\frac{1}{1+\epsilon}) \in \RR_1$};

\draw[-] (3,1.4,1.3)--(3.5,1,0);
\node[right] at (3.5,1,0){\tiny$D_2=(1,\frac{1}{2+\epsilon},\frac{1}{\wt{p_0}}) \in \RR_1$};

\draw[-] (1.5,3,1.3)--(1.4,3.3,-0.5);
\node[right] at (1.4,3.3,-0.5){\tiny$D_3=(\frac{1}{2+\epsilon},1,\frac{1}{\wt{p_0}})\in \RR_2$};

\draw[-] (1,3,1.75)--(-1,3.3,-0.5);
\node[above] at (-1,3.3,-0.5){\tiny$D_4=(\frac{1}{\wt{p_0}},1,\frac{1}{2+\epsilon})\in \RR_2$};

\draw[-] (1,1.4,4)--(-0.5,2.2,3.5);
\node[left] at (-0.5,2.2,3.5){\tiny$\RR_3 \ni D_5=(\frac{1}{\wt{p_0}},\frac{1}{2+\epsilon},1)$};

\draw[-] (1.5,1,4)--(-1,0.5,3.5);
\node[left] at (-1,0.5,3.5){\tiny$\RR_3 \ni D_6=(\frac{1}{2+\epsilon},\frac{1}{\wt{p_0}},1)$};


\end{tikzpicture}
\caption{$\big(\frac{1}{p_1},\frac{1}{p_2},\frac{1}{p_3}\big)\in \RR_0$}\label{fig7}
\end{figure}

\subsection{Proof of \eqref{mainthmest} when $(1/p_1,1/p_2,1/p_3)\in \RR_7$}

Let $0<p\le 1/2$ be such that $1/p=1/p_1+1/p_2+1/p_3$, and assume that
$$s>n/p-n/2.$$
We choose $0<p_0\le 1$, satisfying $1/p_0+1=1/p$, so that
$$s>n/p_0+n/2.$$
Then there exist $\epsilon>0$ and $2<q<\infty$ so that $s>n/(p_0-\epsilon)+n/2$ and $1/p=1/(p_0-\epsilon)+2/q$.
Let $E_1:=\big(1/(p_0-\epsilon),1/q,1/q\big)$, $E_2:=\big(1/q,1/(p_0-\epsilon),1/q\big)$, and $E_3:=\big(1/q,1/q,1/(p_0-\epsilon)\big)$.
Then it is immediately verified that $E_1\in \RR_1$, $E_2\in \RR_2$, $E_3\in \RR_3$, and 
$$\theta_1\Big(\frac{1}{(p_0-\epsilon)},\frac{1}{q},\frac{1}{q}\Big)+\theta_2\Big(\frac{1}{q},\frac{1}{(p_0-\epsilon)},\frac{1}{q}\Big)+\theta_3\Big(\frac{1}{q},\frac{1}{q},\frac{1}{(p_0-\epsilon)}\Big)=\Big(\frac{1}{p_1},\frac{1}{p_2},\frac{1}{p_3}\Big)$$
for some  $0<\theta_1,\theta_2,\theta_3<1$ with $\theta_1+\theta_2+\theta_3=1$.
Therefore, Proposition \ref{mainproposition} yields that
\begin{align*}
\Vert T_{\sigma}\Vert_{H^{p_0-\epsilon}\times H^{q}\times H^{q}\to H^p}\lesssim \LL_s^2[\sigma] \q &\text{ at }~ E_1=\big(1/(p_0-\epsilon),1/q,1/q\big)\in \RR_1, \\
\Vert T_{\sigma}\Vert_{H^{q}\times H^{p_0-\epsilon}\times H^{q}\to H^p}\lesssim \LL_s^2[\sigma] \q &\text{ at }~ E_2=\big(1/q,1/(p_0-\epsilon),1/q\big)\in \RR_2, \\
\Vert T_{\sigma}\Vert_{H^{q}\times H^{q}\times H^{p_0-\epsilon}\to H^p}\lesssim \LL_s^2[\sigma] \q &\text{ at }~ E_3=\big(1/q,1/q,1/(p_0-\epsilon)\big)\in \RR_3,
\end{align*}
and using the interpolation method in Lemma \ref{interpolationlemma}, we conclude the estimate \eqref{mainthmest} holds for $(1/p_1,1/p_2,1/p_3)\in \RR_7$.
See Figure \ref{fig8} for the interpolation.

 \begin{figure}[h]
\begin{tikzpicture}

\path[fill=gray!5] (5,1.5,2)--(1.7,4.5,2)--(1.7,1.5,6)--(5,1.5,2);

\draw[dash pattern= { on 1pt off 1pt}] (1.7,1.5,4)--(1.7,3,2)--(3,1.5,2)--(1.7,1.5,4);
\draw[dash pattern= { on 1pt off 1pt}] (1.7,1.5,4)--(1.7,1.5,6);
\draw[dash pattern= { on 1pt off 1pt}] (1.7,3,2)--(1.7,4.5,2);
\draw[dash pattern= { on 1pt off 1pt}] (3,1.5,2)--(5,1.5,2);

\draw[gray!40, -] (5,1.5,2)--(1.7,4.5,2)--(1.7,1.5,6)--(5,1.5,2);

\filldraw[fill=black] (2.5,2.7,3.3)  circle[radius=0.5mm];
\filldraw[fill=black] (1.8,4,2)  circle[radius=0.3mm];
\filldraw[fill=black] (1.85,1.5,5.3)  circle[radius=0.3mm];
\filldraw[fill=black] (4.3,1.6,2)  circle[radius=0.3mm];

\draw[dash pattern= { on 2pt off 1pt}]  (1.8,4,2)--(1.85,1.5,5.3)--(4.3,1.6,2)--(1.8,4,2);
\draw[dotted] (1.8,4,2)--(2.5,2.7,3.3);
\draw[dotted] (1.85,1.5,5.3)--(2.5,2.7,3.3);
\draw[dotted] (4.3,1.6,2)--(2.5,2.7,3.3);

\draw[-] (2.5,2.7,3.3)--(3.5,2.7,0.5);
\node[right] at (3.5,2.7,0.5){\tiny$(\frac{1}{p_1},\frac{1}{p_2},\frac{1}{p_3})\in \RR_7$};

\draw[-] (1.8,4,2)--(2.5,4.2,2);
\node[right] at (2.5,4.2,2){\tiny$E_2=(\frac{1}{q},\frac{1}{p_0-\epsilon},\frac{1}{q})\in \RR_2$};

\draw[-] (1.85,1.5,5.3)--(0.5,1.5,3);
\node[left] at (0.5,1.5,3){\tiny$\RR_3\ni E_3=(\frac{1}{q},\frac{1}{q},\frac{1}{p_0-\epsilon})$};

\draw[-] (4.3,1.6,2)--(5,1.3,3);
\node[below right] at (5,1.3,3){\tiny$E_1=(\frac{1}{p_0-\epsilon},\frac{1}{q},\frac{1}{q}) \in \RR_1$};

\end{tikzpicture}
\caption{$\theta_1\big(\frac{1}{p_0-\epsilon},\frac{1}{q},\frac{1}{q}\big)+\theta_2\big(\frac{1}{q},\frac{1}{p_0-\epsilon},\frac{1}{q}\big)+\theta_3\big(\frac{1}{q},\frac{1}{q},\frac{1}{p_0-\epsilon}\big)=\big(\frac{1}{p_1},\frac{1}{p_2},\frac{1}{p_3}\big)\in \RR_7$}\label{fig8}
\end{figure}

\section{Auxiliary lemmas}
This section is devoted to providing several technical results which will be repeatedly used in the proof of Propositions \ref{mainproposition} and \ref{mainproposition2}.
\begin{lemma}\label{epsilon control}
Let $N\in\mathbb{N}$ and $a\in\R^n$.
Suppose that a Schwartz function $f$, defined on $\bbrn$, satisfies
\begin{equation}\label{vanishingp}
\int_{\R^n}{x^{\alpha}f(x)}dx=0 \quad \text{for all multi-indices}~ \alpha ~ \text{ with }~|\alpha|\leq N.
\end{equation}
Then for any $0\leq \epsilon\leq 1$, there exists a constant $C_{\epsilon}>0$ such that
\begin{equation*}
\big\Vert \phi_l \ast f\big\Vert_{L^{\infty}(\R^n)}\le C_{\epsilon} 2^{l(N+n+\epsilon)}\int_{\R^n}{|y-a|^{N+\epsilon}|f(y)|}\; dy
\end{equation*}
\end{lemma}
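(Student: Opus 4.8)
The plan is to exploit the vanishing moments of $f$ by subtracting the Taylor polynomial of $\phi_l$ at the point $a$. Fix $l\in\bbz$ and $x\in\bbrn$, and recall $\phi_l=2^{ln}\phi(2^l\cdot)$ with $\phi$ smooth and supported in the unit ball. Write
\[
\phi_l\ast f(x)=\int_{\bbrn}\phi_l(x-y)f(y)\,dy.
\]
Let $P(y)$ be the degree-$N$ Taylor polynomial of the function $y\mapsto \phi_l(x-y)$ expanded about $y=a$. Since every monomial $(y-a)^\beta$, $|\beta|\le N$, is a linear combination of monomials $y^\gamma$ with $|\gamma|\le N$, the hypothesis \eqref{vanishingp} gives $\int_{\bbrn}P(y)f(y)\,dy=0$. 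Hence
\[
\phi_l\ast f(x)=\int_{\bbrn}\big(\phi_l(x-y)-P(y)\big)f(y)\,dy,
\]
and the whole estimate reduces to bounding the Taylor remainder pointwise.

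Next I would estimate the remainder. By the standard integral form of Taylor's theorem (or simply the mean-value estimate for the order-$N$ remainder),
\[
\big|\phi_l(x-y)-P(y)\big|\le C_N\,|y-a|^{N}\sup_{|\beta|=N,\ z\in[a,y]}\big|\di^\beta\phi_l(x-z)\big|\lesssim_N 2^{l(N+n)}|y-a|^{N}\,\big\|\di^N\phi\big\|_{L^\infty},
\]
using $\di^\beta\phi_l(\cdot)=2^{l(n+|\beta|)}(\di^\beta\phi)(2^l\cdot)$. This already yields the claimed bound with $\epsilon=0$. To obtain the full range $0\le\epsilon\le 1$, I would interpolate between the $\epsilon=0$ estimate and an $\epsilon=1$ estimate: write the remainder at order $N$ as the order-$(N-1)$ Taylor polynomial plus its remainder — more cleanly, apply the above argument with the degree-$N$ polynomial but keep one extra degree of smoothness of $\phi$, giving
\[
\big|\phi_l(x-y)-P(y)\big|\lesssim_N 2^{l(N+n)}|y-a|^{N}\min\big\{1,\ 2^l|y-a|\big\}\big(\|\di^N\phi\|_{L^\infty}+\|\di^{N+1}\phi\|_{L^\infty}\big),
\]
where the factor $2^l|y-a|$ comes from the mean-value theorem applied once more to the difference $\di^\beta\phi_l(x-z)-\di^\beta\phi_l(x-a)$ on $|\beta|=N$. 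Raising $\min\{1,2^l|y-a|\}$ to the power $\epsilon$ (legitimate since $\min\{1,t\}\le \min\{1,t\}^{\epsilon}$ for $t\ge 0$, $0\le\epsilon\le1$) gives
\[
\big|\phi_l(x-y)-P(y)\big|\lesssim_{N,\epsilon} 2^{l(N+n+\epsilon)}|y-a|^{N+\epsilon}.
\]
Plugging this into the integral representation and taking the supremum over $x$ produces
\[
\big\|\phi_l\ast f\big\|_{L^\infty(\bbrn)}\le C_\epsilon\, 2^{l(N+n+\epsilon)}\int_{\bbrn}|y-a|^{N+\epsilon}|f(y)|\,dy,
\]
as desired; the implicit constant depends on $N$, $\epsilon$, $n$, and finitely many derivatives of $\phi$.

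I do not anticipate a genuine obstacle here — this is a clean Taylor-remainder argument. The one point requiring a little care is the passage to fractional $\epsilon\in(0,1)$: one must not naively differentiate a non-integer number of times, so the trick is to keep the Taylor polynomial at the integer degree $N$ and extract the extra factor $|y-a|^\epsilon$ from the elementary inequality $\min\{1,2^l|y-a|\}\le\big(2^l|y-a|\big)^\epsilon$ together with the fact that the remainder is trivially also $\lesssim 2^{l(N+n)}|y-a|^N$. A second minor point is justifying that $\phi_l(x-\cdot)$ and its low-order Taylor polynomial are genuinely integrable against $f$ so that the subtraction is legitimate; since $\phi$ is compactly supported and smooth this is immediate, and one only needs $f$ to decay fast enough for $|y-a|^{N+\epsilon}|f(y)|$ to be integrable — which is implicit in the statement, the right-hand side being assumed finite. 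Everything else is bookkeeping with the scaling identities for $\phi_l$.
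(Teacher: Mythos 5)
Your proof is correct and matches the paper's argument in essentially every step: both subtract the degree-$N$ Taylor polynomial of $\phi_l(x-\cdot)$ about $y=a$ (the paper writes the degree-$(N-1)$ expansion plus the integral remainder and then subtracts the constant $\partial^\alpha\phi_l(x-a)$ terms, which is the same thing), and both obtain the fractional exponent by interpolating the two remainder bounds $2^{l(N+n)}|y-a|^N$ and $2^{l(N+n+1)}|y-a|^{N+1}$ via $\min\{1,t\}\le t^\epsilon$. The only superficial difference is that you phrase the intermediate bound as $\min\{1,2^l|y-a|\}^\epsilon$ while the paper averages its displays (8.1) and (8.2); these are the same computation.
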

\begin{proof}
Using the Taylor theorem for $\phi_l$, we write
\begin{align*}
\phi_l(x-y)&=\sum_{|\alpha|\le N-1}\frac{\partial^{\alpha}\phi_l(x-a)}{\alpha !}(a-y)^{\alpha}\\
&\qq +N\sum_{|\alpha|=N}\frac{1}{\alpha !}\Big(\int_0^1{(1-t)^{N-1}\partial^{\alpha}\phi_l\big(x-a+t(a-y)\big)}dt \Big) (a-y)^{\alpha}.
\end{align*}
Then it follows from the condition \eqref{vanishingp} that
\begin{align*}
\big| \phi_l\ast f(x)\big|&\lesssim_{N} \sum_{|\alpha|=N}\Big| \int_{\R^n}{\Big( \int_0^1 (1-t)^{N-1}\partial^{\alpha}\phi_l\big(x-a+t(a-y)\big)   dt\Big) (a-y)^{\alpha}f(y)}\; dy\Big|\\
&\lesssim_{N} \sum_{|\alpha|=N}\Big| \int_{\R^n}{\Big[ \int_0^1 (1-t)^{N-1}\partial^{\alpha}\phi_l\big(x-a+t(a-y)\big)  \; dt}\\
&\qqqq  {-\int_0^1 (1-t)^{N-1}\partial^{\alpha}\phi_l(x-a)   dt       \Big] (a-y)^{\alpha}f(y)}\; dy\Big|\\
&\lesssim \sum_{|\alpha|=N}\int_{\R^n}\Big(\int_0^1\big|\partial^{\alpha}\phi_l\big(x-a+t(a-y)\big)-\partial^{\alpha}\phi_l(x-a) \big|dt\Big) |y-a|^N |f(y)| \; dy.
\end{align*}
For $|\alpha|=N$, we note that
\begin{equation}\label{avg1}
\big|\partial^{\alpha}\phi_l\big(x-a+t(a-y)\big)-\partial^{\alpha}\phi_l(x-a) \big|\lesssim 2^{l(N+n+1)}|y-a|
\end{equation}
and
\begin{align}\label{avg2}
&\big|\partial^{\alpha}\phi_l\big(x-a+t(a-y)\big)-\partial^{\alpha}\phi_l(x-a) \big|\nonumber\\
&\le \big|\partial^{\alpha}\phi_l\big(x-a+t(a-y)\big)\big|+\big|\partial^{\alpha}\phi_l(x-a)\big|\lesssim 2^{l(N+n)}.
\end{align}
Then by averaging both \eqref{avg1} and \eqref{avg2}, we obtain that 
\begin{equation*}
\big|\partial^{\alpha}\phi_l\big(x-a+t(a-y)\big)-\partial^{\alpha}\phi_l(x-a) \big|\lesssim_{\epsilon} 2^{l(N+n+\epsilon)}|y-a|^{\epsilon}, \qq 0\le \epsilon\le 1,
\end{equation*}
which completes the proof.
\end{proof}

Now we recall that $\wt{\psi_j}=\psi_{j-1}+\psi_j+\psi_{j+1}$ and $\wt{\theta_j}=2^{n}\theta_j(2\cdot)$, and then define
$\wt{\La_j}g:=\wt{\psi_j}\ast g$ and $\wt{\Ga_j}g:=\wt{\theta_j}\ast g$.
\begin{lemma}\label{almost orthogonality}
Let $2\le q<\infty$, $s>{n}/{q}$, and $L>n,s$. 
Let  $\varphi$ be a function on $\bbrn$ satisfying $$|\varphi(x)|\lesssim_M\frac{1}{(1+|x|)^M} \qq\text{ for all}~ M>0.$$ 
For $j\in\Z$ and for each $Q\in \mathcal{D}_j$ let 
$$\varphi^Q(x):=2^{{jn}/{2}}\varphi\big(2^j(x-\xx_Q)\big),$$ and for a Schwartz function $g$ on $\bbrn$ let
$$\BB_Q(g):=\Big\langle \big| \wt{\La_j}g\big|,\frac{2^{{jn}/{2}}}{\langle 2^j(\cdot-\xx_Q)\rangle^L}\Big\rangle \q\text{ or }\q \Big\langle \big| \wt{\Ga_j}g\big|,\frac{2^{{jn}/{2}}}{\langle 2^j(\cdot-\xx_Q)\rangle^L}\Big\rangle.$$
Then we have
\begin{align*}
&\bigg\Vert \sum_{Q\in\mathcal{D}_j} |\BB_Q(g)|\frac{\chi_{Q^c}(x)}{\langle 2^j(x-\xx_Q)\rangle^{s}}\big| \varphi^Q(z)\big|\bigg\Vert_{L^q(z)}\\
&\lesssim_L  2^{-{jn}/{q}}\bigg(\sum_{Q\in\mathcal{D}_j}\Big(|\BB_Q(g)| |Q|^{-1/2}\frac{\chi_{Q^c}(x)}{\langle 2^j(x-\xx_Q)\rangle^{s}} \Big)^q \bigg)^{{1}/{q}}.
\end{align*}
\end{lemma}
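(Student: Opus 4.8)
The plan is to prove the inequality pointwise in the variable $x$, since both sides are functions of $x$ with the $L^q(z)$-norm already taken. Fix $j\in\bbz$ and $x\in\bbrn$. The left-hand side, after taking the $L^q(z)$-norm, is a sum over $Q\in\mathcal{D}_j$ of the nonnegative coefficients $c_Q:=|\BB_Q(g)|\,\chi_{Q^c}(x)\langle 2^j(x-\xx_Q)\rangle^{-s}$ times the functions $|\varphi^Q(z)|$; so we must compare $\big\Vert \sum_Q c_Q|\varphi^Q|\big\Vert_{L^q(z)}$ with $2^{-jn/q}\big(\sum_Q (c_Q|Q|^{-1/2})^q\big)^{1/q}$. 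Note $|Q|^{-1/2}=2^{jn/2}$ for $Q\in\mathcal{D}_j$, so the target RHS is $2^{-jn/q}2^{jn/2}\big(\sum_Q c_Q^q\big)^{1/q}$. This is a purely quantitative statement about superpositions of the translated/dilated bumps $\varphi^Q$ indexed by the dyadic lattice $\mathcal{D}_j$, and the role of the factor $\chi_{Q^c}(x)\langle 2^j(x-\xx_Q)\rangle^{-s}$ is to give summability in $Q$.

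\textbf{Main steps.} First I would reduce to scale $j=0$ by rescaling: writing $y:=2^jz$ and $\mathbf{y}_Q:=2^j\xx_Q$ (which runs over $\bbzn$ as $Q$ runs over $\mathcal{D}_j$), one has $\varphi^Q(z)=2^{jn/2}\varphi(y-\mathbf{y}_Q)$ and $dz=2^{-jn}dy$, which produces exactly the powers of $2$ claimed; so it suffices to treat $j=0$, $\varphi^Q(z)=\varphi(z-k)$ with $k\in\bbzn$, and prove $\big\Vert\sum_k c_k\,\varphi(\cdot-k)\big\Vert_{L^q}\lesssim\big(\sum_k c_k^q\big)^{1/q}$ with $c_k=|\BB_Q(g)|\,\chi_{Q^c}(x)\langle x-\mathbf{y}_Q\rangle^{-s}$ (the constraint $x\notin Q$ meaning $|x-k|\gtrsim 1$, i.e. $k$ lies outside a fixed ball around $x$). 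Next, since $|\varphi(z-k)|\lesssim_M (1+|z-k|)^{-M}$ for every $M$, I would bound $\big\Vert\sum_k c_k\varphi(\cdot-k)\big\Vert_{L^q}^q=\int\big(\sum_k c_k|\varphi(z-k)|\big)^q\,dz$ using H\"older's inequality in the $k$-sum: split $|\varphi(z-k)|=|\varphi(z-k)|^{1/q}|\varphi(z-k)|^{1/q'}$, so that $\sum_k c_k|\varphi(z-k)|\le\big(\sum_k c_k^q|\varphi(z-k)|\big)^{1/q}\big(\sum_k|\varphi(z-k)|\big)^{1/q'}$; the second factor is $\lesssim 1$ uniformly in $z$ since $\varphi$ decays faster than any polynomial, and then $\int\sum_k c_k^q|\varphi(z-k)|\,dz=\|\varphi\|_{L^1}\sum_k c_k^q\lesssim\sum_k c_k^q$. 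Undoing the rescaling gives the claim. I would also remark that the hypotheses $s>n/q$ and $L>n,s$ are not actually needed for \emph{this} particular inequality (they matter for the subsequent summation over $Q$ when $\BB_Q(g)$ is estimated against a maximal function); here one only uses the rapid decay of $\varphi$.

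\textbf{Expected obstacle.} The computation itself is routine once the rescaling is performed; the only point requiring a little care is the bookkeeping of the powers of $2^{jn}$ through the change of variables and the identification $|Q|^{-1/2}=2^{jn/2}$, together with confirming that the almost-disjointness input one really needs is just $\sum_{k\in\bbzn}(1+|z-k|)^{-M}\lesssim 1$ for $M>n$ uniformly in $z$ — a standard fact. In other words, the ``hard part'' is mostly notational: making sure the exponent on the RHS is exactly $2^{-jn/q}$ and not, say, $2^{-jn/q}2^{jn/2}$ absorbed incorrectly. If one instead wanted the sharper route that keeps the $\langle 2^j(x-\xx_Q)\rangle^{-s}$ weight genuinely in play (rather than absorbing it into the coefficient $c_Q$), one would Schur-test the matrix with entries $\langle x-k\rangle^{-s}|\varphi(z-k)|$, but the H\"older argument above is cleaner and sufficient.
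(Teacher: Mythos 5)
Your proof is correct and takes a genuinely different, more elementary route than the paper's. The paper fixes $x$, raises the left-hand side to the $q$-th power, and applies H\"older with exponents $q/2$ and $q/(q-2)$ to split off $q-2$ copies of $\sum_Q|\varphi^Q(z)|$; this reduces the problem to an $L^2(z)$ estimate, which the paper handles by expanding the square, bounding $\langle|\varphi^Q|,|\varphi^R|\rangle\lesssim\langle 2^j(\xx_Q-\xx_R)\rangle^{-3Lq/2}$, and then crucially invoking a ``transfer'' inequality $|\BB_R(g)|\le\langle 2^j(\xx_Q-\xx_R)\rangle^L|\BB_Q(g)|$, which is a Peetre-type estimate that relies on the specific integral form of $\BB_Q(g)$ and on $L$ being large. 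You instead use H\"older with the conjugate pair $(q,q')$ to peel off $\big(\sum_k|\varphi(z-k)|\big)^{1/q'}$ and reduce to an $L^1$ estimate, which is immediate from $\|\varphi\|_{L^1}<\infty$ and the lattice sum $\sum_{k\in\bbz^n}(1+|z-k|)^{-M}\lesssim 1$. Your route treats $\BB_Q(g)$ as arbitrary nonnegative coefficients, so it does not use the integral structure of $\BB_Q$ at all, and you correctly observe that the hypotheses $s>n/q$, $L>n,s$, and even $q\ge 2$ are not needed for this particular inequality (they enter in the later lemmas that sum over $j$ and estimate $\BB_Q(g)$ by maximal functions). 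Your rescaling bookkeeping (using $|Q|^{-1/2}=2^{jn/2}$ and $dz=2^{-jn}dy$) correctly reproduces the factor $2^{-jn/q}$. In short: same conclusion, simpler and slightly more general argument; the paper's extra machinery ($L^2$ cross-term expansion and the $\BB_Q$-transfer inequality) is unnecessary here.
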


\begin{proof}
For $2\le q<\infty$, we have
\begin{align*}
&\Big(\sum_{Q\in\mathcal{D}_j} |\BB_Q(g)|\frac{\chi_{Q^c}(x)}{\langle 2^j(x-\xx_Q)\rangle^{s}}\big| \varphi^Q(z)\big|\Big)^q\\
&\lesssim   \Big(\sum_{Q\in\mathcal{D}_j} |\BB_Q(g)|^{{q}/{2}}\frac{\chi_{Q^c}(x)}{\langle 2^j(x-\xx_Q)\rangle^{{qs}/{2}}}\big| \varphi^Q(z)\big|\Big)^2\Big(\sum_{Q\in\mathcal{D}_j}\big|\varphi^Q(z) \big| \Big)^{q-2}
\end{align*}
where H\"older's inequality is applied if $2<q<\infty$.
Clearly,
\begin{align*}
\Big(\sum_{Q\in\mathcal{D}_j}\big|\varphi^Q(z) \big| \Big)^{q-2}&\lesssim_M 2^{{jn(q-2)}/{2} }\Big(\sum_{Q\in\mathcal{D}_j}\frac{1}{\langle 2^j(z-\xx_Q)\rangle^M} \Big)^{q-2}\\
&=2^{{jn(q-2)}/{2} }\Big(\sum_{\mm \in \Z^n}\frac{1}{\langle 2^jz-\mm\rangle^M} \Big)^{q-2}\lesssim_M 2^{{jn(q-2)}/{2}}
\end{align*} for sufficiently large $M>n$.
Therefore, the left-hand side of the claimed estimate is less than a constant times
\begin{equation}\label{mainestest}
2^{jn({1}/{2}-{1}/{q})}\Big\Vert  \sum_{Q\in\mathcal{D}_j} |\BB_P(g)|^{{q}/{2}}\frac{\chi_{Q^c}(x)}{\langle 2^j(x-\xx_Q)\rangle^{{qs}/{2}}}\big| \varphi^Q(z)\big|   \Big\Vert_{L^2(z)}^{{2}/{q}}.
\end{equation}
 The $L^2$ norm is dominated by 
\begin{align*}
& \Big( \sum_{Q\in\mathcal{D}_j}|\BB_Q(g)|^{{q}/{2}}\frac{\chi_{Q^c(x)}}{\langle 2^j(x-\xx_Q)\rangle^{{qs}/{2}}} \sum_{R\in\mathcal{D}_j}|\BB_R(g)|^{{q}/{2}}\frac{1}{\langle 2^j(x-\xx_R)\rangle^{{qs}/{2}}}\big\langle |\varphi^Q|,|\varphi^R|\big\rangle\Big)^{{1}/{2}}.
\end{align*}
Note that $$\big\langle |\varphi^Q|,|\varphi^R|\big\rangle\lesssim_{q,L} \frac{1}{\langle 2^j(\xx_Q-\xx_R)\rangle^{{3Lq}/{2}}}$$
and thus the preceding term is controlled by a constant multiple of
$$\Big(\sum_{Q\in\mathcal{D}_j}\big|\BB_Q(g)\big|^q\frac{\chi_{Q^c}(x)}{\langle 2^j(x-\xx_Q)\rangle^{qs}}\sum_{R\in\mathcal{D}_j}\frac{1}{\langle 2^j(\xx_Q-\xx_R)\rangle^{{Lq}/{2}}}\Big)^{{1}/{2}}.$$
Here, we used the facts that
$$\frac{\big|\BB_R(g)\big|}{(1+2^j|\xx_Q-\xx_R|)^{L}}\le \big|\BB_Q(g)\big|$$
and
$$\frac{1}{\langle 2^j(x-\xx_R)\rangle^{{qs}/{2}}\langle 2^j(\xx_Q-\xx_R)\rangle^{{Lq}/{2}}}\le \frac{1}{\langle 2^j(x-\xx_Q)\rangle^{{qs}/{2}}}.$$
Since the sum over $R\in\mathcal{D}_j$ converges, we deduce
\begin{align*}
\eqref{mainestest}&\lesssim 2^{-{jn}/{q}} \bigg(\sum_{Q\in\mathcal{D}_j}  \Big( \big|\BB_Q(g)\big||Q|^{-{1}/{2}}\frac{\chi_{Q^c}(x)}{\langle 2^j(x-\xx_Q)\rangle^{s}}\Big)^q \bigg)^{{1}/{q}}
\end{align*}
and thus the desired result follows.
\end{proof}

\begin{lemma}\label{lacunarylemma}
Let $2\le p,q< \infty$, $s>{n}/\min{\{p,q\}}$, and $L>n,s$.
For $j\in\bbz$ and $Q\in\DD_j$, let 
$$\BB_Q(g):= \Big\langle \big| \wt{\La_j}g\big|,\frac{2^{{jn}/{2}}}{\langle 2^j(\cdot-\xx_Q)\rangle^L}\Big\rangle$$
where $g$ is a Schwartz function on $\bbrn$.
 Then we have
\begin{align}\label{lacunaryest}
&\bigg\Vert \bigg( \sum_{j\in\bbz}\sum_{Q\in\DD_j}\Big( \big|\BB_Q(g)\big||Q|^{-{1}/{2}}\frac{\chi_{Q^c}(\cdot)}{\langle 2^j(\cdot-\xx_Q)\rangle^s}\Big)^q \bigg)^{{1}/{q}} \bigg\Vert_{L^p(\bbrn)}\lesssim \Vert g\Vert_{L^p(\bbrn)}.
\end{align}
\end{lemma}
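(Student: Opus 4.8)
\textbf{Proof plan for Lemma \ref{lacunarylemma}.}

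The plan is to reduce the mixed-norm estimate \eqref{lacunaryest} to a Fefferman--Stein-type maximal inequality for the functions $\wt{\La_j}g$, together with the almost-orthogonality bound of Lemma \ref{almost orthogonality}. First I would observe that, since $p,q\ge 2$, it suffices to bound the $\ell^q(L^p)$-type quantity by $\Vert g\Vert_{L^p}$; the idea is to localize the inner sum over $Q\in\DD_j$ according to the distance of $x$ from the cube $Q$, exploiting the weight $\langle 2^j(x-\xx_Q)\rangle^{-s}$ with $s>n/\min\{p,q\}$. For a fixed scale $j$, writing $x$ in terms of the unique dyadic cube $Q_0\in\DD_j$ containing it, the sum $\sum_{Q\in\DD_j}\big(|\BB_Q(g)||Q|^{-1/2}\chi_{Q^c}(x)\langle 2^j(x-\xx_Q)\rangle^{-s}\big)^q$ should be controlled, after summing the geometric-type series in $|\xx_Q-\xx_{Q_0}|$ coming from the weight, by a constant multiple of $\sum_{\mm\in\bbzn}\langle \mm\rangle^{-sq}\big(|\BB_{Q_0(\mm)}(g)||Q_0|^{-1/2}\big)^q$. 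This is the key structural reduction: the off-diagonal weight is summable in $q$ precisely because $sq>n$.

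Next I would estimate $|\BB_Q(g)|$ for $Q\in\DD_j$ by a shifted maximal function. Since $\wt{\La_j}g=\wt{\psi_j}\ast g$ and $\wt{\psi_j}$ has Fourier support in an annulus of size $2^j$, one has the pointwise bound $\big|\wt{\La_j}g(y)\big|\lesssim \mathcal{M}_r\big(\La_j'g\big)(y)$ for a slightly fattened Littlewood--Paley piece and any $r<2$, or more directly $\big|\BB_Q(g)\big||Q|^{-1/2}\lesssim \inf_{y\in Q}\mathcal{M}_r\big(\wt{\La_j}g\big)(y)$ using $L>n$ and \eqref{maximalbound}. Even more simply, by the definition of $\BB_Q$ and the fact that $\langle 2^j(\cdot-\xx_Q)\rangle^{-L}$ is an $L^1$-normalized bump (up to the factor $2^{-jn}$) supported in a ball concentrated near $Q$, we get $|\BB_Q(g)||Q|^{-1/2}\lesssim \big(\mathcal{M}_{dyad}^{\mm_Q}|\wt{\La_j}g|\big)(x)$ for $x\in Q_0$ with $\mm_Q$ the shift vector, but it is cleaner to absorb the shift into the weight as above and land on $\mathcal{M}|\wt{\La_j}g|(x)$ for $x\in Q$. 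Thus the left-hand side of \eqref{lacunaryest} is bounded by
\begin{equation*}
\bigg\Vert \Big(\sum_{j\in\bbz}\big(\mathcal{M}|\wt{\La_j}g|\big)^q\Big)^{1/q}\bigg\Vert_{L^p(\bbrn)}.
\end{equation*}

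Then I would apply the Fefferman--Stein vector-valued maximal inequality \eqref{hlmax} with $1\le 1<\min\{p,q\}$ to remove the maximal operators, reducing to $\big\Vert \{\wt{\La_j}g\}_{j}\big\Vert_{L^p(\ell^q)}$. Since $q\ge 2$ and $\ell^2\hookrightarrow\ell^q$, this is dominated by $\big\Vert \{\wt{\La_j}g\}_{j}\big\Vert_{L^p(\ell^2)}$, which by the Littlewood--Paley characterization \eqref{hardydeflittle} (valid since $p\ge 2>0$, and here $H^p=L^p$ as $p>1$) is comparable to $\Vert g\Vert_{L^p(\bbrn)}$, up to the harmless overlap from $\wt{\psi_j}=\psi_{j-1}+\psi_j+\psi_{j+1}$. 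I expect the main obstacle to be the bookkeeping in the first step: making the passage from the weighted sum over all $Q\in\DD_j$ to a clean shifted-maximal-function bound uniform in $j$, and verifying that the resulting series in the shift $\mm$ converges with a constant independent of $j$, $p$, and $q$ — this is exactly where the hypotheses $s>n/\min\{p,q\}$ and $L>n,s$ are used, and care is needed because the argument must degrade gracefully as $q\to\infty$ is approached (it does not, since $q<\infty$ is assumed, but the constant in \eqref{hlmax} must be tracked). The interplay with Lemma \ref{almost orthogonality}, if one prefers to route through it rather than reproving the localization, would be to apply that lemma with $\varphi$ a fixed Schwartz bump and integrate in the auxiliary variable, but the direct maximal-function route sketched above is more transparent.
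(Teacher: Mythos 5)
Your overall route---reduce to a maximal-function square function, apply the Fefferman--Stein inequality \eqref{hlmax}, then use $\ell^2\hookrightarrow\ell^q$ and the Littlewood--Paley equivalence---is the right family of tools, and your final two paragraphs are correct \emph{once} you have the intermediate bound
\begin{equation*}
\text{LHS of \eqref{lacunaryest}}\lesssim\bigg\Vert\Big(\sum_{j\in\bbz}\big(\mathcal{M}\wt{\La_j}g\big)^q\Big)^{1/q}\bigg\Vert_{L^p(\bbrn)}.
\end{equation*}
That intermediate step is exactly where your argument has a gap. Fix $j$, let $Q_0\in\DD_j$ contain $x$, and write $Q=Q_0(\mm)$. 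Your bound $|\BB_Q(g)||Q|^{-1/2}\lesssim\mathcal{M}_{dyad}^{\mm}\big(\wt{\La_j}g\big)(x)$ is correct, but you are then left with $\sum_{\mm\in\bbzn}\langle\mm\rangle^{-sq}\big(\mathcal{M}_{dyad}^{\mm}\wt{\La_j}g(x)\big)^q$, and there is no pointwise domination of this by $\big(\mathcal{M}\wt{\La_j}g(x)\big)^q$: a shifted dyadic maximal operator is not controlled by the unshifted one. Your alternative---``absorb the shift into the weight''---is lossy. Peetre's inequality gives $\langle 2^j(y-\xx_Q)\rangle^{-L}\lesssim\langle\mm\rangle^{L}\langle 2^j(y-x)\rangle^{-L}$, hence $|\BB_Q(g)||Q|^{-1/2}\lesssim\langle\mm\rangle^{L}\mathcal{M}\wt{\La_j}g(x)$, and after multiplying by the weight $\langle\mm\rangle^{-s}$ and raising to the $q$-th power the resulting series $\sum_{\mm}\langle\mm\rangle^{(L-s)q}$ diverges, since $L>s$. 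Even if you only spend the $\langle\mm\rangle^{n}$ needed for the comparison $\mathcal{M}_{dyad}^{\mm}f(x)\lesssim\langle\mm\rangle^{n}\mathcal{M}f(x)$, the convergence requirement becomes $(s-n)q>n$, i.e.\ $s>n+n/q$, strictly stronger than the hypothesis $s>n/\min\{p,q\}$ for all $p,q\ge 2$. Making the shifted-maximal route rigorous would require a vector-valued (Fefferman--Stein type) analogue of \eqref{okl}, which is not available in the paper and would need a separate argument.

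The paper sidesteps this entirely by a different order of operations. It first invokes the pointwise bound
\begin{equation*}
\frac{\chi_{Q^c}(x)}{\langle 2^j(x-\xx_Q)\rangle^s}\lesssim\mathcal{M}_{n/s}\chi_Q(x),\qquad Q\in\DD_j,
\end{equation*}
and then applies \eqref{hlmax} to the family $\big\{|\BB_Q(g)||Q|^{-1/2}\chi_Q\big\}_{Q\in\DD}$ indexed by \emph{all dyadic cubes}, not by scale. The hypothesis $s>n/\min\{p,q\}$ is precisely what makes $r=n/s<\min\{p,q\}$, the threshold for \eqref{hlmax}. This step replaces $\chi_{Q^c}(x)\langle 2^j(x-\xx_Q)\rangle^{-s}$ by $\chi_Q(x)$, so that from then on one only needs the estimate at $x\in Q$, where your shift-free bound $|\BB_Q(g)||Q|^{-1/2}\lesssim\mathcal{M}\wt{\La_j}g(x)$ applies verbatim; the embedding $\ell^2\hookrightarrow\ell^q$ and Littlewood--Paley then finish things exactly as in your closing paragraph. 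The key idea you are missing is to apply the vector-valued maximal inequality to the characteristic-function sequence \emph{before} touching $\BB_Q(g)$.
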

\begin{proof}
It is easy to verify that for $Q\in\DD_j$
$$\frac{1}{\langle 2^j|x-\xx_Q|\rangle^{s}}\chi_{Q^c}(x)\lesssim \mathcal{M}_{\frac{n}{s}}\chi_Q(x)  $$
and thus the left-hand side of \eqref{lacunaryest} is less than a constant multiple of 
\begin{align*}
&\bigg\Vert     \bigg( \sum_{Q\in\DD} \Big( \big|\BB_Q(g) \big| |Q|^{-{1}/{2}} \mathcal{M}_{\frac{n}{s}}\chi_Q(\cdot)\Big)^q\bigg)^{{1}/{q}}\bigg\Vert_{L^p(\bbrn)}\\
&\lesssim \bigg\Vert     \bigg( \sum_{Q\in\DD} \Big( \big|\BB_Q(g) \big| |Q|^{-{1}/{2}} \chi_Q(\cdot)\Big)^q\bigg)^{{1}/{q}}\bigg\Vert_{L^p(\bbrn)}
\end{align*}
by virtue of the maximal inequality \eqref{hlmax} with $s>{n}/\min{\{p,q \}}$.
 We see that
\begin{align*}
& \bigg( \sum_{Q\in\DD} \Big( \big|\BB_Q(g) \big| |Q|^{-{1}/{2}} \chi_Q(x)\Big)^q\bigg)^{{1}/{q}} \le  \bigg( \sum_{Q\in\DD} \Big( \big|\BB_Q(g) \big| |Q|^{-{1}/{2}} \chi_Q(x)\Big)^2\bigg)^{{1}/{2}}\\
&=\bigg( \sum_{j\in\bbz}\sum_{Q\in\DD_j}\chi_Q(x)\Big(\int_{\bbrn}\big| \wt{\La_j}g(y)\big|\frac{2^{jn}}{\langle 2^j(y-\xx_Q)\rangle^L} \; dy \Big)^2\bigg)^{{1}/{2}}\\
&\lesssim\bigg( \sum_{j\in\bbz}\Big(\int_{\bbrn}\big| \wt{\La_j}g(y)\big|\frac{2^{jn}}{\langle 2^j(y-x)\rangle^L} \;dy \Big)^2\bigg)^{{1}/{2}}\lesssim \big\Vert \big\{ \mathcal{M}\La_jg(x)\big\}_{j\in\bbz}\big\Vert_{\ell^2}
\end{align*}
since  $\ell^2\hookrightarrow \ell^q$, $L>n$ and $\langle 2^j(y-\xx_Q)\rangle\gtrsim \langle 2^j(y-x)\rangle$ for $Q\in\DD_j$ and $x\in Q$.
 Using \eqref{hlmax} again, the left-hand side of \eqref{lacunaryest} is less than a constant times
\begin{equation*}
\big\Vert    \big\{ \La_jg\big\}_{j\in\bbz}  \big\Vert_{L^{p}(\ell^2)}\sim \Vert g\Vert_{L^{p}(\bbrn)}.
\end{equation*}
\end{proof}

\begin{lemma}\label{nonlacunarylemma}
Let $1 \le q < \infty$,  $s>{n}/{q}$, and $L>n,s$.
For $j\in\bbz$ and $Q\in\DD_j$, let 
$$\BB_Q(g):= \Big\langle \big| \wt{\Ga_j}g\big|,\frac{2^{\frac{jn}{2}}}{\langle 2^j(\cdot-c_Q)\rangle^L}\Big\rangle$$
where $g$ is a Schwartz function on $\bbrn$.
 Then  for $1<p\le \infty$ with $q\le p$ we have
\begin{align}\label{nonlacunaryest}
&\bigg\Vert   \sup_{j\in\bbz}\bigg( \sum_{Q\in\DD_j}\Big( \big|\BB_Q(g)\big||Q|^{-{1}/{2}}\frac{1}{\langle 2^j(\cdot-c_Q)\rangle^s}\Big)^q \bigg)^{{1}/{q}}  \bigg\Vert_{L^p(\bbrn)}\lesssim \Vert g\Vert_{L^p(\bbrn)}.
\end{align}
\end{lemma}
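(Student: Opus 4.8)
The plan is to mimic the proof of Lemma~\ref{lacunarylemma}: first replace the polynomial tails $\langle 2^j(\cdot-c_Q)\rangle^{-s}$ by a fractional maximal function of $\chi_Q$, then absorb these maximal functions through a vector-valued maximal inequality, and finally recognize what is left as the maximal characterization of $\|g\|_{H^p}$. The one structural difference from the lacunary case is that $\wt{\Ga_j}$ is a low-frequency operator: $\wt{\Ga_j}g=\wt{\theta_j}\ast g=\theta_{j+1}\ast g=\Ga_{j+1}g$. Hence, after the reductions, we land on the nontangential sum $\sup_j|\Ga_{j+1}g|$ rather than on a square function, which is exactly why the characterization \eqref{hardydef} (and not \eqref{hardydeflittle}) is the right endpoint.

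First I would carry out the \emph{inflation step}: for $Q\in\DD_j$ and every $x\in\bbrn$ one has $\langle 2^j(x-c_Q)\rangle^{-s}\lesssim \mathcal{M}_{n/s}\chi_Q(x)$ --- the elementary estimate already used in the proof of Lemma~\ref{lacunarylemma} for $x\notin Q$, and trivial for $x\in Q$ since both sides are then $\sim 1$. Thus the left side of \eqref{nonlacunaryest} is dominated by
$$
\Big\|\sup_{j\in\bbz}\Big(\sum_{Q\in\DD_j}\big(|\BB_Q(g)|\,|Q|^{-1/2}\,\mathcal{M}_{n/s}\chi_Q\big)^q\Big)^{1/q}\Big\|_{L^p(\bbrn)}.
$$
Next, since $s>n/q$ and $q\le p$ we have $n/s<\min\{p,q\}$, so the Fefferman--Stein inequality \eqref{hlmax}, used here in the mixed-norm form $L^p\big(\ell^\infty_j(\ell^q_{Q\in\DD_j})\big)$ (standard, still requiring only $n/s<\min\{p,q\}$), lets us delete the maximal operators. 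Because for each fixed $j$ exactly one term of the inner $\ell^q$-sum survives, this bounds the quantity above by
$$
\Big\|\sup_{j\in\bbz}\Big(\sum_{Q\in\DD_j}\big(|\BB_Q(g)|\,|Q|^{-1/2}\,\chi_Q\big)^q\Big)^{1/q}\Big\|_{L^p(\bbrn)}
=\Big\|\sup_{j\in\bbz}\big|\BB_{Q_j(\cdot)}(g)\big|\,\big|Q_j(\cdot)\big|^{-1/2}\Big\|_{L^p(\bbrn)},
$$
where $Q_j(x)\in\DD_j$ denotes the cube containing $x$.

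It then remains to return to the Hardy norm. For $x\in Q_j(x)$ we have $|x-c_{Q_j(x)}|\le\sqrt{n}\,2^{-j}$, hence $\langle 2^j(y-c_{Q_j(x)})\rangle\sim\langle 2^j(y-x)\rangle$ with constants depending only on $n$, and since $L>n$,
$$
\big|\BB_{Q_j(x)}(g)\big|\,\big|Q_j(x)\big|^{-1/2}
=2^{jn}\!\int_{\bbrn}\big|\wt{\Ga_j}g(y)\big|\,\langle 2^j(y-c_{Q_j(x)})\rangle^{-L}\,dy
\;\lesssim\;\mathcal{M}\big(\wt{\Ga_j}g\big)(x).
$$
Taking the supremum over $j$, using $\sup_j\mathcal{M}(\wt{\Ga_j}g)\le\mathcal{M}\big(\sup_j|\wt{\Ga_j}g|\big)$, the boundedness of $\mathcal{M}$ on $L^p$ for $1<p\le\infty$, the identity $\wt{\Ga_j}g=\Ga_{j+1}g$, and finally \eqref{hardydef} together with $H^p(\bbrn)=L^p(\bbrn)$ for $1<p\le\infty$, we obtain
$$
\text{l.h.s. of }\eqref{nonlacunaryest}\;\lesssim\;\Big\|\mathcal{M}\big(\sup_j|\Ga_{j+1}g|\big)\Big\|_{L^p}\;\lesssim\;\big\|\{\Ga_jg\}_{j\in\bbz}\big\|_{L^p(\ell^\infty)}\;\sim\;\|g\|_{H^p(\bbrn)}=\|g\|_{L^p(\bbrn)},
$$
which is the claim.

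The only point that is more than mechanical is the maximal inequality step: it must be applied in the mixed-norm space $L^p\big(\ell^\infty_j(\ell^q_{Q\in\DD_j})\big)$ with the \emph{fractional} exponent $n/s<\min\{p,q\}$, so the outer $\ell^\infty$ over scales is what pushes it slightly beyond the literal statement of \eqref{hlmax}; it is nonetheless a standard consequence of \eqref{hlmax}. Everything else is the inflation estimate of Step~1 (borrowed verbatim from Lemma~\ref{lacunarylemma}) and the observation that $\wt{\Ga_j}$ is nothing but $\Ga_{j+1}$, which converts the problem into the radial-maximal characterization of $H^p=L^p$ for $p>1$.
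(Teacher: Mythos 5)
The paper proves Lemma~\ref{nonlacunarylemma} by an argument that is genuinely different from the inflate-and-Fefferman--Stein strategy of Lemma~\ref{lacunarylemma}. Writing $\xx_Q=2^{-j}\mm_Q$ and $\xx_{Q_{j,x}}=2^{-j}\mm_{Q_{j,x}}$, the paper shows $|\BB_Q(g)||Q|^{-1/2}\lesssim\mathcal{M}_{dyad}^{\mm_{Q_{j,x}}-\mm_Q}\mathcal{M}g(x)$, so that after reindexing by the lattice offset $\mm$ the sum over $Q\in\DD_j$ becomes a $j$-independent sum over $\mm\in\bbzn$; the outer $\sup_j$ then disappears for free, and the proof closes with Minkowski's inequality (this is where $q\le p$ enters) together with the logarithmic operator-norm growth \eqref{okl} of the shifted dyadic maximal. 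The log bound \eqref{okl} is the decisive ingredient; it does not appear in your proposal at all.

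Your Step 2 is a genuine gap. The "Fefferman--Stein inequality in the mixed-norm form $L^p\big(\ell^\infty_j(\ell^q_{Q\in\DD_j})\big)$" is not a standard variant of \eqref{hlmax} and cannot be obtained from it by any routine reduction. The two usual tricks both fail here: you cannot enlarge the outer $\sup_j$ to an $\ell^q$-sum over $j$ and invoke the single-index $L^p(\ell^q)$ FS, because $\wt{\Ga_j}g=\Ga_{j+1}g$ does not decay in $j$ (it tends to $g$), so the $\ell^q_j$-sum on the right would diverge; and you cannot use the pointwise $\ell^\infty$ trick ($\sup_k\mathcal{M}f_k\le\mathcal{M}\sup_k|f_k|$) that handles $q=\infty$ in the scalar-indexed FS, because $\ell^\infty_j(\ell^q_Q)$ is not of that form. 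If you try to prove the mixed-norm inequality directly you are led to exactly the paper's difficulty: for each lattice offset $\mm'$ one must control $\Phi_{\mm'}(x):=\sup_j c_{Q_j(x)-2^{-j}\mm'}$, and since $c_Q\le |Q|^{-1}\int_Q\Phi_0$ with $\Phi_0:=\sup_j c_{Q_j(\cdot)}$ one gets $\Phi_{\mm'}\le\mathcal{M}_{dyad}^{\mm'}\Phi_0$; after Minkowski your claimed inequality reduces to
$\sum_{\mm'}\langle\mm'\rangle^{-sq}\|\mathcal{M}_{dyad}^{\mm'}\Phi_0\|_{L^p}^q\lesssim\|\Phi_0\|_{L^p}^q$.
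Because the shifted dyadic maximals are not uniformly $L^p$-bounded but grow like $(\log(10+|\mm'|))^{n/p}$ --- and that rate is sharp --- this sum only closes thanks to the polynomial decay $\langle\mm'\rangle^{-sq}$ (using $s>n/q$) combined with \eqref{okl}. That is precisely the content of the paper's proof, and precisely what the phrase "standard, still requiring only $n/s<\min\{p,q\}$" sweeps under the rug: the crux of the lemma is being assumed rather than proved.
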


\begin{proof}
For any $j\in\bbz$ and $Q\in\DD_j$, there exists a unique lattice $\mm_Q\in\bbzn$ such that $\xx_Q=2^{-j}\mm_Q$.
For any $j\in\bbz$ and $x\in\bbrn$, let $Q_{j,x}$ be a unique dyadic cube in $\DD_j$ containing $x$.
Then we have the representations  $\xx_{Q_{j,x}}=2^{-j}\mm_{Q_{j,x}}$ for $\mm_{Q_{j,x}}\in\bbzn$ and
$$x=2^{-j}(\mm_{Q_{j,x}}+u_x) \qq \text{ for some }~ ~u_x\in [0,1)^n.$$
Now for $Q\in\DD_j$, we write
\begin{align*}
|\BB_Q(g)| |Q|^{-{1}/{2}}&\lesssim_L\int_{\bbrn} \big| \mathcal{M}g(y)\big|  \frac{2^{jn}}{\langle 2^j(y-\cc_Q)\rangle^L}      dy\lesssim \int_{\bbrn} \big| \mathcal{M}g(y)\big|  \frac{2^{jn}}{\langle 2^j(y-\xx_Q)\rangle^L}  \;    dy\\
&=\int_{\bbrn} \big| \mathcal{M}g(y)\big|  \frac{2^{jn}}{\big\langle 2^j(y-x)+\mm_{Q_{j,x}}-\mm_Q+u_x \big\rangle^L}  \;    dy\\
&\lesssim_L\int_{\bbrn} \big| \mathcal{M}g(y)\big|  \frac{2^{jn}}{\big\langle 2^j(y-x)+\mm_{Q_{j,x}}-\mm_Q \big\rangle^L}     \; dy\\
&\lesssim \mathcal{M}_{dyad}^{\mm_{Q_{j,x}}-\mm_Q}\mathcal{M}g(x)
\end{align*}
 where the penultimate inequality follows from the fact that $u_x\in [0,1)^n$.
This deduces 
\begin{align*}
&\sup_{j\in\bbz}\bigg( \sum_{Q\in\DD_j}\Big( \big|\BB_Q(g)\big||Q|^{-{1}/{2}}\frac{1}{\langle 2^j(x-c_Q)\rangle^s}\Big)^q \bigg)^{{1}/{q}}\\
&\lesssim \sup_{j\in\bbz}\bigg(  \sum_{\mm\in\bbzn}\Big(  \mathcal{M}_{dyad}^{\mm_{Q_{j,x}}-\mm}\mathcal{M}g(x)\frac{1}{\langle \mm_{Q_{j,x}}-\mm\rangle^s} \Big)^q      \bigg)^{{1}/{q}}\\
&=\bigg(  \sum_{\mm\in\bbzn}\Big(  \mathcal{M}_{dyad}^{\mm}\mathcal{M}g(x)\frac{1}{\langle \mm\rangle^s} \Big)^q      \bigg)^{{1}/{q}}.
\end{align*}
Therefore, the left-hand side of \eqref{nonlacunaryest} is less than a constant times
\begin{align*}
\Big( \sum_{\mm\in\bbzn} \langle \mm\rangle^{-sq}\big\Vert  \mathcal{M}_{dyad}^{\mm}\mathcal{M}g    \big\Vert_{L^p(\bbrn)}^q \Big)^{{1}/{q}}&\lesssim \Vert \mathcal{M}g\Vert_{L^p(\bbrn)}\Big( \sum_{\mm\in\bbzn}\langle \mm\rangle^{-sq}\big(\log \big(10+|\mm| \big) \big)^{{qn}/{p}}\Big)^{{1}/{q}}\\
&\lesssim \Vert g\Vert_{L^p(\bbrn)}
\end{align*}
as $sq>n$, where we applied Minkowski's inequality if $p >  q$ and the maximal inequality \eqref{okl}. 
This completes the proof.
\end{proof}

\begin{lemma}\label{technical lemma}
Let $a$ be an $H^p$-atom associated with $Q$, satisfying 
\begin{equation}\label{vmcondition}
\int_{\bbrn}x^{\ga}a(x) \; dx=0 \q \text{ for all multi-indices }~ |\ga|\le M,
\end{equation}
 and fix $L_0>0$.
Then we have
\begin{equation}\label{estlaak}
\big| \La_j a (x) \big| \lesssim_{L_0} l(Q)^{-{n}/{p}} \min\big\{ 1, \big( 2^jl(Q) \big)^{M+n+1} \big\} \bigg(  \chi_{Q^{*}}(x)  + \chi_{(Q^{*})^c}(x)\frac{1}{\langle 2^j(x-\xx_Q)\rangle^{L_0}} \bigg),
\end{equation}and
\begin{equation}\label{estgaak}
\big| \Ga_j a (x) \big| \lesssim_{L_0} l(Q)^{-{n}/{p}} \min\big\{ 1, \big( 2^jl(Q) \big)^{M+n+1} \big\} \bigg(  \chi_{Q^{*}}(x)  + \chi_{(Q^{*})^c}(x)\frac{1}{\langle 2^j(x-\xx_Q)\rangle^{L_0}} \bigg).
\end{equation}
Moreover, for $1\leq r \leq \infty$,
\begin{equation}\label{lajalr}
\| \La_j a \|_{L^r(\bbrn)}, \| \Ga_j a \|_{L^r(\bbrn)} \lesssim l(Q)^{-{n}/{p} + {n}/{r}} \min\{ 1, (2^jl(Q))^{M+n - {n}/{r}+1} \}.
\end{equation}
\end{lemma}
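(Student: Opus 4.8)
\textbf{Proof proposal for Lemma \ref{technical lemma}.}

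The plan is to prove the pointwise bounds \eqref{estlaak} and \eqref{estgaak} first, and then deduce the $L^r$ estimate \eqref{lajalr} by integrating. I will only write out the argument for $\La_j a = \psi_j \ast a$; the estimate for $\Ga_j a = \theta_j \ast a$ is identical since $\theta$ and $\psi$ are both Schwartz functions with all the relevant decay, the only difference being that $\wh{\psi}$ is supported away from the origin (which is irrelevant for these bounds). Throughout, write $\ell := \ell(Q)$ and recall $\|a\|_{L^\infty} \le \ell^{-n/p}$, $\supp(a) \subset Q$, and the vanishing moments \eqref{vmcondition} up to order $M$.

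First I would establish the two competing size bounds that produce the factor $\min\{1, (2^j\ell)^{M+n+1}\}$. For the trivial bound, use $\|\psi_j\|_{L^1} \lesssim 1$, so $|\La_j a(x)| \le \|\psi_j\|_{L^1}\|a\|_{L^\infty} \lesssim \ell^{-n/p}$; together with the rapid decay of $\psi$ this already gives \eqref{estlaak} with the constant $1$ in place of the minimum (for the spatial tail, write $|\psi_j(x-y)| \lesssim_{L_0} 2^{jn}\langle 2^j(x-y)\rangle^{-L_0 - n - 1}$ and note that for $x \notin Q^*$ and $y \in Q$ we have $\langle 2^j(x-y)\rangle \gtrsim \langle 2^j(x - \xx_Q)\rangle$, so integrating in $y$ over $Q$ costs $\ell^n \|a\|_\infty \cdot 2^{jn}$, which is $\lesssim 1$ when $2^j\ell \le 1$ and is anyway controlled by $\langle 2^j(x-\xx_Q)\rangle^{-L_0}$ times a harmless power). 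For the gain when $2^j\ell \le 1$, subtract the degree-$M$ Taylor polynomial of $\psi_j(x - \cdot)$ centered at $\xx_Q$: by \eqref{vmcondition},
\begin{equation*}
\La_j a(x) = \int_Q \Big( \psi_j(x-y) - \sum_{|\gamma|\le M} \frac{\partial^\gamma \psi_j(x-\xx_Q)}{\gamma!}(\xx_Q - y)^\gamma \Big) a(y)\, dy,
\end{equation*}
and the Taylor remainder is $\lesssim \sup_{|\gamma| = M+1}\|\partial^\gamma \psi_j\|_{L^\infty(\text{relevant set})} |y - \xx_Q|^{M+1} \lesssim 2^{j(n + M+1)} \ell^{M+1}$ on $Q^*$, while on $(Q^*)^c$ one keeps the decay $\langle 2^j(x-\xx_Q)\rangle^{-L_0}$ from $\partial^\gamma\psi_j$ and picks up $(2^j\ell)^{M+n+1}$ after integrating $\ell^n\|a\|_\infty$. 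Multiplying by $|Q| \|a\|_\infty \lesssim \ell^{n - n/p}$ gives the bound $\ell^{-n/p}(2^j\ell)^{M+n+1}$ times the bracketed spatial profile. Combining the two cases yields \eqref{estlaak}, and the same computation with $\theta_j$ gives \eqref{estgaak}.

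Finally, \eqref{lajalr} follows by taking the $L^r$ norm of the right-hand side of \eqref{estlaak}. The $\chi_{Q^*}$ term contributes $\ell^{-n/p}\min\{1,(2^j\ell)^{M+n+1}\} |Q^*|^{1/r} \sim \ell^{-n/p + n/r}\min\{1,(2^j\ell)^{M+n+1}\}$, and the tail term contributes $\ell^{-n/p}\min\{1,(2^j\ell)^{M+n+1}\} \big\| \langle 2^j(\cdot - \xx_Q)\rangle^{-L_0}\big\|_{L^r((Q^*)^c)} \lesssim \ell^{-n/p}\min\{1,(2^j\ell)^{M+n+1}\} 2^{-jn/r}$, using $L_0 > n/r$ (choose $L_0$ large at the outset). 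When $2^j\ell \le 1$ the exponent $2^{-jn/r} = (2^j\ell)^{-n/r}\ell^{-n/r}$ combines with $(2^j\ell)^{M+n+1}$ to give $\ell^{-n/p+n/r}(2^j\ell)^{M+n+1-n/r}$, and when $2^j\ell > 1$ both terms are $\lesssim \ell^{-n/p+n/r}$; since $M + n + 1 - n/r \ge 0$, the two regimes assemble into $\min\{1,(2^j\ell)^{M+n-n/r+1}\}$, which is \eqref{lajalr}. The case $r = \infty$ is immediate from \eqref{estlaak} directly. The main obstacle is purely bookkeeping: keeping the spatial localization onto $Q^*$ versus its complement consistent through the Taylor-remainder estimate so that the decay exponent $L_0$ survives, and checking that the powers of $2^j\ell$ line up correctly in the two regimes when converting the pointwise bound into the $L^r$ bound.
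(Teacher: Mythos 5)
Your proposal is correct and follows essentially the same route as the paper: a trivial $L^1\ast L^\infty$ bound combined with the kernel's rapid decay for the tail when $2^j\ell(Q)\gtrsim 1$, a degree-$M$ Taylor expansion centered at $\xx_Q$ (exploiting the moment cancellation of $a$) to extract the factor $(2^j\ell(Q))^{M+n+1}$ when $2^j\ell(Q)\lesssim 1$, and then an $L^r$ integration of the pointwise bound split over $Q^*$ and $(Q^*)^c$. One small typo to fix: you wrote $2^{-jn/r}=(2^j\ell)^{-n/r}\ell^{-n/r}$, but it should be $(2^j\ell)^{-n/r}\ell^{+n/r}$, as your subsequent (correct) conclusion $\ell^{-n/p+n/r}(2^j\ell)^{M+n+1-n/r}$ already reflects.
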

\begin{proof}
We will prove only the estimates for $\La_ja$ and the exactly same argument is applicable to $\Ga_ja$ as well.
Let us first assume $2^j\ell(Q)\ge 1$.
Then we have
\begin{align*}
\big| \La_j a(x)\big|&\le \ell(Q)^{-{n}/{p}}\Big(  \chi_{Q^*}(x)\Vert \psi_j\Vert_{L^1(\bbrn)}+\chi_{(Q^*)^c}(x)\int_{y\in Q}\big|\psi_j(x-y)\big| \; dy     \Big)\\
&\lesssim_{L_0}  \ell(Q)^{-{n}/{p}}\Big(  \chi_{Q^*}(x)+\chi_{(Q^*)^c}(x)\int_{y\in Q}\frac{2^{jn}}{\langle 2^j(x-y)\rangle^{n+1}}\frac{1}{\langle 2^j(x-y)\rangle^{L_0}} \; dy     \Big)\\
&\lesssim  \ell(Q)^{-{n}/{p}}\Big(  \chi_{Q^*}(x)+\chi_{(Q^*)^c}(x)\frac{1}{\langle 2^j(x-\xx_Q)\rangle^{L_0}}     \Big)
\end{align*}
since $|x-y|\gtrsim |x-\xx_Q|    $ for $x\in (Q^*)^c$ and $y\in Q$.

Now suppose that $2^j\ell(Q)<1$.
By using the vanishing moment condition \eqref{vmcondition}, we obtain
$$\big| \La_j a(x)\big|\le 2^{j(M+n+1)}\int_Q \int_0^1  \frac{1}{\langle 2^j(x-ty-(1-t)\xx_Q)\rangle^{L_0}}|y-\xx_Q|^{M+1}|a(y)|    \;  dt dy.$$
If $x\in Q^*$, 
then it is clear that
$$\big| \La_j a(x)\big|\lesssim \big( 2^j\ell(Q)\big)^{M+n+1} \ell(Q)^{-{n}/{p}}.$$
If $x\in (Q^*)^c$, then
we have 
$$\langle 2^j(x-ty-(1-t)\xx_Q)\rangle^{-1}\lesssim \langle 2^j(x-\xx_Q)\rangle^{-1},$$
which implies
$$\big| \La_j a(x)\big|\lesssim_{L_0} \frac{1}{\langle 2^j(x-\xx_Q)\rangle^{L_0}}\big( 2^j\ell(Q)\big)^{M+n+1} \ell(Q)^{-{n}/{p}}.$$
This proves \eqref{estlaak}.

Moreover, using the estimate \eqref{estlaak},  we have
\begin{align*}
\big\Vert \La_ja\big\Vert_{L^r(\bbrn)}&\le \big\Vert \La_ja\big\Vert_{L^r(Q^*)}+\big\Vert \La_ja\big\Vert_{L^r((Q^*)^c)}\\
&\lesssim  \ell(Q)^{-{n}/{p}}\min\big\{ 1, \big( 2^jl(Q) \big)^{M+n+1} \big\}\Big(|Q|^{{1}/{r}}+\Big\Vert \frac{1}{\langle 2^j(\cdot-\xx_Q)\rangle^{{n+1}}}\Big\Vert_{L^r(\bbrn)} \Big)\\
&\lesssim \ell(Q)^{-{n}/{p}+{n}/{r}}\min\big\{ 1, \big( 2^jl(Q) \big)^{M+n+1} \big\}\Big( 1+\big( 2^j\ell(Q)\big)^{-{n}/{r}}\Big)\\
&\le \ell(Q)^{-{n}/{p}+{n}/{r}}\min\big\{ 1, \big( 2^jl(Q) \big)^{M+n-{n}/{r}+1} \big\}.
\end{align*}
This concludes the proof of \eqref{lajalr}.
\end{proof}

\section{Proof of Proposition \ref{mainproposition} : Reduction }\label{pfproposition}

\subsection{Reduction via paraproduct}
Without loss of generality, we may assume
$$\Vert f_1\Vert_{H^{p_1}(\bbrn)}=\Vert f_2\Vert_{H^{p_2}(\bbrn)}=\Vert f_3\Vert_{H^{p_3}(\bbrn)}=1 \q \text{and}\q \LL_s^2[\sigma]=1.$$

We first note that $T_{\sigma}(f_1,f_2,f_3)$ can be written as
\begin{equation*}
T_{\sigma}(f_1,f_2,f_3)=\sum_{j_1,j_2,j_3\in\bbz}T_{\sigma}\big(\La_{j_1}f_1,\La_{j_2}f_2,\La_{j_3}f_3\big).
\end{equation*} 
We shall work with only the case $j_1\ge j_2\ge j_3$ as other cases follow from a symmetric argument.
When $j_1\ge j_2\ge j_3$, it is easy to verify that
$$T_{\sigma}\big(\La_{j_1}f_1,\La_{j_2}f_2,\La_{j_3}f_3\big)=T_{\sigma_{j_1}}\big(\La_{j_1}f_1,\La_{j_2}f_2,\La_{j_3}f_3\big)$$
where $\sigma_j(\xxxi):=\sigma(\xxxi) \wh{\Theta}(\xxxi/2^j)$ and $\wh{\Theta}(\xxxi):=\sum_{l=-2}^2\wh{\Psi}(2^{l}\xxxi)$ so that
$\wh{\Theta}(\xxxi)=1$ for $2^{-2}\le |\xxxi|\le 2^{2}$ and $\supp(\wh{\Theta})\subset \{\xxxi\in (\bbrn)^3:2^{-3}\le |\xxxi|\le 2^3\}$.
Then we observe that
\begin{equation}\label{supequiv}
\sup_{k\in\bbz}\big\Vert \sigma_k(2^k\ccdot)\big\Vert_{L_s^2((\bbrn)^3)}\lesssim \LL_{s}^{2}[\sigma]=1
\end{equation}
by virtue of the triangle inequality.
Moreover, using the fact that 
$
\Ga_jf=\sum_{k\le j}\La_kf,
$
we write
\begin{align*}
&\sum_{j_1\in\bbz}\;\sum_{j_2,j_3\in\bbz: j_3\le j_2\le j_1}T_{\sigma_{j_1}}\big(\La_{j_1}f_1,\La_{j_2}f_2,\La_{j_3}f_3\big)\\
&=\sum_{j\in\bbz}T_{\sigma_{j}}\big(\La_{j}f_1,\Ga_{j-10}f_2,\Ga_{j-10}f_3\big)+\sum_{k=0}^{9}\sum_{j\in\bbz}T_{\sigma_{j}}\big(\La_{j}f_1,\La_{j-k}f_2,\Ga_{j-k}f_3\big)\\
&=:T_{\sigma}^{(1)}(f_1,f_2,f_3)+\sum_{k=0}^{9}T_{\sigma}^{(2),k}(f_1,f_2,f_3),
\end{align*}
and especially, let $T_{\sigma}^{(2)}:=T_{\sigma}^{(2),0}$.
Then it is enough to prove that 
\begin{equation}\label{mainreductionest}
\big\Vert T_{\sigma}^{(\mu)}(f_1,f_2,f_3)\big\Vert_{H^p(\bbrn)}\lesssim 1, \qq \mu=1,2
\end{equation}
since the operator $T_{\sigma}^{(2),k}$, $1\le k\le 9$, can be handled in the same way as $T_{\sigma}^{(2)}$.

It should be remarked that
 the vanishing moment condition \eqref{vanishingmoment} now implies 
\begin{equation}\label{vanishingreduction}
\int_{\bbrn}x^{\alpha}T_{\sigma_j}\big( f_1,f_2,f_3\big)(x) \; dx=0 \q \text{ for all multi-indices ~}|\alpha|\le \frac{n}{p}-n.
\end{equation}

\subsection{Proof of \eqref{mainreductionest} for $\mu=1$}
In this case, we may simply follow the arguments used in the proof of Theorems \ref{thmb} and \ref{thmd}. 
The proof is based on the fact that if $\wh{g_k}$ is supported in $\{\xi \in\bbrn: C_0^{-1} 2^{k}\le |\xi|\le C_02^{k}\}$ for $C_0>1$ then
\begin{equation}\label{marshall}
\bigg\Vert \Big\{ \La_j \Big(\sum_{k\in\bbz}{g_k}\Big)\Big\}_{j\in\mathbb{Z}}\bigg\Vert_{L^p(\ell^q)}\lesssim_{C_0} \big\Vert \big\{ g_j\big\}_{j\in\mathbb{Z}}\big\Vert_{L^p(\ell^q)}.
\end{equation} 
 The proof of \eqref{marshall} is elementary and standard, simply using the estimate
 $$\bigg| \La_j \Big(\sum_{k\in\bbz}{g_k}\Big)(x)\bigg|=\bigg| \La_j\Big(\sum_{k=j-h}^{j+h}{g_k}\Big)(x)\bigg|\lesssim_{r,h}\mathcal{M}_rg_j(x)$$
 for all $0<r<\infty$ and for some $h\in\bbn$,  depending on $C_0$, and the maximal inequality \eqref{hlmax}.
 We refer to  \cite[Theorem 3.6]{Ya1986} for details.

By using the equivalence in \eqref{hardydeflittle}, 
\begin{equation*}
\big\Vert T_{\sigma}^{(1)}(f_1,f_2,f_3)\big\Vert_{H^p(\bbrn)}\sim \bigg\Vert \bigg\{ \La_j \Big(\sum_{k\in\bbz}{ T_{\sigma_k}\big(\La_kf_1,\Ga_{k-10}f_2,\Ga_{k-10}f_3 \big)}\Big)\bigg\}_{j\in\mathbb{Z}}\bigg\Vert_{L^p(\ell^2)}.
\end{equation*}
We see that the Fourier transform of $T_{\sigma_k}\big(\La_kf_1,\Ga_{k-10}f_2,\Ga_{k-10}f_3 \big)$ is supported in $\big\{\xi\in\bbrn : 2^{k-2}\leq |\xi|\leq 2^{k+2} \big\}$
 and thus the estimate \eqref{marshall} yields that
 \begin{equation*}
\big\Vert T_{\sigma}^{(1)}(f_1,f_2,f_3)\big\Vert_{H^p(\bbrn)}\lesssim \Big\Vert \Big( \sum_{j\in\bbz}\big|    T_{\sigma_j}\big(\La_jf_1,\Ga_{j-10}f_2,\Ga_{j-10}f_3 \big)     \big|^2\Big)^{1/2}\Big\Vert_{L^p(\bbrn)}.
\end{equation*}
Then it is already proved in \cite[(3.14)]{Gr_Mi_Ng_Tom2017} that the preceding expression is dominated by the right-hand side of \eqref{mainreductionest} for $s>n/\min{\{p_1,p_2,p_3\}}+n/2$, where
 we remark that $\min{\{p_1,p_2,p_3\}}\le 1$.
This proves \eqref{mainreductionest} for $\mu=1$.

\subsection{Proof of \eqref{mainreductionest} for $\mu=2$}
Recall that
\begin{equation}\label{t22}
T_{\sigma}^{(2)}(f_1,f_2,f_3)=\sum_{j\in\bbz}T_{\sigma_j}\big(\La_jf_1,\La_jf_2,\Ga_jf_3\big)
\end{equation}
and observe that
$$T_{\sigma_j}\big(\La_jf_1,\La_jf_2,\Ga_jf_3\big)(x)=\sigma_j^{\vee}\ast_{3n}\big(\La_jf_1\otimes \La_jf_2\otimes \Ga_jf_3 \big)(x,x,x)$$
where $\ast_{3n}$ means the convolution on $\bbr^{3n}$.

It suffices to consider the case when $(1/p_1,1/p_2,1/p_3)$ belongs to $\RR_1$ or $\RR_3$, as the remaining case is symmetrical to the case $(1/p_1,1/p_2,1/p_3)\in \RR_1$, in view of \eqref{t22}.
We will mainly focus on the case $(1/p_1,1/p_2,1/p_3)\in \RR_1$, while simply providing a short description for the case $(1/p_1,1/p_2,1/p_3)\in\RR_3$ in the remark below as almost same arguments will be applied in that case.

Therefore, we now assume $0<p_1\le 1$ and $2<p_2,p_3<\infty$,  and in turn, suppose that $s>n/p_1+n/2$.
By using the atomic decomposition in \eqref{atomdecomp}, the function $f_1\in H^{p_1}(\bbrn)$ can be written as
$f_1=\sum_{k=1}^{\infty}\la_k a_k$ where $a_k$'s are $H^{p_1}$-atoms associated with cubes $Q_k$, and 
\begin{equation}\label{hardylambda}
\Big(\sum_{k=1}^{\infty}|\lambda_k|^{p_1}\Big)^{1/{p_1}}\lesssim 1.
\end{equation}
 As mentioned before, we may assume that $M$ is sufficiently large and $\int{x^{\ga}a_k(x)}dx=0$ holds for all multi-indices $|\ga|\le M$.

By the definition in \eqref{hardydef2}, we have
\begin{align*}
\big\Vert T_{\sigma}^{(2)}(f_1,f_2,f_3)\big\Vert_{H^p(\bbrn)}\sim \bigg\Vert \sup_{l\in\bbz} \Big| \sum_{k=0}^{\infty}\la_k\phi_l\ast \Big( \sum_{j\in\bbz} T_{\sigma_j}(\La_ja_k,\La_jf_2,\Ga_jf_3)  \Big)     \Big|\bigg\Vert_{L^p(\bbrn)}
\end{align*}
and thus we need to prove that
\begin{equation}\label{mainmaingoal}
\bigg\Vert \sup_{l\in\bbz} \Big| \sum_{k=0}^{\infty}\la_k\phi_l\ast \Big( \sum_{j\in\bbz} T_{\sigma_j}(\La_ja_k,\La_jf_2,\Ga_jf_3)  \Big)     \Big|\bigg\Vert_{L^p(\bbrn)}\lesssim 1.
\end{equation}
The left-hand side is less than the sum of
$$\II:=\bigg\Vert \sup_{l\in\bbz}\Big| \sum_{k=0}^{\infty}\la_k \chi_{Q_k^{***}} \phi_l\ast \Big( \sum_{j\in\bbz} T_{\sigma_j}(\La_ja_k,\La_jf_2,\Ga_jf_3)    \Big)   \Big|\bigg\Vert_{L^p(\bbrn)}$$
and
$$\JJ:=\bigg\Vert \sup_{l\in\bbz}\Big| \sum_{k=0}^{\infty}\la_k  \chi_{(Q_k^{***})^c}\phi_l\ast \Big( \sum_{j\in\bbz} T_{\sigma_j}(\La_ja_k,\La_jf_2,\Ga_jf_3)   \Big)   \Big|\bigg\Vert_{L^p(\bbrn)}$$
recalling that $Q_k^{***}$ is the dilate of $Q_k$ by a factor $(10\sqrt{n})^3$. 
The two terms $\II$ and $\JJ$ will be treated separately in the next two sections.

\begin{remark}
When $(1/p_1,1/p_2,1/p_3)\in \RR_3$ (that is, $0<p_3\le 1, ~2<p_1,p_2< \infty$), we need to prove
$$\bigg\Vert \sup_{l\in\bbz} \Big| \sum_{k=0}^{\infty}\la_k\phi_l\ast \Big( \sum_{j\in\bbz} T_{\sigma_j}(\La_jf_1,\La_jf_2,\Ga_j\wt{a_k})  \Big)     \Big|\bigg\Vert_{L^p(\bbrn)}\lesssim 1$$
where $\wt{a_k}$ is the $H^{p_3}$-atom associated with $f_3$.
This is actually, via symmetry, equivalent to the estimate that for $0<p_1\le 1$ and  $2<p_2,p_3< \infty$,
\begin{equation}\label{symmetricmain}
\bigg\Vert \sup_{l\in\bbz} \Big| \sum_{k=0}^{\infty}\la_k\phi_l\ast \Big( \sum_{j\in\bbz} T_{\sigma_j}(\Ga_ja_k,\La_jf_2,\La_jf_3)  \Big)     \Big|\bigg\Vert_{L^p(\bbrn)}\lesssim 1
\end{equation}
where $a_k$ is the $H^{p_1}$-atom for $f_1$.
The proof of \eqref{symmetricmain} is almost same as that of \eqref{mainmaingoal} which will be discussed in Sections \ref{estimateii} and \ref{estimatejj}.
So this will not be pursued in this paper, just saying that \eqref{estgaak} will be needed rather than \eqref{estlaak}, and
the estimate
$\big\Vert \big\{ \Ga_ja_k\big\}_{j\in\bbz}\big\Vert_{L^r(\ell^{\infty})}\sim \Vert a_k\Vert_{H^r(\bbrn)}$
will be required in place of the equivalence $\big\Vert \big\{ \La_ja_k\big\}_{j\in\bbz}\big\Vert_{L^r(\ell^{2})}\sim \Vert a_k\Vert_{H^r(\bbrn)}$.

\end{remark}

\section{Proof of Proposition \ref{mainproposition} : Estimate for $\II$}\label{estimateii}

For the estimation of $\II$, we need the following lemma whose proof will be given in Section \ref{proofoflemmas}.
\begin{lemma}\label{keylemma1}
Let $0<p_1\le 1$ and $2<p_2,p_3<\infty$ and suppose that $\Vert f_1\Vert_{H^{p_1}(\bbrn)}=\Vert f_2\Vert_{H^{p_2}(\bbrn)}=\Vert f_3\Vert_{H^{p_3}(\bbrn)}=1$ and $\LL_s^2[\sigma]=1$ for  $s>n/p_1+n/2$. 
Then there exist nonnegative functions $u_1$, $u_2$, and $u_3$ on $\bbrn$ such that
$$\Vert u_{\ii}\Vert_{L^{p_{\ii}}(\bbrn)}\lesssim 1 \q \text{ for }~ \ii=1,2,3,$$
and for $x\in\bbrn$
\begin{equation}\label{keylemma1est}
\sup_{l\in\bbz}\Big|\sum_{k=0}^{\infty}\la_k\chi_{Q_k^{***}}(x)\phi_l\ast \Big( \sum_{j\in\bbz} T_{\sigma_j}(\La_ja_k,\La_jf_2,\Ga_jf_3)    \Big)(x)\Big| \lesssim u_1(x) u_2(x) u_3(x).
\end{equation}
\end{lemma}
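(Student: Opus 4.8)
The plan is to produce the majorants $u_1, u_2, u_3$ by decomposing the sum over $j$ according to the relative size of $2^{-j}$ and $\ell(Q_k)$, then exploiting the support constraint $x \in Q_k^{***}$ together with the vanishing moment condition. First I would fix $k$ and $l$, and split $\sum_{j}$ into the ranges $2^j \ell(Q_k) \le 1$ and $2^j \ell(Q_k) > 1$. In both ranges, I would write $T_{\sigma_j}(\La_j a_k, \La_j f_2, \Ga_j f_3)(x) = \sigma_j^\vee \ast_{3n}(\La_j a_k \otimes \La_j f_2 \otimes \Ga_j f_3)(x,x,x)$ and use \eqref{supequiv} to control $\sigma_j^\vee$ in an $L^2_s$-weighted sense; this converts the trilinear kernel bound into a product of three one-variable weighted integrals, each of which can be estimated by a Hardy--Littlewood-type maximal function via \eqref{maximalbound}. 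The factor coming from $\La_j a_k$ is handled by Lemma \ref{technical lemma}: the bound \eqref{estlaak} gives the gain $\min\{1, (2^j \ell(Q_k))^{M+n+1}\}$ in the small-scale regime and the decay $\langle 2^j(x-\xx_Q)\rangle^{-L_0}$ in the large-scale regime. Since $M$ may be taken arbitrarily large, summing $\min\{1,(2^j\ell(Q_k))^{M+n+1}\}$ over $j$ and against the $L^2_s$ weight is harmless once $s > n/p_1 + n/2$; the surplus regularity $s - n/p_1 - n/2 > 0$ is what I would spend on the convergence of the $j$-sum and on the factors coming from $f_2$ and $f_3$.

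Next I would assemble the majorants. For the $f_2$ and $f_3$ inputs, since $p_2, p_3 > 2$, the natural objects are $\sup_j$ of suitably weighted averages of $\La_j f_2$ and $\Ga_j f_3$; the three auxiliary Lemmas \ref{lacunarylemma}, \ref{nonlacunarylemma}, and Lemma \ref{almost orthogonality} are tailored precisely to bound the $L^{p_2}$ and $L^{p_3}$ norms of such expressions by $\|f_2\|_{H^{p_2}}$ and $\|f_3\|_{H^{p_3}}$ respectively (using the $\varphi$-transform representations \eqref{character0}, \eqref{characterc1} and the maximal inequalities \eqref{hlmax}, \eqref{okl}). Concretely, I would set $u_2$ to be (a constant times) the square function / maximal expression built from $\{\La_j f_2\}$ that appears after applying Cauchy--Schwarz in $j$ against the regularity weight, and similarly $u_3$ from $\{\Ga_j f_3\}$; then $\|u_2\|_{L^{p_2}} \lesssim 1$ and $\|u_3\|_{L^{p_3}} \lesssim 1$ follow from those lemmas. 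The hardest input is $u_1$: here I would use that $x \in Q_k^{***}$, so that $\chi_{Q_k^{***}}(x)\ell(Q_k)^{-n/p_1} \lesssim \mathcal{M}_{p_1}\big(\ell(Q_k)^{-n/p_1}\chi_{Q_k}\big)(x)$-type bounds hold, combine this with $\big(\sum_k |\lambda_k|^{p_1}\big)^{1/p_1} \lesssim 1$ from \eqref{hardylambda}, and invoke the vector-valued maximal inequality \eqref{hlmax} (valid since $p_1 < p_2, p_3$ are not an obstruction — one runs the maximal operator in $L^{p_1}$ on the atom-indexed family). The resulting $u_1$ is a maximal function of $\sum_k |\lambda_k| \ell(Q_k)^{-n/p_1}\chi_{Q_k}$, whose $L^{p_1}$ norm is $\lesssim 1$ by the atomic size estimates and \eqref{hlmax}.

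I would then carry out the Hölder-type bookkeeping: after the $j$-sum one is left pointwise (for each $x \in \bigcup_k Q_k^{***}$) with a product of three factors, one controlled by $u_1(x)$, one by $u_2(x)$, one by $u_3(x)$, uniformly in $l$; the key technical point is that the interaction of $\phi_l \ast$ with the $j$-sum contributes no loss because $\phi_l$ is a nice bump (and, in the small $l$ regime where $2^l \ell(Q_k) \le 1$, one can additionally exploit \eqref{vanishingmoment} / \eqref{vanishingreduction} via Lemma \ref{epsilon control} to get extra decay in $2^l\ell(Q_k)$, though for the piece $\II$ localized to $Q_k^{***}$ this may not be strictly necessary). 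Summation over $k$ of the $\lambda_k$-weighted pieces is absorbed into the definition of $u_1$ as above.

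The main obstacle I anticipate is the $u_1$ construction together with the overlap of the cubes $Q_k^{***}$: one must sum the $k$-indexed contributions inside the supremum over $l$ and inside the $L^{p_1}$ (quasi-)norm without losing the $\ell^{p_1}$ summability of $\{\lambda_k\}$, which forces one to move the $\lambda_k$-sum through a maximal operator rather than through the $p_1$-triangle inequality naively; keeping the three-factor product structure \eqref{keylemma1est} intact through this step — so that Hölder with $1/p = 1/p_1 + 1/p_2 + 1/p_3$ can be applied at the very end — is the delicate part, and is exactly why the argument is split off into Section \ref{proofoflemmas}.
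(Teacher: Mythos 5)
Your overall strategy---reduce to a pointwise bound that factors as a product of three functions $u_1(x)u_2(x)u_3(x)$, with each $u_j$ controlled in $L^{p_j}$---is the right one, and some individual ingredients you cite do belong in the general picture. But your construction of $u_1$ contains a genuine gap, and the rest of the plan is considerably heavier than what the paper actually does for this lemma.

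The gap: you propose to take $u_1$ to be a maximal function of $\sum_k |\lambda_k|\,\ell(Q_k)^{-n/p_1}\chi_{Q_k}$ and bound its $L^{p_1}$ norm ``by the atomic size estimates and \eqref{hlmax}.'' This cannot work: since $p_1\le 1$, neither $\mathcal M$ nor any $\mathcal M_r$ with $r\ge 1$ is bounded on $L^{p_1}(\bbrn)$, and \eqref{hlmax} requires $r<p_1$, which is incompatible with the H\"older exponent $r>1$ you need to pair with the $L^{r'}$ bound on the kernel $\sigma_j^\vee$. Moreover you explicitly say you want to route the $\lambda_k$-sum through a maximal operator ``rather than through the $p_1$-triangle inequality naively,'' but the paper does exactly the opposite: because of the localizing factor $\chi_{Q_k^{***}}$ in the statement, the paper keeps it inside the definition
\[
u_1(x):=\sum_k \la_k\,\chi_{Q_k^{***}}(x)\,\mathcal M_q\Big(\big\|\{\mathcal M_r\La_j a_k\}_j\big\|_{\ell^2}\Big)(x),
\]
applies the $p_1$-triangle inequality to pull the sum over $k$ outside the $L^{p_1}$ quasinorm, then H\"older on each $Q_k^{***}$ (which has finite measure) converts $L^{p_1}$ to $L^{r_0}$ for some $r_0>q>1$, where $\mathcal M_q$ \emph{is} bounded. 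The localization $\chi_{Q_k^{***}}$ is the mechanism that makes the $p_1$-triangle inequality harmless, and your construction throws that localization away before the $L^{p_1}$ estimate.

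The remaining differences are more about efficiency than correctness. You propose splitting the $j$-sum by $2^j\ell(Q_k)\lessgtr 1$, invoking Lemma \ref{technical lemma} for $\La_j a_k$, using Lemmas \ref{almost orthogonality}, \ref{lacunarylemma}, \ref{nonlacunarylemma} for $u_2,u_3$, and possibly exploiting the vanishing moment condition via Lemma \ref{epsilon control}. None of that is needed for the localized piece $\II$. The paper's proof of Lemma \ref{keylemma1} is direct: it first proves the single pointwise estimate
\[
\big|T_{\sigma_j}(\La_j a_k,\La_jf_2,\Ga_jf_3)(y)\big|\lesssim \mathcal M_r\La_ja_k(y)\,\mathcal M_r\La_jf_2(y)\,\mathcal M_r\Ga_jf_3(y)
\]
for some $1<r<2$ with $s>3n/r$, by H\"older against the kernel together with the Hausdorff--Young inequality and \eqref{maximalbound}; it then applies Cauchy--Schwarz in $j$ and H\"older over the ball $\{|x-y|\le 2^{-l}\}$ with three exponents $q,\tilde r,\tilde r$ satisfying $1/q+2/\tilde r=1$, $2<\tilde r<p_2,p_3$, yielding $u_2=\mathcal M_{\tilde r}(\|\{\mathcal M_r\La_jf_2\}\|_{\ell^2})$ and $u_3=\mathcal M_{\tilde r}(\|\{\mathcal M_r\Ga_jf_3\}\|_{\ell^\infty})$, whose bounds follow from \eqref{hlmax}, \eqref{hardydef}, \eqref{hardydeflittle} alone. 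The $\varphi$-transform machinery and the vanishing moment condition are reserved for the off-cube piece $\JJ$ (Lemmas \ref{keylemma2}--\ref{keylemma12}). Your instinct that the vanishing moments ``may not be strictly necessary'' for the localized piece is correct, but you would save yourself much of the proposed work by recognizing that the entire $j$-range splitting and $\varphi$-transform apparatus is also unnecessary here.
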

This lemma, together with H\"older's inequality, clearly shows that 
$$\II\lesssim \Vert u_1\Vert_{L^{p_1}(\bbrn)}\Vert u_2\Vert_{L^{p_2}(\bbrn)}\Vert u_3\Vert_{L^{p_3}(\bbrn)}\lesssim 1.$$

\section{Proof of Proposition \ref{mainproposition} : Estimate for $\JJ$}\label{estimatejj}
Recall that for each $Q_k$ and $l\in\bbz$, $B_{\xx_{Q_k}}^l=B(\xx_{Q_k},100n2^{-l})$ stands for the ball of radius $100n2^{-l}$ and center $\xx_{Q_k}$.
Simply writing $B_k^l:=B_{\xx_{Q_k}}^l$, we bound $\JJ$ by the sum of 
$$\JJ_1:=\bigg\Vert \sup_{l\in\bbz}\Big| \sum_{k=0}^{\infty}\la_k  \chi_{(Q_k^{***})^c}\chi_{(B_{k}^l)^c}\phi_l\ast \Big( \sum_{j\in\bbz} T_{\sigma_j}(\La_ja_k,\La_jf_2,\Ga_jf_3)   \Big)   \Big|\bigg\Vert_{L^p(\bbrn)}$$
and
$$\JJ_2:=\bigg\Vert \sup_{l\in\bbz}\Big| \sum_{k=0}^{\infty}\la_k  \chi_{(Q_k^{***})^c}\chi_{B_k^l}\phi_l\ast \Big( \sum_{j\in\bbz} T_{\sigma_j}(\La_ja_k,\La_jf_2,\Ga_jf_3)   \Big)   \Big|\bigg\Vert_{L^p(\bbrn)}$$
 and treat them separately.

\subsection{Estimate for $\JJ_1$}
Using the representations in \eqref{character0} and \eqref{characterc1}, we write
$$\La_jf_2(x):=\sum_{P\in\mathcal{D}_j}b_P^2\psi^P(x),\qq \Ga_jf_3(x):=\sum_{R\in\mathcal{D}_j}b_R^3\theta^R(x)$$
where we recall $\psi^P(x)=|P|^{{1}/{2}}\psi_j(x-\xx_P)$ and $\theta^R(x)=|R|^{{1}/{2}}\theta_j(x-\xx_R)$ for $P,R\in\mathcal{D}_j$.
Then it follows from \eqref{charactera}, \eqref{characterc2}, \eqref{hardydef}, and \eqref{hardydeflittle} that
\begin{equation}\label{bp2}
\big\Vert \{b_P^2\}_{P\in\mathcal{D}}\big\Vert_{\dot{f}^{p_2,2}}\sim \big\Vert \big\{\La_jf_2\big\}_{j\in\bbz}\big\Vert_{L^{p_2}(\ell^2)}\sim \Vert f_2\Vert_{H^{p_2}(\bbrn)}= 1
\end{equation}
and
\begin{equation}\label{br3}
\big\Vert \{b_R^3\}_{R\in\mathcal{D}}\big\Vert_{\dot{f}^{p_3,\infty}}\sim \big\Vert \big\{\Ga_jf_3\big\}_{j\in\bbz}\big\Vert_{L^{p_3}(\ell^{\infty})}\sim \Vert f_3\Vert_{H^{p_3}(\bbrn)}=1.
\end{equation}
We write
\begin{equation*}
 \phi_l\ast \Big( \sum_{j\in\bbz} T_{\sigma_j}(\La_ja_k,\La_jf_2,\Ga_jf_3)   \Big)(x)= \sum_{\nu=1}^{4}  \phi_l\ast \big( \UU_{\nu}(x,\cdot)\big)(x)
\end{equation*}
where
\begin{equation}\label{omeganudef}
\Omega_{\nu}(P,R):=\begin{cases}
P\cap R & \nu=1\\
P^c\cap R & \nu=2\\
P\cap R^c & \nu=3\\
P^c \cap R^c & \nu=4
\end{cases}
\end{equation}
and
\begin{equation}\label{unudef}
\UU_{\nu}(x,y):= \sum_{j\in\bbz} \sum_{P\in\mathcal{D}_j}\sum_{R\in\mathcal{D}_j} b_P^2 b_R^3  T_{\sigma_j}\big(\La_ja_k,\psi^P,\theta^R\big)(y)  \chi_{\Omega_{\nu}(P,R)}(x), \q \nu=1,2,3,4.
\end{equation}

Then we have 
$$\JJ_1 \lesssim_p \JJ_{1}^1+\JJ_{1}^2+\JJ_{1}^3+\JJ_{1}^4$$
 where
\begin{equation*}
\JJ_{1}^{\nu}:=\bigg\Vert \sup_{l\in\bbz}\; \Big| \sum_{k=0}^{\infty}\la_k\chi_{(Q_k^{***})^c}(x)\chi_{(B_k^l)^c}(x) \phi_l\ast\big( \UU_{\nu}(x,\cdot)\big)(x)\Big|\bigg\Vert_{L^p(x)},\qq \nu=1,2,3,4.
\end{equation*} 

Now we will show that 
\begin{equation}\label{jkeyest}
\JJ_1^{\nu}\lesssim 1, \qq \nu=1,2,3,4.
\end{equation}

\subsubsection{Proof of \eqref{jkeyest} for $\nu=1$}
We further decompose $\UU_1(x,y)$ as 
$$\UU_1(x,y)=\UU_1^{\mathrm{in}}(x,y)+\UU_1^{\mathrm{out}}(x,y)$$ 
where
\begin{equation}\label{u1inoutdef}
\begin{aligned}
\UU_1^{\mathrm{in}}(x,y)&:=\sum_{j\in\bbz} \sum_{P\in\mathcal{D}_j}\sum_{R\in\mathcal{D}_j} b_P^2 b_R^3  T_{\sigma_j}\big(\chi_{Q_k^*}\La_ja_k,\psi^P,\theta^R\big)(y)  \chi_{P\cap R}(x),\\
\UU_1^{\mathrm{out}}(x,y)&:=\sum_{j\in\bbz} \sum_{P\in\mathcal{D}_j}\sum_{R\in\mathcal{D}_j} b_P^2 b_R^3  T_{\sigma_j}\big(\chi_{(Q_k^*)^c}\La_ja_k,\psi^P,\theta^R\big)(y)  \chi_{P\cap R}(x),
\end{aligned}
\end{equation}
and accordingly, we define
 \begin{equation*}
 \JJ_1^{1,\mathrm{in}/\mathrm{out}}:=\bigg\Vert \sup_{l\in\bbz}\; \Big| \sum_{k=0}^{\infty}\la_k\chi_{(Q_k^{***})^c}(x)\chi_{(B_k^l)^c}(x) \phi_l\ast\big( \UU_{1}^{\mathrm{in}/\mathrm{out}}(x,\cdot)\big)(x)\Big|\bigg\Vert_{L^p(x)}.
 \end{equation*}

Then we claim the following lemma.
\begin{lemma}\label{keylemma2}
Let $0<p_1\le 1$ and $2<p_2,p_3<\infty$ and let $\UU_1^{\mathrm{in}/\mathrm{out}}$ be defined as in \eqref{u1inoutdef}.
Suppose that $\Vert f_1\Vert_{H^{p_1}(\bbrn)}=\Vert f_2\Vert_{H^{p_2}(\bbrn)}=\Vert f_3\Vert_{H^{p_3}(\bbrn)}=1$ and $\LL_s^2[\sigma]=1$ for $s>n/p_1+n/2$. 
Then there exist nonnegative functions $u_1^{\mathrm{in}}$, $u_1^{\mathrm{out}}$, $u_2$, and $u_3$ on $\bbrn$ such that
$$\big\Vert u_1^{\mathrm{in}/\mathrm{out}}\big\Vert_{L^{p_1}(\bbrn)}\lesssim 1, \qq \Vert u_{\ii}\Vert_{L^{p_{\ii}}(\bbrn)}\lesssim 1 \q \text{ for }~ \ii=2,3,$$
and for $x\in\bbrn$
\begin{equation}\label{keylemma2est}
\sup_{l\in\bbz}\; \Big| \sum_{k=0}^{\infty}\la_k\chi_{(Q_k^{***})^c}(x)\chi_{(B_k^l)^c}(x) \phi_l\ast\big( \UU_{1}^{\mathrm{in}/\mathrm{out}}(x,\cdot)\big)(x)\Big| \lesssim u_1^{\mathrm{in}/\mathrm{out}}(x) u_2(x) u_3(x).
\end{equation}
\end{lemma}

The proof of Lemma \ref{keylemma2}  will be given in Section \ref{proofoflemmas}.
Taking the lemma for granted and using H\"older's inequality, we can easily show that
$$\JJ_1^1\lesssim_p \JJ_1^{1,\mathrm{in}}+\JJ_1^{1,\mathrm{out}}\lesssim 1.$$

\subsubsection{Proof of \eqref{jkeyest} for $\nu=2$}
For $P\in\DD$ and $l\in\bbz$ let $B_P^l:=B_{\xx_P}^l=B(\xx_P,100n2^{-l})$.
By introducing
\begin{equation}\label{u2etainoutdef}
\begin{aligned}
\UU_2^{1,\mathrm{in}}(x,y)&:=\sum_{j\in\bbz} \sum_{P\in\mathcal{D}_j}\sum_{R\in\mathcal{D}_j} b_P^2 b_R^3  T_{\sigma_j}\big(\chi_{Q_k^*}\La_ja_k,\psi^P,\theta^R\big)(y)  \chi_{P^c\cap (B_P^l)^c\cap R}(x),\\
\UU_2^{1,\mathrm{out}}(x,y)&:=\sum_{j\in\bbz} \sum_{P\in\mathcal{D}_j}\sum_{R\in\mathcal{D}_j} b_P^2 b_R^3  T_{\sigma_j}\big(\chi_{(Q_k^*)^c}\La_ja_k,\psi^P,\theta^R\big)(y) \chi_{P^c\cap (B_P^l)^c\cap R}(x),\\
\UU_2^{2,\mathrm{in}}(x,y)&:=\sum_{j\in\bbz} \sum_{P\in\mathcal{D}_j}\sum_{R\in\mathcal{D}_j} b_P^2 b_R^3  T_{\sigma_j}\big(\chi_{Q_k^*}\La_ja_k,\psi^P,\theta^R\big)(y)  \chi_{P^c\cap B_P^l\cap R}(x),\\
\UU_2^{2,\mathrm{out}}(x,y)&:=\sum_{j\in\bbz} \sum_{P\in\mathcal{D}_j}\sum_{R\in\mathcal{D}_j} b_P^2 b_R^3  T_{\sigma_j}\big(\chi_{(Q_k^*)^c}\La_ja_k,\psi^P,\theta^R\big)(y)  \chi_{P^c\cap B_P^l\cap R}(x),
\end{aligned}
\end{equation}
we write $\UU_2=\UU_2^{1,\mathrm{in}}+\UU_2^{1,\mathrm{out}}+\UU_2^{2,\mathrm{in}}+\UU_2^{2,\mathrm{out}}$
and consequently,
$$\JJ_1^2 \lesssim_p \JJ_1^{2,1,\mathrm{in}}+\JJ_1^{2,1,\mathrm{out}}+\JJ_1^{2,2,\mathrm{in}}+\JJ_1^{2,2,\mathrm{out}}$$
where 
\begin{equation*}
\JJ_{1}^{2,\eta,\mathrm{in}/\mathrm{out}}:=\bigg\Vert \sup_{l\in\bbz}\; \Big|\sum_{k=0}^{\infty}\la_k \chi_{(Q_k^{***})^c}(x)\chi_{(B_k^l)^c}(x)\;  \phi_l\ast\big( \UU_{2}^{\eta,\mathrm{in}/\mathrm{out}}(x,\cdot)\big)(x) \Big| \bigg\Vert_{L^p(x)},\q \eta=1,2.
\end{equation*}

Then we apply the following lemma that will be proved in Section \ref{proofoflemmas}.
\begin{lemma}\label{keylemma3}
Let $0<p_1\le 1$ and $2<p_2,p_3<\infty$ and let $\UU_2^{\eta,\mathrm{in}/\mathrm{out}}$ be defined as in \eqref{u2etainoutdef}.
Suppose that $\Vert f_1\Vert_{H^{p_1}(\bbrn)}=\Vert f_2\Vert_{H^{p_2}(\bbrn)}=\Vert f_3\Vert_{H^{p_3}(\bbrn)}=1$ and $\LL_s^2[\sigma]=1$ for $s>n/p_1+n/2$. 
Then there exist nonnegative functions $u_1^{\mathrm{in}}$, $u_1^{\mathrm{out}}$, $u_2$, and $u_3$ on $\bbrn$ such that
$$\big\Vert u_1^{\mathrm{in}/\mathrm{out}}\big\Vert_{L^{p_1}(\bbrn)}\lesssim 1, \qq \Vert u_{\ii}\Vert_{L^{p_{\ii}}(\bbrn)}\lesssim 1 \q \text{ for }~ \ii=2,3,$$
and for each $\eta=1,2$ 
\begin{equation}\label{keylemma3est}
\sup_{l\in\bbz}\; \Big| \sum_{k=0}^{\infty}\la_k\chi_{(Q_k^{***})^c}(x)\chi_{(B_k^l)^c}(x) \phi_l\ast\big( \UU_{2}^{\eta,\mathrm{in}/\mathrm{out}}(x,\cdot)\big)(x)\Big| \lesssim u_1^{\mathrm{in}/\mathrm{out}}(x) u_2(x) u_3(x).
\end{equation}
\end{lemma}

Then Lemma \ref{keylemma3} and H\"older's inequality yield that $\JJ_1^2$ is controlled by the sum of four terms in the form
\begin{align*}
\big\Vert u_1^{\mathrm{in}/\mathrm{out}}\big\Vert_{L^{p_1}(\bbrn)}\Vert u_2\Vert_{L^{p_2}(\bbrn)} \Vert u_3\Vert_{L^{p_3}(\bbrn)},
\end{align*}
which is obviously less than a constant.
This proves \eqref{jkeyest} for $\nu=2$.

\subsubsection{Proof of \eqref{jkeyest} for $\nu=3$}
This case is essentially symmetrical to the case $\nu=2$.
For $R\in\DD$ and $l\in\bbz$ let $B_R^l:=B_{\xx_R}^l=B(\xx_R,100n2^{-l})$.
Let
\begin{equation}\label{u3etainoutdef}
\begin{aligned}
\UU_3^{1,\mathrm{in}}(x,y)&:=\sum_{j\in\bbz} \sum_{P\in\mathcal{D}_j}\sum_{R\in\mathcal{D}_j} b_P^2 b_R^3  T_{\sigma_j}\big(\chi_{Q_k^*}\La_ja_k,\psi^P,\theta^R\big)(y)  \chi_{P\cap R^c\cap (B_R^l)^c}(x),\\
\UU_3^{1,\mathrm{out}}(x,y)&:=\sum_{j\in\bbz} \sum_{P\in\mathcal{D}_j}\sum_{R\in\mathcal{D}_j} b_P^2 b_R^3  T_{\sigma_j}\big(\chi_{(Q_k^*)^c}\La_ja_k,\psi^P,\theta^R\big)(y)  \chi_{P\cap R^c\cap (B_R^l)^c}(x),\\
\UU_3^{2,\mathrm{in}}(x,y)&:=\sum_{j\in\bbz} \sum_{P\in\mathcal{D}_j}\sum_{R\in\mathcal{D}_j} b_P^2 b_R^3  T_{\sigma_j}\big(\chi_{Q_k^*}\La_ja_k,\psi^P,\theta^R\big)(y)  \chi_{P\cap R^c\cap B_R^l}(x),\\
\UU_3^{2,\mathrm{out}}(x,y)&:=\sum_{j\in\bbz} \sum_{P\in\mathcal{D}_j}\sum_{R\in\mathcal{D}_j} b_P^2 b_R^3 T_{\sigma_j}\big(\chi_{(Q_k^*)^c}\La_ja_k,\psi^P,\theta^R\big)(y)  \chi_{P\cap R^c\cap B_R^l}(x),
\end{aligned}
\end{equation}
and then we write 
$$\JJ_1^3\lesssim_p \JJ_1^{3,1,\mathrm{in}}+\JJ_1^{3,1,\mathrm{out}}+\JJ_1^{3,2,\mathrm{in}}+\JJ_1^{3,2,\mathrm{out}}$$
where
\begin{equation*}
\JJ_{1}^{3,\eta,\mathrm{in}/\mathrm{out}}:=\bigg\Vert \sup_{l\in\bbz}\; \Big| \sum_{k=0}^{\infty} \la_k \chi_{(Q_k^{***})^c}(x)\chi_{(B_k^l)^c}(x)\; \phi_l\ast\big( \UU_{3}^{\eta,\mathrm{in}/\mathrm{out}}(x,\cdot)\big)(x) \Big|\bigg\Vert_{L^p(x)}, \q\eta=1,2.
\end{equation*}
Now \eqref{jkeyest} for $\nu=3$ follows from the lemma below.
\begin{lemma}\label{keylemma4}
Let $0<p_1\le 1$ and $2<p_2,p_3<\infty$ and let $\UU_3^{\eta,\mathrm{in}/\mathrm{out}}$ be defined as in \eqref{u3etainoutdef}.
Suppose that $\Vert f_1\Vert_{H^{p_1}(\bbrn)}=\Vert f_2\Vert_{H^{p_2}(\bbrn)}=\Vert f_3\Vert_{H^{p_3}(\bbrn)}=1$ and $\LL_s^2[\sigma]=1$ for $s>n/p_1+n/2$. 
Then there exist nonnegative functions $u_1^{\mathrm{in}}$, $u_1^{\mathrm{out}}$, $u_2$, and $u_3$ on $\bbrn$ such that
$$\big\Vert u_1^{\mathrm{in}/\mathrm{out}}\big\Vert_{L^{p_1}(\bbrn)}\lesssim 1, \qq \Vert u_{\ii}\Vert_{L^{p_{\ii}}(\bbrn)}\lesssim 1 \q \text{ for }~ \ii=2,3,$$
and for each $\eta=1,2$
\begin{equation*}
\sup_{l\in\bbz}\; \Big| \sum_{k=0}^{\infty}\la_k\chi_{(Q_k^{***})^c}(x)\chi_{(B_k^l)^c}(x) \phi_l\ast\big( \UU_{3}^{\eta,\mathrm{in}/\mathrm{out}}(x,\cdot)\big)(x)\Big| \lesssim u_1^{\mathrm{in}/\mathrm{out}}(x) u_2(x) u_3(x).
\end{equation*}
\end{lemma}
The proof of the lemma will be provided in Section \ref{proofoflemmas}.

\subsubsection{Proof of \eqref{jkeyest} for $\nu=4$}
In this case, we divide $\UU_4$ into eight types depending on whether $x$ belongs to each of $B_P^l$ and $B_R^l$ and whether $\La_ja_k$ is supported in $Q_k^*$.
Indeed,  let
\begin{equation}\label{xietadef}
\Xi_{\eta}(P,R,l):=\begin{cases}
P^c\cap R^c\cap (B_P^l)^c\cap (B_R^l)^c, & \eta=1\\
P^c\cap R^c\cap (B_P^l)^c\cap B_R^l, & \eta=2\\
P^c\cap R^c\cap B_P^l\cap (B_R^l)^c, & \eta=3\\
P^c\cap R^c\cap B_P^l\cap B_R^l, & \eta=4
\end{cases}
\end{equation}
and we define 
\begin{equation}\label{u4etainoutdef}
\begin{aligned}
\UU_4^{\eta,\mathrm{in}}(x,y)&:=\sum_{j\in\bbz} \sum_{P\in\mathcal{D}_j}\sum_{R\in\mathcal{D}_j} b_P^2 b_R^3  T_{\sigma_j}\big(\chi_{Q_k^*}\La_ja_k,\psi^P,\theta^R\big)(y)  \chi_{\Xi_{\eta}}(x),\\
\UU_4^{\eta,\mathrm{out}}(x,y)&:=\sum_{j\in\bbz} \sum_{P\in\mathcal{D}_j}\sum_{R\in\mathcal{D}_j} b_P^2 b_R^3  T_{\sigma_j}\big(\chi_{(Q_k^*)^c}\La_ja_k,\psi^P,\theta^R\big)(y)  \chi_{\Xi_{\eta}}(x)
\end{aligned}
\end{equation}
for $\eta=1,2,3,4$.

Then we use the following lemma to obtain the desired result.
\begin{lemma}\label{keylemma5}
Let $0<p_1\le 1$ and $2<p_2,p_3<\infty$ and let $\UU_4^{\eta,\mathrm{in}/\mathrm{out}}$ be defined as in \eqref{u4etainoutdef}.
Suppose that $\Vert f_1\Vert_{H^{p_1}(\bbrn)}=\Vert f_2\Vert_{H^{p_2}(\bbrn)}=\Vert f_3\Vert_{H^{p_3}(\bbrn)}=1$ and $\LL_s^2[\sigma]=1$ for $s>n/p_1+n/2$. 
Then there exist nonnegative functions $u_1^{\mathrm{in}}$, $u_1^{\mathrm{out}}$, $u_2$, and $u_3$ on $\bbrn$ such that
\begin{equation}\label{keylemma5vconditions}
\big\Vert u_1^{\mathrm{in}/\mathrm{out}}\big\Vert_{L^{p_1}(\bbrn)}\lesssim 1, \qq \Vert u_{\ii}\Vert_{L^{p_{\ii}}(\bbrn)}\lesssim 1 \q \text{ for }~ \ii=2,3,
\end{equation}
and for each $\eta=1,2,3,4,$
\begin{equation}\label{keylemma5est}
\sup_{l\in\bbz}\; \Big| \sum_{k=0}^{\infty}\la_k\chi_{(Q_k^{***})^c}(x)\chi_{(B_k^l)^c}(x) \phi_l\ast\big( \UU_{4}^{\eta,\mathrm{in}/\mathrm{out}}(x,\cdot)\big)(x)\Big| \lesssim u_1^{\mathrm{in}/\mathrm{out}}(x) u_2(x) u_3(x).
\end{equation}
\end{lemma}
We will prove the lemma in Section \ref{proofoflemmas}.

\subsection{Estimate for $\JJ_2$}

Let $x\in (Q_k^{***})^c\cap B_{k,l}$. For $\nu=1,2,3,4$, let $\Omega_{\nu}(P,R)$ be defined as in \eqref{omeganudef}.
Then as in the proof of the estimate for $\JJ_1$, we consider the four cases:  $x \in \Omega_{1}(P,R)$, $x \in \Omega_{2}(P,R)$, $x \in \Omega_{3}(P,R)$, and $x \in \Omega_{4}(P,R)$.
That is, for each $\nu=1,2,3,4$, let $\UU_{\nu}$ be defined as in \eqref{unudef} and 
$$\JJ_2^{\nu}:=\bigg\Vert   \sup_{l\in\bbz} \Big| \sum_{k=0}^{\infty} \la_k \chi_{(Q_k^{***})^c}(x)\chi_{B_k^l}(x) \phi_l\ast \big(\UU_{\nu}(x,\cdot) \big)(x)\Big|  \bigg\Vert_{L^p(x)}.$$
Then it suffices to show that for each $\nu=1,2,3,4$,
\begin{equation}\label{j2keyest}
\JJ_2^{\nu}\lesssim 1.
\end{equation}

\subsubsection{Proof of \eqref{j2keyest} for $\nu=1$}
In this case, the proof can be simply reduced to the following lemma, which will be proved in Section \ref{proofoflemmas}.
\begin{lemma}\label{keylemma6}
Let $0<p_1\le 1$ and $2<p_2,p_3<\infty$ and let $\UU_1$ be defined as in \eqref{unudef}.
Suppose that $\Vert f_1\Vert_{H^{p_1}(\bbrn)}=\Vert f_2\Vert_{H^{p_2}(\bbrn)}=\Vert f_3\Vert_{H^{p_3}(\bbrn)}=1$ and $\LL_s^2[\sigma]=1$ for $s>n/p_1+n/2$. 
Then there exist nonnegative functions $u_1$, $u_2$, and $u_3$ on $\bbrn$ such that
$$\Vert u_{\ii}\Vert_{L^{p_{\ii}}(\bbrn)}\lesssim 1 \q \text{ for }~ \ii=1,2,3,$$
and for $x\in\bbrn$
\begin{equation}\label{keylemma6est}
\sup_{l\in\bbz}\; \Big| \sum_{k=0}^{\infty}\la_k\chi_{(Q_k^{***})^c}(x)\chi_{B_k^l}(x) \phi_l\ast\big( \UU_{1}(x,\cdot)\big)(x)\Big| \lesssim u_1(x) u_2(x) u_3(x).
\end{equation}
\end{lemma}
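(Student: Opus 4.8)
The plan is to prove the pointwise bound \eqref{keylemma6est} following the scheme used for the $\JJ_1^{\nu}$'s, the new feature being that $x$ is now \emph{close} to the atom centre $\xx_{Q_k}$ (within $100n2^{-l}$) while still outside $Q_k^{***}$, so that the mollification scale $2^{-l}$ is the dominant one. Since $0<p_1\le 1$ and $\big(\sum_k|\lambda_k|^{p_1}\big)^{1/p_1}\lesssim 1$, it suffices to produce, for each $k$, a nonnegative $v_k$ with $\|v_k\|_{L^{p_1}}\lesssim 1$ together with fixed $u_2=u_2(f_2)$, $u_3=u_3(f_3)$ satisfying $\|u_2\|_{L^{p_2}},\|u_3\|_{L^{p_3}}\lesssim 1$, such that $\chi_{(Q_k^{***})^c}(x)\chi_{B_k^l}(x)\big|\phi_l\ast(\UU_1(x,\cdot))(x)\big|\lesssim v_k(x)u_2(x)u_3(x)$ for every $l$, the right side being $l$-independent; then $u_1:=\big(\sum_k|\lambda_k|^{p_1}v_k^{p_1}\big)^{1/p_1}$ works, since $\|u_1\|_{L^{p_1}}^{p_1}=\sum_k|\lambda_k|^{p_1}\|v_k\|_{L^{p_1}}^{p_1}\lesssim 1$ and $\ell^{p_1}\hookrightarrow\ell^1$ gives \eqref{keylemma6est} after taking $\sup_l$ and summing in $k$.

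Next, the geometry and the collapse of the triple sum. On the relevant support, $x\in B_k^l$ gives $|x-\xx_{Q_k}|\le 100n2^{-l}$ while $x\notin Q_k^{***}$ gives $|x-\xx_{Q_k}|\gtrsim\ell(Q_k)$, hence $2^l\ell(Q_k)\lesssim 1$; and since $|\phi_l\ast g(x)|\lesssim 2^{ln}\|g\|_{L^1}$, the admissible supremum over $l$ is attained near $2^{-l}\sim\dist(x,\xx_{Q_k})$, so we take $2^{-l}\sim\dist(x,\xx_{Q_k})\gtrsim\ell(Q_k)$ throughout. In $\UU_1$ the cutoff $\chi_{P\cap R}(x)$ with $P,R\in\DD_j$ forces $P=R=Q_{j,x}$, so the triple sum collapses and $\phi_l\ast(\UU_1(x,\cdot))(x)=\sum_{j\in\bbz}b^2_{Q_{j,x}}b^3_{Q_{j,x}}\big\langle T_{\sigma_j}(\La_ja_k,\psi^{Q_{j,x}},\theta^{Q_{j,x}}),\phi_l(x-\cdot)\big\rangle$. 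We split this by Hölder in $j$ with exponents $(2,\infty,2)$: the $\ell^2$–factor built from $|b^2_{Q_{j,x}}||Q_{j,x}|^{-1/2}\lesssim\mathcal{M}\La_jf_2(x)$ gives $u_2(x):=\big(\sum_j(\mathcal{M}\La_jf_2(x))^2\big)^{1/2}$ with $\|u_2\|_{L^{p_2}}\lesssim\|f_2\|_{H^{p_2}}=1$ by \eqref{hlmax} and \eqref{hardydeflittle} (using $p_2>2$; this is essentially Lemma \ref{lacunarylemma}); the $\ell^\infty$–factor built from $|b^3_{Q_{j,x}}||Q_{j,x}|^{-1/2}\lesssim\sup_j\mathcal{M}\Ga_jf_3(x)$ gives $u_3(x)$ with $\|u_3\|_{L^{p_3}}\lesssim\|f_3\|_{H^{p_3}}=1$ by \eqref{hardydef} and the boundedness of $\mathcal{M}$ on $L^{p_3}$ ($p_3>1$; this is Lemma \ref{nonlacunarylemma}); and there remains the weight $\big(\sum_j c_j(x)^2\big)^{1/2}$ with $c_j(x):=|Q_{j,x}|\big|\big\langle T_{\sigma_j}(\La_ja_k,\psi^{Q_{j,x}},\theta^{Q_{j,x}}),\phi_l(x-\cdot)\big\rangle\big|$, to be dominated by $v_k$.

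The weight estimate is the heart of the proof. Using the kernel bounds for $\sigma_j$ coming from $\mathcal{L}^2_s[\sigma]=1$ (the $L^2_s\to$ weighted-$L^1$ passage, as in the proof of Theorems \ref{thmb} and \ref{thmd}), the size and decay of $\La_ja_k$ from Lemma \ref{technical lemma} — notably $\|\La_ja_k\|_{L^1}\lesssim\ell(Q_k)^{n-n/p_1}\min\{1,(2^j\ell(Q_k))^{M+1}\}$ and its decay off $Q_k^{*}$ — and $\|\psi^{Q_{j,x}}\|_{L^\infty},\|\theta^{Q_{j,x}}\|_{L^\infty}\lesssim 2^{jn/2}$, one bounds $c_j(x)$ in three regimes according to the position of $2^{-j}$ relative to the now fixed scale $2^{-l}\sim\dist(x,\xx_{Q_k})$: for $2^{-j}$ much larger, the factor $\min\{1,(2^j\ell(Q_k))^{M+1}\}$ decays and sums as $j\to-\infty$; for $2^{-j}$ much smaller, $\La_ja_k$ (near $Q_k$) and $\psi^{Q_{j,x}}$ (near $x$) are separated at scale $2^j\dist(x,\xx_{Q_k})\gg1$ and the kernel decay of $\sigma_j$, quantified by $s>n/p_1+n/2$, produces a gain $(2^j\dist(x,\xx_{Q_k}))^{-L}$ summable as $j\to+\infty$; at the interaction scale one combines the atom's $L^1$ gain with the remaining kernel decay. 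The outcome is $\big(\sum_j c_j(x)^2\big)^{1/2}\lesssim\ell(Q_k)^{-n/p_1}\big(\ell(Q_k)/\dist(x,\xx_{Q_k})\big)^{N}=:v_k(x)$ for some $N>n/p_1$, whence $\|v_k\|_{L^{p_1}}^{p_1}\lesssim\ell(Q_k)^{-n}\ell(Q_k)^{Np_1}\int_{|x-\xx_{Q_k}|\gtrsim\ell(Q_k)}|x-\xx_{Q_k}|^{-Np_1}\,dx\lesssim 1$ since $Np_1>n$.

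The main obstacle is the interaction regime $2^{-j}\sim2^{-l}\sim\dist(x,\xx_{Q_k})$, where no scale dominates and one must balance the bump growth $2^{jn}$ against the atom's $L^1$ smallness $\ell(Q_k)^{n-n/p_1}$ while extracting a \emph{fractional} amount of decay from the almost-orthogonality of the $\sigma_j$ kernels (Lemma \ref{almost orthogonality}), and then verify that the exponent $N$ so obtained strictly exceeds $n/p_1$ — which is precisely what the hypothesis $s>n/p_1+n/2$ guarantees. The uniformity in $l$, so that the supremum costs nothing, is handled by the reduction to $2^{-l}\sim\dist(x,\xx_{Q_k})$ made in the second step.
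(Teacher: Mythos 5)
Your structural plan (reduce to a per-atom $v_k$ with $\Vert v_k\Vert_{L^{p_1}}\lesssim 1$, collapse the triple sum to $P=R=Q_{j,x}$, split the $j$-sum by H\"older with exponents $(2,\infty,2)$, and observe that the admissible $l$ satisfy $2^{l}\lesssim |x-\xx_{Q_k}|^{-1}$) matches the shape of the paper's argument. The fatal gap is in the ``weight estimate'': you never invoke the vanishing moment hypothesis \eqref{vanishingmoment}/\eqref{vanishingreduction} on $T_{\sigma}$, and yet you claim a bound $\big(\sum_j c_j(x)^2\big)^{1/2}\lesssim \ell(Q_k)^{-n/p_1}\big(\ell(Q_k)/|x-\xx_{Q_k}|\big)^N$ with $N>n/p_1$. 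Without that hypothesis the bound is unreachable: the mollification contributes at most $2^{ln}\sim |x-\xx_{Q_k}|^{-n}$, the atom's vanishing moments control only the $j$-sum through the factor $\min\{1,(2^j\ell(Q_k))^{M+1}\}$, and the kernel decay available from $\LL_s^2[\sigma]=1$ (after reserving enough regularity to make the kernel absolutely integrable in the two remaining variables) supplies roughly $s-\tfrac{3n}{2}$-many additional powers of $|x-\xx_{Q_k}|^{-1}$; with $s>n/p_1+n/2$ this amounts to slightly more than $n/p_1-n$ extra powers, i.e.\ $N$ slightly above $n/p_1$ only if those powers were additive \emph{on top of} the $|x-\xx_{Q_k}|^{-n}$ from $\phi_l$, but in fact you cannot harvest kernel decay at the critical scale $2^{-j}\sim 2^{-l}$ because there the variables $z_1,z_2,z_3$ in $\sigma_j^{\vee}$ are all $O(2^{-l})$, so $2^j z_i=O(1)$ and the kernel is not in its decay regime. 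Balancing the atom's $L^1$ gain against the bump growth exactly as you describe gives only $|x-\xx_{Q_k}|^{-n}\ell(Q_k)^{n-n/p_1}$ near the critical scale, i.e.\ $N=n$, which is $\le n/p_1$ since $p_1\le 1$, so $\Vert v_k\Vert_{L^{p_1}}$ diverges.

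The paper closes exactly this gap by applying Lemma~\ref{epsilon control} to $T_{\sigma_j}(\La_ja_k,\psi^P,\theta^R)$, using the reduction \eqref{vanishingreduction} of the vanishing moment condition \eqref{vanishingmoment}: this replaces the trivial $2^{ln}\Vert\cdot\Vert_{L^1}$ by $2^{l(n+N_{p_1}+\epsilon)}\int |y-\xx_{Q_k}|^{N_{p_1}+\epsilon}|\cdots|\,dy$ with $N_{p_1}=[n/p_1-n]$, and that extra $N_{p_1}+\epsilon$ is precisely what pushes $N$ strictly above $n/p_1$. This is, not incidentally, the whole point of the theorem compared with Theorem~\ref{thmd}: in the multilinear setting $T_{\sigma}$ does not inherit vanishing moments from the $f_j$'s, so the assumption \eqref{vanishingmoment} must be used somewhere, and Lemma~\ref{keylemma6} (together with its siblings handling $x\in B_k^l$) is precisely where. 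If you prepend the Lemma~\ref{epsilon control} step, your remaining machinery (the collapse to $Q_{j,x}$, the $(2,\infty,2)$ H\"older, and Lemmas~\ref{lacunarylemma}, \ref{nonlacunarylemma}, \ref{technical lemma}) should carry the argument through in the spirit of the paper's proof.
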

Then it follows from H\"older's inequality that
$$\JJ_2^1\lesssim \Vert u_1\Vert_{L^{p_1}(\bbrn)}\Vert u_2\Vert_{L^{p_2}(\bbrn)}\Vert u_3\Vert_{L^{p_3}(\bbrn)}\lesssim 1.$$

\hfill

\subsubsection{Proof of \eqref{j2keyest} for $\nu=2$}
For $P\in\DD$ and $l\in\bbz$ let $B_P^l:=B_{\xx_P}^l$ be the ball of center $\xx_P$ and radius $100n2^{-l}$ as before. 
We define
\begin{equation}\label{u212def}
\begin{aligned}
\UU_2^{1}(x,y)&:=\sum_{j\in\bbz} \sum_{P\in\mathcal{D}_j}\sum_{R\in\mathcal{D}_j} b_P^2 b_R^3  T_{\sigma_j}\big(\La_ja_k,\psi^P,\theta^R\big)(y)  \chi_{P^c\cap (B_{P}^l)^c\cap R}(x),\\
\UU_2^{2}(x,y)&:=\sum_{j\in\bbz} \sum_{P\in\mathcal{D}_j}\sum_{R\in\mathcal{D}_j} b_P^2 b_R^3  T_{\sigma_j}\big(\La_ja_k,\psi^P,\theta^R\big)(y)  \chi_{P^c\cap B_P^l\cap R}(x),
\end{aligned}
\end{equation}
and write
$$\JJ_2^2\lesssim\JJ_2^{2,1}+\JJ_2^{2,2}$$
where
$$\JJ_2^{2,\eta}:=\bigg\Vert   \sup_{l\in\bbz} \Big| \sum_{k=0}^{\infty} \la_k \chi_{(Q_k^{***})^c}(x)\chi_{B_k^l}(x) \phi_l\ast \big(\UU_{2}^{\eta}(x,\cdot) \big)(x)\Big|\;   \bigg\Vert_{L^p(x)}, \q \eta=1,2.$$
Then we need the following lemmas.
\begin{lemma}\label{keylemma7}
Let $0<p_1\le 1$ and $2<p_2,p_3<\infty$ and let $\UU_2^{1}$ be defined as in \eqref{u212def}.
Suppose that $\Vert f_1\Vert_{H^{p_1}(\bbrn)}=\Vert f_2\Vert_{H^{p_2}(\bbrn)}=\Vert f_3\Vert_{H^{p_3}(\bbrn)}=1$ and $\LL_s^2[\sigma]=1$ for $s>n/p_1+n/2$. 
Then there exist nonnegative functions $u_1$, $u_2$, and $u_3$ on $\bbrn$ such that
\begin{equation*}
\Vert u_{\ii}\Vert_{L^{p_{\ii}}(\bbrn)}\lesssim 1 \q \text{ for }~ \ii=1,2,3,
\end{equation*}
and 
\begin{equation}\label{keylemma7est}
\sup_{l\in\bbz}\; \Big| \sum_{k=0}^{\infty}\la_k\chi_{(Q_k^{***})^c}(x)\chi_{B_k^l}(x) \phi_l\ast\big( \UU_{2}^{1}(x,\cdot)\big)(x)\Big| \lesssim u_1(x) u_2(x) u_3(x).
\end{equation}
\end{lemma}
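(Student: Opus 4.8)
The plan is to construct explicit nonnegative $u_1,u_2,u_3$ realizing the pointwise bound \eqref{keylemma7est}; once that is done, H\"older's inequality with $1/p=1/p_1+1/p_2+1/p_3$ gives \eqref{j2keyest} for $\nu=2$, exactly as in the paragraph following Lemma \ref{keylemma1}. First I would record the geometry: if $x,l$ and an index $k$ satisfy $\chi_{(Q_k^{***})^c}(x)\chi_{B_k^l}(x)\neq0$, then $\ell(Q_k)\lesssim|x-\xx_{Q_k}|\le 100n\,2^{-l}$, so $2^l\ell(Q_k)\lesssim1$ and $2^l|x-\xx_{Q_k}|\lesssim1$ — the atom, and $x$ itself, live at scales far below the mollification scale $2^{-l}$. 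Since $|\phi_l(x-y)|\lesssim 2^{ln}\langle2^l(x-y)\rangle^{-L}$ for every $L$, in $\phi_l\ast(\UU_2^1(x,\cdot))(x)$ the variable $y$ is essentially confined to $|y-x|\lesssim2^{-l}$, while the cutoff $(B_P^l)^c$ forces $|y-\xx_P|\gtrsim2^{-l}$. For each $j$ the $R$–sum in \eqref{u212def} collapses to the unique $R=R_{j,x}\in\DD_j$ with $x\in R$, and $|b_{R_{j,x}}^3|\,|R_{j,x}|^{-1/2}\le g^\infty(\{b_R^3\})(x)=:u_3(x)$ with $\|u_3\|_{L^{p_3}}\lesssim1$ by \eqref{br3}.

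Next comes the trilinear kernel estimate. Rescaling $\sigma_j^\vee$ to unit scale, $\|\langle\,\cdot\,\rangle^s(\sigma_j(2^j\,\cdot))^\vee\|_{L^2((\bbrn)^3)}\lesssim1$ by \eqref{supequiv}, so Cauchy–Schwarz against this weighted kernel — splitting the regularity as $s=s_1+s_2$ with $s_1>n/p_1$ and $s_2>n/2$, which is possible precisely because $s>n/p_1+n/2$, and charging \emph{no} regularity to the $\theta^R$–slot since $\xx_{R_{j,x}}$ lies within $\sqrt n\,2^{-j}$ of $x$ — together with \eqref{estlaak} for $|\La_j a_k|$ and the rapid decay of $\psi^P$, bounds $|T_{\sigma_j}(\La_j a_k,\psi^P,\theta^R)(y)|$ by
$\ell(Q_k)^{-n/p_1}\min\{1,(2^j\ell(Q_k))^{M+n+1}\}\,2^{jn}\,W_1(2^j(y-\xx_{Q_k}))\,\langle2^j(y-\xx_P)\rangle^{-s_2}$,
where $W_1(v)=(2^j\ell(Q_k))^{n/2}\langle v\rangle^{-s_1}$ on the $Q_k^*$–piece of \eqref{estlaak} and $W_1(v)=\langle v\rangle^{-s_1}$ on the $(Q_k^*)^c$–piece; the two factors $2^{jn/2}$ making up $2^{jn}$ come from the $\psi^P$– and $\theta^R$–slots, and the $\theta^R$–one has already been absorbed into $|b_{R_{j,x}}^3|\,|R_{j,x}|^{-1/2}\le u_3(x)$.

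Inserting this into $\phi_l\ast(\cdot)(x)$, I would split the $j$–sum. When $2^j\ell(Q_k)<1$ the vanishing-moment gain $(2^j\ell(Q_k))^{M+n+1}$, with $M$ arbitrarily large, is summable as $j\to-\infty$; on the $(Q_k^*)^c$–piece it combines with $\langle2^j(y-\xx_{Q_k})\rangle^{-s_1}$ (using $|y-u_1|\gtrsim|x-\xx_{Q_k}|$ for $u_1\in Q_k^*$ when $2^{-l}\ll|x-\xx_{Q_k}|$) to produce $\mathcal M_r\chi_{Q_k}(x)$ with $r=n/s_1<p_1$, while $y$ can reach $Q_k^*$ only when $2^l\ell(Q_k)\sim1$, i.e.\ when $|x-\xx_{Q_k}|\sim\ell(Q_k)$ so that no $|x-\xx_{Q_k}|$–decay is needed. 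When $2^j\ell(Q_k)\ge1$ necessarily $j\gtrsim l$, hence $|y-\xx_P|\gtrsim2^{-l}$ gives $\langle2^j(y-\xx_P)\rangle^{-s_2}\lesssim2^{-(j-l)s_2}$, which is summable in $j$ and beats the growth $(2^j\ell(Q_k))^{n/2}\lesssim2^{(j-l)n/2}$ because $s_2>n/2$, the residual $\langle2^j(y-\xx_{Q_k})\rangle^{-s_1}$ again giving $\mathcal M_r\chi_{Q_k}(x)$. The leftover $P$–sum $\sum_{P\in\DD_j}|b_P^2|\,|P|^{-1/2}\langle2^j(y-\xx_P)\rangle^{-s_2}$ — not absolutely summable, since $s_2$ is only above $n/2$ — is handled by Lemma \ref{almost orthogonality} ($\psi$ in the role of $\varphi$, $q=2$) and then Lemma \ref{lacunarylemma}, producing $u_2(x):=\big(\sum_{P\in\DD}(|b_P^2|\,|P|^{-1/2}\mathcal M_{n/s_2}\chi_P(x))^2\big)^{1/2}$ with $\|u_2\|_{L^{p_2}}\lesssim\|g^2(\{b_P^2\})\|_{L^{p_2}}\sim1$ by \eqref{hlmax} and \eqref{bp2}. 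Collecting the $f_1$–factors, set $u_1:=\sum_k|\la_k|\,\ell(Q_k)^{-n/p_1}\mathcal M_r\chi_{Q_k}$; then $\|u_1\|_{L^{p_1}}^{p_1}\lesssim\sum_k|\la_k|^{p_1}\lesssim1$ using $p_1\le1$, $r<p_1$, and \eqref{hardylambda}. Summing over $k$ then yields \eqref{keylemma7est}.

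The hard part is the simultaneous bookkeeping of the three scales $2^{-j}$, $\ell(Q_k)$, $2^{-l}$ against the three slots of $T_{\sigma_j}$: one must extract decay in $|x-\xx_{Q_k}|$ of order $(\ell(Q_k)/|x-\xx_{Q_k}|)^{n/r}$ with $r<p_1\le1$ — forcing $s_1>n/p_1$ — while simultaneously keeping $s_2>n/2$ for the $\psi^P$–slot and its almost-orthogonal $P$–sum, and this is possible only because $s>n/p_1+n/2$. There is no slack; in particular a maximal function can be afforded on at most one input, which is exactly why the $P$–sum must pass through Lemma \ref{almost orthogonality} rather than be summed crudely, and why the case split according to whether $y$ can reach $Q_k^*$ (equivalently whether $2^l\ell(Q_k)\sim1$) is unavoidable.
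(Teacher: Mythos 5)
Your regularity budget does not close. You split $s=s_1+s_2$ with $s_1>n/p_1$ and $s_2>n/2$ and claim you can charge nothing to the $\theta^R$--slot, but look at what is actually left after you peel off $\|\theta^R\|_\infty\lesssim|R|^{-1/2}$ and the unique $R_{j,x}\ni x$: you are then trying to bound $\int_{\bbrn^3}|\sigma_j^\vee(w_1,w_2,w_3)|\,|\La_ja_k(y-w_1)|\,|\psi^P(y-w_2)|\,d\www$, and the $w_3$--integral of $\sigma_j^\vee$ has no companion factor whatsoever. The only control on $\sigma_j^\vee$ is $\|\langle 2^j\ccdot\rangle^s\sigma_j^\vee\|_{L^2}\lesssim 2^{3jn/2}$, and Cauchy--Schwarz in $w_3$ against $\langle 2^jw_3\rangle^{-s_3}$ requires $s_3>n/2$ for the dual $L^2$--norm to be finite. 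Your geometric observation that $\xx_{R_{j,x}}$ is within $\sqrt{n}\,2^{-j}$ of $x$ does not help: $\theta$ is a Schwartz function, not compactly supported, and the $j$--sum runs over all $j\in\bbz$, so for $j\gg l$ the decay of $\theta^R(y-w_3)$ is centered nowhere near $w_3=0$; in any case it is the kernel $\sigma_j^\vee$, not $\theta^R$, whose $w_3$--integrability needs regularity once you have replaced $\theta^R$ by its $L^\infty$--bound. So the budget you actually need with your $\|\theta^R\|_\infty$ route is $s_1>n/p_1$, $s_2>n/2$, $s_3>n/2$, i.e.\ $s>n/p_1+n$, exceeding the hypothesis $s>n/p_1+n/2$ by $n/2$.

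The paper closes this exact $n/2$--gap by being more economical in the atom slot. In the proof of Lemma \ref{keylemma7} the split is $s=s_1+s_2+s_3$ with $s_1>n/p_1-n/2$ (not $>n/p_1$), $s_2>n/2$, $s_3>n/2$, which is consistent with $s>n/p_1+n/2$. The price is that the paper cannot bound $|x-\xx_{Q_k}|^{-s_1}$ by $\mathcal M_{n/s_1}\chi_{Q_k}(x)$ as you do (since $n/s_1$ may exceed $p_1$); instead it keeps the $j$--sum as an $\ell^2$--valued object, sets $u_1(x)=\sum_k|\la_k|\chi_{(Q_k^{***})^c}(x)|x-\xx_{Q_k}|^{-s_1}\big(\sum_j(2^{-js_1}\mathcal{M}J^1_{k,j,s}(x))^2\big)^{1/2}$ with $J^1_{k,j,s}$ as in \eqref{jkjs1def}, and estimates $\|u_1\|_{L^{p_1}}$ by H\"older with exponent $2/p_1$ against the $L^2$--boundedness of $\mathcal{M}$, so that only $\big\||\cdot-\xx_{Q_k}|^{-s_1p_1}\big\|_{L^{(2/p_1)'}((Q_k^{***})^c)}<\infty$ is needed, which is precisely $s_1>n/p_1-n/2$. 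Your cruder $u_1:=\sum_k|\la_k|\ell(Q_k)^{-n/p_1}\mathcal{M}_r\chi_{Q_k}$ with $r=n/s_1<p_1$ forces $s_1>n/p_1$ and thereby steals the $n/2$ of regularity that the $\theta^R$--slot requires. Either you must abandon the $\mathcal M_r\chi_{Q_k}$ shortcut and adopt the paper's $\ell^2$-in-$j$ / H\"older-$2/p_1$ scheme, or you must replace the $\|\theta^R\|_\infty$ extraction by a genuine $L^2(w_3)$--pairing of $\theta^R$ against $\sigma_j^\vee$ and then re-derive the $u_3$-side with the extra $|R|^{1/2}$ that this introduces -- but as written the two choices you make are jointly inconsistent.
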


\begin{lemma}\label{keylemma8}
Let $0<p_1\le 1$ and $2<p_2,p_3<\infty$ and let $\UU_2^{2}$ be defined as in \eqref{u212def}.
Suppose that $\Vert f_1\Vert_{H^{p_1}(\bbrn)}=\Vert f_2\Vert_{H^{p_2}(\bbrn)}=\Vert f_3\Vert_{H^{p_3}(\bbrn)}=1$ and $\LL_s^2[\sigma]=1$ for $s>n/p_1+n/2$. 
Then there exist nonnegative functions $u_1$, $u_2$, and $u_3$ on $\bbrn$ such that
\begin{equation*}
 \Vert u_{\ii}\Vert_{L^{p_{\ii}}(\bbrn)}\lesssim 1 \q \text{ for }~ \ii=1,2,3,
\end{equation*}
and 
\begin{equation*}
\sup_{l\in\bbz}\; \Big| \sum_{k=0}^{\infty}\la_k\chi_{(Q_k^{***})^c}(x)\chi_{B_k^l}(x) \phi_l\ast\big( \UU_{2}^{2}(x,\cdot)\big)(x)\Big| \lesssim u_1(x) u_2(x) u_3(x).
\end{equation*}
\end{lemma}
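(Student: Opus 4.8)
The plan is to establish the asserted pointwise inequality by the same scheme used for Lemmas~\ref{keylemma1}--\ref{keylemma7}, so that after H\"older's inequality with $1/p=1/p_1+1/p_2+1/p_3$ the bound $\JJ_2^{2,2}\lesssim 1$ follows. For fixed $l\in\bbz$ we expand
\begin{equation*}
\phi_l\ast\big(\UU_2^2(x,\cdot)\big)(x)=\sum_{j\in\bbz}\;\sum_{P,R\in\DD_j}b_P^2\,b_R^3\,\chi_{P^c\cap B_P^l\cap R}(x)\,\big(\phi_l\ast T_{\sigma_j}(\La_ja_k,\psi^P,\theta^R)\big)(x),
\end{equation*}
where $\chi_R(x)$ collapses the sum over $R$ to the single cube $R_{j,x}\in\DD_j$ with $x\in R_{j,x}$. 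It then remains to give a good pointwise bound for $\big(\phi_l\ast T_{\sigma_j}(\La_ja_k,\psi^P,\theta^R)\big)(x)$ and to sum over $j$, over those $P\in\DD_j$ with $x\in B_P^l\setminus P$, over $k$, and finally over $l$, producing nonnegative $u_2,u_3$ built from square/maximal functions of $f_2,f_3$ and $u_1$ built from the atomic decomposition of $f_1$.

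The structural point driving this case is geometric. Since $x\in(Q_k^{***})^c\cap B_k^l$ we have $|x-\xx_{Q_k}|\gtrsim \ell(Q_k)$ with a large constant while $|x-\xx_{Q_k}|<100n\,2^{-l}$, so $2^{-l}$ is far larger than $\ell(Q_k)$; writing $2^{-j_k}:=\ell(Q_k)$, this forces $l$ to lie well below $j_k$. By \eqref{estlaak} the factor $\La_ja_k$ carries the gain $\min\{1,(2^j\ell(Q_k))^{M+n+1}\}$, so $j<j_k$ contributes only a geometrically convergent tail and the bulk of the $j$-sum is over $j\ge j_k>l$. For such $j$ the output $T_{\sigma_j}(\La_ja_k,\psi^P,\theta^R)$ has Fourier support in $|\xi|\sim 2^j$, whereas $\widehat{\phi_l}(\xi)=\widehat{\phi}(\xi/2^l)$ decays rapidly for $|\xi|\gg 2^l$, yielding a gain $2^{-(j-l)K}$ for any $K>0$. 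For the size of $T_{\sigma_j}(\La_ja_k,\psi^P,\theta^R)(x)$ one uses $\LL_s^2[\sigma]=1$ with $s>3n/2$ (which via \eqref{supequiv} gives $\|\sigma_j^\vee\|_{L^1}\lesssim 1$ plus a small amount of weighted decay of $\sigma_j^\vee$), the Schwartz decay of $\psi_j,\theta_j$, and \eqref{estlaak}; because $x\in R_{j,x}$ and $x\in B_P^l$, the mild decay of $\sigma_j^\vee$ already suffices and no strong separation decay in $P,R$ is needed. Absorbing the normalizations $|P|^{1/2},|R|^{1/2}$ into $b_P^2,b_R^3$ leaves the natural densities $|b_P^2||P|^{-1/2}$, $|b_R^3||R|^{-1/2}$, times $\ell(Q_k)^{-n/p_1}\min\{1,(2^j\ell(Q_k))^{M+n+1}\}$, times the gain $2^{-(j-l)_+K}$ and kernel factors centered at $\xx_P,\xx_R,\xx_{Q_k}$.

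The summation is then routine. The sum over the admissible $P\in\DD_j$ is controlled, via the almost-orthogonality estimate of Lemma~\ref{almost orthogonality} together with the square-function bound of Lemma~\ref{lacunarylemma} (applicable since $p_2>2$), by a function $u_2$ with $\|u_2\|_{L^{p_2}}\lesssim\|f_2\|_{H^{p_2}}=1$; the single cube $R_{j,x}$ is absorbed by Lemma~\ref{nonlacunarylemma} (using $p_3>2$), giving $u_3$ with $\|u_3\|_{L^{p_3}}\lesssim 1$. For the $j$-sum one splits at $j=l$: for $j>l$ the gain $2^{-(j-l)K}$ outruns the number $\lesssim 2^{(j-l)n}$ of admissible $P$ and is summable, while for $j\le l<j_k$ there is (up to a bounded multiple) one admissible $P$ and the atomic gain makes $\sum_{j\le l}$ of order $(2^l\ell(Q_k))^{M+n+1}$; in both cases the summed $j$-gain is $\lesssim(2^l\ell(Q_k))^{K_0}$ for a large $K_0$. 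Since $p_1\le 1$ we then use $\ell^{p_1}\hookrightarrow\ell^1$ and $\sum_k|\la_k|^{p_1}\lesssim 1$: the surviving $k$-contribution is a sum of terms $|\la_k|^{p_1}\chi_{(Q_k^{***})^c\cap B_k^l}(x)\big(\ell(Q_k)^{-n/p_1}(2^l\ell(Q_k))^{K_0}\big)^{p_1}$, and because $|B_k^l|\sim 2^{-ln}$ one gets $\int\chi_{B_k^l}(x)\big(\ell(Q_k)^{-n/p_1}(2^l\ell(Q_k))^{K_0}\big)^{p_1}\,dx\lesssim(2^l\ell(Q_k))^{K_0p_1-n}$, which sums over the admissible $l$ (those with $2^l\ell(Q_k)\lesssim 1$) as soon as $K_0p_1>n$. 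This defines $u_1$ with $\|u_1\|_{L^{p_1}}\lesssim 1$ and disposes of the outer $\sup_l$. (Unlike in the estimate for $\JJ_1$, the moment condition \eqref{vanishingreduction} is not needed here, the cutoffs already confining $x$ to within $O(2^{-l})$ of $\xx_{Q_k}$.)

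The main difficulty is precisely the near-corner region $x\in B_P^l$ that defines this ``$\eta=2$'' piece: here there is no decay available from the separation of $x$ from the support of $\psi^P$, so the only smallness at hand is the Fourier-support gain $2^{-(j-l)K}$ and the atomic gain $(2^j\ell(Q_k))^{M+n+1}$. Making these cooperate --- and, above all, extracting from them enough decay in $\ell(Q_k)^{-1}2^{-l}$ to sum the atomic decomposition in $L^{p_1}$ --- hinges on the geometric fact that the $\JJ_2$-cutoffs force $2^{-l}\gg\ell(Q_k)$, i.e.\ $j_k>l$. The delicate, though essentially bookkeeping, part is the cube count for $P$ at each scale $j$ relative to $l$ and the reconciliation of the maximal/square-function packagings for $f_2,f_3$ with the outer $\sup_l$.
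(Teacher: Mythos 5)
Your proof has a fundamental gap: the Fourier–support argument you rely on to produce the decay $2^{-(j-l)K}$ for $j>l$ is false for the piece $T_{\sigma}^{(2)}$ under consideration. You assert that $T_{\sigma_j}(\La_j a_k,\psi^P,\theta^R)$ has Fourier transform supported in $\{|\xi|\sim 2^j\}$ and that convolving with $\phi_l$ therefore gains $2^{-(j-l)K}$. But here \emph{both} $\La_j a_k$ and $\psi^P$ carry frequency $\sim 2^j$ (and $\theta^R$ carries frequency $\lesssim 2^j$), so the output frequency $\xi_1+\xi_2+\xi_3$ can be arbitrarily small (take $\xi_1\approx-\xi_2$, $\xi_3\approx 0$). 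The Fourier transform of $T_{\sigma_j}(\La_j a_k,\psi^P,\theta^R)$ therefore fills out the whole ball $\{|\xi|\lesssim 2^j\}$, including a neighborhood of the origin, and $\widehat{\phi_l}$ gives no gain. (The localization $|\xi|\sim 2^j$ you invoke does hold for $T_{\sigma}^{(1)}=\sum_j T_{\sigma_j}(\La_jf_1,\Ga_{j-10}f_2,\Ga_{j-10}f_3)$, where only one input carries the top frequency, but that is a different piece of the paraproduct.) Since $j\ge j_k$ is, as you note yourself, the bulk of the $j$-sum, without this gain your $j$-summation does not converge and your $u_1$ does not exist.

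The mechanism that actually produces the $j$-decay in this regime is precisely the vanishing moment condition \eqref{vanishingreduction}, which you explicitly disclaim. (Your parenthetical remark is also backwards: in the paper the moment condition enters only in the $\JJ_2$ lemmas where $x$ lies inside $B_k^l$ together with $B_P^l$ and/or $B_R^l$ — namely Lemmas \ref{keylemma6}, \ref{keylemma8}, \ref{keylemma10}, \ref{keylemma12} — and is not used in the $\JJ_1$ lemmas.) Concretely, because $x\in B_k^l\cap B_P^l$ one has $2^l\lesssim |x-\xx_{Q_k}|^{-1}$ and $2^l\lesssim |x-\xx_P|^{-1}$, and the paper applies Lemma \ref{epsilon control} to $\phi_l\ast T_{\sigma_j}(\La_j a_k,\psi^P,\theta^R)$ with the Taylor base point $a=\xx_P$ and $N_0=[n/p_1+n/p_2]-n$. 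This yields a factor $2^{l(N_0+n+\epsilon_0)}\int|y-\xx_P|^{N_0+\epsilon_0}|T_{\sigma_j}(\cdots)(y)|\,dy$; splitting $t_1+t_2=N_0+n+\epsilon_0$ with $t_1>n/p_1$, $t_2>n/p_2$ and converting $2^{lt_1}$, $2^{lt_2}$ into $|x-\xx_{Q_k}|^{-t_1}$, $|x-\xx_P|^{-t_2}$, while absorbing $|y-\xx_P|^{N_0+\epsilon_0}$ into $\langle 2^j(y-z_2)\rangle^{N_0+\epsilon_0}\wt{\psi^P}(z_2)$ and $\langle 2^j(x-\xx_P)\rangle^{-t_2}$, is exactly what produces the factor $2^{-j(t_1-n)}$ that makes the $j$-sum converge for $j>j_k$. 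This is irreplaceable here; no purely geometric or Fourier-support argument substitutes for it. Your treatment of $u_2$ and $u_3$ via Lemmas \ref{almost orthogonality}, \ref{lacunarylemma}, \ref{nonlacunarylemma} is in the right spirit, but the overall argument cannot be repaired without reinstating the moment condition.
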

The above lemmas will be proved in Section \ref{proofoflemmas}.
Using Lemmas \ref{keylemma7} and \ref{keylemma8}, we obtain
\begin{equation*}
\JJ_2^{2,\eta}\lesssim \Vert u_1\Vert_{L^{p_1}(\bbrn)}\Vert u_2\Vert_{L^{p_2}(\bbrn)}\Vert u_3\Vert_{L^{p_3}(\bbrn)}\lesssim 1,\qq \eta=1,2,
\end{equation*}
which finishes the proof of \eqref{j2keyest} for $\nu=2$.\\

\subsubsection{Proof of \eqref{j2keyest} for $\nu=3$}
We use the notation $B_R^l:=B_{\xx_R}^l$ for $R\in \DD$ and $l\in\bbz$ as before, and write
$$\JJ_2^3\lesssim \JJ_2^{3,1}+\JJ_2^{3,2}$$
where
\begin{equation}\label{u312def}
\begin{aligned}
\UU_3^{1}(x,y)&:=\sum_{j\in\bbz} \sum_{P\in\mathcal{D}_j}\sum_{R\in\mathcal{D}_j} b_P^2 b_R^3  T_{\sigma_j}\big(\La_ja_k,\psi^P,\theta^R\big)(y)  \chi_{P\cap R^c\cap (B_R^l)^c}(x),\\
\UU_3^{2}(x,y)&:=\sum_{j\in\bbz} \sum_{P\in\mathcal{D}_j}\sum_{R\in\mathcal{D}_j} b_P^2 b_R^3  T_{\sigma_j}\big(\La_ja_k,\psi^P,\theta^R\big)(y)  \chi_{P\cap R^c\cap B_R^l}(x),
\end{aligned}
\end{equation}
and 
$$\JJ_2^{3,\eta}:=\bigg\Vert   \sup_{l\in\bbz} \Big| \sum_{k=0}^{\infty} \la_k \chi_{(Q_k^{***})^c}(x)\chi_{B_k^l}(x)  \phi_l\ast \big(\UU_{3}^{\eta}(x,\cdot) \big)(x)\Big| \bigg\Vert_{L^p(x)}, \q \eta=1,2.$$

As in the proof of the case $\nu=2$, it suffices to prove the following two lemmas.
\begin{lemma}\label{keylemma9}
Let $0<p_1\le 1$ and $2<p_2,p_3<\infty$ and let $\UU_3^{1}$ be defined as in \eqref{u312def}.
Suppose that $\Vert f_1\Vert_{H^{p_1}(\bbrn)}=\Vert f_2\Vert_{H^{p_2}(\bbrn)}=\Vert f_3\Vert_{H^{p_3}(\bbrn)}=1$ and $\LL_s^2[\sigma]=1$ for  $s>n/p_1+n/2$. 
Then there exist nonnegative functions $u_1$, $u_2$, and $u_3$ on $\bbrn$ such that
\begin{equation}\label{keylemma9conditions}
 \Vert u_{\ii}\Vert_{L^{p_{\ii}}(\bbrn)}\lesssim 1 \q \text{ for }~ \ii=1,2,3,
\end{equation}
and 
\begin{equation}\label{keylemma9est}
\sup_{l\in\bbz}\; \Big| \sum_{k=0}^{\infty}\la_k\chi_{(Q_k^{***})^c}(x)\chi_{B_k^l}(x) \phi_l\ast\big( \UU_{3}^{1}(x,\cdot)\big)(x)\Big| \lesssim u_1(x) u_2(x) u_3(x).
\end{equation}
\end{lemma}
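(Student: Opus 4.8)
The plan is to establish the pointwise estimate \eqref{keylemma9est}; once it is in hand, H\"older's inequality with $1/p=1/p_1+1/p_2+1/p_3$ together with \eqref{keylemma9conditions} yields $\JJ_2^{3,1}\lesssim 1$. The functions $u_2$ and $u_3$ will be the (maximal versions of the) Littlewood--Paley objects attached to $f_2$ and $f_3$: we take $u_2\sim g^2(\{b_P^2\})$, so that $\Vert u_2\Vert_{L^{p_2}}\lesssim 1$ by \eqref{bp2}, and $u_3$ to be (a variant of) the maximal function controlled by Lemma \ref{nonlacunarylemma} applied to $f_3$, so that $\Vert u_3\Vert_{L^{p_3}}\lesssim\Vert f_3\Vert_{H^{p_3}}=1$ (this is where $p_3>2$ enters). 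The function $u_1$ will have the form $u_1=\sum_k|\la_k|\,v_k$ with $v_k$ supported in $(Q_k^{***})^c$ and $\Vert v_k\Vert_{L^{p_1}}\lesssim 1$ uniformly in $k$; since $p_1\le 1$ this gives $\Vert u_1\Vert_{L^{p_1}}^{p_1}\le\sum_k|\la_k|^{p_1}\Vert v_k\Vert_{L^{p_1}}^{p_1}\lesssim\sum_k|\la_k|^{p_1}\lesssim 1$ by \eqref{hardylambda}.

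First I would record the geometry forced by the cutoffs: on the set where $\chi_{(Q_k^{***})^c}(x)\chi_{B_k^l}(x)\neq 0$ one has $\ell(Q_k)\lesssim|x-\xx_{Q_k}|<100n\,2^{-l}$, hence $2^l\ell(Q_k)\lesssim 1$, and $\phi_l\ast g(x)=\int_{|x-y|\le 2^{-l}}\phi_l(x-y)g(y)\,dy$. Then I would expand $T_{\sigma_j}(\La_j a_k,\psi^P,\theta^R)(y)=\sigma_j^\vee\ast_{3n}(\La_j a_k\otimes\psi^P\otimes\theta^R)(y,y,y)$ and, using \eqref{supequiv} (so that $\sigma_j^\vee$ is under control through the $L_s^2$-norm, with $s>n/p_1+n/2\ge 3n/2$), reduce $|\phi_l\ast T_{\sigma_j}(\La_ja_k,\psi^P,\theta^R)(x)|$ on the relevant set to: the $L^1$/$L^2$ size of $\La_j a_k$ weighted by $\langle 2^j(\cdot-\xx_{Q_k})\rangle^{-L_0}$ — here Lemma \ref{technical lemma} supplies the decisive factor $\min\{1,(2^j\ell(Q_k))^{M+n+1}\}\,\ell(Q_k)^{-n/p_1}$ together with arbitrarily fast spatial decay (the exponent $L_0$ is free) — multiplied by the pointwise bounds $|\psi^P(z)|\lesssim|P|^{-1/2}\langle 2^j(z-\xx_P)\rangle^{-N}$ and $|\theta^R(z)|\lesssim|R|^{-1/2}\langle 2^j(z-\xx_R)\rangle^{-N}$.

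The summations I would organize as follows: sum in $R$, then in $P$, then in $j$, then take $\sup_l$, then sum in $k$. The $R$-sum, over $R\in\mathcal{D}_j$ with $x\in R^c\cap(B_R^l)^c$ (so $|x-\xx_R|\gtrsim\max(2^{-j},2^{-l})$), converges by the tails of $\theta_j$ and, after bounding the tail-weighted $|b_R^3||R|^{-1/2}$ by $\mathcal{M}_r(\cdots)$ for a suitable $r<p_3$ and invoking (the proof of) Lemma \ref{nonlacunarylemma}, is dominated by $u_3(x)$. In the $P$-sum there is, for each $j$, a unique $P_{j,x}\ni x$; after a Cauchy--Schwarz in $j$ against the geometric $j$-decay coming from the $\min$-term of Lemma \ref{technical lemma} (for $2^j\ell(Q_k)\le 1$) and from the tails of $\sigma_j^\vee,\psi_j,\theta_j$ together with $2^{-l}\gtrsim\ell(Q_k)$ (for $2^j\ell(Q_k)>1$), one is left with $\big(\sum_j(|b_{P_{j,x}}^2||P_{j,x}|^{-1/2})^2\big)^{1/2}=g^2(\{b_P^2\})(x)\le u_2(x)$. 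The residual $k$-dependent factor, after the $j$-sum, takes the form $\ell(Q_k)^{-n/p_1}\langle\dist(x,Q_k)/\ell(Q_k)\rangle^{-L_0}\chi_{(Q_k^{***})^c}(x)$ with $L_0$ large; this is $v_k$, and $\Vert v_k\Vert_{L^{p_1}}\lesssim 1$ because $L_0p_1>n$ makes $\ell(Q_k)^{-n}\int_{(Q_k^{***})^c}\langle\dist(x,Q_k)/\ell(Q_k)\rangle^{-L_0p_1}\,dx\lesssim 1$. Every estimate above is uniform in $l$, so taking $\sup_l$ at the end is harmless.

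\textbf{The main obstacle} is to manage these coupled summations uniformly in $l$. One must split the $j$-sum at $2^j\sim\ell(Q_k)^{-1}$ and, in the delicate range $2^j\gg\ell(Q_k)^{-1}\gtrsim 2^l$, extract enough geometric decay in $j$ — from the tails of $\sigma_j^\vee$, $\psi^P$, $\theta^R$ and the separations $|x-\xx_R|\gtrsim 2^{-l}$ and $\dist(x,Q_k)\gtrsim\ell(Q_k)$ — to do three things at once: run the Cauchy--Schwarz that turns the $j$-sum against $\{b_P^2\}$ into $g^2(\{b_P^2\})$ rather than an $\ell^1$-sum; keep the $R$-sum summable so that Lemma \ref{nonlacunarylemma} delivers the $L^{p_3}$ bound; and keep the leftover $k$-factor decaying rapidly in $\dist(x,Q_k)/\ell(Q_k)$ so that the atomic series converges in $\ell^{p_1}$. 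Verifying that the single budget $s>n/p_1+n/2$, together with the freely large parameters $M$ and $L_0$, accommodates all three demands simultaneously, and that nothing degrades as $x$ roams over $(Q_k^{***})^c\cap B_k^l$ with $l$ varying, is the technical core of the argument, and is the reason it is deferred to Section \ref{proofoflemmas}.
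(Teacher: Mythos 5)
Your overall plan — take $u_2\sim g^2(\{b_P^2\})$, obtain $u_3$ from Lemma~\ref{nonlacunarylemma} applied to $f_3$, and write $u_1=\sum_k|\la_k|v_k$ with $v_k$ supported in $(Q_k^{***})^c$ — matches the architecture of the paper's argument, and your geometric observations ($2^l\ell(Q_k)\lesssim 1$ on the support, $|x-\xx_{Q_k}|\le|x-\xx_R|\lesssim|y-\xx_R|$ on $B_k^l\cap(B_R^l)^c$) are correct. The gap is in the claim that, after summing in $j$, the $k$-dependent residue is $v_k(x)\sim\ell(Q_k)^{-n/p_1}\langle\dist(x,Q_k)/\ell(Q_k)\rangle^{-L_0}\chi_{(Q_k^{***})^c}(x)$ with $L_0$ \emph{arbitrarily} large, so that $\Vert v_k\Vert_{L^{p_1}}\lesssim 1$ follows by direct integration. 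That is not available. The arbitrarily fast decay in Lemma~\ref{technical lemma} lives in the $z_1$ variable (argument of $\La_ja_k$), but the chain from $x$ back to $z_1$ passes through the kernel $\sigma_j^\vee(y-z_1,\cdot,\cdot)$, whose decay is capped by the fixed Sobolev exponent $s$. After allocating $s_2,s_3>n/2$ to manage the $P$- and $R$-sums via Lemma~\ref{almost orthogonality}, Lemma~\ref{lacunarylemma}, and Lemma~\ref{nonlacunarylemma}, only $s_1>n/p_1-n/2$ remains for the decay in $|x-\xx_{Q_k}|$; since $n/p_1-n/2<n/p_1$ for $p_1\le 1$, one cannot conclude $\int_{(Q_k^{***})^c}\ell(Q_k)^{-n}(\ell(Q_k)/|x-\xx_{Q_k}|)^{s_1p_1}dx\lesssim 1$.

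The paper closes this gap by not reducing $v_k$ to a pure power of $\dist(x,Q_k)$: it keeps the factor $\big(\sum_j(2^{-js_1}\mathcal{M}J^1_{k,j,s}(x))^2\big)^{1/2}$ inside $u_1$, and then in \eqref{v1p1est} applies H\"older with exponents $(2/p_1)'$ and $2/p_1$, pairing $\big\Vert|\cdot-\xx_{Q_k}|^{-s_1p_1}\big\Vert_{L^{(2/p_1)'}((Q_k^{***})^c)}\lesssim\ell(Q_k)^{-p_1(s_1-(n/p_1-n/2))}$ (which only needs $s_1>n/p_1-n/2$) against the $L^2$-in-$x$ norm of the maximal-function factor. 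The $L^2$ bound of $\mathcal M$ together with $\Vert J^1_{k,j,s}\Vert_{L^2}\lesssim\ell(Q_k)^{-n/p_1+n}2^{jn/2}\min\{1,(2^j\ell(Q_k))^M\}$ supplies exactly the missing $\ell(Q_k)^{n/2}$ of gain that your direct-integration step is lacking. So your budget allocation needs to be reworked: one cannot ask for more than $s_1\approx s-n$ of decay in $\dist(x,Q_k)$, and the $L^{p_1}$ bound of $u_1$ must be closed through an interpolation-in-$L^p$ (H\"older) device rather than by pointwise decay alone.
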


\begin{lemma}\label{keylemma10}
Let $0<p_1\le 1$ and $2<p_2,p_3<\infty$ and let $\UU_3^{2}$ be defined as in \eqref{u312def}.
Suppose that $\Vert f_1\Vert_{H^{p_1}(\bbrn)}=\Vert f_2\Vert_{H^{p_2}(\bbrn)}=\Vert f_3\Vert_{H^{p_3}(\bbrn)}=1$ and $\LL_s^2[\sigma]=1$ for  $s>n/p_1+n/2$. 
Then there exist nonnegative functions $u_1$, $u_2$, and $u_3$ on $\bbrn$ such that
\begin{equation}\label{keylemma10conditions}
 \Vert u_{\ii}\Vert_{L^{p_{\ii}}(\bbrn)}\lesssim 1 \q \text{ for }~ \ii=1,2,3,
\end{equation}
and 
\begin{equation}\label{keylemma10est}
\sup_{l\in\bbz}\; \Big| \sum_{k=0}^{\infty}\la_k\chi_{(Q_k^{***})^c}(x)\chi_{B_k^l}(x) \phi_l\ast\big( \UU_{3}^{2}(x,\cdot)\big)(x)\Big| \lesssim u_1(x) u_2(x) u_3(x).
\end{equation}
\end{lemma}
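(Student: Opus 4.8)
The plan is to follow the scheme already used for Lemma~\ref{keylemma9}: split the three arguments of $T_{\sigma_j}$ among three functions $u_1,u_2,u_3$ of $x$ that factor the left-hand side of \eqref{keylemma10est} as a product, and then read off the three $L^{p_i}$-bounds from \eqref{hardylambda}, from the $\dot{f}^{p_2,2}$-characterization of $H^{p_2}$, and from Lemma~\ref{nonlacunarylemma}. First I would fix $l\in\bbz$ and extract the geometry of the cutoffs: since $x$ lies outside $Q_k^{***}$ (the $(10\sqrt{n})^3$-dilate of $Q_k$) but inside $B_k^l$, one has $|x-\xx_{Q_k}|\sim 2^{-l}$ and $\ell(Q_k)\lesssim 2^{-l}$, so, writing $Q_k\in\DD_{j_k}$, this forces $j_k\ge l+c_n$ for a dimensional constant $c_n$. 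Then, expanding $\phi_l\ast(\UU_3^2(x,\cdot))(x)=\int\phi_l(x-y)\UU_3^2(x,y)\,dy$ and inserting \eqref{u312def}: since $x\in P$ and $|x-y|\le 2^{-l}$ on $\supp\phi_l$, the inner sum over $P\in\DD_j$ collapses to the single cube $P_{j,x}\ni x$, while the sum over $R\in\DD_j$ runs over the $\lesssim 2^{n(j-l)_+}$ cubes $R$ with $x\notin R$ and $\xx_R$ within $100n2^{-l}$ of $x$.

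Next I would estimate $T_{\sigma_j}(\La_j a_k,\psi^{P_{j,x}},\theta^R)$, separating the $j$-sum at $j=j_k$. Pairing the kernel $\sigma_j^{\vee}$ against $\psi^{P_{j,x}}$ and $\theta^R$ expresses this quantity as $\int G_j^{P,R}(y,v)\,a_k(v)\,dv$. For $j\le j_k$ I would use the vanishing moments of $a_k$ through Lemma~\ref{epsilon control} (with $a=\xx_{Q_k}$ and $N=M$), which gains $(2^j\ell(Q_k))^{M+1}=2^{(M+1)(j-j_k)}$; for $j>j_k$ I would instead use $|x-\xx_{Q_k}|\gtrsim(10\sqrt{n})^3\ell(Q_k)\gg 2^{-j}$, together with \eqref{estlaak}--\eqref{lajalr} of Lemma~\ref{technical lemma}, to gain $2^{-L(j-j_k)}$ from the separation of $y\approx x$ from the essential support of $\La_j a_k$. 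In both regimes I would control $T_{\sigma_j}(\cdots)$ by H\"older in the three factors, splitting the Sobolev exponent as $s=s_1+s_2+s_3$ with $s_1>n/p_1$, $s_2>n/2$ and $s_3>n$ — admissible because $s>n/p_1+n/2$ with strict inequality — using \eqref{supequiv} to handle the $\sigma_j$ factor, Lemma~\ref{technical lemma} for $\|\La_j a_k\|_{L^{r_1}}$, and the pointwise bounds $|b_P^2||P|^{-1/2}\lesssim\BB_P(f_2)$ and $|b_R^3||R|^{-1/2}\lesssim\BB_R(f_3)$ for the other two.

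After summing the geometric series in $j$ (peaked at $j=j_k$) and the $\lesssim 2^{n(j-l)_+}$ near-diagonal cubes $R$ — the $R$-sum costing only a bounded power of $2^{j-l}$, which is absorbed by $s_3>n$ — the left-hand side of \eqref{keylemma10est} is dominated by $u_1(x)u_2(x)u_3(x)$, where $u_1(x)=\big(\sum_k(|\la_k|\ell(Q_k)^{-n/p_1}\langle\ell(Q_k)^{-1}(x-\xx_{Q_k})\rangle^{-L})^{p_1}\big)^{1/p_1}$ with $L>n/p_1$, so that $\|u_1\|_{L^{p_1}}^{p_1}\lesssim\sum_k|\la_k|^{p_1}\lesssim 1$ by \eqref{hardylambda} and $p_1\le 1$; $u_2(x)=\|\{|b_P^2||P|^{-1/2}\chi_P(x)\}_{P\in\DD}\|_{\ell^2}$, for which $\|u_2\|_{L^{p_2}}=\|\{b_P^2\}_{P\in\DD}\|_{\dot{f}^{p_2,2}}\sim\|f_2\|_{H^{p_2}}=1$ (the diagonal $x\in P$ slot requires no off-diagonal summation); and $u_3(x)$ a maximal quantity of the form $\sup_j\big(\sum_R(\BB_R(f_3)|R|^{-1/2}\langle 2^j(x-\xx_R)\rangle^{-s_3})^q\big)^{1/q}$ with $q$ large, for which Lemma~\ref{nonlacunarylemma} yields $\|u_3\|_{L^{p_3}}\lesssim\|f_3\|_{L^{p_3}}=\|f_3\|_{H^{p_3}}=1$ (this is where $p_3>2>1$ is used). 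H\"older's inequality with $1/p=1/p_1+1/p_2+1/p_3$ then closes the proof; the case $(1/p_1,1/p_2,1/p_3)\in\RR_3$ follows in the same way after the symmetrization recorded around \eqref{symmetricmain}.

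The step I expect to be the main obstacle is the regime $j>j_k$: there $\phi_l$ is coarser than the scale $2^{-j}$ and no longer detects the oscillation of $T_{\sigma_j}(\cdots)$, so the decay in $j-j_k$ must be extracted entirely from the spatial gap between $x$ and $Q_k^{***}$, while at the same time the number of admissible near-diagonal cubes $R$ grows like $2^{n(j-l)}$. Keeping these two effects balanced — and simultaneously respecting the diagonal structure in the $f_2$-slot and the near-diagonal structure in the $f_3$-slot — within the sharp margin $s>n/p_1+n/2$ is the delicate point.
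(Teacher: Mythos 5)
Your overall architecture agrees with the paper's: factor the left-hand side of \eqref{keylemma10est} into three $x$-dependent quantities, control the $f_2$-slot via the $\dot{f}^{p_2,2}$ discretization (the $P$-sum collapses to the cube containing $x$), control the $f_3$-slot via Lemma~\ref{nonlacunarylemma}, and get the $f_1$-slot from $|x-\xx_{Q_k}|^{-t_1}$ with $t_1>n/p_1$. However, your exponent accounting does not close within the stated hypothesis $s>n/p_1+n/2$. You propose the decomposition $s=s_1+s_2+s_3$ with $s_1>n/p_1$, $s_2>n/2$ and $s_3>n$, and claim this is admissible because the hypothesis is strict; but these three lower bounds force $s>n/p_1+n/2+n$, overshooting the hypothesis by $n$, while the margin from strictness is arbitrarily small. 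The culprit is allocating $s_3>n$ so as to sum the $\lesssim 2^{n(j-l)}$ near-diagonal cubes $R$ in $\ell^1$. The paper (arguing as in the proof of Lemma~\ref{keylemma8}, with $p_2$ replaced by $p_3$) instead packages the $R$-sum in $\ell^{p_3}$ and charges only $t_3>n/p_3<n/2$ to the weight $\langle 2^j(x-\xx_R)\rangle^{-t_3}$; the leftover summability is recovered by an auxiliary H\"older inequality with exponent $(1/p_3'-1/2)^{-1}$, which costs roughly $n(1/2-1/p_3)$ of Sobolev regularity, strictly less than $n/2$ because $p_3>2$. The totals then add to about $n/p_1+n/p_3+(n/2-n/p_3)=n/p_1+n/2$, which fits. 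Lemma~\ref{nonlacunarylemma} with $q=p_3$ is designed precisely for this $\ell^{p_3}$ accounting: it requires $t_3>n/q=n/p_3$, not $t_3>n$. This is exactly the balance you flagged as delicate, but as written the plan does not keep it.

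A secondary concern is your mechanism for extracting the $2^l$-gain. You split at $j=j_k$ and cite Lemma~\ref{epsilon control} applied to $a_k$ to gain $(2^j\ell(Q_k))^{M+1}$; but Lemma~\ref{epsilon control} controls $\|\phi_l*f\|_\infty$ and its gain is a prefactor $2^{l(N+n+\epsilon)}$, not a power of $2^j\ell(Q_k)$ (factors of the latter type come from Lemma~\ref{technical lemma}). The paper's mechanism is cleaner and uniform in $j$: apply Lemma~\ref{epsilon control}, invoking the vanishing moment condition \eqref{vanishingreduction}, to the $y$-variable of the full sum with Taylor center at $\xx_R$, then convert the resulting factor $2^{l(N_0+n+\epsilon_0)}=2^{l(t_1+t_3)}$ into $|x-\xx_{Q_k}|^{-t_1}|x-\xx_R|^{-t_3}$ via $2^l\lesssim|x-\xx_{Q_k}|^{-1}$ and $2^l\lesssim|x-\xx_R|^{-1}$, both valid on $B_k^l\cap B_R^l$. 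Even if your $j_k$-splitting could be made precise, the exponent bookkeeping above must be corrected first.
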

The proof of Lemmas \ref{keylemma9} and \ref{keylemma10} will be provided in Section \ref{proofoflemmas}.

\hfill

\subsubsection{Proof of \eqref{j2keyest} for $\nu=4$}

Let $B_P^l:=B_{\xx^P}^l$ and $B_R^l:=B_{\xx_R}^l$ for $P,R\in \DD$ and $l\in\bbz$, and let $\Xi_{\eta}(P,R,l)$ be defined as in \eqref{xietadef}.
Now we write
$$\UU_4=\UU_4^{1}+\UU_4^{2}+\UU_4^{3}+\UU_4^{4}$$
where
\begin{equation}\label{u4etadef}
\UU_4^{\eta}(x,y):=\sum_{j\in\bbz} \sum_{P\in\mathcal{D}_j}\sum_{R\in\mathcal{D}_j} b_P^2 b_R^3  T_{\sigma_j}\big(\La_ja_k,\psi^P,\theta^R\big)(y)  \chi_{\Xi_{\eta}(P,R,l)}(x), \q \eta=1,2,3,4.
\end{equation}
Accordingly, we define  
$$\JJ_2^{4,\eta}:=\bigg\Vert   \sup_{l\in\bbz} \Big| \sum_{k=0}^{\infty}\la_k \chi_{(Q_k^{***})^c}(x)\chi_{B_k^l}(x)  \phi_l\ast \big(\UU_{4}^{\eta}(x,\cdot) \big)(x)\Big|\;  \bigg\Vert_{L^p(x)}, \q \eta=1,2,3,4.$$
Then we obtain the desired result from the following lemmas. 
\begin{lemma}\label{keylemma11}
Let $0<p_1\le 1$ and $2<p_2,p_3<\infty$ and let $\UU_4^{\eta}$, $\eta=1,2,3$, be defined as in \eqref{u4etadef}.
Suppose that $\Vert f_1\Vert_{H^{p_1}(\bbrn)}=\Vert f_2\Vert_{H^{p_2}(\bbrn)}=\Vert f_3\Vert_{H^{p_3}(\bbrn)}=1$ and $\LL_s^2[\sigma]=1$ for $s>n/p_1+n/2$. 
Then there exist nonnegative functions $u_1$, $u_2$, and $u_3$ on $\bbrn$ such that
\begin{equation*}
\Vert u_{\ii}\Vert_{L^{p_{\ii}}(\bbrn)}\lesssim 1 \q \text{ for }~ \ii=1,2,3,
\end{equation*}
and for each $\eta=1,2,3$
\begin{equation}\label{keylemma11est}
\sup_{l\in\bbz}\; \Big| \sum_{k=0}^{\infty}\la_k\chi_{(Q_k^{***})^c}(x)\chi_{B_k^l}(x) \phi_l\ast\big( \UU_{4}^{\eta}(x,\cdot)\big)(x)\Big| \lesssim u_1(x) u_2(x) u_3(x).
\end{equation}
\end{lemma}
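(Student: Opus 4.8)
For each $\eta\in\{1,2,3\}$ the plan is to prove the pointwise bound \eqref{keylemma11est} uniformly in $l$, and then to read off $u_1$ as an atomic maximal function and $u_2,u_3$ as Littlewood--Paley square/maximal functions of $f_2,f_3$; the three $L^{p_i}$-bounds will follow from the auxiliary lemmas above together with \eqref{hardydeflittle}, \eqref{hardydef} and \eqref{hardylambda}. The three values of $\eta$ run through one and the same argument.

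Fix $l$, fix $x$, and fix $k$ with $x\in B_k^l\setminus Q_k^{***}$; writing $\ell(Q_k)=2^{-j_k}$, the geometry forces $2^l\ell(Q_k)\lesssim 1$ and $\ell(Q_k)\lesssim|x-\xx_{Q_k}|\lesssim 2^{-l}$. Using \eqref{u4etadef} and $T_{\sigma_j}(g_1,g_2,g_3)(y)=\sigma_j^\vee\ast_{3n}(g_1\otimes g_2\otimes g_3)(y,y,y)$, we write $\phi_l\ast(\UU_4^\eta(x,\cdot))(x)=\sum_{j\in\bbz}\sum_{P,R\in\DD_j:\,x\in\Xi_\eta(P,R,l)}b_P^2 b_R^3\,\Phi_{j,l}^{P,R}(x)$ with $\Phi_{j,l}^{P,R}:=\phi_l\ast T_{\sigma_j}(\La_j a_k,\psi^P,\theta^R)$. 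The technical heart is a pointwise bound for $\Phi_{j,l}^{P,R}(x)$ on the set where $x\notin P\cup R$, obtained by combining: (i) the estimate \eqref{estlaak} of Lemma~\ref{technical lemma} for $\La_j a_k$, which supplies the factor $\ell(Q_k)^{-n/p_1}\min\{1,(2^j\ell(Q_k))^{M+n+1}\}$ and an arbitrarily strong spatial tail; (ii) the Schwartz decay of $\psi^P,\theta^R$ at scale $2^{-j}$ about $\xx_P,\xx_R$, which, after the off-diagonal decay of $\sigma_j^\vee$ extracted from $\LL_s^2[\sigma]=1$ (allocating the regularity carefully, and exploiting the $L^2$-smallness of $\La_j a_k$ from \eqref{lajalr} where a purely $L^1$-based bound would be too lossy), passes to weights $2^{jn/2}\langle 2^j(x-\xx_P)\rangle^{-N}$ and $2^{jn/2}\langle 2^j(x-\xx_R)\rangle^{-N}$ for any $N$; and (iii) the frequency localization of $T_{\sigma_j}(\cdots)$ in $\{|\xi|\sim 2^j\}$ against the rapid decay of $\wh\phi$, giving the factor $2^{-N(j-l)_+}$ (for $j\le l$, $\phi_l$ merely averages $T_{\sigma_j}(\cdots)$ over a ball of radius $\sim 2^{-l}\gtrsim 2^{-j}$, which is harmless). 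Schematically,
$$\big|\Phi_{j,l}^{P,R}(x)\big|\lesssim_N 2^{-N(j-l)_+}\min\{1,(2^j\ell(Q_k))^{M+n+1}\}\,\ell(Q_k)^{-n/p_1}\,\frac{2^{jn/2}}{\langle 2^j(x-\xx_P)\rangle^{N}}\,\frac{2^{jn/2}}{\langle 2^j(x-\xx_R)\rangle^{N}}.$$

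Now I sum. Using $|b_P^2|\lesssim\BB_P(f_2)$ and $|b_R^3|\lesssim\BB_R(f_3)$ (the $\BB$'s of Lemmas~\ref{almost orthogonality}--\ref{nonlacunarylemma}) and $\sum_{P\in\DD_j}\langle 2^j(x-\xx_P)\rangle^{-N}\lesssim 1$, for each fixed $j$ the $P$-sum is dominated by $\big(\sum_{P\in\DD_j}(\BB_P(f_2)|P|^{-1/2}\chi_{P^c}(x)\langle 2^j(x-\xx_P)\rangle^{-s})^2\big)^{1/2}$ --- here $x\notin P$ supplies the needed $\chi_{P^c}$ --- and the $R$-sum, analogously, by the non-lacunary $\ell^2$-expression of Lemma~\ref{nonlacunarylemma}. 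Splitting the gain $2^{-N(j-l)_+}\min\{1,(2^j\ell(Q_k))^{M+n+1}\}$ into three comparable pieces --- one to perform the $j$-sum, one for Cauchy--Schwarz in $j$ against the $f_2$-square function, one for H\"older in $j$ against the $f_3$-supremum function --- and invoking $2^l\lesssim|x-\xx_{Q_k}|^{-1}$ from $x\in B_k^l\setminus Q_k^{***}$, the $j$-sum produces, for the $k$-th atom, the spatial decay $\langle 2^{j_k}(x-\xx_{Q_k})\rangle^{-M'}$ with $M'\to\infty$ as $M\to\infty$. Taking $M'>n/p_1$, this yields \eqref{keylemma11est} with $u_1(x)=\sum_k|\la_k|\,\ell(Q_k)^{-n/p_1}\langle 2^{j_k}(x-\xx_{Q_k})\rangle^{-M'}\chi_{(Q_k^{***})^c}(x)$, with $u_2$ the lacunary square function $\big(\sum_j(\sum_{P\in\DD_j}\BB_P(f_2)|P|^{-1/2}\chi_{P^c}\langle 2^j(\cdot-\xx_P)\rangle^{-s})^2\big)^{1/2}$, and $u_3$ the corresponding non-lacunary $\sup_j$-$\ell^2$ function of $f_3$ (any $s\in(n/2,\min\{N,L\})$ is admissible). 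Finally $\|u_1\|_{L^{p_1}}\lesssim(\sum_k|\la_k|^{p_1})^{1/p_1}\lesssim 1$ by the $p_1$-triangle inequality and \eqref{hardylambda}, $\|u_2\|_{L^{p_2}}\lesssim\|f_2\|_{H^{p_2}}=1$ by Lemma~\ref{lacunarylemma}, and $\|u_3\|_{L^{p_3}}\lesssim\|f_3\|_{H^{p_3}}=1$ by Lemma~\ref{nonlacunarylemma}. The ball-exclusions $(B_P^l)^c,(B_R^l)^c$ recorded in $\Xi_\eta$ are used only, where present, to keep the $2^{-N(j-l)_+}$ gain effective in the range $j\ge l$; the decisive decays come from the cancellation of the atom, the Schwartz tails of $\psi^P,\theta^R$, and the constraint $x\in B_k^l$.

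The main obstacle is the pointwise estimate for $\Phi_{j,l}^{P,R}$: one must extract simultaneously, from the single bound $\LL_s^2[\sigma]=1$ with the limited regularity $s>n/p_1+n/2$, the decoupling of the three inner slots into a product of a $\psi^P$-weight, a $\theta^R$-weight, and an $\La_j a_k$-factor carrying the atom's cancellation, while arranging the bookkeeping so that the coefficients of $f_2$ are summed in $\ell^2$ --- matching the Littlewood--Paley characterization \eqref{hardydeflittle} of $H^{p_2}$ --- and those of $f_3$ in $\ell^\infty$/$\ell^2$ --- matching \eqref{hardydef} of $H^{p_3}$; mismatching these, or spending too much regularity on the $\psi^P,\theta^R$ slots, would destroy the bound for $p_2,p_3>2$.
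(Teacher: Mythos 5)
Your overall scaffolding is right — and matches the paper: bound $\phi_l\ast\big(\UU_4^\eta(x,\cdot)\big)(x)$ pointwise by a product of an atomic maximal function $u_1$, a lacunary $\ell^2$-square function in $P$ for $f_2$, and a non-lacunary $\sup_j$-$\ell^2$ function in $R$ for $f_3$, then close with the $p_1$-triangle inequality, Lemma \ref{lacunarylemma}, and Lemma \ref{nonlacunarylemma}. But the step you correctly identify as ``the technical heart'' is not established, and the schematic bound you assert for $\Phi_{j,l}^{P,R}$ is actually false. With $\LL_s^2[\sigma]=1$ you only have $L^2$ Sobolev decay of order $s>n/p_1+n/2$ for $\sigma_j^\vee$ at scale $2^{-j}$. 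When you unpack $T_{\sigma_j}(\La_ja_k,\psi^P,\theta^R)(y)$ as a convolution against $\sigma_j^\vee$, the decay in $\langle 2^j(x-\xx_P)\rangle$ and $\langle 2^j(x-\xx_R)\rangle$ is gated by how much of the budget $s$ you allocate to the $z_2$- and $z_3$-directions, not by the Schwartz decay of $\psi^P,\theta^R$; the two weights cannot both have arbitrary exponent $N$. The correct statement --- which is \eqref{eta123claim} in the paper --- carries weights $\langle 2^j(x-\xx_P)\rangle^{-s_2}$ and $\langle 2^j(x-\xx_R)\rangle^{-s_3}$ with $s_2,s_3>n/2$, $s_1>n/p_1-n/2$, and $s_1+s_2+s_3=s$, a split that the hypothesis $s>n/p_1+n/2$ makes barely admissible. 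Getting there requires, for each $\eta$, exploiting a different position hierarchy among $x,\xx_{Q_k},\xx_P,\xx_R$ on $\Xi_\eta(P,R,l)$ (e.g., for $\eta=1$, $|x-\xx_{Q_k}|\le|x-\xx_P|\lesssim|y-\xx_P|$ and $|x-\xx_R|\lesssim|y-\xx_R|$; for $\eta=2$, all three are dominated by $|x-\xx_P|$) so that the $\langle 2^j(y-z_i)\rangle^s$ weight from $\sigma_j^\vee$ can be threaded through to the right factors of $\wt{\psi^P},\wt{\theta^R}$.

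The second missing ingredient is Lemma \ref{almost orthogonality}. Passing from the raw $\ell^1$-sum $\sum_{P\in\DD_j}|b_P^2|\chi_{P^c}(x)\langle 2^j(x-\xx_P)\rangle^{-s_2}|\wt{\psi^P}(z_2)|$ to the $\ell^2$ expression with the correct $|P|^{-1/2}$ normalization that Lemma \ref{lacunarylemma} then controls is exactly what Lemma \ref{almost orthogonality} does, by Cauchy--Schwarz against the overlap $\langle|\varphi^Q|,|\varphi^R|\rangle$; invoking $\sum_P\langle 2^j(x-\xx_P)\rangle^{-N}\lesssim1$ and a bare Cauchy--Schwarz on the coefficients does not produce this form. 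Finally, the paper does not need the $2^{-N(j-l)_+}$ low-pass factor or the collapse of the $j$-sum into an explicit power $\langle 2^{j_k}(x-\xx_{Q_k})\rangle^{-M'}$; it keeps the $j$-resolved structure via $J_{k,j,s}^1$ and takes $u_1(x)=\sum_k|\la_k|\chi_{(Q_k^{***})^c}(x)|x-\xx_{Q_k}|^{-s_1}\big(\sum_j(2^{-js_1}\mathcal{M}J_{k,j,s}^1(x))^2\big)^{1/2}$, whose $L^{p_1}$ bound is \eqref{v1p1est}. Your alternative $u_1$ could work, but the derivation of its defining pointwise bound relies on the overclaimed decay, so as written the argument does not go through.
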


\begin{lemma}\label{keylemma12}
Let $0<p_1\le 1$ and $2<p_2,p_3<\infty$ and let $\UU_4^{4}$ be defined as in \eqref{u4etadef}.
Suppose that $\Vert f_1\Vert_{H^{p_1}(\bbrn)}=\Vert f_2\Vert_{H^{p_2}(\bbrn)}=\Vert f_3\Vert_{H^{p_3}(\bbrn)}=1$ and $\LL_s^2[\sigma]=1$ for $s>n/p_1+n/2$. 
Then there exist nonnegative functions $u_1$, $u_2$, and $u_3$ on $\bbrn$ such that
\begin{equation}\label{keylemma12conditions}
 \Vert u_{\ii}\Vert_{L^{p_{\ii}}(\bbrn)}\lesssim 1 \q \text{ for }~ \ii=1,2,3,
\end{equation}
and 
\begin{equation}\label{keylemma12est}
\sup_{l\in\bbz}\; \Big| \sum_{k=0}^{\infty}\la_k\chi_{(Q_k^{***})^c}(x)\chi_{B_k^l}(x) \phi_l\ast\big( \UU_{4}^{4}(x,\cdot)\big)(x)\Big| \lesssim u_1(x) u_2(x) u_3(x).
\end{equation}
\end{lemma}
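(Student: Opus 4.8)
The plan is to follow the same scheme used for the preceding key lemmas, Lemmas \ref{keylemma1}--\ref{keylemma11}, each of which reduces a supremum over $l$ of a convolution with $\phi_l$ to a product $u_1(x)u_2(x)u_3(x)$ of functions in $L^{p_1}$, $L^{p_2}$, $L^{p_3}$; I only indicate the features specific to $\UU_4^4$. First I would write $K_j:=\sigma_j^{\vee}$, so that $T_{\sigma_j}(g_1,g_2,g_3)(y)=\int_{(\bbrn)^3}K_j(y-z_1,y-z_2,y-z_3)\,g_1(z_1)g_2(z_2)g_3(z_3)\,dz_1dz_2dz_3$, and record from \eqref{supequiv} that $\|\sigma_j(2^j\ccdot)\|_{L^2_s((\bbrn)^3)}\lesssim 1$ uniformly in $j$. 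Feeding this Sobolev bound, the pointwise atom estimate \eqref{estlaak} for $\La_ja_k$, and the rapid decay $|\psi^P(z)|\lesssim 2^{jn/2}\langle 2^j(z-\xx_P)\rangle^{-L}$, $|\theta^R(z)|\lesssim 2^{jn/2}\langle 2^j(z-\xx_R)\rangle^{-L}$ for $P,R\in\DD_j$ (with $L$ as large as desired) into the triple integral, and running a Cauchy--Schwarz argument as in the proof of Lemma \ref{almost orthogonality}, one arrives --- after convolving with the $L^1$-normalized bump $\phi_l$ of support radius $2^{-l}$ --- at a schematic pointwise bound for $\phi_l\ast T_{\sigma_j}(\La_ja_k,\psi^P,\theta^R)(x)$ that factors as $\ell(Q_k)^{-n/p_1}\min\{1,(2^j\ell(Q_k))^{M+n+1}\}$ times two suitably normalized fast-decaying bumps of scale $2^{-j}$ centered at $\xx_P$ and $\xx_R$, times a tail in $\langle 2^j(x-\xx_{Q_k})\rangle$.

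Next I would use the geometry forced by $\chi_{(Q_k^{***})^c}(x)\chi_{B_k^l}(x)\chi_{\Xi_4(P,R,l)}(x)$: on this set $\ell(Q_k)\lesssim 2^{-l}$ and $|x-\xx_{Q_k}|,|x-\xx_P|,|x-\xx_R|\lesssim 2^{-l}$, so $Q_k$, $P$, $R$ all lie in $B(x,C2^{-l})$, while simultaneously $x\notin P$, $x\notin R$ and $\dist(x,Q_k)\gtrsim\ell(Q_k)$; in particular the $\sup_l$ in \eqref{keylemma12est} effectively runs only over $l$ with $2^l\ell(Q_k)\lesssim 1$, and for such $l$ the tail factor at $\xx_{Q_k}$ is comparable to a negative power of $2^j|x-\xx_{Q_k}|$, hence to negative powers of both $2^j\ell(Q_k)$ and $|x-\xx_{Q_k}|/\ell(Q_k)$.

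I would then carry out the summations. Split $\sum_j$ into the range $2^j\ell(Q_k)\le 1$, where the factor $(2^j\ell(Q_k))^{M+n+1}$ with $M$ arbitrarily large renders the $j$-sum geometric, and the range $2^j\ell(Q_k)>1$, which forces $2^{-j}\lesssim\ell(Q_k)\lesssim 2^{-l}$, so $j\ge l$, and where the tail decay in $2^j\ell(Q_k)$ supplies summability; in either range the $P$- and $R$-sums are handled by pairing the bump at $\xx_P$ against $\{b_P^2\}$ via Hölder and the bump at $\xx_R$ against $\{b_R^3\}$ via a supremum, the off-cube cut-offs $\chi_{P^c}(x)$, $\chi_{R^c}(x)$ allowing Lemma \ref{almost orthogonality} --- which is built around exactly the weights $\chi_{Q^c}(x)\langle 2^j(x-\xx_Q)\rangle^{-s}$ --- to turn the $P$-data into an $\ell^2$-summable sequence and the $R$-data into an $\ell^{\infty}$-bounded one, whence Lemmas \ref{lacunarylemma} and \ref{nonlacunarylemma} together with \eqref{bp2} and \eqref{br3} supply $u_2,u_3$ with $\|u_2\|_{L^{p_2}},\|u_3\|_{L^{p_3}}\lesssim 1$. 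What remains is a $k$-dependent factor at most $\ell(Q_k)^{-n/p_1}\big(\ell(Q_k)/|x-\xx_{Q_k}|\big)^{L}\chi_{(Q_k^{***})^c}(x)$ with $L$ as large as we wish, so that
$$u_1(x):=\Big(\sum_k|\la_k|^{p_1}\ell(Q_k)^{-n}\big(\ell(Q_k)/|x-\xx_{Q_k}|\big)^{Lp_1}\chi_{(Q_k^{***})^c}(x)\Big)^{1/p_1}$$
satisfies $\|u_1\|_{L^{p_1}}^{p_1}\lesssim\sum_k|\la_k|^{p_1}\lesssim 1$ by \eqref{hardylambda}, using $Lp_1>n$ so that $\int_{(Q_k^{***})^c}|x-\xx_{Q_k}|^{-Lp_1}\,dx\lesssim\ell(Q_k)^{n-Lp_1}$. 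Combining the three factors yields \eqref{keylemma12est}.

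The hard part will be the range $2^j\ell(Q_k)>1$, where the vanishing moments of $a_k$ give no gain at all: there every bit of decay controlling the $j$-sum must be extracted from the kernel bound for $K_j$ in combination with the cut-offs $\chi_{P^c}$, $\chi_{R^c}$, and this has to be done with constants independent of $l$ so that the supremum over $l$ is taken only at the very end. Keeping honest track of the three clustered-but-separated scales --- the common side-length $2^{-j}$ of $P$ and $R$ against $\ell(Q_k)$, all of size $\lesssim 2^{-l}$, with $x$ forced to sit outside each of $P$, $R$ and $Q_k^{***}$ --- is the most delicate point of the argument, and is exactly what dictates the presence of the factor $\chi_{Q^c}$ in Lemma \ref{almost orthogonality}.
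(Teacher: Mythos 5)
There is a genuine gap: your proposal never invokes the vanishing moment condition \eqref{vanishingmoment}/\eqref{vanishingreduction} on $\sigma$, nor Lemma~\ref{epsilon control}, yet this is precisely the mechanism the case $\eta=4$ lives or dies by. On the set $\chi_{(Q_k^{***})^c}\chi_{B_k^l}\chi_{\Xi_4(P,R,l)}$ the point $x$ lies within $100n\,2^{-l}$ of \emph{all three} centers $\xx_{Q_k}$, $\xx_P$, $\xx_R$, while $\phi_l$ has support radius $2^{-l}$. The cut-offs $\chi_{P^c}(x),\chi_{R^c}(x)$ only give $|x-\xx_P|,|x-\xx_R|\gtrsim 2^{-j}$, and $x\notin Q_k^{***}$ gives $|x-\xx_{Q_k}|\gtrsim\ell(Q_k)$; since $2^{-j},\ell(Q_k)\lesssim 2^{-l}$, the convolution $\phi_l*$ washes out every localization at scale finer than $2^{-l}$. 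In particular your ``schematic pointwise bound'' with bumps of scale $2^{-j}$ centered at $\xx_P,\xx_R$ and a tail in $\langle 2^j(x-\xx_{Q_k})\rangle$ evaluated at $x$ cannot be extracted from the convolution: for $y$ in the support of $\phi_l(x-\cdot)$, $|y-\xx_P|$ ranges over essentially $[0,2^{-l}]$, so the average of $\langle 2^j(y-\xx_P)\rangle^{-L}$ is of size $\sim 2^{-jn}2^{ln}$, with no further dependence on $|x-\xx_P|$. The ``transfer the tail across the convolution'' trick used in Lemmas~\ref{keylemma7}, \ref{keylemma9}, \ref{keylemma11} is available exactly when $x\notin B_P^l$ (resp.\ $B_R^l$, $B_k^l$), i.e.\ $|x-\xx_\bullet|\gtrsim 2^{-l}$; in $\Xi_4\cap B_k^l$ none of these hold, so there is nothing to transfer.

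What the paper does instead, and what your outline is missing, is to apply Lemma~\ref{epsilon control} to $f=T_{\sigma_j}(\La_ja_k,\psi^P,\theta^R)$ centered at $a=\xx_P$: the vanishing moments of this $f$ (a consequence of \eqref{vanishingreduction}, i.e.\ of the cancellation hypothesis on $\sigma$, not of the moments of $a_k$) yield the factor $2^{l(N_0+n+\epsilon_0)}$ with $N_0:=[n/p]-n$. This factor is then split as $(2^l)^{t_1}(2^l)^{t_2}(2^l)^{t_3}$ with $t_i>n/p_i$ and $t_1+t_2+t_3=N_0+n+\epsilon_0$, and \eqref{2lbound} converts it into the three separate weights $|x-\xx_{Q_k}|^{-t_1}$, $\langle 2^j(x-\xx_P)\rangle^{-t_2}$, $\langle 2^j(x-\xx_R)\rangle^{-t_3}$ that feed Lemma~\ref{almost orthogonality}, Lemma~\ref{lacunarylemma}, and Lemma~\ref{nonlacunarylemma}. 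Without this Taylor-remainder gain there is no source for the $L^{p_1}\times L^{p_2}\times L^{p_3}$-compatible decoupling, and --- as you half-noticed when you flagged the range $2^j\ell(Q_k)>1$ as ``the hard part'' --- the sum over $j\gtrsim l$ does not converge: the decay you hope to ``extract from the kernel bound for $K_j$ in combination with the cut-offs $\chi_{P^c},\chi_{R^c}$'' simply is not there after averaging against $\phi_l$ at the coarser scale. You should incorporate Lemma~\ref{epsilon control} at the very first step, as in the paper's treatment of Lemmas~\ref{keylemma6}, \ref{keylemma8}, \ref{keylemma10}, \ref{keylemma14}, which are the other places where $x$ sits inside $B_k^l$ and simultaneously inside enough of the balls $B_P^l,B_R^l$ to block the tail-transfer argument.
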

The proof of the lemmas will be given in Section \ref{proofoflemmas}.

\section{Proof of Proposition \ref{mainproposition2}}\label{pfpropositionp=1}


 We need to deal only with, via symmetry, the case when $0<p_1=p\le 1$ and $p_2=p_3=\infty$. 
As before, we assume that $\| f_1 \|_{H^p(\bbrn)} = \|f_2 \|_{L^\infty(\bbrn)} = \| f_3 \|_{L^\infty(\bbrn)}=1$ and $\LL_s^2[\sigma]=1$ for $s>n/p+n/2$.
In this case, we do not decompose the frequencies of $f_2, f_3$ and only make use of the atomic decomposition on $f_1$. 
Let $a_k$'s be $H^p$-atoms associated with $Q_k$ so that  $f_1=\sum_{k=1}^{\infty}\la_k a_k$ and $\big(\sum_{k=1}^{\infty}|\la_k|^p\big)^{1/p}\lesssim 1$.
Then we will prove that
\begin{equation}\label{qkinside}
\bigg\| \sup_{l\in\mathbb{Z}} \Big|\sum_{k=1}^{\infty} \lambda_k \chi_{Q_k^{***}}\;\phi_l \ast T_{\sigma}(a_k, f_2, f_3)\Big| \bigg\|_{L^p(\bbrn)} \lesssim 1
\end{equation}
and
\begin{equation}\label{qkout}
\bigg\| \sup_{l\in\mathbb{Z}} \Big|\sum_{k=1}^{\infty} \lambda_k \chi_{(Q_k^{***})^c}\;\phi_l \ast T_{\sigma}(a_k, f_2, f_3)\Big| \bigg\|_{L^p(\bbrn)} \lesssim 1
\end{equation}

\subsection{Proof of \eqref{qkinside}}
Since $$\big| \phi_l\ast T_{\sigma}(a_k,f_2,f_3)(x)\big|\lesssim \mathcal{M}T_{\sigma}(a_k,f_2,f_3)(x),$$
the left-hand side of \eqref{qkinside} is controlled by
\begin{equation*}
\Big(\sum_{k=1}^{\infty}|\la_k|^p\big\Vert \mathcal{M}T_{\sigma}(a_k,f_2,f_3) \big\Vert_{L^p(Q^{***})}^p \Big)^{1/p}.
\end{equation*}
Using H\"older's inequality, the $L^2$ boundedness of $\mathcal{M}$, and Theorem \ref{thmd}, we have
\begin{align*}
\big\Vert \mathcal{M}T_{\sigma}(a_k,f_2,f_3) \big\Vert_{L^p(Q^{***})}\lesssim |Q_k|^{1/p-1/2}\big\Vert  T_{\sigma}(a_k,f_2,f_3)\big\Vert_{L^2(\bbrn)}\lesssim |Q_k|^{1/p-1/2}\Vert a_k\Vert_{L^2(\bbrn)}\lesssim 1
\end{align*}
and thus \eqref{qkinside} follows from  $\big(\sum_{k=1}^{\infty}|\la_k|^p\big)^{1/p}\lesssim 1$.

\subsection{Proof of \eqref{qkout}}

Let $B_k^l=B(\xx_{Q_k},100 n2^{-l})$ as before. We now decompose the left-hand side of \eqref{qkout} as the sum of 
$$\VV_1:= \bigg\| \sup_{l\in\mathbb{Z}} \Big|\sum_{k=1}^{\infty} \lambda_k \chi_{(Q_k^{***})^c}\chi_{(B_k^l)^c}\;\phi_l \ast T_{\sigma}(a_k,f_2,f_3)\Big| \bigg\|_{L^p(\bbrn)},$$
$$\VV_2:= \bigg\| \sup_{l\in\mathbb{Z}} \Big|\sum_{k=1}^{\infty} \lambda_k \chi_{(Q_k^{***})^c}\chi_{B_k^l}\;\phi_l \ast T_{\sigma}(a_k,f_2,f_3)\Big)\Big| \bigg\|_{L^p(\bbrn)},$$
and thus we need to show that 
\begin{equation*}
\VV_{1},\VV_2\lesssim 1.
\end{equation*}
Actually, the proof of these estimates will be complete once we have verified the following lemmas.
\begin{lemma}\label{keylemma13}
Let $0<p\le 1$.
Suppose that $\Vert f_1\Vert_{H^{p}(\bbrn)}=\Vert f_2\Vert_{H^{\infty}(\bbrn)}=\Vert f_3\Vert_{H^{\infty}(\bbrn)}=1$ and $\LL_s^2[\sigma]=1$ for $s>n/p+n/2$. 
Then there exist nonnegative functions $u_1$, $u_2$, and $u_3$ on $\bbrn$ such that
\begin{equation*}
 \Vert u_{1}\Vert_{L^{p}(\bbrn)}\lesssim 1, \qq \Vert u_{\ii}\Vert_{L^{\infty}(\bbrn)}\lesssim 1\q \text{ for }~ \ii=2,3,
\end{equation*}
and 
\begin{equation}\label{keylemma13est}
\sup_{l\in\mathbb{Z}} \Big|\sum_{k=1}^{\infty} \lambda_k \chi_{(Q_k^{***})^c}(x)\chi_{(B_k^l)^c}(x)\;\phi_l \ast T_{\sigma}\big(a_k,f_2,f_3\big)(x)\Big|\lesssim u_1(x) u_2(x) u_3(x).
\end{equation}
\end{lemma}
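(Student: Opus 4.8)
The plan is to take $u_2$ and $u_3$ to be constants (absorbing $\|f_2\|_{L^\infty}=\|f_3\|_{L^\infty}=1$) and to build $u_1$ atom by atom. Since $0<p\le 1$, the left-hand side of \eqref{keylemma13est} is at most $\sum_{k}|\lambda_k|\,V_k(x)$ with
$$V_k(x):=\chi_{(Q_k^{***})^c}(x)\,\sup_{l:\,x\notin B_k^l}\big|\phi_l\ast T_{\sigma}(a_k,f_2,f_3)(x)\big|,$$
so we may set $u_1:=\big(\sum_k|\lambda_k|^{p}V_k^{p}\big)^{1/p}$; then $u_1\ge\sum_k|\lambda_k|V_k$ and $\|u_1\|_{L^p}^{p}=\sum_k|\lambda_k|^{p}\|V_k\|_{L^p}^{p}\lesssim\sup_k\|V_k\|_{L^p}^{p}$ because $\big(\sum_k|\lambda_k|^{p}\big)^{1/p}\lesssim 1$. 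Thus everything reduces to the uniform bound $\|V_k\|_{L^p(\bbrn)}\lesssim 1$.

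Next I would exploit the localization encoded in the constraint $x\notin B_k^l$: it forces $2^{-l}\lesssim|x-\xx_{Q_k}|$, and since $\phi_l$ is supported in $\{|u|\le 2^{-l}\}$, the average $\phi_l\ast T_{\sigma}(a_k,f_2,f_3)(x)$ sees $T_{\sigma}(a_k,f_2,f_3)$ only on $B(x,2^{-l})$, which for $x\notin Q_k^{***}$ lies in the fattened dyadic annulus $\mathcal{A}_m^{*}:=\mathcal{A}_{m-1}\cup\mathcal{A}_m\cup\mathcal{A}_{m+1}$, where $\mathcal{A}_m:=\{z:2^{m}\ell(Q_k)\le|z-\xx_{Q_k}|<2^{m+1}\ell(Q_k)\}$ and $m=m(x)$ is chosen so that $2^{m}\ell(Q_k)\sim|x-\xx_{Q_k}|$. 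Consequently $V_k(x)\lesssim\chi_{(Q_k^{***})^c}(x)\,\mathcal{M}\big[\chi_{\mathcal{A}_{m(x)}^{*}}T_{\sigma}(a_k,f_2,f_3)\big](x)$; splitting $\|V_k\|_{L^p}^{p}$ over the annuli and using H\"older's inequality together with the $L^2$-boundedness of $\mathcal{M}$,
$$\|V_k\|_{L^p}^{p}\lesssim\sum_{m\ge 0}|\mathcal{A}_m|^{1-p/2}\,\big\|T_{\sigma}(a_k,f_2,f_3)\big\|_{L^2(\mathcal{A}_m^{*})}^{p}\lesssim\sum_{m\ge 0}2^{m(n-np/2-\gamma p)},$$
where the $|Q_k|$-powers cancel and the last inequality holds provided one has the weighted $L^2$-tail estimate $\|T_{\sigma}(a_k,f_2,f_3)\|_{L^2(\mathcal{A}_m)}\lesssim 2^{-m\gamma}|Q_k|^{1/2-1/p}$ for some $\gamma>n/p-n/2$; the geometric series then converges. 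This reduces the lemma to the tail estimate, which is the trilinear analogue, with two $L^\infty$ inputs, of the main decay estimate in the proof of Theorem~\ref{thma}.

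For the tail estimate I would decompose the symbol into Littlewood--Paley pieces, $T_{\sigma}=\sum_{j\in\bbz}T_{\sigma^{(j)}}$, so that the kernel of $T_{\sigma^{(j)}}$ lives at spatial scale $2^{-j}$, and estimate $\|T_{\sigma^{(j)}}(a_k,f_2,f_3)\|_{L^2(\mathcal{A}_m)}$ for each $j$. Three ingredients combine: (i) the base bound $\|T_{\sigma^{(j)}}(g_1,g_2,g_3)\|_{L^2}\lesssim\LL_s^2[\sigma]\,\|g_1\|_{L^2}\|g_2\|_{L^\infty}\|g_3\|_{L^\infty}$, valid for $s>3n/2$ and hence for our $s>n/p+n/2$, which follows from Theorem~\ref{thmd} applied with exponents $(2,\infty,\infty)\to 2$; (ii) the many vanishing moments of $a_k$, which for coarse frequencies $2^{j}\lesssim\ell(Q_k)^{-1}$ yield a gain $(2^{j}\ell(Q_k))^{M+1}$ via the Fourier-side estimate $|\wh{a_k}(\xi)|\lesssim|Q_k|^{1-1/p}\min\{1,(2^{j}\ell(Q_k))^{M+1}\}$ for $|\xi|\lesssim 2^{j}$, exactly as in Lemma~\ref{technical lemma}; and (iii) the off-diagonal decay of the kernel of $T_{\sigma^{(j)}}$ — since $a_k$ is supported in $Q_k$ while $\mathcal{A}_m$ sits at distance $\sim 2^{m}\ell(Q_k)$ from $Q_k$, restricting the output to $\mathcal{A}_m$ costs a factor $(2^{m}\cdot 2^{j}\ell(Q_k))^{-L}$ for any $L$ up to the order of decay afforded by $s$. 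Summing in $j$ — the vanishing moments control the coarse frequencies and the kernel decay the fine ones — produces the decay $2^{-m\gamma}$ with $\gamma$ as large as one likes relative to $s$, in particular $\gamma>n/p-n/2$.

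The main obstacle is step (iii). Because $f_2,f_3$ are merely bounded, one cannot pass to a pointwise bound on the three-variable kernel $(\sigma^{(j)})^{\vee}$ and then integrate out the $f_2,f_3$ variables: that would cost $2n$ orders of decay and force $s>n/p+2n$, far more than $n/p+n/2$. Instead the $f_2,f_3$ inputs must be kept inside an $L^2$/Plancherel estimate while only the transfer from $Q_k$ to $\mathcal{A}_m$ in the first slot is made explicit; carrying this out cleanly, interleaving Bernstein's inequality, the vanishing-moment gain, and the off-diagonal $L^2$ bound, is the technical heart of the argument, and it runs entirely parallel to the treatment of the analogous terms in Sections~\ref{estimateii}--\ref{estimatejj}; the details are given in Section~\ref{proofoflemmas}.
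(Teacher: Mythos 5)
Your scaffolding — taking $u_2 = u_3 = 1$, building $u_1 = \bigl(\sum_k |\lambda_k|^p V_k^p\bigr)^{1/p}$, reducing via the $\ell^p$-quasi-triangle and H\"older to the uniform bound $\|V_k\|_{L^p} \lesssim 1$, and then decomposing $(Q_k^{***})^c$ into dyadic annuli so that $x \notin B_k^l$ localizes the average $\phi_l \ast T_\sigma(a_k,f_2,f_3)(x)$ to $\mathcal{A}_{m(x)}^*$ — is sound, and the arithmetic $\sum_m 2^{m(n - np/2 - \gamma p)} < \infty \iff \gamma > n/p - n/2$ is correct. This reorganizes, in a somewhat cleaner form, what the paper does via the pointwise bound $\bigl|\phi_l \ast \bigl(\sum_j T_{\wt{\sigma_j}}(\Ga_{j+1}a_k,f_2,f_3)\bigr)(x)\bigr| \lesssim \ell(Q_k)^{-n/p}|x-\xx_{Q_k}|^{-s_1}\mathcal{M}\bigl(\sum_j 2^{-s_1 j}\min\{1,(2^j\ell(Q_k))^M\}\,I_{k,j,s}(\cdot)\bigr)(x)$ followed directly by H\"older with $L^{p(2/p)'} \times L^2$.

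However, the proposal does not actually prove the step it correctly identifies as the technical heart: the annular $L^2$ tail estimate $\|T_\sigma(a_k,f_2,f_3)\|_{L^2(\mathcal{A}_m)} \lesssim 2^{-m\gamma}|Q_k|^{1/2-1/p}$ with some $\gamma > n/p - n/2$. You diagnose the pitfall accurately — one cannot take pointwise kernel decay in all three variables, since that would demand $s > n/p + 2n$ — and you sketch the right remedy (Littlewood--Paley decomposition $\sigma = \sum_j \wt{\sigma_j}$, replacement of $a_k$ by $\Ga_{j+1}a_k$ to exploit vanishing moments at coarse frequencies, Plancherel in the $(z_2,z_3)$-variables while only the transfer in $z_1$ is made geometric). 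But ``carrying this out cleanly \ldots the details are given in Section~\ref{proofoflemmas}'' is a deferral, not a proof; that section is precisely where the paper supplies the missing content. Concretely, what is needed and what the paper actually proves is the split $s = s_1 + s_2 + s_3$ with $s_1 > n/p - n/2$ and $s_2, s_3 > n/2$, Lemma~\ref{technical lemma} (in its $\Ga_j$ form) for the factor $\min\{1,(2^j\ell(Q_k))^M\}$, and the Cauchy--Schwarz bound $\bigl\| I_{k,j,s} \bigr\|_{L^2} \lesssim 2^{jn/2}\ell(Q_k)^n\bigl(1 + (2^j\ell(Q_k))^{-(L_0 - s_1 - n)}\bigr)$ that converts $\LL_s^2[\sigma] = 1$ into the decay in $j$; none of this is carried out in the proposal, so the lemma remains unproved.
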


\begin{lemma}\label{keylemma14}
Let $0<p\le 1$. Suppose that $\Vert f_1\Vert_{H^{p}(\bbrn)}=\Vert f_2\Vert_{H^{\infty}(\bbrn)}=\Vert f_3\Vert_{H^{\infty}(\bbrn)}=1$ and $\LL_s^2[\sigma]=1$ for $s>n/p+n/2$. 
Then there exist nonnegative functions $u_1$, $u_2$, and $u_3$ on $\bbrn$ such that
\begin{equation*}
 \Vert u_{1}\Vert_{L^{p}(\bbrn)}\lesssim 1, \qq \Vert u_{\ii}\Vert_{L^{\infty}(\bbrn)}\lesssim 1\q \text{ for }~ \ii=2,3,
\end{equation*}
and 
\begin{equation}\label{keylemma14est}
\sup_{l\in\mathbb{Z}} \Big|\sum_{k=1}^{\infty} \lambda_k \chi_{(Q_k^{***})^c}(x)\chi_{B_k^l}(x)\;\phi_l \ast T_{\sigma}\big(a_k,f_2,f_3\big)(x)\Big|\lesssim u_1(x) u_2(x) u_3(x).
\end{equation}
\end{lemma}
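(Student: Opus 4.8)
The plan is to prove the pointwise bound \eqref{keylemma14est} with the explicit choice $u_2\equiv u_3\equiv 1$ (admissible since $\|f_2\|_{L^\infty}=\|f_3\|_{L^\infty}=1$) and with $u_1$ a single series in $k$ read off from a scale-by-scale estimate of $\phi_l\ast T_\sigma(a_k,f_2,f_3)$. Put $N:=[n/p-n]$, which does not exceed the (large) vanishing-moment order $M$ of the atoms $a_k$. By \eqref{vanishingmoment} together with a density argument ($a_k\in H^p$ and $\mathscr{S}_0(\bbrn)$ is dense in $H^p$), each $T_\sigma(a_k,f_2,f_3)$ has vanishing moments up to order $n/p-n$, so Lemma \ref{epsilon control}, applied to $g=T_\sigma(a_k,f_2,f_3)$ with base point $\xx_{Q_k}$ and an $\epsilon\in(0,1]$ chosen so that $n/p-n<N+\epsilon\le n/p$ (possible as $N\le n/p-n$ and $n\ge1$), gives for every $l\in\bbz$
\[
\big|\phi_l\ast T_\sigma(a_k,f_2,f_3)(x)\big|\lesssim 2^{l(n+N+\epsilon)}\int_{\bbrn}|z-\xx_{Q_k}|^{N+\epsilon}\big|T_\sigma(a_k,f_2,f_3)(z)\big|\,dz.
\]
Thus everything is reduced to the weighted $L^1$ bound (uniform in $k$, recalling $\LL_s^2[\sigma]=1$, $\|f_2\|_{L^\infty}=\|f_3\|_{L^\infty}=1$)
\begin{equation*}
\int_{\bbrn}|z-\xx_{Q_k}|^{N+\epsilon}\big|T_\sigma(a_k,f_2,f_3)(z)\big|\,dz\lesssim \ell(Q_k)^{N+\epsilon+n-n/p}.\tag{$\star$}
\end{equation*}

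To prove $(\star)$ I would split the integral over $z\in Q_k^{**}$ and $z\in(Q_k^{**})^c$. On $Q_k^{**}$, estimate $|z-\xx_{Q_k}|^{N+\epsilon}\lesssim\ell(Q_k)^{N+\epsilon}$, apply Cauchy--Schwarz, and use the boundedness $T_\sigma\colon L^2\times L^\infty\times L^\infty\to L^2$ (the case $p_1=2$, $p_2=p_3=\infty$ of Theorem \ref{thmd}, valid since $s>n/p+n/2\ge3n/2$; this is already invoked for \eqref{qkinside}) together with $\|a_k\|_{L^2}\le\ell(Q_k)^{n/2-n/p}$; this contributes $\lesssim\ell(Q_k)^{N+\epsilon}\cdot\ell(Q_k)^{n/2}\cdot\ell(Q_k)^{n/2-n/p}=\ell(Q_k)^{N+\epsilon+n-n/p}$. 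On $(Q_k^{**})^c$ the point $z$ is at distance $\gtrsim\ell(Q_k)$ from $Q_k$, and one needs quantitative off-diagonal decay of $T_\sigma(a_k,f_2,f_3)$: writing the kernel through the Littlewood--Paley pieces $\sigma=\sum_j\sigma(\,\cdot\,)\widehat{\Psi}(2^{-j}\,\cdot\,)$ and using that $a_k$ is supported in $Q_k$ with many vanishing moments, the regularity $\LL_s^2[\sigma]<\infty$ yields (via Cauchy--Schwarz in the frequency variable, in the spirit of the proof of Lemma \ref{almost orthogonality}) a weighted $L^2$ estimate $\big\||z-\xx_{Q_k}|^{\gamma}T_\sigma(a_k,f_2,f_3)\big\|_{L^2}\lesssim\ell(Q_k)^{\gamma+n/2-n/p}$ for every $\gamma$ below a threshold of order $\sim s$; since $s>n/p+n/2$ this threshold exceeds $N+\epsilon+n/2$, so Cauchy--Schwarz against $|z-\xx_{Q_k}|^{N+\epsilon-\gamma}\in L^2((Q_k^{**})^c)$ again yields $\lesssim\ell(Q_k)^{N+\epsilon+n-n/p}$. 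This proves $(\star)$ (and in particular the finiteness needed to legitimise Lemma \ref{epsilon control}).

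Finally, on the support of the characteristic functions in \eqref{keylemma14est} we have $|x-\xx_{Q_k}|\gtrsim_n\ell(Q_k)$ (from $x\in(Q_k^{***})^c$) and $2^{-l}\gtrsim_n|x-\xx_{Q_k}|$ (from $x\in B_k^l$). Combining the two displayed bounds,
\[
\chi_{(Q_k^{***})^c}(x)\,\chi_{B_k^l}(x)\,\big|\phi_l\ast T_\sigma(a_k,f_2,f_3)(x)\big|\lesssim\big(2^l\ell(Q_k)\big)^{n+N+\epsilon}\ell(Q_k)^{-n/p},
\]
and since $n+N+\epsilon>0$ the right side increases in $l$, so its supremum over the admissible $l$ (those with $x\in B_k^l$) is reached near $2^l\sim|x-\xx_{Q_k}|^{-1}$, giving $\lesssim(\ell(Q_k)/|x-\xx_{Q_k}|)^{n+N+\epsilon}\ell(Q_k)^{-n/p}$. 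Using $\sup_l\sum_k\le\sum_k\sup_l$ on nonnegative terms, \eqref{keylemma14est} holds with $u_2\equiv u_3\equiv1$ and
\[
u_1(x):=\sum_{k=1}^{\infty}|\lambda_k|\,\chi_{(Q_k^{***})^c}(x)\Big(\frac{\ell(Q_k)}{|x-\xx_{Q_k}|}\Big)^{n+N+\epsilon}\ell(Q_k)^{-n/p}.
\]
Since $0<p\le1$, $\|u_1\|_{L^p}^p\le\sum_k|\lambda_k|^p\,\ell(Q_k)^{p(n+N+\epsilon)-n}\int_{(Q_k^{***})^c}|y-\xx_{Q_k}|^{-p(n+N+\epsilon)}\,dy$; the choice $N+\epsilon>n/p-n$ makes $p(n+N+\epsilon)>n$, so the integral is $\lesssim\ell(Q_k)^{n-p(n+N+\epsilon)}$, each summand is $\lesssim|\lambda_k|^p$, and $\|u_1\|_{L^p}^p\lesssim\sum_k|\lambda_k|^p\lesssim1$ by the atomic decomposition. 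The main obstacle is the far-region estimate in $(\star)$: one must control the off-diagonal decay of $T_\sigma(a_k,f_2,f_3)$ using only the finite-order quantity $\LL_s^2[\sigma]$ and $\|f_2\|_{L^\infty}\|f_3\|_{L^\infty}$ (the Schwartz decay of $f_2,f_3$ must not enter), and verify that the extractable decay is governed by $s$ with loss at most $n/2$ --- precisely the budget provided by $s>n/p+n/2$.
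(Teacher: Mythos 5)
Your plan follows the same architecture as the paper's proof: apply Lemma \ref{epsilon control} to $T_\sigma(a_k,f_2,f_3)$, use $2^l\lesssim|x-\xx_{Q_k}|^{-1}$ on $B_k^l$, split the resulting weighted $L^1$ integral into $Q_k^{**}$ and its complement, take $u_2\equiv u_3\equiv 1$, and verify $\|u_1\|_{L^p}\lesssim 1$ from $N+\epsilon>n/p-n$. The inner estimate, though routed through $L^2\times L^\infty\times L^\infty\to L^2$ plus Cauchy--Schwarz rather than the paper's $H^1\times L^\infty\times L^\infty\to L^1$, produces the same bound $\ell(Q_k)^{n-n/p}$. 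But two items need repair.

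First, the constraint you impose on $\epsilon$, namely $n/p-n<N+\epsilon\le n/p$, is not the one that makes the far-region argument run. The upper bound $N+\epsilon\le n/p$ is automatic (since $n/p-N\ge n\ge1\ge\epsilon$) and does not force $N+\epsilon+n/2$ below the actual threshold. The paper requires $N_p+\epsilon<s-3n/2$ (its display \eqref{np1def} in the proof of Lemma \ref{keylemma6} has the same constraint, repeated in the proof of Lemma \ref{keylemma14}); this is exactly what guarantees a choice of $s_1$ (your $\gamma$) in the non-empty interval $(N_p+\epsilon+n/2,\,s-n)$. The hypothesis $s>n/p+n/2$ is used only to make $s-3n/2>n/p-n$, so that such an $\epsilon$ exists; it does not make your inference ``threshold $>N+\epsilon+n/2$'' valid for the $\epsilon$ you chose. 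Relatedly, your loss budget is off: the usable exponent for the weight is bounded by $s-n$ (one must spend $s_2,s_3>n/2$ on the two remaining frequency variables in Cauchy--Schwarz), so the total loss against $s$ is $3n/2$, not $n/2$; the arithmetic closes only because $s-3n/2>n/p-n$.

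Second, the far-region weighted estimate is asserted, not proved, and it is the technical heart of the lemma. The paper does not prove a single weighted $L^2$ bound on $T_\sigma(a_k,f_2,f_3)$; it works scale by scale, writing $T_\sigma=\sum_j T_{\widetilde{\sigma_j}}$, exploiting $T_{\widetilde{\sigma_j}}(a_k,\cdot,\cdot)=T_{\widetilde{\sigma_j}}(\Gamma_{j+1}a_k,\cdot,\cdot)$, invoking the pointwise bound of Lemma \ref{technical lemma} to introduce the profile $A_{j,Q_k}$ of \eqref{ajqkdef}, and then summing in $j$ using the factor $\min\{1,(2^j\ell(Q_k))^M\}$ (from the atom's many vanishing moments, controlling small $j$) against $2^{-j(s_1-n/2)}$ (from $\LL_s^2[\sigma]$ and the $L^1$ norm of $A_{j,Q_k}$, controlling large $j$). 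If one insists on your aggregated weighted $L^2$ claim, exactly this $j$-convergence has to be supplied; citing ``the spirit of Lemma \ref{almost orthogonality}'' does not do it, since that lemma concerns almost-orthogonality of wave packets within a single scale, not summability across scales. With both points repaired the argument matches the paper's, but as written these are genuine gaps.
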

The proof of the two lemmas will be given in Section \ref{proofoflemmas}.

\section{Proof of the key lemmas}\label{proofoflemmas}
\subsection{Proof of Lemma \ref{keylemma1}}

Let $1<r<2$ such that $s>{3n}/{r}>{3n}/{2}$ and we
claim the pointwise estimate 
\begin{equation}\label{tsijpt}
\big|T_{\sigma_j}(\La_ja_k,\La_jf_2,\Ga_jf_3)(y)\big|\lesssim \mathcal{M}_r\La_ja_k(y)\mathcal{M}_r\La_jf_2(y)\mathcal{M}_r\Ga_jf_3(y).
\end{equation}
Indeed, choosing $t$ so that ${3n}/{r}<3t<s$, we apply H\"older's inequality to bound the left-hand side of \eqref{tsijpt} by
\begin{align*}
 &\int_{(\bbrn)^3} \langle 2^j\zzz\rangle^{3t} \big| \sigma_j^{\vee}(\zzz)\big| \frac{|\La_ja_k(y-z_1)|}{\langle 2^jz_1\rangle^t} \frac{|\La_jf_2(y-z_2)|}{\langle 2^jz_2\rangle^t} \frac{|\Ga_jf_3(y-z_3)|}{\langle 2^jz_3\rangle^t} \; d\zzz\\
&\le \big\Vert \langle 2^j\ccdot\rangle^{3t} \sigma_j^{\vee}\big\Vert_{L^{r'}((\bbrn)^3)} \bigg\Vert  \frac{\La_ja_k(y-\cdot)}{\langle 2^j\cdot \rangle^t}\bigg\Vert_{L^{r}(\bbrn)}   \bigg\Vert  \frac{\La_jf_2(y-\cdot)}{\langle 2^j\cdot \rangle^t}\bigg\Vert_{L^{r}(\bbrn)}\bigg\Vert  \frac{\Ga_jf_3(y-\cdot)}{\langle 2^j\cdot \rangle^t}\bigg\Vert_{L^{r}(\bbrn)}.
\end{align*}
We observe that 
$$\big\Vert \langle 2^j\ccdot\rangle^{3t} \sigma_j^{\vee}\big\Vert_{L^{r'}((\bbrn)^3)}\lesssim 2^{{3jn}/{r}}\Vert \sigma(2^j\ccdot) \Vert_{L^r_{3t}((\bbrn)^3)}\lesssim 2^{{3jn}/{r}}\Vert \sigma(2^j\ccdot)\Vert_{L^2_s((\bbrn)^3)}\lesssim 2^{{3jn}/{r}}$$
using the Hausdorff-Young inequality, \eqref{supequiv}, and the inclusion 
$$L_{s_0}^{t_0}(A)\hookrightarrow L_{s_1}^{t_1}(A)\q \text{ for } ~s_0\ge s_1,~ t_0\ge t_1$$ where $A$ is a ball of a constant radius, whose proof is contained in \cite[(1.8)]{Gr_Park_IMRN}. 
Applying \eqref{maximalbound} to the remaining three $L^r$ norms, we finally obtain \eqref{tsijpt}.

Now we choose $\wt{r}$ and $q$ such that $2<\wt{r}<p_2,p_3$ and ${1}/{q}+{2}/{\wt{r}}=1$.
Finally, using the estimate \eqref{tsijpt} and H\"older's inequality, we have
\begin{align*}
&\Big|\sum_{k=0}^{\infty}\la_k\chi_{Q_k^{***}}(x)\phi_l\ast \Big( \sum_{j\in\bbz} T_{\sigma_j}(\La_ja_k,\La_jf_2,\Ga_jf_3)    \Big)(x)\Big|\\
&\lesssim \sum_{k=0}^{\infty}\la_k\chi_{Q_k^{***}}(x)      2^{ln}\int_{|x-y|\le  2^{-l}   } \big\Vert \big\{ \mathcal{M}_r\La_ja_k(y)\big\}_{j\in\bbz}\big\Vert_{\ell^2}\big\Vert \big\{ \mathcal{M}_r\La_jf_2(y)\big\}_{j\in\bbz}\big\Vert_{\ell^2} \\
&\qq\qq\qq\qq\qq\qq\qq\qq\qq\qq\qq\times  \big\Vert \big\{ \mathcal{M}_r\Ga_jf_3(y)\big\}_{j\in\bbz}\big\Vert_{\ell^{\infty}}\; dy\\
&\lesssim u_1(x) u_2(x)u_3(x)
\end{align*}
where we choose
\begin{align*}
u_1(x)&:= \sum_{k=0}^{\infty}\la_k\chi_{Q_k^{***}}(x) \mathcal{M}_{q}\big( \big\Vert \big\{ \mathcal{M}_r\La_ja_k\big\}_{j\in\bbz}\big\Vert_{\ell^2}\big)(x),\\
u_2(x)&:=\mathcal{M}_{\wt{r}}\big( \big\Vert \big\{ \mathcal{M}_r\La_jf_2\big\}_{j\in\bbz}\big\Vert_{\ell^2}\big)(x),\\
u_3(x)&:=\mathcal{M}_{\wt{r}}\big( \big\Vert \big\{ \mathcal{M}_r\Ga_jf_3\big\}_{j\in\bbz}\big\Vert_{\ell^{\infty}}\big)(x)
\end{align*}  and this proves \eqref{keylemma1est}.
Moreover,
\begin{align*}
 \Vert u_1\Vert_{L^{p_1}(\bbrn)} \le \Big( \sum_{k=0}^{\infty}|\la_k|^{p_1}\Big\Vert    \mathcal{M}_{q}\big( \big\Vert \big\{ \mathcal{M}_r\La_ja_k\big\}_{j\in\bbz}\big\Vert_{\ell^2}\big)      \Big\Vert_{L^{p_1}(Q_k^{***})}^{p_1} \Big)^{{1}/{p_1}}\lesssim 1
\end{align*}
where the last inequality follows from \eqref{hardylambda} and the estimate
\begin{align*}
&\Big\Vert    \mathcal{M}_{q}\big( \big\Vert \big\{ \mathcal{M}_r\La_ja_k\big\}_{j\in\bbz}\big\Vert_{\ell^2}\big)      \Big\Vert_{L^{p_1}(Q_k^{***})}\lesssim |Q_k|^{{1}/{p_1}-{1}/{r_0}}\Big\Vert    \mathcal{M}_{q}\big( \big\Vert \big\{ \mathcal{M}_r\La_ja_k\big\}_{j\in\bbz}\big\Vert_{\ell^2}\big)      \Big\Vert_{L^{r_0}(\bbrn)}\\
&\qq\qq\qq\qq\qq \lesssim |Q_k|^{{1}/{p_1}-{1}/{r_0}}\big\Vert \big\{ \La_j a_k\big\}_{j\in\bbz}\big\Vert_{L^{r_0}(\ell^2)}\sim  |Q_k|^{{1}/{p_1}-{1}/{r_0}}\Vert a_k\Vert_{L^{r_0}(\bbrn)}\lesssim 1
\end{align*} for $q<r_0<\infty$.
Here, we applied H\"older's inequality, the maximal inequality \eqref{hlmax}, the equivalence in \eqref{hardydeflittle}, and properties of the $H^{p_1}$-atom $a_k$.
It is also easy to verify
$$\Vert u_2\Vert_{L^{p_2}(\bbrn)}\lesssim \big\Vert  \big\{ \La_jf_2\big\}_{j\in\bbz}\big\Vert_{L^{p_2}(\ell^2)}\sim 1$$
and 
$$\Vert u_3\Vert_{L^{p_3}(\bbrn)}\lesssim \big\Vert  \big\{ \Ga_jf_3\big\}_{j\in\bbz}\big\Vert_{L^{p_3}(\ell^{\infty})}\sim 1$$
using \eqref{hlmax}, \eqref{hardydef}, and \eqref{hardydeflittle}.

\subsection{Proof of Lemma \ref{keylemma2}}
Since $$s>{n}/{p_1}+{n}/{2}=\big({n}/{p_1}-{n}/{2}\big)+{n}/{2}+{n}/{2},$$
we can choose $s_1,s_2,s_3$ such that
$s_1>{n}/{p_1}-{n}/{2}$, $s_2,s_3>{n}/{2}$, and $s=s_1+s_2+s_3$.

Using the estimates
$$\big\Vert \psi^P \big\Vert_{L^{\infty}(\bbrn)}\le |P|^{-{1}/{2}}\q \text{ and }\q \big\Vert \theta^R \big\Vert_{L^{\infty}(\bbrn)}\le |R|^{-{1}/{2}},$$
we have 
\begin{align}\label{u1inoutest}
\big|\UU_1^{\mathrm{in}}(x,y)\big|&\lesssim\sum_{j\in\bbz}  \Big(\sum_{P\in\DD_j}|b_P^2||P|^{-{1}/{2}}\chi_P(x) \Big)\Big(\sum_{R\in\DD_j}|b_R^3| |R|^{-{1}/{2}}\chi_R(x) \Big) \nonumber \\
 &\qq\qq\qq\times \int_{(\bbrn)^3}    \big| \sigma_j^{\vee}(y-z_1,z_2,z_3)\big|\big| \La_ja_k(z_1)\big|\chi_{Q_k^*}(z_1)  \; d\zzz   \nonumber \\
&\le g^2\big(\{b_P^2\}_{P\in\DD} \big)(x) g^{\infty}\big( \{b_R^3\}_{R\in\DD}\big)(x)\\
&\qq\times   \bigg( \sum_{j\in\bbz}\Big(\int_{(\bbrn)^3}     \big| \sigma_j^{\vee}(y-z_1,z_2,z_3)\big|\big| \La_ja_k(z_1)\big| \chi_{Q_k^*}(z_1) \;  d\zzz \Big)^2 \bigg)^{{1}/{2}}     .\nonumber
\end{align}
We observe that for $|x-y|\le 2^{-l}$, $x\in (Q_k^{***})^c\cap (B_k^l)^c$, and $z_1\in Q_k^*$,
\begin{equation}\label{xcyz}
|x-\xx_{Q_k}|\lesssim |y-z_1|
\end{equation}
and thus, by using Lemma \ref{technical lemma},
\begin{align*}
&\langle 2^j(x-\xx_{Q_k})\rangle^{s_1}\int_{(\bbrn)^3}     \big| \sigma_j^{\vee}(y-z_1,z_2,z_3)\big|\big| \La_ja_k(z_1)\big|  \chi_{Q_k^*}(z_1) \; d\zzz \\
&\lesssim \ell(Q_k)^{-{n}/{p_1}}\min \big\{1,\big(2^j\ell(Q_k)\big)^M \big\}    \int_{(\bbrn)^3} \langle 2^j(y-z_1)\rangle^{s_1}\big| \sigma_j^{\vee}(y-z_1,z_2,z_3)\big|\chi_{Q_k^*}(z_1) \; d\zzz \\
&\lesssim \ell(Q_k)^{-{n}/{p_1}}\min \big\{1,\big(2^j\ell(Q_k)\big)^M \big\}  \big\Vert \langle 2^j\cdot \rangle^{-s_2}\big\Vert_{L^2(\bbrn)}\big\Vert \langle 2^j\cdot \rangle^{-s_3}\big\Vert_{L^2(\bbrn)} 2^{jn} I_{k,j,s}^{\mathrm{in}}(y)    \\
&\sim \ell(Q_k)^{-{n}/{p_1}}\min \big\{1,\big(2^j\ell(Q_k)\big)^M \big\} I_{k,j,s}^{\mathrm{in}}(y)
\end{align*}
for sufficiently large $M$,
where
\begin{equation}\label{ikjsindef}
I_{k,j,s}^{\mathrm{in}}(y):= 2^{-jn}\int_{Q_k^*}\Big\Vert \langle 2^j(y-z_1),2^jz_2,2^jz_3\rangle^s\big| \sigma_j^{\vee}(y-z_1,z_2,z_3)\big|\Big\Vert_{L^2(z_2,z_3)} \; dz_1.  
\end{equation}
This proves that
\begin{align}\label{keyinest}
&\int_{(\bbrn)^3}     \big| \sigma_j^{\vee}(y-z_1,z_2,z_3)\big|\big| \La_ja_k(z_1)\big|  \chi_{Q_k^*}(z_1) \; d\zzz \nonumber\\
&\lesssim \ell(Q_k)^{-{n}/{p_1}}\min \big\{1,\big(2^j\ell(Q_k)\big)^M \big\}  \langle 2^j(x-\xx_{Q_k})\rangle^{-s_1} I_{k,j,s}^{\mathrm{in}}(y)
\end{align}
and therefore, we obtain
\begin{align}\label{uu11inest}
\big| \UU_1^{\mathrm{in}}(x,y)\big|&\lesssim  g^2\big(\{b_P^2\}_{P\in\DD} \big)(x) g^{\infty}\big( \{b_R^3\}_{R\in\DD}\big)(x) \ell(Q_k)^{-{n}/{p_1}} | x-\xx_{Q_k}|^{-s_1}\\
&\qq\qq \times \bigg(\sum_{j\in\bbz}\Big(2^{-s_1j} \min \big\{1,\big(2^j\ell(Q_k)\big)^{M} \big\}  I_{k,j,s}^{\mathrm{in}}(y)\Big)^2   \bigg)^{1/2}\nonumber
\end{align}

\hfill 

Similar to \eqref{u1inoutest}, we write
\begin{align*}
\big|\UU_1^{\mathrm{out}}(x,y)\big|&\lesssim  g^2\big(\{b_P^2\}_{P\in\DD} \big)(x) g^{\infty}\big( \{b_R^3\}_{R\in\DD}\big)(x) \\
&\qq \times  \bigg( \sum_{j\in\bbz}\Big(\int_{(\bbrn)^3}     \big| \sigma_j^{\vee}(y-z_1,z_2,z_3)\big|\big| \La_ja_k(z_1)\big| \chi_{(Q_k^*)^c}(z_1)  d\zzz \Big)^2 \bigg)^{{1}/{2}} .
\end{align*}
Instead of \eqref{xcyz}, we make use of the estimate
\begin{equation}\label{alterest}
\langle 2^j(x-\xx_{Q_k})\rangle \lesssim \langle 2^j(y-\xx_{Q_k})\rangle\le \langle 2^j(y-z_1)\rangle \langle 2^j(z_1-\xx_{Q_k})\rangle
\end{equation}
for $|x-y|\le 2^{-l}$ and $x\in (Q_k^{***})^c\cap (B_k^l)^c$.
Then, using the argument that led to \eqref{keyinest}, we have
\begin{align}\label{keyoutest}
&\int_{(\bbrn)^3}     \big| \sigma_j^{\vee}(y-z_1,z_2,z_3)\big|\big| \La_ja_k(z_1)\big| \chi_{(Q_k^*)^c}(z_1) \;  d\zzz \nonumber\\
&\lesssim \ell(Q_k)^{-{n}/{p_1}}\min\big\{ 1,\big( 2^j\ell(Q_k)\big)^M\big\} \langle 2^j(x-\xx_{Q_k})\rangle^{-s_1}I_{k,j,s}^{\mathrm{out}}(y)
\end{align}
where $M$, $L_0$ are sufficiently large numbers and
\begin{align}\label{ikjsout}
I_{k,j,s}^{\mathrm{out}}(y)&:=2^{-jn}\int_{(Q_k^*)^c}   \frac{(2^j\ell(Q_k))^n}{\langle 2^j(z_1-\xx_{Q_k})\rangle^{L_0-s_1}} \\
&\qq\qq\times  \Big\Vert \langle 2^j(y-z_1),2^jz_2,2^jz_3\rangle^s\big| \sigma_j^{\vee}(y-z_1,z_2,z_3)\big|\Big\Vert_{L^2(z_2,z_3)}    \;     dz_1.\nonumber
\end{align}
Now we deduce
\begin{align}\label{uu11outest}
\big| \UU_1^{\mathrm{out}}(x,y)\big|&\lesssim g^2\big(\{b_P^2\}_{P\in\DD} \big)(x) g^{\infty}\big( \{b_R^3\}_{R\in\DD}\big)(x) \ell(Q_k)^{-{n}/{p_1}}| x-\xx_{Q_k}|^{-s_1}\\
&\qq\times \bigg(\sum_{j\in\bbz}\Big(2^{-s_1j} \min \big\{1,\big(2^j\ell(Q_k)\big)^{M} \big\}  I_{k,j,s}^{\mathrm{out}}(y)\Big)^2   \bigg)^{{1}/{2}}.\nonumber
\end{align}

According to \eqref{uu11inest} and \eqref{uu11outest}, the estimate \eqref{keylemma2est} follows from taking
\begin{align*}
u_1^{\mathrm{in}/\mathrm{out}}(x)&:=\sum_{k=0}^{\infty}|\la_k|\ell(Q_k)^{-{n}/{p_1}}\chi_{(Q_k^{***})^c}(x)| x-\xx_{Q_k}|^{-s_1}\\
&\qq\times\mathcal{M}\bigg[\bigg(\sum_{j\in\bbz}\Big(2^{-s_1j}\min{\{1,(2^j\ell(Q_k))^M\}}I_{k,j,s}^{\mathrm{in}/\mathrm{out}}(\cdot) \Big)^2 \bigg)^{{1}/{2}}\bigg](x),\nonumber\\
u_2(x)&:=g^2\big(\{b_P^2\}_{P\in\DD} \big)(x) ,\nonumber\\
u_3(x)&:=g^{\infty}\big( \{b_R^3\}_{R\in\DD}\big)(x).\nonumber
\end{align*}
It is clear that
\begin{align}
\Vert u_2\Vert_{L^{p_2}(\bbrn)}&=\big\Vert \{b_P^2\}_{P\in\DD}\big\Vert_{\dot{f}^{p_2,2}}\sim 1 \label{u2p2est}\\
\Vert u_3\Vert_{L^{p_3}(\bbrn)}&=\big\Vert \{b_R^3\}_{R\in\DD}\big\Vert_{\dot{f}^{p_3,\infty}}\sim 1 \label{u3p3est}
\end{align}
in view of \eqref{bp2} and \eqref{br3}.
To estimate $u_1^{\mathrm{in}}$ and $u_1^{\mathrm{out}}$, we note that
\begin{align}\label{ikjsin}
\big\Vert I_{k,j,s_1}^{\mathrm{in}}\big\Vert_{L^2(\bbrn)}&\le 2^{-jn}\int_{Q_k}\bigg( \int_{\bbrn}\Big\Vert \langle 2^jy,2^jz_2,2^jz_3\rangle^s \big| \sigma_j^{\vee}(y,z_2,z_3)\big|\Big\Vert_{L^2(z_2,z_3)}^2 \; dy\bigg)^{{1}/{2}} \; dz_1\nonumber\\
&=2^{-jn}\ell(Q_k)^n\big\Vert \langle 2^j\ccdot\rangle^s\sigma_j^{\vee}\big\Vert_{L^2((\bbrn)^3)}\le 2^{{jn}/{2}}\ell(Q_k)^n
\end{align}
where we applied Minkowski's inequality and a change of variables,
and similarly,
\begin{align}\label{ikjsoutest}
\big\Vert I_{k,j,s}^{\mathrm{out}}\big\Vert_{L^2(\bbrn)}&\lesssim 2^{-jn}\int_{(Q_k^*)^c}  \frac{(2^j\ell(Q_k))^n}{\langle 2^j(z_1-\xx_{Q_k})\rangle^{L_0-s_1}}  \;  dz_1 \big\Vert \langle 2^j\ccdot\rangle^s\sigma_j^{\vee} \big\Vert_{L^2((\bbrn)^3)}\nonumber\\
&\lesssim 2^{{jn}/{2}}\ell(Q_k)^n\big( 2^j\ell(Q_k)\big)^{-(L_0-s_1-n)} 
\end{align}
for $L_0>s+n$.
Now we have
\begin{align*}
\big\Vert u_1^{\mathrm{in}}\big\Vert_{L^{p_1}(\bbrn)}^{p_1}&\le \sum_{k=0}^{\infty}|\la_k|^{p_1}\ell(Q_k)^{-n}\int_{(Q_k^{***})^c} | x-\xx_{Q_k}|^{-s_1p_1}\\
&\qq\qq\times \bigg( \mathcal{M}\bigg[\bigg(\sum_{j\in\bbz}\Big(2^{-s_1j}\min{\big\{1,\big(2^j\ell(Q_k)\big)^M\big\}}I_{k,j,s_1}^{\mathrm{in}}(\cdot) \Big)^2 \bigg)^{1/2}\bigg](x) \bigg)^{p_1}\, dx
\end{align*}
and the integral is dominated by
\begin{align*}
&\big\Vert  |\cdot-\xx_{Q_k}|^{-{s_1p_1}}\big\Vert_{L^{({2}/{p_1})'}((Q_k^{***})^c)}\\
&\qq\times \bigg\Vert \bigg( \mathcal{M}\bigg[\bigg(\sum_{j\in\bbz}\Big(2^{-s_1j}\min{\big\{1,\big(2^j\ell(Q_k)\big)^M\big\}}I_{k,j,s_1}^{\mathrm{in}}(\cdot) \Big)^2 \bigg)^{1/2}\bigg] \bigg)^{p_1} \bigg\Vert_{L^{{2}/{p_1}}(\bbrn)}.
\end{align*}
The first term is no more than a constant times $\ell(Q_k)^{-p_1(s_1-({n}/{p_1}-{n}/{2}))}$ and the second one is bounded by
\begin{align*}
&\bigg( \sum_{j\in\bbz} \Big(2^{-s_1j}\min \big\{1,\big( 2^j\ell(Q_k)\big)^N \big\}\big\Vert I_{k,j,s_1}^{\mathrm{in}}\big\Vert_{L^2(\bbrn)} \Big)^2\bigg)^{{p_1}/{2}}\\
&\lesssim \ell(Q_k)^{p_1n}\bigg(\sum_{j\in\bbz}\Big( 2^{-s_1j}  \min \big\{1,\big( 2^j\ell(Q_k)\big)^M \big\} 2^{{jn}/{2}}     \Big)^2 \bigg)^{{p_1}/{2}}\lesssim \ell(Q_k)^{s_1p_1+{p_1n}/{2}},
\end{align*}
due to \eqref{ikjsin}.
This proves 
\begin{equation}\label{u1inest}
\big\Vert u_1^{\mathrm{in}}\big\Vert_{L^{p_1}(\bbrn)}\lesssim  \Big( \sum_{k=0}^{\infty}|\la_k|^{p_1}\Big)^{{1}/{p_1}}\lesssim 1.
\end{equation}
In a similar way, together with \eqref{ikjsoutest},  we can also prove 
\begin{equation}\label{u1outest}
\big\Vert u_1^{\mathrm{out}}\big\Vert_{L^{p_1}(\bbrn)}\lesssim \Big( \sum_{k=0}^{\infty}|\la_k|^{p_1}\Big)^{{1}/{p_1}}\lesssim 1,
\end{equation}
 choosing $M>L_0-{3n}/{2}$.

\subsection{Proof of Lemma \ref{keylemma3}}
As in the proof of Lemma \ref{keylemma2}, we pick $s_1,s_2,s_3$ satisfying
$s_1>{n}/{p_1}-{n}/{2}$, $s_2,s_3>{n}/{2}$, and $s=s_1+s_2+s_3>n/p_1+n/2$.

We first consider the case $\eta=1$.
For $x\in P^c\cap (B_P^l)^c$  and      $|x-y|\le 2^{-l}$,
we have
\begin{equation}\label{xcp2j}
 \langle 2^j(x-\xx_{P})\rangle\lesssim \langle 2^j(y-\xx_P)\rangle \le \langle 2^j(y-z_2)\rangle \langle 2^j(z_2-\xx_P)\rangle.
\end{equation}
By using $$\big\Vert \theta^R\big\Vert_{L^{\infty}(\bbrn)}\le |R|^{-{1}/{2}},$$
we have
\begin{align}\label{u21inest}
\big| \UU_2^{1,\mathrm{in}}(x,y)\big| &\lesssim \sum_{j\in\bbz}\Big( \sum_{R\in\DD_j}|b_R^3||R|^{-{1}/{2}}\chi_R(x)\Big)\int_{(\bbrn)^3}  \big|\sigma_j^{\vee}(y-z_1,y-z_2,z_3) \big|\\
&\qq\q\times \big| \La_ja_k(z_1)\big|\chi_{Q_k^*}(z_1)\Big(\sum_{P\in\DD_j} |b_P^2|\chi_{P^c}(x)\chi_{(B_P^l)^c}(x)\big| \psi^P(z_2)\big|     \Big)   \; d\zzz.\nonumber
\end{align}
Using \eqref{xcyz}, \eqref{xcp2j}, and Lemma \ref{technical lemma}, the integral in the preceding expression is bounded by
\begin{align*}
& \ell(Q_k)^{-{n}/{p_1}}\min\big\{1,\big(2^j\ell(Q_k) \big)^M \big\}\langle2^j(x-c_{Q_k})\rangle^{-s_1}\int_{(\bbrn)^3} \langle 2^j(y-z_1)\rangle^{s_1}\langle 2^j(y-z_2)\rangle^{s_2} \\
& \qq\qq \times \big| \sigma_j^{\vee}(y-z_1,y-z_2,z_3)\big|\chi_{Q_k^*}(z_1)\bigg(\sum_{P\in\DD_j}|b_P^2|\frac{\chi_{P^c}(x)}{\langle 2^j(x-\xx_P)\rangle^{s_2}}\big| \wt{\psi^P}(z_2)\big| \bigg)  \;     d\zzz\\
&\lesssim \ell(Q_k)^{-{n}/{p_1}}\min\big\{1,\big(2^j\ell(Q_k) \big)^M \big\} \langle2^j(x-\xx_{Q_k})\rangle^{-s_1}I_{k,j,s}^{\mathrm{in}}(y) \\
&\qq\qq\qq\qq\times 2^{{jn}/{2}}\bigg\Vert \sum_{P\in\DD_j}|b_P^2|\frac{\chi_{P^c}(x)}{\langle 2^j(x-\xx_P)\rangle^{s_2}}\big| \wt{\psi^P}(\cdot)\big|\bigg\Vert_{L^2(\bbrn)}
\end{align*}
for sufficiently large $M>0$,
where $\wt{\psi^P}(z_2):=\langle 2^j(z_2-\xx_P)\rangle^{s_2}\psi^P(z_2)$ for $P\in\DD_j$ and $I_{k,j,s}^{\mathrm{in}}$ is defined as in \eqref{ikjsindef}.
Note that 
\begin{equation}\label{bpf2}
|b_P^2|\lesssim \BB_P^2(f_2):=\Big\langle \big| \wt{\La_j}f_2\big|,\frac{2^{{jn}/{2}}}{\langle 2^j(\cdot-\xx_P)\rangle^L}\Big\rangle \q \text{ for }~L>n,s
\end{equation}
and thus it follows from Lemma \ref{almost orthogonality} that the $L^2$ norm in the last displayed expression is dominated by
$$2^{-{jn}/{2}}\bigg( \sum_{P\in\DD_j} \Big( | \BB_P^2(f_2)| |P|^{-{1}/{2}}   \frac{\chi_{P^c}(x)}{\langle 2^j(x-\xx_P)\rangle^{s_2}}\Big)^2\bigg)^{{1}/{2}}.$$
This yields that
\begin{align}\label{u21inpointest}
\big| \UU_2^{1,\mathrm{in}}(x,y) \big|&\lesssim \ell(Q_k)^{-{n}/{p_1}}|x-c_{Q_k}|^{-s_1}\Big( \sum_{j\in\bbz} \big(2^{-js_1}\min\big\{1,\big(2^j\ell(Q_k) \big)^M \big\}I_{k,j,s}^{\mathrm{in}}(y) \big)^2\Big)^{{1}/{2}}\\
&\qq \times \bigg( \sum_{P\in\DD} \Big( \big| \BB_P^2(f_2)\big| |P|^{-{1}/{2}} \frac{\chi_{P^c}(x)}{\langle 2^j(x-\xx_P)\rangle^{s_2}}\Big)^2   \bigg)^{{1}/{2}}g^{\infty}\big(\{b_R^3\}_{R\in\DD}\big)(x)\nonumber.
\end{align}

Similarly, using \eqref{alterest}, \eqref{xcp2j}, Lemma \ref{technical lemma}, and Lemma \ref{almost orthogonality}, we have
\begin{align}\label{u21outpointest}
\big| \UU_2^{1,\mathrm{out}}(x,y) \big|&\lesssim \sum_{j\in\bbz}\Big(\sum_{R\in\DD_j} |b_R^3| |R|^{-{1}/{2}}\chi_R(x) \Big) \int_{(\bbrn)^3}  \big|\sigma_j^{\vee}(y-z_1,y-z_2,z_3) \big|\nonumber\\
&\qq\qq\times \big| \La_ja_k(z_1)\big|\chi_{(Q_k^*)^c}(z_1)\Big(\sum_{P\in\DD_j} |b_P^2|\chi_{P^c}(x)\chi_{(B_P^l)^c}(x)\big| \psi^P(z_2)\big|     \Big)   \; d\zzz \nonumber\\
&\lesssim \ell(Q_k)^{-{n}/{p_1}}|x-\xx_{Q_k}|^{-s_1}\Big( \sum_{j\in\bbz} \big(2^{-js_1}\min\big\{1,\big(2^j\ell(Q_k) \big)^M \big\}I_{k,j,s}^{\mathrm{out}}(y) \big)^2\Big)^{{1}/{2}}\\
&\qq\qq \times \bigg( \sum_{P\in\DD} \Big( \big| \BB_P^2(f_2)\big| |P|^{-{1}/{2}} \frac{\chi_{P^c}(x)}{\langle 2^j(x-\xx_P)\rangle^{s_2}}\Big)^2   \bigg)^{{1}/{2}}g^{\infty}\big(\{b_R^3\}_{R\in\DD}\big)(x) \nonumber
\end{align}
where $I_{k,j,s}^{\mathrm{out}}$ is defined as in \eqref{ikjsout}.

\hfill

When $\eta=2$, we use the inequality
\begin{equation}\label{xcpxcq}
\langle 2^j(x-\xx_{Q_k})\rangle^{-1}\le \langle 2^j (x-\xx_{P})\rangle^{-1}
\end{equation}  for $x\in (B_k^l)^c \cap B_{P}^l$.
Then, similar to \eqref{u21inest}, we have
\begin{align*}
\big| \UU_2^{2,\mathrm{in}}(x,y) \big|&\lesssim \sum_{j\in\bbz}\Big( \sum_{R\in\DD_j}|b_R^3||R|^{-{1}/{2}}\chi_R(x)\Big)\int_{(\bbrn)^3}  \big|\sigma_j^{\vee}(y-z_1,y-z_2,z_3) \big|\\
&\qq\q\times \big| \La_ja_k(z_1)\big|\chi_{Q_k^*}(z_1)\Big(\sum_{P\in\DD_j} |b_P^2|\chi_{P^c}(x)\chi_{B_P^l}(x)\big| \psi^P(z_2)\big|     \Big)   \; d\zzz\nonumber
\end{align*}
and the integral is dominated by a constant times
\begin{align*}
&\ell(Q_k)^{-{n}/{p_1}}\min \{1,\big( 2^j\ell(Q_k)\big)^M\}\langle 2^j(x-\xx_{Q_k})\rangle^{-(s_1+s_2)}\int_{(\bbrn)^3}\langle 2^j(y-z_1)\rangle^{s_1+s_2}\\
&\qq\qq\qq\times \big| \sigma_j^{\vee}(y-z_1,y-z_2,z_3)\big|\chi_{Q_k^*}(z_1)\Big( \sum_{P\in\DD_j}|b_P^2|\chi_{P^c}(x)\chi_{B_P^l}(x)\big| {\psi^P}(z_2)\big|\Big)\;d\zzz\\
&\le \ell(Q_k)^{-{n}/{p_1}}\min \{1,\big( 2^j\ell(Q_k)\big)^M\}\langle 2^j(x-\xx_{Q_k})\rangle^{-s_1}\int_{(\bbrn)^3}\langle 2^j(y-z_1)\rangle^{s_1+s_2}\\
&\qq\qq\qq\times \big| \sigma_j^{\vee}(y-z_1,y-z_2,z_3)\big|\chi_{Q_k^*}(z_1)\Big( \sum_{P\in\DD_j}|b_P^2|\frac{\chi_{P^c}(x)}{\langle 2^j(x-\xx_P)\rangle^{s_2}}\big| {\psi^P}(z_2)\big|\Big)\;d\zzz\\
&\lesssim \ell(Q_k)^{-{n}/{p_1}}\min\big\{1,\big(2^j\ell(Q_k) \big)^M \big\} \langle2^j(x-\xx_{Q_k})\rangle^{-s_1}I_{k,j,s}^{\mathrm{in}}(y)\\
&\qq\qq\qq\qq\qq\qq\times  \bigg( \sum_{P\in\DD} \Big( \big| \BB_P^2(f_2)\big| |P|^{-{1}/{2}} \frac{\chi_{P^c}(x)}{\langle 2^j(x-\xx_P)\rangle^{s_2}}\Big)^2   \bigg)^{{1}/{2}}
\end{align*}
due to  \eqref{xcyz}, \eqref{xcpxcq}, Lemma \ref{technical lemma}, and Lemma \ref{almost orthogonality}, where $I_{k,j,s}^{\mathrm{in}}$ and $\BB_P^2(f_2)$ are defined as in \eqref{ikjsindef} and \eqref{bpf2}. 
Therefore,
\begin{align}\label{u22inpointest}
\big| \UU_2^{2,\mathrm{in}}(x,y)\big|&\lesssim \ell(Q_k)^{-{n}/{p_1}}|x-\xx_{Q_k}|^{-s_1}\Big( \sum_{j\in\bbz} \big(2^{-js_1}\min\big\{1,\big(2^j\ell(Q_k) \big)^M \big\}I_{k,j,s}^{\mathrm{in}}(y) \big)^2\Big)^{{1}/{2}}\\
&\qq \times \bigg( \sum_{P\in\DD} \Big( \big| \BB_P^2(f_2)\big| |P|^{-{1}/{2}} \frac{\chi_{P^c}(x)}{\langle 2^j(x-\xx_P)\rangle^{s_2}}\Big)^2   \bigg)^{{1}/{2}} g^{\infty}\big(\{b_R^3\}_{R\in\DD}\big)(x).\nonumber
\end{align}

Similarly, we can also prove that
\begin{align}\label{u22outpointest}
\big| \UU_2^{2,\mathrm{out}}(x,y)\big|&\lesssim \ell(Q_k)^{-{n}/{p_1}}|x-\xx_{Q_k}|^{-s_1}\Big( \sum_{j\in\bbz} \big(2^{-js_1}\min\big\{1,\big(2^j\ell(Q_k) \big)^M \big\}I_{k,j,s}^{\mathrm{out}}(y) \big)^2\Big)^{{1}/{2}}\\
&\qq \times \bigg( \sum_{P\in\DD} \Big( \big| \BB_P^2(f_2)\big| |P|^{-{1}/{2}} \frac{\chi_{P^c}(x)}{\langle 2^j(x-\xx_P)\rangle^{s_2}}\Big)^2   \bigg)^{{1}/{2}} g^{\infty}\big(\{b_R^3\}_{R\in\DD}\big)(x).\nonumber
\end{align}
Combining \eqref{u21inpointest}, \eqref{u21outpointest}, \eqref{u22inpointest}, and \eqref{u22outpointest},
the estimate \eqref{keylemma3est} holds with
\begin{align*}
u_1^{\mathrm{in}/\mathrm{out}}(x)&:=\sum_{k=0}^{\infty}|\la_k|\ell(Q_k)^{-{n}/{p_1}}\chi_{(Q_k^{***})^c}(x)| x-\xx_{Q_k}|^{-s_1}\\
&\qq\times\mathcal{M}\bigg[\bigg(\sum_{j\in\bbz}\Big(2^{-s_1j}\min{\{1,(2^j\ell(Q_k))^M\}}I_{k,j,s}^{\mathrm{in}/\mathrm{out}}(\cdot) \Big)^2 \bigg)^{{1}/{2}}\bigg](x),\\
u_2(x)&:= \bigg(\sum_{j\in\bbz} \sum_{P\in\DD_j} \Big( \big| \BB_P^2(f_2)\big| |P|^{-{1}/{2}} \frac{\chi_{P^c}(x)}{\langle 2^j(x-\xx_P)\rangle^{s_2}}\Big)^2   \bigg)^{{1}/{2}},\\
u_3(x)&:=g^{\infty}\big( \{b_R^3\}_{R\in\DD}\big)(x).
\end{align*}
Clearly,  as in \eqref{u1inest}, \eqref{u1outest}, and \eqref{u3p3est},
\begin{align*}
\big\Vert u_1^{\mathrm{in}/\mathrm{out}}\big\Vert_{L^{p_1}(\bbrn)}\lesssim 1, \qq \Vert u_3\Vert_{L^{p_3}(\bbrn)}\lesssim 1
\end{align*}
and Lemma \ref{lacunarylemma} proves that
\begin{equation*}
\Vert u_2\Vert_{L^{p_2}(\bbrn)}\lesssim 1.
\end{equation*}

\subsection{Proof of Lemma \ref{keylemma4}}

The proof is almost same as that of Lemma \ref{keylemma3}.
By letting $M>0$ be sufficiently large and exchanging the role of terms associated with $f_2$ and $f_3$ in the estimate \eqref{u21inest}, 
we may obtain
\begin{align}\label{u31inest}
\big| \UU_3^{1,\mathrm{in}}(x,y)\big|&\lesssim \sum_{j\in\bbz}\Big( \sum_{P\in\DD_j}|b_P^2||P|^{-{1}/{2}}\chi_P(x)\Big)\int_{(\bbrn)^3}  \big|\sigma_j^{\vee}(y-z_1,z_2,y-z_3) \big|\nonumber\\
&\qq\q\times \big| \La_ja_k(z_1)\big|\chi_{Q_k^*}(z_1)\Big(\sum_{R\in\DD_j} |b_R^3|\chi_{R^c}(x)\chi_{(B_R^l)^c}(x)\big| \theta^R(z_3)\big|     \Big)   \; d\zzz\nonumber\\
&\lesssim   \ell(Q_k)^{-{n}/{p_1}}|x-\xx_{Q_k}|^{-s_1}\bigg( \sum_{j\in\bbz}  \Big( 2^{-js_1}\min{\big\{ 1,\big( 2^j\ell(Q_k)\big)^M\big\}}   I_{k,j,s}^{\mathrm{in}}(y)\Big)^2              \bigg)^{{1}/{2}}\\
&\qq\times g^2\big(\{b_P^2 \}_{P\in\DD} \big)(x)\sup_{j\in\bbz}\bigg( \sum_{R\in\DD_j} \Big(\big| \BB_R^3(f_3) \big| |R|^{-{1}/{2}}\frac{1}{\langle 2^j(x-\xx_R)\rangle ^{s_3}}    \Big)^2 \bigg)^{{1}/{2}}\nonumber
\end{align}
where $I_{k,j,s}^{\mathrm{in}}$ is defined as in \eqref{ikjsindef}, $\wt{\theta^R}(z_3):=\langle 2^j(z_3-\xx_R)\rangle^{s_3}\theta^R(z_3)$ for $R\in\DD_j$ and 
\begin{equation}\label{br3f3def}
 \BB_R^3(f_3):=\Big\langle \big| \wt{\Ga_j}f_3\big|,\frac{2^{{jn}/{2}}}{\langle 2^j(\cdot-\xx_R)\rangle^L}\Big\rangle \q \text{ for }~L>s,n.
 \end{equation}

Similarly,
\begin{align}\label{u31outest}
\big| \UU_3^{1,\mathrm{out}}(x,y) \big|&\lesssim   \ell(Q_k)^{-{n}/{p_1}} |x-\xx_{Q_k}|^{-s_1}   \bigg( \sum_{j\in\bbz}  \Big(2^{-js_1} \min{\big\{ 1,\big( 2^j\ell(Q_k)\big)^M\big\}}    I_{k,j,s}^{\mathrm{out}}(y)\Big)^2              \bigg)^{{1}/{2}}\\
&\qq\times g^2\big(\{b_P^2 \}_{P\in\DD} \big)(x)\sup_{j\in\bbz}\bigg( \sum_{R\in\DD_j} \Big(\big| \BB_R^3(f_3) \big| |R|^{-{1}/{2}}\frac{1}{\langle 2^j(x-\xx_R)\rangle ^{s_3}}    \Big)^2 \bigg)^{{1}/{2}}\nonumber
\end{align}
where $I_{k,j,s}^{\mathrm{out}}$ is defined as in \eqref{ikjsout}.

For the case $\eta=2$, we use the fact that for $x\in (B_k^l)^c\cap B_R^l$,
$$\langle 2^j(x-\xx_{Q_k})\rangle^{-1}\le \langle 2^j(x-\xx_R)\rangle^{-1},$$
instead of \eqref{xcpxcq}.
Then we have
\begin{align*}
\big| \UU_3^{2,\mathrm{in}}(x,y)\big| &\lesssim \ell(Q_k)^{-{n}/{p_1}}|x-\xx_{Q_k}|^{-s_1}\Big( \sum_{j\in\bbz} \big(2^{-js_1}\min\big\{1,\big(2^j\ell(Q_k) \big)^M \big\}I_{k,j,s}^{\mathrm{in}}(y) \big)^2\Big)^{{1}/{2}}\\
&\qq \times g^{2}\big(\{b_P^2\}_{P\in\DD}\big)(x)    \sup_{j\in\bbz}\bigg( \sum_{R\in\DD_j} \Big(\big| \BB_R^3(f_3) \big| |R|^{-{1}/{2}}\frac{1}{\langle 2^j(x-\xx_R)\rangle ^{s_3}}    \Big)^2 \bigg)^{{1}/{2}} 
\end{align*}
and
\begin{align*}
\big| \UU_3^{2,\mathrm{out}}(x,y) \big|&\lesssim   \ell(Q_k)^{-{n}/{p_1}}\bigg( \sum_{j\in\bbz}  \Big( \min{\big\{ 1,\big( 2^j\ell(Q_k)\big)^M\big\}}    \langle 2^j(x-\xx_Q)\rangle^{-s_1}I_{k,j,s}^{\mathrm{out}}(y)\Big)^2              \bigg)^{{1}/{2}}\\
&\qq\times g^2\big(\{b_P^2 \}_{P\in\DD} \big)\sup_{j\in\bbz}\bigg( \sum_{R\in\DD_j} \Big(\big| \BB_R^3(f_3) \big| |R|^{-{1}/{2}}\frac{1}{\langle 2^j(x-\xx_R)\rangle ^{s_3}}    \Big)^2 \bigg)^{{1}/{2}}
\end{align*}
 which are analogous to \eqref{u31inest} and \eqref{u31outest}.

Then Lemma \ref{keylemma4} follows from \eqref{u1inest}, \eqref{u1outest}, \eqref{u2p2est}, and Lemma \ref{nonlacunarylemma}, by 
choosing
\begin{align*}
u_1^{\mathrm{in}/\mathrm{out}}(x)&:= \sum_{k=0}^{\infty}|\la_k|\ell(Q_k)^{-{n}/{p_1}}\chi_{(Q_k^{***})^c}(x)| x-\xx_{Q_k}|^{-s_1}\\
&\qq\times\mathcal{M}\bigg[\bigg(\sum_{j\in\bbz}\Big(2^{-s_1j}\min{\{1,(2^j\ell(Q_k))^M\}}I_{k,j,s}^{\mathrm{in}/\mathrm{out}}(\cdot) \Big)^2 \bigg)^{{1}/{2}}\bigg](x)      \\
u_2(x)&:=       g^2\big(\{b_P^2 \}_{P\in\DD} \big)(x)      \\
u_3(x)&:=   \sup_{j\in\bbz}\bigg( \sum_{R\in\DD_j} \Big(\big| \BB_R^3(f_3) \big| |R|^{-{1}/{2}}\frac{1}{\langle 2^j(x-\xx_R)\rangle ^{s_3}}    \Big)^2 \bigg)^{{1}/{2}}        .
\end{align*}

\subsection{Proof of Lemma \ref{keylemma5}}

Let $I_{k,j,s}^{\mathrm{in}}$, $I_{k,j,s}^{\mathrm{out}}$, $\BB_P^2(f_2)$, and $\BB_R^3(f_3)$ be defined as before. Let $M>0$ be a sufficiently large number.
We claim the pointwise estimates that for each $\eta=1,2,3,4$,
\begin{align*}
\big| \UU_4^{\eta,\mathrm{in}/\mathrm{out}}(x,y)\big|&\lesssim \ell(Q_k)^{-{n}/{p_1}}|x-x_{Q_k}|^{-s_1}\bigg( \sum_{j\in\bbz}\Big( 2^{-js_1}\min\big\{1,\big( 2^j\ell(Q_k)\big)^M \big\}I_{k,j,s}^{\mathrm{in}/\mathrm{out}}(y) \Big)^2\bigg)^{{1}/{2}}\\
    &\qq\qq\q \times   \bigg( \sum_{P\in\DD} \Big( \big| \BB_P^2(f_2)\big| |P|^{-{1}/{2}}  \frac{\chi_{P_c}(x)}{\langle 2^j(x-\xx_P)\rangle^{s_2}}    \Big)^2\bigg)^{{1}/{2}} \\
    &\qq\qq\qq\times \sup_{j\in\bbz}\bigg( \sum_{P\in\DD_j}\Big(   \big| \BB_R^3(f_3)\big| |R|^{-{1}/{2}} \frac{1}{\langle 2^j(x-\xx_R)\rangle^{s_3}} \Big)^2\bigg)^{{1}/{2}}.
\end{align*}
The proof of the above claim is a repetition of the arguments used in the proof of  Lemmas \ref{keylemma3} and \ref{keylemma4}, so we omit the details.
 We now take
\begin{align*}
u_1^{\mathrm{in}/\mathrm{out}}(x)&:=   \sum_{k=0}^{\infty}|\la_k|\ell(Q_k)^{-{n}/{p_1}}\chi_{(Q_k^{***})^c}(x)| x-\xx_{Q_k}|^{-s_1}\\
&\qq\times\mathcal{M}\bigg[\bigg(\sum_{j\in\bbz}\Big(2^{-s_1j}\min{\{1,(2^j\ell(Q_k))^M\}}I_{k,j,s}^{\mathrm{in}/\mathrm{out}}(\cdot) \Big)^2 \bigg)^{{1}/{2}}\bigg](x)   \\
u_2(x)&:=        \bigg(\sum_{j\in\bbz} \sum_{P\in\DD_j} \Big( \big| \BB_P^2(f_2)\big| |P|^{-{1}/{2}} \frac{\chi_{P^c}(x)}{\langle 2^j(x-\xx_P)\rangle^{s_2}}\Big)^2   \bigg)^{{1}/{2}}    \\
u_3(x)&:=            \sup_{j\in\bbz}\bigg( \sum_{R\in\DD_j} \Big(\big| \BB_R^3(f_3) \big| |R|^{-{1}/{2}}\frac{1}{\langle 2^j(x-\xx_R)\rangle ^{s_3}}    \Big)^2 \bigg)^{{1}/{2}}     .
\end{align*}
Then it is obvious that  \eqref{keylemma5vconditions} and  \eqref{keylemma5est} hold.

\subsection{Proof of Lemma \ref{keylemma6}}

We choose $0<\epsilon<1$ such that
\begin{equation}\label{np1def}
N_{p_1}:=\big[{n}/{p_1}-n\big]\le {n}/{p_1}-n<\big[ {n}/{p_1}-n\big]+\epsilon<s-{3n}/{2}.
\end{equation}
We note that 
\begin{equation*}
2^l\lesssim |x-\xx_{Q_k}|^{-1} \q\text{ for }~ x\in B_k^l.
\end{equation*}
By using Lemma \ref{epsilon control} with the vanishing moment condition \eqref{vanishingreduction}, 
 we have
\begin{align*}
\big| \phi_l\ast \big( \UU_{1}(x,\cdot)\big)(x)\big|  & \lesssim \sum_{j\in\bbz}\sum_{P\in\DD_j}\sum_{R\in\DD_j}|b_P^2| |b_R^3|\chi_{P\cap R}(x) 2^{l(n+N_{p_1}+\epsilon)}\nonumber\\
&\qq\qq\times \int_{\bbrn}|y-\xx_{Q_k}|^{N_{p_1}+\epsilon}\big|T_{\sigma_j}\big(\La_ja_k,\psi^P,\theta^R\big)(y) \big| \; dy\nonumber\\
&\lesssim \frac{1}{|x-\xx_{Q_k}|^{n+N_{p_1}+\epsilon}}\sum_{j\in\bbz}\sum_{P\in\DD_j}\sum_{R\in\DD_j}|b_P^2| |b_R^3|\chi_{P\cap R}(x)\\
&\qq\qq\times \Big(\mathscr{K}_{N_{p_1}+\epsilon}^{j,\mathrm{in}}(Q_k,P,R)+\mathscr{K}_{N_{p_1}+\epsilon}^{j,\mathrm{out}}(Q_k,P,R) \Big)\nonumber
\end{align*}
where 
$$\KK_{N_{p_1}+\epsilon}^{j,\mathrm{in}}(Q_k,P,R):=\int_{Q_k^{**}} |y-\xx_{Q_k}|^{N_{p_1}+\epsilon}\big| T_{\sigma_j}\big( \La_ja_k,\psi^P,\theta^R\big)(y)\big| \; dy,$$
and
$$\KK_{N_{p_1}+\epsilon}^{j,\mathrm{out}}(Q_k,P,R):=\int_{(Q_k^{**})^c} |y-\xx_{Q_k}|^{N_{p_1}+\epsilon}\big| T_{\sigma_j}\big( \La_ja_k,\psi^P,\theta^R\big)(y)\big| \; dy.$$
Now the left-hand side of \eqref{keylemma6est} is dominated by $\mathscr{J}^{in}(x)+\mathscr{J}^{out}(x)$
\begin{align*}
\mathscr{J}^{\mathrm{in}/\mathrm{out}}(x)&:=\sum_{k=0}^{\infty}|\la_k|\chi_{(Q_{k}^{***})^c}(x)\frac{1}{|x-\xx_{Q_k}|^{n+N_{p_1}+\epsilon}}\\
&\qq\qq\qq\times \sum_{j\in\bbz}\sum_{P\in\DD_j}\sum_{R\in\DD_j}|b_P^2| |b_R^3|  \chi_{P\cap R}(x)\KK_{N_{p_1}+\epsilon}^{j,\mathrm{in}/ \mathrm{out}}(Q_k,P,R).
\end{align*}

To estimate $\mathscr{J}^{\mathrm{in}}$, we first see that
\begin{align*}
&\KK_{N_{p_1}+\epsilon}^{j,\mathrm{in}}(Q_k,P,R)\\
&\lesssim \ell(Q_k)^{N_{p_1}+\epsilon}|P|^{-{1}/{2}}|R|^{-{1}/{2}}\int_{y\in Q_k^{**}}\int_{(\bbrn)^3}\big| \sigma_j^{\vee}(y-z_1,z_2,z_3)\big| \big|\La_ja_k(z_1) \big| \; d\zzz dy\\
&\lesssim \ell(Q_k)^{N_{p_1}+\epsilon}|P|^{-{1}/{2}}|R|^{-{1}/{2}}\Vert \La_ja_k\Vert_{L^{1}(\bbrn)},
 \end{align*} using the Cauchy-Schwarz inequality with $s>{3n}/{2}$, 
 and thus
\begin{align}\label{jinest}
&\sum_{j\in\bbz}\sum_{P\in\DD_j}\sum_{R\in\DD_j} |b_P^2| |b_R^3| \chi_{P\cap R}(x)\KK_{N_{p_1}+\epsilon}^{j,\mathrm{in}}(Q_k,P,R)\nonumber\\
&\lesssim \ell(Q_k)^{N_{p_1}+\epsilon}\big\Vert \big\{ \La_ja_k\big\}_{j\in\bbz}\big\Vert_{L^1(\ell^2)}g^2\big(\big\{ b_P^2\big\}_{P\in\DD} \big)(x)g^{\infty}\big(\big\{ b_R^3\big\}_{R\in\DD} \big)(x)\nonumber\\
&\lesssim  |Q_k|^{-1/p_1} \ell(Q_k)^{n+N_{p_1}+\epsilon}g^2\big(\big\{ b_P^2\big\}_{P\in\DD} \big)(x)g^{\infty}\big(\big\{ b_R^3\big\}_{R\in\DD} \big)(x)
\end{align} by using the fact that
\begin{equation*}
\big\Vert \big\{ \La_ja_k\big\}_{j\in\bbz}\big\Vert_{L^1(\ell^2)}\sim \Vert a_k\Vert_{H^1(\bbrn)}\lesssim \ell(Q_k)^{-{n}/{p_1}+n}.
\end{equation*}

For the other term $\mathscr{J}^{out}$, 
we choose $s_1$ such that
\begin{equation}\label{npcondition1}
N_{p_1}+{n}/{2}+\epsilon<s_1<s-n,
\end{equation}
which is possible due to \eqref{np1def}, and  $s_2,s_3>{n}/{2}$ such that 
\begin{equation}\label{npcondition2}
s_1+n<s_1+s_2+s_3=s.
\end{equation}
We observe that for $y\in (Q_k^{**})^c$,
\begin{align*}
&\langle 2^j(y-\xx_{Q_k})\rangle^{s_1}\big| \La_ja_k(z_1)\big|\\
&\lesssim  \ell(Q_k)^{-{n}/{p_1}}\min{\big\{1,\big( 2^j\ell(Q_k)\big)^M \big\}}\langle 2^j(y-\xx_{Q_k})\rangle^{s_1} \\
&\qq\qq\qq\qq\times \Big(\chi_{Q_k^*}(z_1)+\chi_{(Q_k^*)^c}(z_1)\frac{(2^j\ell(Q_k))^n}{\langle 2^j(z_1-\xx_{Q_k})\rangle^{L_0}} \Big)\\
&\lesssim \ell(Q_k)^{-{n}/{p_1}}\min{\big\{1,\big( 2^j\ell(Q_k)\big)^M \big\}} \langle 2^j(y-z_1)\rangle^{s_1}\\
&\qq\qq\qq\qq\times \Big(\chi_{Q_k^*}(z_1)+\chi_{(Q_k^*)^c}(z_1)\frac{(2^j\ell(Q_k))^n}{\langle 2^j(z_1-\xx_{Q_k})\rangle^{L_0-s_1}} \Big)
\end{align*}
where Lemma \ref{technical lemma} is applied in the first inequality.
Here, $M$ and $L_0$ are sufficiently large numbers such that $L_0-s_1>n$ and $M-L_0+3n/2>0$.
By letting 
\begin{equation}\label{ajqkdef}
A_{j,Q_k}(z_1):=\chi_{Q_k^*}(z_1)+\chi_{(Q_k^*)^c}(z_1)\frac{(2^j\ell(Q_k))^n}{\langle 2^j(z_1-\xx_{Q_k})\rangle^{L_0-s_1}},
\end{equation}
we have
\begin{align}\label{tsigmajlaest}
&\big| T_{\sigma_j}\big( \La_ja_k,\psi^P,\theta^R\big)(y)\big|\chi_{(Q_k^{**})^c}(y)\nonumber\\
&\lesssim  \ell(Q_k)^{-{n}/{p_1}}\min \big\{ 1,\big( 2^j\ell(Q_k)\big)^M\big\}\frac{1}{\langle 2^j(y-\xx_{Q_k})\rangle^{s_1}}|P|^{-{1}/{2}} |R|^{-{1}/{2}}\\
&\qq\qq\qq\times \int_{(\bbrn)^3} \langle 2^j(y-z_1)\rangle^{s_1}\big| \sigma_j^{\vee}(y-z_1,z_2,z_3)\big| A_{j,Q_k}(z_1)\; d\zzz \nonumber
\end{align}
and the integral is, via the Cauchy-Schwarz inequality, less than
$$2^{-jn}\int_{\bbrn} |A_{j,Q_k}(z_1)|   \big\Vert \langle 2^j(y-z_1,z_2,z_3)\rangle^{s}\sigma_j^{\vee}(y-z_1,z_2,z_3)\big\Vert_{L^2(z_2,z_3)}    \;  dz_1.$$
This deduces that 
\begin{align*}
&\KK_{N_{p_1}+\epsilon}^{j,\mathrm{out}}(Q_k,P,R)\\
& \lesssim \ell(Q_k)^{-n/{p_1}}\min\big\{1,\big( 2^j\ell(Q_k)\big)^M \big\} |P|^{-{1}/{2}}|R|^{-{1}/{2}}2^{-js_1}2^{-jn} \int_{\bbrn} \big|A_{j,Q_k}(z_1)\big|\\
&\q\times \Big(\int_{(Q_k^{**})^c} \frac{1}{|y-\xx_{Q_k}|^{s_1-(N_{p_1}+\epsilon)}}      \big\Vert \langle 2^j(y-z_1,z_2,z_3)\rangle^{s}\sigma_j^{\vee}(y-z_1,z_2,z_3)\big\Vert_{L^2(z_2,z_3)}       \,   dy\Big) \; dz_1\\
&\lesssim  \ell(Q_k)^{-n/{p_1}}\min\big\{1,\big( 2^j\ell(Q_k)\big)^M \big\} |P|^{-{1}/{2}}|R|^{-{1}/{2}}2^{-js_1}2^{-jn} \Vert A_{j,Q_k}\Vert_{L^1(\bbrn)}\\
&\qq\qq\qq\qq\qq\times \bigg\Vert \frac{1}{|\cdot-\xx_{Q_k}|^{s_1-(N_{p_1}+\epsilon)}}\bigg\Vert_{L^2((Q_k^{**})^c)}\big\Vert \langle 2^j\ccdot\rangle^s\sigma_j^{\vee}\big\Vert_{L^2((\bbrn)^3)}\\
&\lesssim \ell(Q_k)^{-n/p_1+n+N_{p_1}+\epsilon}   \big( 2^j\ell(Q_k)\big)^{-(s_1-{n}/{2})}    \min\big\{1,\big( 2^j\ell(Q_k)\big)^{M-L_0+s_1+n} \big\}                     |P|^{-{1}/{2}}|R|^{-{1}/{2}}
\end{align*}
since $$\Vert A_{j,Q_k}\Vert_{L^1(\bbrn)}\lesssim \ell(Q_k)^n\Big(1+(2^j\ell(Q_k))^{-(L_0-s_1-n)} \Big) \q \text{ for }~ L_0-s_1>n.$$
Therefore, by using the Cauchy-Schwarz inequality
\begin{align}\label{joutest}
&\sum_{j\in\bbz}\sum_{P\in\DD_j}\sum_{R\in\DD_j} |b_P^2| |b_R^3| \chi_{P\cap R}(x)\KK_{N_{p_1}+\epsilon}^{j,\mathrm{out}}(Q_k,P,R)\nonumber\\
&\lesssim |Q_k|^{-{1}/{p_1}}\ell(Q_k)^{n+N_{p_1}+\epsilon}g^2\big(\big\{ b_P^2\big\}_{P\in\DD} \big)(x)g^{\infty}\big(\big\{ b_R^3\big\}_{R\in\DD} \big)(x)\nonumber\\
&\qq\times \bigg( \sum_{j\in\bbz}\Big(\big( 2^j\ell(Q_k)\big)^{-(s_1-{n}/{2})}    \min\big\{1,\big( 2^j\ell(Q_k)\big)^{M-L_0+s_1+n} \big\}  \Big)^2\bigg)^{{1}/{2}} \nonumber\\
&\lesssim |Q_k|^{-{1}/{p_1}}\ell(Q_k)^{n+N_{p_1}+\epsilon}g^2\big(\big\{ b_P^2\big\}_{P\in\DD} \big)(x)g^{\infty}\big(\big\{ b_R^3\big\}_{R\in\DD} \big)(x)
\end{align}  where the last inequality holds due to $s_1>{n}/{2}$ and $M-L_0+{3n}/{2}>0$.

In conclusion, the estimate \eqref{keylemma6est} can be derived from \eqref{jinest} and \eqref{joutest}, using the choices of
\begin{align*}
u_1(x)&:=     \sum_{k=0}^{\infty}|\la_k| |Q_k|^{-{1}/{p_1}} \chi_{(Q^{**})^c} \frac{\ell(Q_k)^{n+N_{p_1}+\epsilon}}{|x-\xx_{Q_k}|^{n+N_{p_1}+\epsilon}} ,  \\
u_2(x)&:=     g^2\big(\big\{ b_P^2\big\}_{P\in\DD} \big)(x),  \\
u_3(x)&:=      g^{\infty}\big(\big\{ b_R^3\big\}_{R\in\DD} \big)(x).
\end{align*}
It is obvious from \eqref{bp2} and \eqref{br3} that $\Vert u_2\Vert_{L^{p_2}(\bbrn)}, \Vert u_3\Vert_{L^{p_3}(\bbrn)}\lesssim 1$.
Furthermore,
\begin{align}\label{u1p1new}
\Vert u_1\Vert_{L^{p_1}(\bbrn)}&\lesssim \bigg(\sum_{k=0}^{\infty}|\la_k|^{p_1}|Q_k|^{-1}\int_{(Q_k^{***})^c}\frac{\ell(Q_k)^{(n+N_{p_1}+\epsilon)p_1}}{|x-\xx_{Q_k}|^{(n+N_{p_1}+\epsilon)p_1}} dx \bigg)^{{1}/{p_1}}\nonumber\\
&\lesssim \Big( \sum_{k=0}^{\infty}|\la_k|^{p_1}\Big)^{{1}/{p_1}}\lesssim 1.
\end{align}
This completes the proof.

\subsection{Proof of Lemma \ref{keylemma7}}
Choose $s_1$, $s_2$, and $s_3$ such that $s_1>{n}/{p_1}-{n}/{2}$, $s_2>{n}/{2}$, $s_3>{n}/{2}$, and $s=s_1+s_2+s_3$.
For $x\in B_k^l\cap (B_P^l)^c$ and $|x-y|\le 2^{-l}$, we have
\begin{equation}\label{xxxqk}
|x-\xx_{Q_k}|\le |x-\xx_{P}|\lesssim |y-\xx_P|.
\end{equation}
This implies
\begin{align*}
&\langle 2^j(x-\xx_{Q_k})\rangle^{s_1}\langle 2^j(x-\xx_P)\rangle^{s_2}\big| \phi_l\ast T_{\sigma_j}\big(\La_ja_k,\psi^P,\theta^R \big)(x)\big|\\
&\lesssim |R|^{-{1}/{2}}  2^{ln}\int_{|x-y|\le 2^{-l}}  \int_{(\bbrn)^3}\langle 2^j(y-\xx_P)\rangle^{s_1+s_2}\\
&\qq\qq\qq\qq\qq\times\big| \sigma_j^{\vee}(y-z_1,y-z_2,z_3)\big| \big| \La_ja_k(z_1)\big| \big| {\psi^P}(z_2)\big|\;d\zzz\; dy\\
&\le |R|^{-{1}/{2}}  2^{ln}\int_{|x-y|\le 2^{-l}}\int_{(\bbrn)^3} \langle 2^j(y-z_2)\rangle^{s_1+s_2} \\
&\qq\qq\qq\qq\qq\times \big| \sigma_j^{\vee}(y-z_1,y-z_2,z_3)\big| \big| \La_ja_k(z_1)\big| \big|\wt{\psi^P}(z_2) \big|\;d\zzz\; dy
\end{align*}
where
$$\wt{\psi^P}(z_2) :=\langle 2^j(z_2-\xx_P)\rangle^{s_1+s_2}\psi^P(z_2).$$
 By using the Cauchy-Schwarz inequality and Lemma \ref{almost orthogonality}, we obtain
\begin{align}\label{philu21}
&\big| \phi_l\ast \big( \UU_2^1(x,\cdot)\big)(x)\big| \nonumber\\
&\lesssim \sum_{j\in\bbz}\sum_{P\in\DD_j}\sum_{R\in\DD_j}|b_P^2| |b_R^3|\chi_{P^c}(x)\chi_R(x)\frac{1}{\langle 2^j(x-\xx_{Q_k})\rangle^{s_1}} \frac{1}{\langle 2^j(x-\xx_P)\rangle^{s_2}} |R|^{-{1}/{2}}\nonumber\\
&\qq \times 2^{ln}\int_{|x-y|\le 2^{-l}} \int_{(\bbrn)^3}\langle 2^j(y-z_2)\rangle^{s_1+s_2}\big|  \sigma_j^{\vee}(y-z_1,y-z_2,z_3) \big| \big| \La_ja_k(z_1)\big|      \;    d\zzz dy\nonumber\\
&\lesssim g^{\infty}\big( \big\{ b_R^3\big\}_{R\in\DD}\big)(x) \sum_{j\in\bbz}\frac{1}{\langle 2^j(x-\xx_{Q_k})\rangle^{s_1}} 2^{ln}\int_{|x-y|\le 2^{-l}} \int_{(\bbrn)^3} \langle 2^j(y-z_2)\rangle^{s_1+s_2}  \nonumber\\
&\qq  \times \big| \sigma_j^{\vee}(y-z_1,y-z_2,z_3)\big| \big| \La_ja_k(z_1)\big|\Big(\sum_{P\in\DD_j}|b_P^2|\frac{\chi_{(P)^c}(x)}{\langle 2^j(x-\xx_P)\rangle^{s_2}} \big|\wt{\psi^P}(z_3) \big| \Big)      \; d\zzz dy\nonumber\\
&\lesssim g^{\infty}\big( \big\{ b_R^3\big\}_{R\in\DD}\big)(x) \frac{1}{|x-\xx_{Q_k}|^{s_1}}\Big( \sum_{j\in\bbz} \big(2^{-js_1}\mathcal{M}J_{k,j,s}^1(x) \big)^2\Big)^{{1}/{2}}\\
&\qq\qq\qq\times\Big( \sum_{j\in\bbz}\sum_{P\in\DD_j} \Big( \big|\BB_P^2(f_2)\big||P|^{-{1}/{2}}\frac{\chi_{P^c}(x)}{\langle 2^j(x-\xx_P)\rangle^{s_2}}\Big)^2     \Big)^{{1}/{2}} \nonumber
\end{align}
where $\BB_P^2(f_2)$ is defined as in \eqref{bpf2} for some $L>n,s_2$, and 
\begin{equation}\label{jkjs1def}
J_{k,j,s}^1(y):= 2^{-jn}\int_{\bbrn} \big| \La_ja_k(z_1)\big|\Big\Vert \langle 2^j(y-z_1,z_2,z_3)\rangle^s\sigma_j^{\vee}(y-z_1,z_2,z_3) \Big\Vert_{L^2(z_2,z_3)}       dz_1.    
\end{equation}
Now we choose
\begin{align*}
u_1(x)&:=\sum_{k=0}^{\infty}|\la_k|\chi_{(Q_k^{***})^c}(x)\frac{1}{|x-\xx_{Q_k}|^{s_1}}\Big( \sum_{j\in\bbz}\big(2^{-js_1}\mathcal{M}J_{k,j,s}^1(x) \big)^2\Big)^{{1}/{2}},\\
u_2(x)&:=\bigg( \sum_{j\in\bbz}\sum_{P\in\DD_j} \Big( |\BB_P^2(f_2)||P|^{-{1}/{2}}\frac{\chi_{P^c}(x)}{\langle 2^j(x-\xx_P)\rangle^{s_2}}\Big)^2     \bigg)^{{1}/{2}},\\
u_3(x)&:=g^{\infty}\big( \big\{ b_R^3\big\}_{R\in\DD}\big)(x). 
\end{align*}
Clearly, \eqref{keylemma7est} holds and $\Vert u_2\Vert_{L^{p_2}(\bbrn)}, \Vert u_3\Vert_{L^{p_3}(\bbrn)} \lesssim 1$ due to Lemma \ref{lacunarylemma} and \eqref{br3}.
In addition,
\begin{equation*}
\Vert u_1\Vert_{L^{p_1}(\bbrn)}^{p_1}\lesssim  \sum_{k=0}^{\infty} |\la_k|^{p_1}\int_{(Q_k^{***})^c} |x-\xx_{Q_k}|^{-s_1p_1}\Big(\sum_{j\in\bbz}\big(2^{-s_1j}\mathcal{M}J_{k,j,s}^1(x) \big)^2 \Big)^{{p_1}/{2}}     dx 
\end{equation*}
and the integral is controlled by
\begin{align*}
&\big\Vert |\cdot-\xx_{Q_k}|^{-s_1p_1}\big\Vert_{L^{({2}/{p_1})'}((Q_k^{***})^c)}\bigg\Vert \Big(     \sum_{j\in\bbz}\big(2^{-s_1j}\mathcal{M}J_{k,j,s}^1(x) \big)^2       \Big)^{{p_1}/{2}}\bigg\Vert_{L^{{2}/{p_1}}(\bbrn)}\\
&\lesssim \ell(Q_k)^{-p_1(s_1-({n}/{p_1}-{n}/{2}))} \Big( \sum_{j\in\bbz}2^{-2js_1}\big\Vert J_{k,j,s}^1 \big\Vert_{L^2(\bbrn)}^2\Big)^{{p_1}/{2}}
\end{align*}
by using H\"older's inequality and the $L^2$ boundedness of $\mathcal{M}$.
It follows from Minkowski's inequality and Lemma \ref{technical lemma} that
\begin{align*}
\big\Vert J_{k,j,s}^{1}\big\Vert_{L^2(\bbrn)}&\lesssim 2^{-jn}\big\Vert \La_ja_k\big\Vert_{L^1(\bbrn)}\Big\Vert \langle 2^j\ccdot\rangle^s\big| \sigma_j^{\vee}\big|\Big\Vert_{L^2((\bbrn)^3)}\\
&\lesssim \ell(Q_k)^{-{n}/{p_1}+n}2^{{jn}/{2}}\min\big\{1,\big(2^j\ell(Q_k) \big)^M \big\}
\end{align*}
and this finally yields that
\begin{equation}\label{v1p1est}
\Vert u_1\Vert_{L^{p_1}(\bbrn)}\lesssim \Big( \sum_{k=0}^{\infty}|\la_k|^{p_1}\Big)^{{1}/{p_1}}\lesssim 1.
\end{equation}

\subsection{Proof of Lemma \ref{keylemma8}}
For $x\in B_k^l\cap B_P^l$,
\begin{equation}\label{2lboundaa}
2^l\lesssim |x-\xx_{Q_k}|^{-1}, |x-\xx_P|^{-1}.
\end{equation}
Since $s>{n}/{p_1}+{n}/{2}$, there exist $0<\epsilon_0,\epsilon_1<1$ such that 
$${n}/{p_1}+{n}/{p_2}<\big[ {n}/{p_1}+{n}/{p_2}\big]+\epsilon_0 \q \text{ and }\q  \big[ {n}/{p_1}+{n}/{p_2}\big]+\epsilon_0+\epsilon_1 <s-\big({n}/{2} -{n}/{p_2}\big).$$
Choose $t_1$ and $t_2$ satisfying $t_1>{n}/{p_1}$, $t_2>{n}/{p_2}$, and $t_1+t_2=\big[ {n}/{p_1}+{n}/{p_2}\big]+\epsilon_0$ and
let $N_0:=\big[ {n}/{p_1}+{n}/{p_2}\big]-n $.
Then Lemma \ref{epsilon control}, together with the vanishing moment condition \eqref{vanishingreduction}, and the estimate \eqref{2lboundaa} yield that
\begin{align*}
&\big| \phi_l\ast T_{\sigma_j}\big( \La_ja_k,\psi^P,\theta^R\big)(x)\big|\\
&\lesssim 2^{l(N_0+n+\epsilon_0)}\int_{\bbrn}|y-\xx_P |^{N_0+\epsilon_0}  \big| T_{\sigma_j} \big( \La_ja_k,\psi^P,\theta^R\big)(y)\big|  \;     dy\\
&\lesssim |R|^{-{1}/{2}}\frac{1}{|x-\xx_{Q_k}|^{t_1}}2^{-j(t_1-n)}\int_{\bbrn} \int_{(\bbrn)^3}\langle 2^j(y-z_2)\rangle^{N_0+\epsilon_0}\\
&\qq\qq\qq\times \big| \sigma_j^{\vee}(y-z_1,y-z_2,z_3)\big| \big| \La_ja_k(z_1)\big| \frac{| \wt{\psi^P}(z_2)|}{\langle 2^j(x-\xx_P)\rangle^{t_2}}     \;    d\zzz dy
  \end{align*}
  where
  $$\wt{\psi^P}(z_2):= \langle z_2-\xx_P\rangle^{N_0+\epsilon_0}\psi^P(z_2).$$
This deduces
\begin{align}\label{philu22est}
&\big| \phi_l\ast \big( \UU_2^2(x,\cdot)\big)(x)\big| \nonumber\\
&\lesssim g^{\infty}\big(\big\{ b_R^3\big\}_{R\in\DD} \big)(x)\frac{1}{|x-\xx_{Q_k}|^{t_1}}\sum_{j\in\bbz}2^{-j(t_1-n)}\int_{\bbrn}  \int_{(\bbrn)^3}   \langle 2^j(y-z_2)\rangle^{N_0+\epsilon_0} \\
&\qq\times \big| \sigma_j(y-z_1,y-z_2,z_3)\big| \big| \La_ja_k(z_1)\big| \Big( \sum_{P\in\DD_j} |b_P^2|\frac{\chi_{P^c}(x)}{\langle 2^j(x-\xx_P)\rangle^{t_2}}\big| \wt{\psi^P}(z_2)\big|    \Big)      \; d\zzz     dy.\nonumber
\end{align}
Using H\"older's inequality with $\frac{1}{2}+\frac{1}{(1/p_2'-1/2)^{-1}}+\frac{1}{p_2}=1$ and Lemma \ref{almost orthogonality}, we see that
\begin{align*}
&\int_{\bbrn} \langle 2^j(y-z_2)\rangle^{N_0+\epsilon_0} \big| \sigma_j(y-z_1,y-z_2,z_3)\big|    \Big( \sum_{P\in\DD_j} |b_P^2|\frac{\chi_{P^c}(x)}{\langle 2^j(x-\xx_P)\rangle^{t_2}}\big| \wt{\psi^P}(z_2)\big|    \Big) \;    dz_2\\
&\le \big\Vert \langle 2^jz_2\rangle^{s-n-\epsilon_1}\sigma_j^{\vee}(y-z_1,z_2,z_3)\big\Vert_{L^2(z_2)}\big\Vert \langle 2^j\cdot \rangle^{-(s-t_1-t_2-\epsilon_1)}\big\Vert_{L^{({1}/{p_2'}-{1}/{2})^{-1}}(\bbrn)}\\
&\qq\qq\times \Big\Vert   \sum_{P\in\DD_j} |b_P^2|\frac{\chi_{P^c}(x)}{\langle 2^j(x-\xx_P)\rangle^{t_2}}\big| \wt{\psi^P}(z_2)\big|     \Big\Vert_{L^{p_2}(\bbrn)}\\
&\lesssim 2^{-\frac{jn}{2}}\big\Vert \langle 2^jz_2\rangle^{s-n-\epsilon_1}\sigma_j^{\vee}(y-z_1,z_2,z_3)\big\Vert_{L^2(z_2)}\\
&\qq\qq\times \bigg( \sum_{P\in\DD_j}\Big( |\BB_P^2(f_2)||P|^{-{1}/{2}}\frac{\chi_{P^c}(x)}{\langle 2^j(x-\xx_P)\rangle^{t_2}}\Big)^{p_2}  \bigg)^{{1}/{p_2}}
\end{align*}
because $s-n-\epsilon_1=s-t_1-t_2-\epsilon_1+N_0+\epsilon_0$, $s-t_1-t_2-\epsilon_1>n(1/p_2'-1/2)$.
This shows that the integral in the right-hand side of \eqref{philu22est} is dominated by a constant times
\begin{align*}
&\Vert \La_ja_k\Vert_{L^1(\bbrn)} \bigg( \sum_{P\in\DD_j}\Big( |\BB_P^2(f_2)||P|^{-{1}/{2}}\frac{\chi_{P^c}(x)}{\langle 2^j(x-\xx_P)\rangle^{t_2}}\Big)^{p_2}  \bigg)^{{1}/{p_2}} \\
&\qq\qq\qq\qq\times 2^{-{jn}/{2}}\int_{(\bbrn)^2}  \big\Vert \langle 2^jz_2\rangle^{s-n-\epsilon_1}\sigma_j^{\vee}(y,z_2,z_3)\big\Vert_{L^2(z_2)} \;dy\; dz_3\\
&\lesssim \ell(Q_k)^{-{n}/{p_1}+n}\min\big\{1,\big( 2^j\ell(Q_k)\big)^M \big\} \bigg( \sum_{P\in\DD_j}\Big( |\BB_P^2(f_2)||P|^{-{1}/{2}}\frac{\chi_{P^c}(x)}{\langle 2^j(x-\xx_P)\rangle^{t_2}}\Big)^{p_2}  \bigg)^{{1}/{p_2}}
\end{align*}
where $\BB_P^2(f_2)$ is defined as in \eqref{bpf2} and $M$ is sufficiently large.
Consequently,
\begin{align}\label{philu22est2}
&\big| \phi_l\ast \big( \UU_2^2(x,\cdot)\big)(x)\big| \nonumber\\
& \lesssim g^{\infty}\big(\big\{ b_R^3\big\}_{R\in\DD} \big)(x) \;\sup_{j\in\bbz} \bigg( \sum_{P\in\DD_j}\Big( |\BB_P^2(f_2)||P|^{-{1}/{2}}\frac{\chi_{P^c}(x)}{\langle 2^j(x-\xx_P)\rangle^{t_2}}\Big)^{p_2}  \bigg)^{{1}/{p_2}}\nonumber\\
&\qq\qq\qq\times \frac{1}{|x-\xx_{Q_k}|^{t_1}}\ell(Q_k)^{-{n}/{p_1}+n}\sum_{j\in\bbz}  2^{-j(t_1-n)} \min\big\{1,\big( 2^j\ell(Q_k)\big)^M \big\}\nonumber\\
&\lesssim  |Q_k|^{-1/p_1} \frac{\ell(Q_k)^{t_1}}{|x-\xx_{Q_k}|^{t_1}} \bigg(  \sum_{j\in\bbz} \sum_{P\in\DD_j}\Big( |\BB_P^2(f_2)||P|^{-{1}/{2}}\frac{\chi_{P^c}(x)}{\langle 2^j(x-\xx_P)\rangle^{t_2}}\Big)^{p_2}  \bigg)^{{1}/{p_2}}\\
&\qq\qq\qq\times g^{\infty}\big(\big\{ b_R^3\big\}_{R\in\DD} \big)(x).\nonumber
\end{align}
Now we are done with
\begin{align*}
u_1(x)&:= \sum_{k=0}^{\infty}|\la_k| |Q_k|^{-{1}/{p_1}}\frac{\ell(Q_k)^{t_1}}{|x-\xx_{Q_k}|^{t_1}}\chi_{(Q_k^{***})^c}(x)\\
u_2(x)&:=   \bigg(\sum_{j\in\bbz} \sum_{P\in\DD_j}\Big( |\BB_P^2(f_2)||P|^{-{1}/{2}}\frac{\chi_{P^c}(x)}{\langle 2^j(x-\xx_P)\rangle^{t_2}}\Big)^{p_2}  \bigg)^{{1}/{p_2}}\\
u_3(x)&:= g^{\infty}\big(\big\{ b_R^3\big\}_{R\in\DD} \big)(x)
\end{align*}
as $\Vert u_{\ii}\Vert_{L^{p_{\ii}}(\bbrn)}\lesssim 1$, $\ii=1,2,3$, follow from
Lemma \ref{lacunarylemma}, \eqref{br3}, and the argument that led to \eqref{u1p1new} with $t_1>n/p_1$.

\subsection{Proof of Lemma \ref{keylemma9}}
Let $s_1$, $s_2$, and $s_3$ satisfy $s_1>{n}/{p_1}-{n}/{2}$, $s_2>{n}/{2}$, $s_3>{n}/{2}$, and $s=s_1+s_2+s_3$.
By mimicking the argument that led to \eqref{philu21} with
$$|x-\xx_{Q_k}|\le |x-\xx_R|\lesssim |y-\xx_R|$$
for $x\in B_k^l\cap (B_R^l)^c$ and $|x-y|\le 2^{-l}$, instead of \eqref{xxxqk}, we can prove
\begin{align*}
\big| \phi_l\ast \big(\UU_{3}^{1}(x,\cdot)\big)(x) \big|&\lesssim \frac{1}{|x-\xx_{Q_k}|^{s_1}}\Big(  \sum_{j\in\bbz} \big(2^{-js_1}\mathcal{M}J_{k,j,s}^1(x)   \big)^2\Big)^{{1}/{2}} \; g^2\big(\big\{ b_P^2\big\}_{P\in\DD} \big)(x)\\
&\qq\qq\times \sup_{j\in\bbz}\bigg( \sum_{R\in\DD_j} \Big( \big|\BB_R^3(f_3)\big||R|^{-{1}/{2}}\frac{\chi_{R^c}(x)}{\langle 2^j(x-\xx_R)\rangle^{s_3}}\Big)^2\bigg)^{{1}/{2}}
\end{align*}
where $J_{k,j,s}^1$ and $\BB_R^3(f_3)$ are defined as in \eqref{jkjs1def} and \eqref{br3f3def} for some $L>n,s_3$.

Now let
\begin{align*}
u_1(x)&:=    \sum_{k=0}^{\infty}|\la_k|\chi_{(Q_k^{***})^c}(x)\frac{1}{|x-\xx_{Q_k}|^{s_1}}\Big( \sum_{j\in\bbz}\big(2^{-js_1}\mathcal{M}J_{k,j,s}^1(x) \big)^2\Big)^{{1}/{2}}      \\
u_2(x)&:=g^2\big( \big\{ b_P^2\big\}_{P\in\DD}\big)(x)\\
u_3(x)&:= \sup_{j\in\bbz}\bigg( \sum_{P\in\DD_j}\Big( |\BB_R^3(f_3)| |R|^{-{1}/{2}}\frac{\chi_{R^c}(x)}{\langle 2^j(x-\xx_R)\rangle^{s_3}}\Big)^2\bigg)^{{1}/{2}}.
\end{align*}
Then the estimate \eqref{keylemma9est} is clear and 
it follows from \eqref{v1p1est}, \eqref{bp2}, and Lemma \ref{nonlacunarylemma} that \eqref{keylemma9conditions} holds.

\subsection{Proof of Lemma \ref{keylemma10}}

Let $0<\epsilon_0,\epsilon_1<1$ satisfy
$${n}/{p_1}+{n}/{p_3}<\big[ {n}/{p_1}+{n}/{p_3}\big]+\epsilon_0 \q\text{ and }\q \big[ {n}/{p_1}+{n}/{p_3}\big]+\epsilon_0+\epsilon_1<s-\big( {n}/{2}-{n}/{p_3}\big)$$
 and select $t_1,t_3$ so that $t_1>{n}/{p_1}$, $t_3>{n}/{p_3}$, and $t_1+t_3=\big[{n}/{p_1}+{n}/{p_3} \big]+\epsilon_0$.
Let $N_0:= \big[ {n}/{p_1}+{n}/{p_3}\big]-n$ and $B_R^3(f_3)$ be defined as in \eqref{br3f3def}.
Then, as the counterpart of \eqref{philu22est2}, we can get
\begin{align*}
\big| \phi_l\ast \big(\UU_{3}^{2}(x,\cdot)\big)(x)\big|&\lesssim |Q_k|^{-1/p_1} \frac{\ell(Q_k)^{t_1}}{|x-\xx_{Q_k}|^{t_1}}g^{2}\big( \big\{b_P^2 \big\}_{P\in\DD}\big)(x)\\
&\qq\qq\times  \sup_{j\in\bbz}\bigg( \sum_{R\in\DD_j}\Big( |\BB_R^3(f_3)||R|^{-{1}/{2}}\frac{\chi_{R^c}(x)}{\langle 2^j(x-\xx_R)\rangle^{t_3}}\Big)^{p_3}\bigg)^{{1}/{p_3}}
\end{align*}
where the embedding $\ell^2\hookrightarrow \ell^{\infty}$ is applied.
By taking
\begin{align*}
u_1(x)&:=     \sum_{k=0}^{\infty}|\la_k| |Q_k|^{-{1}/{p_1}}\frac{\ell(Q_k)^{t_1}}{|x-\xx_{Q_k}|^{t_1}}\chi_{(Q_k^{***})^c}(x)       \\
u_2(x)&:= g^2\big( \big\{ b_P^2\big\}_{P\in\DD}\big)(x),     \\
u_3(x)&:= \sup_{j\in\bbz}\bigg( \sum_{R\in\DD_j}\Big( |\BB_R^3(f_3)||R|^{-{1}/{2}}\frac{\chi_{R^c}(x)}{\langle 2^j(x-\xx_R)\rangle^{t_3}}\Big)^{p_3}\bigg)^{{1}/{p_3}},
\end{align*}
we obtain the inequality \eqref{keylemma10conditions} and  \eqref{keylemma10est}.

\subsection{Proof of Lemma \ref{keylemma11}}
The proof is almost same as that of Lemmas \ref{keylemma7} and \ref{keylemma9}.
Let $s_1$, $s_2$, and $s_3$ be numbers such that $s_1>{n}/{p_1}-{n}/{2}$, $s_2>{n}/{2}$, $s_3>{n}/{2}$, and $s=s_1+s_2+s_3$.
We claim that for $\eta=1,2,3$,
\begin{align}\label{eta123claim}
\big| \phi_l\ast \big( \UU_4^{\eta}(x,\cdot)\big)(x)\big| &\lesssim \frac{1}{|x-\xx_{Q_k}|^{s_1}} \Big( \sum_{j\in\bbz}\big( 2^{-js_1}\mathcal{M}J_{k,j,s}^1(x)\big)^2\Big)^{{1}/{2}}\\
&\q\times\bigg( \sum_{j\in\bbz}\sum_{P\in\DD_j} \Big(|\BB_P^2(f_2)| |P|^{-{1}/{2}} \frac{\chi_{P^c}(x)}{\langle 2^j(x-\xx_P)\rangle^{s_2}}\Big)^2   \bigg)^{{1}/{2}}\nonumber\\
&\qq\times \sup_{j\in\bbz}\bigg(\sum_{R\in\DD_j} \Big( |\BB_R^3(f_3)| |R|^{-{1}/{2}}\frac{\chi_{R^c}(x)}{\langle 2^j(x-\xx_R)\rangle^{s_3}}\Big)^2\bigg)^{{1}/{2}}\nonumber
\end{align}
where $J_{k,j,s}^1$, $\BB_P^2(f_2)$, and $\BB_R^3(f_3)$ are defined as in \eqref{jkjs1def}, \eqref{bpf2}, and \eqref{br3f3def}, respectively.
Then we have \eqref{keylemma11est} with the choice
\begin{align*}
u_1(x)&:=    \sum_{k=0}^{\infty}|\la_k|\chi_{(Q_k^{***})^c}(x)\frac{1}{|x-\xx_{Q_k}|^{s_1}}\Big( \sum_{j\in\bbz}\big(2^{-js_1}\mathcal{M}J_{k,j,s}^1(x) \big)^2\Big)^{{1}/{2}},        \\
u_2(x)&:=     \bigg( \sum_{j\in\bbz}\sum_{P\in\DD_j} \Big(|\BB_P^2(f_2)| |P|^{-{1}/{2}} \frac{\chi_{P^c}(x)}{\langle 2^j(x-\xx_P)\rangle^{s_2}}\Big)^2   \bigg)^{{1}/{2}},    \\
u_3(x)&:=     \sup_{j\in\bbz}\bigg( \sum_{P\in\DD_j}\Big( |\BB_R^3(f_3)| |R|^{-{1}/{2}}\frac{\chi_{R^c}(x)}{\langle 2^j(x-\xx_R)\rangle^{s_3}}\Big)^2\bigg)^{{1}/{2}}   .
\end{align*}
The estimates for $u_1,u_2,u_3$ follow from \eqref{v1p1est}, Lemma \ref{lacunarylemma}, and Lemma \ref{nonlacunarylemma}.

Now we return to the proof of \eqref{eta123claim}.
For $x\in B_k^l\cap (B_P^l)^c\cap (B_R^l)^c$ and $|x-y|\le 2^{-l}$, we have
\begin{equation}\label{xxqkxxp}
|x-\xx_{Q_k}|\le |x-\xx_P|\lesssim |y-\xx_P| \q \text{and} \q |x-\xx_R|\lesssim |y-\xx_R|.
\end{equation}
Then we have
\begin{align*}
&\langle 2^j(x-\xx_{Q_k})\rangle^{s_1}\langle 2^j(x-\xx_P)\rangle^{s_2}\langle 2^j(x-\xx_R)\rangle^{s_3}\big| \phi_l\ast T_{\sigma_j}\big( \La_ja_k,\psi^P,\theta^R\big)(x)\big|\\
&\lesssim 2^{ln}\int_{|x-y|\le 2^{-l}} \int_{(\bbrn)^3}\langle 2^j(y-z_2)\rangle^{s_1+s_2}\langle 2^j(y-z_3)\rangle^{s_3}\big|\sigma_j^{\vee}(y-z_1,y-z_2,y-z_3) \big| \\
&\qq\qq\times |\La_ja_k(z_1)|\big|\wt{\psi^P}(z_2)\big| \big|\wt{\theta^R}(z_3)\big| \; d\zzz      dy
\end{align*}
where 
\begin{align*}
\wt{\psi^P}(z_2)&:=\langle 2^j(z_2-\xx_P)\rangle^{s_1+s_2}\psi^P(z_2),\\
\wt{\theta^R}(z_3)&:=\langle 2^j(z_3-\xx_R)\rangle^{s_3}\theta^R(z_3).
\end{align*}
Now, using the method similar to that used in the proof of \eqref{philu21}, we obtain \eqref{eta123claim} for $\eta=1$.

For the case $\eta=2$, we use the fact, instead of \eqref{xxqkxxp}, that for $x\in B_k^l\cap (B_P^l)^c\cap B_R^l$ and $|x-y|\le 2^{-l}$,
$$|x-\xx_{Q_k}|, |x-\xx_R|\le |x-\xx_P|\lesssim |y-\xx_P|.$$
This shows that
\begin{align*}
&\langle 2^j(x-\xx_{Q_k})\rangle^{s_1}\langle 2^j(x-\xx_P)\rangle^{s_2}\langle 2^j(x-\xx_R)\rangle^{s_3}\big| \phi_l\ast T_{\sigma_j}\big( \La_ja_k,\psi^P,\theta^R\big)(x)\big|\\
&\lesssim 2^{ln}\int_{|x-y|\le 2^{-l}} \int_{(\bbrn)^3}\langle 2^j(y-z_2)\rangle^{s}\big|\sigma_j^{\vee}(y-z_1,y-z_2,y-z_3) \big| \\
&\qq\qq\qq\times  |\La_ja_k(z_1)|\big|{\wt{\psi^P}}(z_2)\big| \big|{\theta^R}(z_3)\big| \; d\zzz      dy
\end{align*}
where $$\wt{\psi^P}(z_2):=\langle 2^j(z_2-\xx_P)\rangle^{s}\psi^P(z_2),$$
and then \eqref{eta123claim} for $\eta=2$ follows. 

Similarly, we can prove that for $x\in B_k^l\cap B_P^l\cap (B_R^l)^c$ and $|x-y|\le 2^{-l}$,
\begin{align*}
&\langle 2^j(x-\xx_{Q_k})\rangle^{s_1}\langle 2^j(x-\xx_P)\rangle^{s_2}\langle 2^j(x-\xx_R)\rangle^{s_3}\big| \phi_l\ast T_{\sigma_j}\big( \La_ja_k,\psi^P,\theta^R\big)(x)\big|\\
&\lesssim 2^{ln}\int_{|x-y|\le 2^{-l}} \int_{(\bbrn)^3}\langle 2^j(y-z_3)\rangle^{s}\big|\sigma_j^{\vee}(y-z_1,y-z_2,y-z_3) \big| \\
&\qq\qq\qq\times  |\La_ja_k(z_1)|\big|{\psi^P}(z_2)\big| \big|\wt{\theta^R}(z_3)\big| \; d\zzz      dy
\end{align*}
where $$\wt{\theta^R}(z_3):=\langle 2^j(z_3-\xx_R)\rangle^{s}\theta^R(z_3).$$
This proves \eqref{eta123claim} for $\eta=3$.

\subsection{Proof of Lemma \ref{keylemma12}}

We first note that
\begin{equation}\label{2lbound}
2^l\lesssim |x-\xx_{Q_k}|^{-1}, |x-\xx_P|^{-1}, |x-\xx_R|^{-1}
\end{equation} for $x\in B_k^l\cap B_P^l\cap B_R^l$.
Since ${n}/{p}<s-\big({n}/{2}-{n}/{p_2}-{n}/{p_3} \big)$, there exist $0<\epsilon_0,\epsilon_1<1$ such that 
$${n}/{p}<\big[ {n}/{p}\big]+\epsilon_0 \q   \text{ and }\q       \big[ {n}/{p}\big]+\epsilon_0+\epsilon_1<s-\big({n}/{2} -{n}/{p_2}-{n}/{p_3}\big).$$
Choose $t_1$, $t_2$, and $t_3$ satisfying $t_1>{n}/{p_1}$, $t_2>{n}/{p_2}$, $t_3>{n}/{p_3}$, and $t_1+t_2+t_3=\big[ {n}/{p}\big]+\epsilon_0$ and let $N_0:=\big[ {n}/{p}\big]-n $.
Then it follows from Lemma \ref{epsilon control} and the estimate \eqref{2lbound} that
\begin{align*}
&\big| \phi_l\ast T_{\sigma_j}\big( \La_ja_k,\psi^P,\theta^R\big)(x)\big|\\
&\lesssim 2^{l(N_0+n+\epsilon_0)}\int_{\bbrn}|y-\xx_P |^{N_0+\epsilon_0}  \big| T_{\sigma_j} \big( \La_ja_k,\psi^P,\theta^R\big)(y)\big|   \;    dy\\
&\lesssim \frac{1}{|x-\xx_{Q_k}|^{t_1}}2^{-j(t_1-n)}\int_{\bbrn} \int_{(\bbrn)^3}\langle 2^j(y-z_2)\rangle^{N_0+\epsilon_0}\big| \sigma_j^{\vee}(y-z_1,y-z_2,y-z_3)\big| \\
&\qq\qq\qq\qq\qq\qq\times \big| \La_ja_k(z_1)\big| \frac{| \wt{\psi^P}(z_2)|}{\langle 2^j(x-\xx_P)\rangle^{t_2}}\frac{|\theta^R(z_3)|}{\langle 2^j(x-\xx_R)\rangle^{t_3}}       \;  d\zzz dy
  \end{align*}
  where
  $$\wt{\psi^P}(z_2):= \langle z_2-\xx_P\rangle^{N_0+\epsilon_0}\psi^P(z_2).$$
This deduces that
\begin{align}\label{philu44est}
&\big| \phi_l\ast \big( \UU_4^4(x,\cdot)\big)(x)\big| \nonumber\\
&\lesssim \frac{1}{|x-\xx_{Q_k}|^{t_1}}\sum_{j\in\bbz}2^{-j(t_1-n)}\int_{\bbrn}  \int_{(\bbrn)^3}   \langle 2^j(y-z_2)\rangle^{N_0+\epsilon_0} \big| \sigma_j(y-z_1,y-z_2,z_3)\big|  \big| \La_ja_k(z_1)\big|\\
&\qq\times  \bigg( \sum_{P\in\DD_j} |b_P^2|\frac{\chi_{P^c}(x)}{\langle 2^j(x-\xx_P)\rangle^{t_2}}\big| \wt{\psi^P}(z_2)\big|    \bigg) 
\bigg( \sum_{R\in\DD_j} |b_R^3|\frac{\chi_{R^c}(x)}{\langle 2^j(x-\xx_R)\rangle^{t_3}}\big| {\theta^R}(z_3)\big|    \bigg)      \;   d\zzz     dy. \nonumber
\end{align}
Since $s-\big[{n}/{p} \big]+{n}/{2}-\epsilon_0-\epsilon_1>\big({n}/{2}-{n}/{p_2} \big)+\big({n}/{2}-{n}/{p_3}\big)$, there exist $\mu_2$ and $\mu_3$ such that $\mu_2>{n}/{2}-{n}/{p_2}$, $\mu_3>{n}/{2}-{n}/{p_3}$, and $\mu_1+\mu_2=s-\big[ {n}/{p}\big]+{n}/{2}-\epsilon_0-\epsilon_1$.
Using H\"older's inequality with
 $$\frac{1}{2}+\frac{1}{(1/p_2'-1/2)^{-1}}+\frac{1}{p_2}=\frac{1}{2}+\frac{1}{(1/p_3'-1/2)^{-1}}+\frac{1}{p_3}=1,$$ we have
\begin{align*}
&\int_{(\bbrn)^2} \langle 2^j(y-z_2)\rangle^{N_0+\epsilon_0} \big| \sigma_j(y-z_1,y-z_2,y-z_3)\big| \\
&\qq\times   \bigg( \sum_{P\in\DD_j} |b_P^2|\frac{\chi_{P^c}(x)}{\langle 2^j(x-\xx_P)\rangle^{t_2}}\big| \wt{\psi^P}(z_2)\big|    \bigg) \bigg( \sum_{R\in\DD_j} |b_R^3|\frac{\chi_{R^c}(x)}{\langle 2^j(x-\xx_R)\rangle^{t_3}}\big| {\theta^R}(z_3)\big|    \bigg)    \;   dz_2 dz_3\\
&\le \big\Vert \langle 2^jz_2\rangle^{N_0+\epsilon_0+\mu_2}\langle 2^jz_3\rangle^{\mu_3}\sigma_j^{\vee}(y-z_1,z_2,z_3)\big\Vert_{L^2(z_2,z_3)}\big\Vert \langle 2^j\cdot \rangle^{-\mu_2}\big\Vert_{L^{({1}/{p_2'}-{1}/{2})^{-1}}(\bbrn)}\\
&\qq\times \big\Vert \langle 2^j\cdot \rangle^{-\mu_3}\big\Vert_{L^{({1}/{p_3'}-{1}/{2})^{-1}}(\bbrn)}\bigg\Vert   \sum_{P\in\DD_j} |b_P^2|\frac{\chi_{P^c}(x)}{\langle 2^j(x-\xx_P)\rangle^{t_2}}\big| \wt{\psi^P}(z_2)\big|     \bigg\Vert_{L^{p_2}(z_2)}\\
&\qq\qq\times \bigg\Vert   \sum_{R\in\DD_j} |b_R^3|\frac{\chi_{R^c}(x)}{\langle 2^j(x-\xx_R)\rangle^{t_3}}\big| {\theta^R}(z_3)\big|     \bigg\Vert_{L^{p_2}(z_3)}
\end{align*}
and then Lemma \ref{almost orthogonality} yields that the preceding expression is less than a constant times
\begin{align*}
& 2^{-jn}\big\Vert \langle 2^j(z_2,z_3)\rangle^{N_0+\epsilon_0+\mu_1+\mu_2}\sigma_j^{\vee}(y-z_1,z_2,z_3)\big\Vert_{L^2(z_2,z_3)}\\
&\qq\times \bigg( \sum_{P\in\DD_j}\Big( |\BB_P^2(f_2)||P|^{-{1}/{2}}\frac{\chi_{P^c}(x)}{\langle 2^j(x-\xx_P)\rangle^{t_2}}\Big)^{p_2}  \bigg)^{{1}/{p_2}}\\
&\qq\qq\times \bigg( \sum_{R\in\DD_j}\Big( |\BB_R^3(f_3)||R|^{-{1}/{2}}\frac{\chi_{R^c}(x)}{\langle 2^j(x-\xx_R)\rangle^{t_3}}\Big)^{p_3}  \bigg)^{{1}/{p_3}}
\end{align*}
because  $\mu_2>n(1/p_2'-1/2)$ and $\mu_3>n(1/p_3'-1/2)$, where $\BB_P^2(f_2)$ and $\BB_R(f_3)$ are defined as in \eqref{bpf2} and \eqref{br3f3def}.

Now the integral in the right-hand side of \eqref{philu44est} is dominated by a constant times
\begin{align*}
&2^{-{jn}}\Vert \La_ja_k\Vert_{L^1(\bbrn)} \int_{\bbrn }  \big\Vert \langle 2^j(z_2,z_3)\rangle^{s-{n}/{2}-\epsilon_1}\sigma_j^{\vee}(y,z_2,z_3)\big\Vert_{L^2(z_2,z_3)} dy \\
&\qq\times  \bigg( \sum_{P\in\DD_j}\Big( |\BB_P^2(f_2)||P|^{-{1}/{2}}\frac{\chi_{P^c}(x)}{\langle 2^j(x-\xx_P)\rangle^{t_2}}\Big)^{p_2}  \bigg)^{{1}/{p_2}} \\
&\qq\qq\times\bigg( \sum_{R\in\DD_j}\Big( |\BB_R^3(f_3)||R|^{-{1}/{2}}\frac{\chi_{R^c}(x)}{\langle 2^j(x-\xx_R)\rangle^{t_3}}\Big)^{p_3}  \bigg)^{{1}/{p_3}}   \\
\end{align*}
and this is no more than
\begin{align*}
& \ell(Q_k)^{-n/p_1+n}\min\big\{1,\big( 2^j\ell(Q_k)\big)^M \big\} \\
&\qq\times \bigg( \sum_{P\in\DD_j}\Big( |\BB_P^2(f_2)||P|^{-{1}/{2}}\frac{\chi_{P^c}(x)}{\langle 2^j(x-\xx_P)\rangle^{t_2}}\Big)^{p_2}  \bigg)^{{1}/{p_2}}\\
&\qq\qq\times \bigg( \sum_{R\in\DD_j}\Big( |\BB_R^3(f_3)||R|^{-{1}/{2}}\frac{\chi_{R^c}(x)}{\langle 2^j(x-\xx_R)\rangle^{t_3}}\Big)^{p_3}  \bigg)^{{1}/{p_3}} 
\end{align*}
where $N_0+\epsilon_0+\mu_2+\mu_3=s-\frac{n}{2}-\epsilon_1$.
Hence, it follows that
\begin{align*}
&\big| \phi_l\ast \big( \UU_4^4(x,\cdot)\big)(x)\big|\\
& \lesssim  \frac{1}{|x-\xx_{Q_k}|^{t_1}}\ell(Q_k)^{-{n}/{p_1}+n}\sum_{j\in\bbz}  2^{-j(t_1-n)} \min\big\{1,\big( 2^j\ell(Q_k)\big)^M \big\}\\
&\qq\times \sup_{j\in\bbz} \bigg( \sum_{P\in\DD_j}\Big( |\BB_P^2(f_2)||P|^{-{1}/{2}}\frac{\chi_{P^c}(x)}{\langle 2^j(x-\xx_P)\rangle^{t_2}}\Big)^{p_2}  \bigg)^{{1}/{p_2}} \\
&\qq\qq\times \sup_{j\in\bbz} \bigg( \sum_{R\in\DD_j}\Big( |\BB_R^3(f_3)||R|^{-{1}/{2}}\frac{\chi_{R^c}(x)}{\langle 2^j(x-\xx_R)\rangle^{t_3}}\Big)^{p_3}  \bigg)^{{1}/{p_3}}\\
&\lesssim  |Q_k|^{-1/p_1} \frac{\ell(Q_k)^{t_1}}{|x-\xx_{Q_k}|^{t_1}} \bigg(  \sum_{j\in\bbz} \sum_{P\in\DD_j}\Big( |\BB_P^2(f_2)||P|^{-{1}/{2}}\frac{\chi_{P^c}(x)}{\langle 2^j(x-\xx_P)\rangle^{t_2}}\Big)^{p_2}  \bigg)^{{1}/{p_2}}\\
&\qq\qq\qq\qq\qq\times \sup_{j\in\bbz} \bigg( \sum_{R\in\DD_j}\Big( |\BB_R^3(f_3)||R|^{-{1}/{2}}\frac{\chi_{R^c}(x)}{\langle 2^j(x-\xx_R)\rangle^{t_3}}\Big)^{p_3}  \bigg)^{{1}/{p_3}}.
\end{align*}
Now let
\begin{align*}
u_1(x)&:=  \sum_{k=0}^{\infty}|\la_k| |Q_k|^{-{1}/{p_1}}\frac{\ell(Q_k)^{t_1}}{|x-\xx_{Q_k}|^{t_1}}\chi_{(Q_k^{***})^c}(x),       \\
u_2(x)&:=     \bigg(\sum_{j\in\bbz} \sum_{P\in\DD_j}\Big( |\BB_P^2(f_2)||P|^{-{1}/{2}}\frac{\chi_{P^c}(x)}{\langle 2^j(x-\xx_P)\rangle^{t_2}}\Big)^{p_2}  \bigg)^{{1}/{p_2}},     \\
u_3(x)&:=     \sup_{j\in\bbz} \bigg( \sum_{R\in\DD_j}\Big( |\BB_R^3(f_3)||R|^{-{1}/{2}}\frac{\chi_{R^c}(x)}{\langle 2^j(x-\xx_R)\rangle^{t_3}}\Big)^{p_3}  \bigg)^{{1}/{p_3}}    .
\end{align*}
Then it is easy to prove  \eqref{keylemma12conditions} and \eqref{keylemma12est}.

\subsection{Proof of Lemma \ref{keylemma13}}
Using the fact that $\sum_{j\in\bbz}\wh{\Psi}(2^{-j}\xxxi)=1$ for $\xxxi\not=\000$,
we can write
\begin{equation}\label{tsigmadecomp}
T_{\sigma}(a_k,f_2,f_3)=\sum_{j\in\bbz}T_{\wt{\sigma_j}}(a_k,f_2,f_3)
\end{equation}
where $\wt{\sigma_j}(\xxxi):=\sigma(\xxxi)\wh{\Psi}(2^{-j}\xxxi)$ so that 
$$\sup_{k\in\bbz}\big\Vert \wt{\sigma_k}(2^k\ccdot)\big\Vert_{L^2_s((\bbrn)^3)}=\LL_s^2[\sigma]=1.$$
Moreover, due to the support of $\wt{\sigma_j}$,
\begin{equation}\label{furtherdecomp}
T_{\wt{\sigma_j}}(a_k,f_2,f_3)=T_{\wt{\sigma_j}}(\Ga_{j+1}a_k,f_2,f_3).
\end{equation}
Now the left-hand side of \eqref{keylemma13est} is less than
$$\sup_{l\in\bbz} \Big|\sum_{k=1}^{\infty}\la_k\chi_{(Q_k^{***})^c}(x)\chi_{(B_k^l)^c}(x)\phi_l\ast \Big(\sum_{j\in\bbz}T_{\wt{\sigma_j}}\big( \Ga_{j+1}a_k,f_2,f_3\big)(x) \Big)(x) \Big|.$$

Let $s_1,s_2,s_3$ be numbers such that $s_1>n/p-n/2$, $s_2,s_3>n/2$, and $s=s_1+s_2+s_3$.
For $x\in (Q^{***})^c\cap(B_k^l)^c$ and $|x-y| \le 2^{-l}$
\begin{equation*}
|x - \xx_{Q_k}| \lesssim | y - \xx_{Q_k}|.
\end{equation*}
In the same argument as in the proof of \eqref{keyinest} and \eqref{keyoutest}, with \eqref{estlaak} replaced by \eqref{estgaak}, we can get
\begin{align*}
&\langle 2^j(x-\xx_{Q_k})\rangle^{s_1}\big|  T_{\wt{\sigma_j}}\big(\Ga_{j+1}a_k,f_2,f_3\big)(y) \big|\\
& \lesssim\Vert f_2\Vert_{L^{\infty}(\bbrn)}\Vert f_3\Vert_{L^{\infty}(\bbrn)}  \langle2^j(x-\xx_{Q_k})\rangle^{s_1} \int_{(\bbrn)^3}\big|\wt{\sigma_j}^{\vee}(y-z_1,z_2,z_3)\big| |\Ga_{j+1}a_k(z_1)| \; d\zzz \\
&\lesssim \ell(Q_k)^{-n/p}\min{\big\{1,\big(2^j\ell(Q_k) \big)^M \big\}}I_{k,j,s}(y)
\end{align*}
where $I_{k,j,s}^{in}$ and $I_{k,j,s}^{out}$ are defined as in \eqref{ikjsindef} and \eqref{ikjsout}, respectively, and
$$I_{k,j,s}(y):=I_{k,j,s}^{in}(y)+I_{k,j,s}^{out}(y).$$ 
This yields that
\begin{align*}
&\Big| \phi_l\ast \Big(\sum_{j\in\bbz}T_{\wt{\sigma_j}}\big(\Ga_{j+1}a_k,f_2,f_3\big) \Big)(x)\Big|\\
&\lesssim \ell(Q_k)^{-n/p}|x-\xx_{Q_k}|^{-s_1}\mathcal{M}\Big( \sum_{j\in\bbz}2^{-s_1 j}\min{\big\{1,\big( 2^j\ell(Q_k)\big)^M\big\}}I_{k,j,s}(\cdot) \Big)(x)
\end{align*}
and thus \eqref{keylemma13est} follows from choosing $u_2(x)=u_3(x):=1$ and
\begin{align*}
u_1(x)&:=\sum_{k=1}^{\infty}|\la_k|\chi_{(Q_k^{***})^c}(x)\ell(Q_k)^{-n/p}|x-\xx_{Q_k}|^{-s_1}\\
&\qq\qq\times \mathcal{M}\Big( \sum_{j\in\bbz}2^{-s_1 j}\min{\big\{1,\big( 2^j\ell(Q_k)\big)^M\big\}}I_{k,j,s}(\cdot) \Big)(x).
\end{align*}
Now it is straightforward that $\Vert u_1\Vert_{L^p(\bbrn)}$ is less than
\begin{equation*}
\bigg(\sum_{k=1}^{\infty}|\la_k|^p\ell(Q_k)^{-n}\bigg\Vert  |\cdot-\xx_{Q_k}|^{-s_1}  \mathcal{M}\Big( \sum_{j\in\bbz}2^{-s_1 j}\min{\big\{1,\big( 2^j\ell(Q_k)\big)^M\big\}}I_{k,j,s}(\cdot) \Big)     \bigg\Vert_{L^p((Q_k^{***})^c)}^p \bigg)^{1/p}
\end{equation*}
and the $L^p$-norm in the preceding expression is less than
\begin{align*}
&\big\Vert  |\cdot-\xx_{Q_k}|^{-{s_1}}\big\Vert_{L^{p({2}/{p})'}((Q_k^{***})^c)}  \bigg\Vert  \mathcal{M}\Big(\sum_{j\in\bbz}2^{-s_1j}\min{\big\{1,\big(2^j\ell(Q_k)\big)^M\big\}}I_{k,j,s_1}(\cdot)  \Big)  \bigg\Vert_{L^{{2}}(\bbrn)}\\
&\lesssim \ell(Q_k)^{-(s_1-(n/p-n/2))}\sum_{j\in\bbz}2^{-s_1 j}\min\big\{1,\big(2^j\ell(Q_k) \big)^M\big\}\Vert I_{k,j,s}\Vert_{L^2(\bbrn)}\\
&\lesssim  \ell(Q_k)^{-(s_1-(n/p-n/2))}\ell(Q_k)^n\sum_{j\in\bbz}2^{-(s_1-n/2) j} \min\big\{1,\big(2^j\ell(Q_k) \big)^M\big\}\lesssim \ell(Q_k)^{n/p}
\end{align*}
where \eqref{ikjsin} and \eqref{ikjsoutest} are applied in the penultimate inequality for sufficiently large $M$.
This concludes that 
$$\Vert u_1\Vert_{L^p(\bbrn)}\lesssim \Big( \sum_{k=1}^{\infty}|\la_k|^p\Big)^{1/p}\lesssim 1.$$

\subsection{Proof of Lemma \ref{keylemma14}}

Select $0<\epsilon<1$ such that
$$N_p:=[n/p-n]\le n/p-n<[n/p-n]+\epsilon<s-3n/2.$$
Then Lemma \ref{epsilon control} yields that
\begin{align*}
\big| \phi_l \ast T_{\sigma}\big(a_k, f_2, f_3\big)(x) \big| &\lesssim 2^{l(N_p+n+\epsilon)} \int_{\bbrn} |y - \xx_{Q_k}|^{N_p  + \epsilon} \big|T_{\sigma}(a_k, f_2, f_3)(y) \big| \;dy\\
&\lesssim \frac{1}{ |x - \xx_{Q_k}|^{N_p+n+\epsilon} } \int_{\mathbb{R}^n}| y - \xx_{Q_k} |^{N_p+\e} \big|T_{\sigma}(a_k, f_2, f_3)(y) \big| \; dy\\
&\le  \frac{1}{ |x - \xx_{Q_k}|^{N_p+n+\epsilon} } \big( \mathcal{K}_{N_p+\epsilon}^{in}(a_k,f_2,f_3)+ \mathcal{K}_{N_p+\epsilon}^{in}(a_k,f_2,f_3)\big)
\end{align*}
where we applied $2^l\lesssim |x-\xx_{Q_k}|$ for $x\in B_k^l$ in the penultimate inequality and
$$ \mathcal{K}_{N_p+\epsilon}^{\mathrm{in}}(a_k,f_2,f_3):=\int_{Q_k^{**}}| y - \xx_{Q_k} |^{N_p+\e} \big|T_{\sigma}(a_k, f_2, f_3)(y) \big| \; dy,$$
 $$\mathcal{K}_{N_p+\epsilon}^{\mathrm{out}}(a_k,f_2,f_3):=\int_{(Q_k^{**})^c}| y - \xx_{Q_k} |^{N_p+\e} \big|T_{\sigma}(a_k, f_2, f_3)(y) \big| \; dy.$$
Now we claim that
\begin{equation}\label{keylemma14claim}
\mathcal{K}_{N_p+\epsilon}^{\mathrm{in}/\mathrm{out}}(a_k,f_2,f_3)\lesssim \ell(Q_k)^{N_p-n/p+n+\epsilon}.
\end{equation}
Once \eqref{keylemma14claim} holds, 
we obtain
$$\big| \phi_l \ast T_{\sigma}\big(a_k, f_2, f_3\big)(x) \big| \lesssim |Q_k|^{-1/p}\frac{\ell(Q_k)^{N_p+n+\epsilon}}{|x-\xx_{Q_k}|^{N_p+n+\epsilon}},$$
which implies \eqref{keylemma14est} with $u_2(x)=u_3(x):=1$ and
$$u_1(x):=\sum_{k=1}^{\infty}|\la_k||Q_k|^{-1/p}\frac{\ell(Q_k)^{N_p+n+\epsilon}}{|x-\xx_{Q_k}|^{N_p+n+\epsilon}}\chi_{(Q_k^{***})^c}(x).$$
Moreover,
\begin{align*}
\Vert u_1\Vert_{L^p(\bbrn)}&\le \Big( \sum_{k=1}^{\infty}|\la_k|^p|Q_k|^{-1}\ell(Q_k)^{p(N_p+n+\epsilon)}\big\Vert |\cdot-\xx_{Q_k}|^{-(N_p+n+\epsilon)}\big\Vert_{L^p((Q_k^{***})^c)}^p  \Big)^{1/p}\\
&\lesssim \Big(\sum_{k=1}^{\infty}|\la_k|^p \Big)^{1/p}\lesssim 1
\end{align*}
because $N_p+n+\epsilon>n/p$.

Therefore, it remains to show \eqref{keylemma14claim}.
Indeed, it follows from Theorem \ref{thmd} that
\begin{align*}
\mathcal{K}_{N_p+\epsilon}^{\mathrm{in}}(a_k,f_2,f_3)&\lesssim \ell(Q_k)^{N_p+\epsilon}\big\Vert T_{\sigma}(a_k,f_2,f_3)\big\Vert_{L^1(\bbrn)}\\
&\lesssim \ell(Q_k)^{N_p+\epsilon}\Vert a_k\Vert_{L^1(\bbrn)}\lesssim \ell(Q_k)^{N_p-n/p+n+\epsilon}.
\end{align*}

For the other term, we use both \eqref{tsigmadecomp} and \eqref{furtherdecomp} to write
$$\mathcal{K}_{N_p+\epsilon}^{\mathrm{out}}(a_k,f_2,f_3)\lesssim  \sum_{j\in\bbz}2^{-j(N_p+\epsilon)}\int_{(Q_k^{**})^c}{ \langle 2^j(y-\xx_{Q_k})\rangle^{N_p+\epsilon}\big| T_{\wt{\sigma_j}}\big(\Ga_{j+1}a_k,f_2,f_3\big)(y)\big|} \; dy.$$
Let $s_1,s_2,s_3$ be numbers satisfying
$$N_p+n/2+\epsilon<s_1<s-n,\q s_2,s_3>n/2,\q s_1+n<s_1+s_2+s_3=s,$$
similar to \eqref{npcondition1} and \eqref{npcondition2}.
Then, using the argument in \eqref{tsigmajlaest}, we have
\begin{align*}
&\big| T_{\wt{\sigma_j}}\big(\Ga_{j+1}a_k,f_2,f_3 \big)(y)\big|\chi_{(Q_k^{**})^c}(y)\lesssim \ell(Q_k)^{-n/p}\min\big\{1,\big( 2^j\ell(Q_k)\big)^M \big\}\frac{1}{\langle 2^j(y-\xx_{Q_k})\rangle^{s_1}}\\
&\qq\qq\qq\times  2^{-jn}\int_{\bbrn} |A_{j,Q_k}(z_1)|\big\Vert \langle 2^j(y-z_1,z_2,z_3)\rangle^{s}\wt{\sigma_j}^{\vee}(y-z_1,z_2,z_3)\big\Vert_{L^2(z_2,z_3)} dz_1
\end{align*}
where $A_{j,Q_k}$ is defined as in \eqref{ajqkdef}.
This finally yields that
\begin{align*}
&\mathcal{K}_{N_p+\epsilon}^{\mathrm{out}}(a_k,f_2,f_3)\lesssim \ell(Q_k)^{-n/p}\sum_{j\in\bbz}2^{-j(N_p+n+\epsilon)}\min\big\{1,\big( 2^j\ell(Q_k)\big)^M\big\}
\int_{\bbrn}|A_{j,Q_k}(z_1)| \\
&\qq\times \Big(\int_{(Q_k^{**})^c} \frac{1}{\langle 2^j(y-\xx_{Q_k})\rangle^{s_1-(N_p+\epsilon)}}     \big\Vert \langle 2^j(y-z_1,z_2,z_3)\rangle^{s}\wt{\sigma_j}^{\vee}(y-z_1,z_2,z_3)\big\Vert_{L^2(z_2,z_3)}     dy \Big) \; dz_1\\
&\lesssim \ell(Q_k)^{-n/p}\sum_{j\in\bbz}2^{-j(s_1+n)}\min\big\{1,\big( 2^j\ell(Q_k)\big)^M\big\}\Vert A_{j,Q_k}\Vert_{L^1(\bbrn)}\\
&\qq\qq\qq\qq\times \big\Vert \langle 2^j\ccdot \rangle^s\wt{\sigma_j}^{\vee}\big\Vert_{L^2((\bbrn)^3)} \big\Vert |\cdot-\xx_{Q_k}|^{-(s_1-(N_p+\epsilon))}\big\Vert_{L^2(\bbrn)}\\
&\lesssim \ell(Q_k)^{-n/p+n}\ell(Q_k)^{-(s_1-(N_p+n/2+\epsilon))}\sum_{j\in\bbz}2^{-j(s_1-n/2)}\min \big\{1,\big( 2^j\ell(Q_k)\big)^{M-(L_0-s_1-n)} \big\}\\
&\lesssim \ell(Q_k)^{N_p-n/p+n+\epsilon}
\end{align*}
for $M$ and $L_0$ satisfying $M>L_0-s_1-n$, which completes the proof of \eqref{keylemma14claim}.

\appendix

\section{Bilinear Fourier multipliers $(m=2)$}

We remark that Theorem \ref{main} still holds in the bilinear setting where all the arguments above work as well.
\begin{theorem}\label{bilinearthm}
Let $0<p_1,p_2\le \infty$ and $0<p\le 1$ with $1/p=1/p_1+1/p_2$.
Suppose that
$$s>n\q \text{ and }\q \frac{1}{p}-\frac{1}{2}<\frac{s}{n}+\sum_{j\in J}\Big(\frac{1}{p_j}-\frac{1}{2} \Big)$$
where $J$ is an arbitrary subset of $\{1,2\}$. Let $\sigma$ be a function on $(\bbrn)^2$ satisfying 
$$\sup_k\big\Vert \sigma(2^k\ccdot)\wh{\Psi^{(2)}}\big\Vert_{L^2_s((\bbrn)^2)}<\infty$$
and the bilinear analogue of the vanishing moment condition \eqref{vanishingmoment}. Then the bilinear Fourier multiplier $T_{\sigma}$, associated with $\sigma$, satisfies
$$\big\Vert T_{\sigma}(f_1,f_2)\big\Vert_{H^p(\bbrn)}\lesssim \sup_k\big\Vert \sigma(2^k\ccdot)\wh{\Psi^{(2)}}\big\Vert_{L^2_s((\bbrn)^2)}\Vert f_1\Vert_{H^{p_1}(\bbrn)}\Vert f_2\Vert_{H^{p_2}(\bbrn)}$$
 for $f_1,f_2\in\mathscr{S}_0(\bbrn)$.
\end{theorem}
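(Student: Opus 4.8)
The plan is to carry out the entire argument of Sections \ref{proofoftheorem1}--\ref{proofoflemmas} with the third factor suppressed; since every bilinear estimate is a shortened version of its trilinear counterpart, I only indicate the changes. As in Section \ref{proofoftheorem1}, complex interpolation — performed with $s$ and the index $p$ held fixed, so that neither the requirement $s>n$ nor the cancellation hypothesis is disturbed — reduces the claim to two building blocks: the bilinear analogue of Proposition \ref{mainproposition}, handling the ranges in which $\min\{p_1,p_2\}\le 1$ and the complementary exponent exceeds $2$, and the bilinear analogue of Proposition \ref{mainproposition2}, in which one of $p_1,p_2$ equals $p$ and the other equals $\infty$. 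The boundary line $1/p_1+1/p_2=1$ with $p_1,p_2>2$, which forces $p=1$, is then recovered by interpolating the $H^1\times H^\infty$ and $H^\infty\times H^1$ estimates, exactly as in the $p=1$ subcase of $\RR_0$. (When $1/p_1+1/p_2<1$ the conclusion is just the $L^p=H^p$ bound of Theorem \ref{thmd} with $m=2$, so the genuinely new content lies in the range $0<p\le 1$.)

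For the first building block one runs the paraproduct reduction of Section \ref{pfproposition}: write $T_\sigma(f_1,f_2)=\sum_{j_1,j_2\in\bbz}T_\sigma(\La_{j_1}f_1,\La_{j_2}f_2)$, keep $j_1\ge j_2$ (the reverse being symmetric), replace $\sigma$ by $\sigma_{j_1}$, and split into the high--low piece $T_\sigma^{(1)}(f_1,f_2)=\sum_jT_{\sigma_j}(\La_jf_1,\Ga_{j-10}f_2)$ and the diagonal piece $T_\sigma^{(2)}(f_1,f_2)=\sum_jT_{\sigma_j}(\La_jf_1,\La_jf_2)$, together with finitely many harmless shifts; the hypothesis on $\sigma$ then reads as in \eqref{vanishingreduction}. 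The term $T_\sigma^{(1)}$ has frequency support in the fixed annulus $\{2^{j-2}\le|\xi|\le 2^{j+2}\}$, so by \eqref{hardydeflittle} and \eqref{marshall} it is dominated in $H^p$ by $\big\|\{T_{\sigma_j}(\La_jf_1,\Ga_{j-10}f_2)\}_j\big\|_{L^p(\ell^2)}$, which the bilinear $L^p$ multiplier theorem (Theorem \ref{thmd} with $m=2$, equivalently the square-function estimate of \cite{Gr_Mi_Ng_Tom2017}) bounds by $\|f_1\|_{H^{p_1}}\|f_2\|_{H^{p_2}}$.

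The bulk of the work is the diagonal term $T_\sigma^{(2)}$. Assuming $p_1\le 1$ and $p_2>2$, decompose $f_1=\sum_k\la_k a_k$ into $H^{p_1}$-atoms with $\big(\sum_k|\la_k|^{p_1}\big)^{1/p_1}\lesssim 1$ and arbitrarily many vanishing moments, and apply the $\varphi$-transform of Section \ref{preliminary} to $f_2$, so that $\La_jf_2=\sum_{P\in\DD_j}b_P^2\psi^P$. Splitting the grand maximal function $\sup_l|\phi_l\ast(\cdots)|$ according to $x\in Q_k^{***}$ or $x\notin Q_k^{***}$ yields the terms $\II$ and $\JJ$; within $\JJ$ one separates $x\in B_k^l$ from $x\notin B_k^l$, and then decomposes according to whether $x$ lies in $P$, in $P^c\cap B_P^l$, or in $P^c\cap(B_P^l)^c$ — the single-cube version of \eqref{omeganudef}--\eqref{xietadef}, so that the eightfold $P^c\cap R^c$ split of the trilinear proof collapses to a twofold split. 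In each piece one proves a pointwise bound $\sup_l|\phi_l\ast(\cdots)|\lesssim u_1(x)u_2(x)$ with $\|u_1\|_{L^{p_1}},\|u_2\|_{L^{p_2}}\lesssim 1$ — the bilinear specializations of Lemmas \ref{keylemma1}--\ref{keylemma12} — and concludes by H\"older's inequality with $1/p=1/p_1+1/p_2$. All the auxiliary tools of Section \ref{proofoflemmas} are insensitive to the number of factors and are used verbatim: Lemma \ref{epsilon control} (Taylor expansion against the moments of $T_{\sigma_j}(\cdots)$), the almost-orthogonality Lemma \ref{almost orthogonality}, the maximal and square-function Lemmas \ref{lacunarylemma} and \ref{nonlacunarylemma}, and the atom bounds of Lemma \ref{technical lemma}. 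The Sobolev-index splittings are likewise the bilinear specializations of those in Sections \ref{estimateii}--\ref{proofoflemmas} (for instance $s=s_1+s_2$ with $s_1>n/p_1-n/2$ and $s_2>n/2$), and one checks that they close under the stated regularity hypotheses. The second building block is obtained exactly as in Section \ref{pfpropositionp=1}, with the $\varphi$-transform step omitted since $f_2\in L^\infty$, using only the atomic decomposition of the $H^p$ factor and the bilinear $L^2$ bound from Theorem \ref{thmd}.

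The one genuinely substantive point — and the only place the cancellation hypothesis on $\sigma$ is used — is the $\JJ_2$-type estimate in which $x$ is close to the atom's cube: there the coarse bound $|\phi_l\ast(\cdots)|\lesssim\mathcal M(\cdots)$ fails to be summable over $k$, and one must instead invoke Lemma \ref{epsilon control} applied to $T_{\sigma_j}(\La_ja_k,\La_jf_2)$, whose moments up to order $n/p-n$ vanish by \eqref{vanishingreduction} (cf.\ \eqref{hardyvanishing}); the resulting decay $|x-\xx_{Q_k}|^{-(n+[n/p-n]+\e)}$ then yields an $L^p$-summable estimate over the atoms. No idea beyond those already deployed in Sections \ref{estimateii}--\ref{proofoflemmas} is required, so I expect the main difficulty to be organizational: tracking the reduced case distinctions and verifying that each Sobolev-exponent splitting used in the trilinear proof still closes under the hypothesis $s>n$ together with the full set of $J$-conditions assumed in the theorem.
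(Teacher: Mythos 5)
Your proposal is correct and follows precisely the route the paper intends for Theorem \ref{bilinearthm}, which the paper itself merely asserts to be a (much simpler) simplification of the trilinear argument: paraproduct reduction, atomic decomposition on the small exponent, $\varphi$-transform on the large one, and the lemmas of Sections \ref{estimateii}--\ref{proofoflemmas} with the third factor deleted, assembled by the interpolation scheme of Section \ref{proofoftheorem1}. One small wrinkle to fix: the phrase ``the boundary line $1/p_1+1/p_2=1$ with $p_1,p_2>2$'' describes an empty set (both $>2$ forces $1/p_1+1/p_2<1$); what you mean, and what your interpolation of the $H^1\times H^\infty$ and $H^\infty\times H^1$ endpoints correctly handles, is the $p=1$ line with $1<p_1,p_2<\infty$, i.e.\ the portion not reached by the bilinear analogue of Proposition \ref{mainproposition}.
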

The proof is similar, but much simpler than that of Theorem \ref{main}. 
Moreover, unlike Theorem \ref{main}, Theorem \ref{bilinearthm} covers the results for $p_{j}=\infty$, $j=1,2$, which follow immediately from the bilinear analogue of Proposition \ref{mainproposition2}.

\section{General $m$-linear Fourier multipliers for $m\ge 4$}

The structure of the proof of Theorem \ref{main} is actually very similar to those of Theorems \ref{thmc} and \ref{thmd}, in which $T_{\sigma}(f_1,\dots,f_m)$ is written as a finite sum of $T^{\kappa}(f_1,\dots,f_m)$ for some variant operators $T^{\kappa}$, and then 
\begin{equation}\label{lpkeyest}
\big|T^{\kappa}\big(f_1,\dots,f_m\big)(x)\big|\lesssim \sup_{k\in\bbz}\big\Vert \sigma(2^k\ccdot)\wh{\Psi^{(m)}}\big\Vert_{L^2_s((\bbrn)^m)}u_1(x)\cdots u_m(x)\end{equation}
where $\Vert u_{j}\Vert_{L^{p_{j}}(\bbrn)}\lesssim \Vert f_{j}\Vert_{L^{p_{j}}(\bbrn)}$ for $1\le j\le m$.
Compared to the $H^{p_1}\times\cdots\times H^{p_m}\to L^p$ estimates in Theorems \ref{thmc} and \ref{thmd}, 
one of the obstacles to be overcome for the boundedness into Hardy space $H^p$ is to replace the left-hand side of \eqref{lpkeyest} by
$$\sup_{l\in\bbz}\big| \phi_l\ast T^{\kappa}\big(f_1,\dots,f_m\big)(x)\big|,$$
and we have successfully accomplished this for $m=3$ as mentioned in \eqref{suplinstance}.
One of the methods we have adopted is 
$$\chi_{Q_k^{***}}(x)2^{ln}\int_{|x-y|\le 2^{-l}} F_1(y)F_2(y)F_3(y)\; dy\lesssim \chi_{Q_k^{***}}(x)\mathcal{M}_qF_1(x)\mathcal{M}_{\wt{r}}F_2(x)\mathcal{M}_{\wt{r}}F_3(x)$$
where $2<\wt{r}<p_2,p_3$ and $1/q+2/\wt{r}=1$. Then we have
$$\Vert \mathcal{M}_{\wt{r}}F_2\Vert_{L^{p_2}(\bbrn)}\lesssim \Vert F_j\Vert_{L^{p_j}(\bbrn)},\qq j=2,3$$
by the $L^{p_j}$ boundedness of $\mathcal{M}_{\wt{r}}$ with $\wt{r}<p_j$.
Such an argument is contained in the proof of Lemma \ref{keylemma1}.
However, if we consider $m$-linear operators for $m\ge 4$, then the above argument does not work for $p_2,\dots,p_m>2$.
For example, it is easy to see that $1/q + 3/\wt{r}$ exceeds $1$ if $\wt{r}>2$ is sufficiently close to $2$.
That is, we are not able to obtain $m$-linear estimates for $0<p_1\le 1$ and $2< p_2, \cdots, p_m<\infty$, $m\ge 4$. This is critical because our approach in this paper highly relies on interpolation between the estimates in the regions $\RR_1, \RR_2, \RR_3$, which are trilinear versions of $\{(1/p_1, \cdots, 1/p_m) : 0<p_1\le 1, \, 2< p_2, \cdots, p_m<\infty\}$.

\end{document}